\definecolor{deepgreen}{cmyk}{1,0,1,0.5}
\newcommand{\HH}{\mathcal{H}}
\newcommand{\K}{\mathcal{K}}
\newcommand{\NN}{\mathcal{N}}
\newcommand{\cV}{\mathcal{V}}
\newcommand{\R}{\mathbb{R}}
\newcommand{\Sp}{\mathbb{S}}
\newcommand{\Z}{\mathbb{Z}}
\newcommand{\al}{\alpha}
\newcommand{\ga}{\gamma}
\newcommand{\e}{\varepsilon}
\newcommand{\fy}{\varphi}
\newcommand{\la}{\lambda}
\newcommand{\La}{\Lambda}
\newcommand{\p}{\partial}
\newcommand{\Rmnum}[1]{\expandafter\@slowromancap\romannumeral #1@}
\newcommand{\I}{\infty}
\newcommand{\ti}{\widetilde}
\newcommand{\ang}[1]{\left\langle{#1}\right\rangle}
\newcommand{\abs}[1]{\left\lvert{#1}\right\rvert}
\newcommand{\ant}[1]{\begin{align*}\begin{split} #1 \end{split}\end{align*}}
\newcommand{\EQ}[1]{\begin{equation}\begin{split} #1 \end{split}\end{equation}}
\newcommand{\bmat}[1]{\begin{bmatrix} #1 \end{bmatrix}}
\newcommand{\Del}[1]{}
\newcommand{\pt}{&}
\numberwithin{equation}{section}
\newcommand{\mif}{{\ \ \text{if} \ \ }}
\newcommand{\mas}{{\ \ \text{as} \ \ }}
\newcommand{\spa}{\operatorname{span}}
\newcommand{\Proj}{\operatorname{Proj}}
\newcommand{\Res}{\operatorname{Res}}
\newtheorem{thm}{Theorem}[section]
\newtheorem*{thm*}{Theorem}
\newtheorem{prop}[thm]{Proposition}
\newtheorem{cor}[thm]{Corollary}
\newtheorem{lemma}[thm]{Lemma}
\theoremstyle{definition}
\theoremstyle{remark}
\newtheorem{remark}[thm]{Remark}
\newcommand{\tdk}{\tilde{k}}
\newcommand{\pipp}{\pi_R^\perp}
\newcommand{\pip}{\pi_R}
\newcommand{\hrr}{\mathcal{H}(r\geq R)}
\newcommand{\ld}{\lambda_1}
\newcommand{\ldd}{\lambda_2}
\newcommand{\E}{\mathcal{E}}
\newcommand{\Ho}{\mathcal{H}_{\ell,0}}
\newcommand{\Hn}{\mathcal{H}_{\ell,n}}
\newcommand{\En}{\mathcal{E}_{\ell,n}}
\newcommand{\Hd}{\mathcal{H}}
\newcommand{\N}{\mathcal{N}}
\newcommand{\app}{3}
\newcommand{\ldp}{\lambda_{\tdk-P}}
\newcommand{\mup}{\mu_{k-P}}
\newcommand{\rpt}{\varrho_{k-P}(t)}
 \newcommand{\rp}{\varrho_{k-P}}
 \newcommand{\thpt}{\vartheta_{\tdk-P}(t)}
  \newcommand{\thp}{\vartheta_{\tdk-P}}
\newcommand{\rpto}{\varrho_{k-P-1}(t)}
 \newcommand{\rpo}{\varrho_{k-P-1}}
 \newcommand{\thpto}{\vartheta_{k-P}(t)}
  \newcommand{\thpo}{\vartheta_{k-P}}
\title[Soliton resolution for exterior wave maps]{Stable soliton resolution for exterior  wave maps in all equivariance classes}
\author{Carlos Kenig, Andrew Lawrie,  Baoping Liu,  \and Wilhelm Schlag}
\begin{document}
\begin{abstract} In this paper we consider finite energy $\ell$--equivariant wave maps from $\mathbb{R}^{1+3}_{t,x}\backslash{(\mathbb{R}\times B(0,1))}\rightarrow \Sp^3$ with a Dirichlet boundary condition at $r=1$, and for all $\ell \in \mathbb{N}$. Each such  $\ell$-equivariant wave map has a fixed integer-valued topological degree, and in each degree class there is a unique harmonic map, which minimizes the energy for maps of the same degree.  We prove that an arbitrary $\ell$-equivariant exterior wave map with finite energy scatters to the unique harmonic map in its degree class, i.e., soliton resolution. This extends the recent results of the first, second, and fourth authors on the $1$-equivariant equation to higher equivariance classes, and thus completely resolves a conjecture of Bizo\'{n}, Chmaj and Maliborski, who observed this asymptotic behavior numerically.   The proof relies crucially on exterior energy estimates for the free radial wave equation in dimension $d = 2 \ell +3$, which are established  in the companion paper~\cite{KLLS}. 

\end{abstract}

\thanks{Support of the National Science Foundation DMS-1265249 for the first author, and   DMS-1160817 for the fourth author is gratefully acknowledged. The second author is supported by an NSF postdoctoral fellowship.}

 \maketitle
 
\section{Introduction}
In this paper we give a complete description of  the asymptotic dynamics for the $\ell$-equivariant
wave map equation
\ant{
U: \mathbb{R}^{1+3}_{t,x}\backslash{(\mathbb{R}\times B(0,1))}\rightarrow \Sp^3, 
}
with a Dirichlet condition on the boundary of the unit ball $B(0,1) \subset \mathbb{R}^3$ and initial data of finite energy. To be precise, consider the Lagrangian
\ant{
\mathcal{L}(U, \partial_t U)=\int_{\mathbb{R}^{1+3}_{t,x}\backslash{(\mathbb{R}\times B(0,1))}}\frac12  \left(-|\partial_t U|_g^2 +\sum_{j=1}^3|\partial_x U|^2_g \right)dtdx,
}
where $g$ is the round metric on $\Sp^3$, and where we only consider functions for which the boundary of the unit cylinder $\R \times B(0, 1)$ gets mapped to a fixed point on the $3$-sphere,  i.e,  $U(t, \partial B(0,1))=N$, where $N \in \Sp^3$ is say, the north pole.
%
Under the usual $\ell$-equivariant assumption, for $\ell\in \mathbb{N}$, 
the Euler-Lagrange equation associated with this Lagrangian reduces to an equation for the azimuth angle $\psi$ measured from the north pole on $\Sp^3$, namely 
\ant{
\psi_{tt} - \psi_{rr} -\frac{2}{r}\psi_r +\frac{\ell(\ell+1)}{2r^2}\sin (2\psi) =0.
}
The Dirichlet boundary condition then becomes $\psi(t, 1) = 0$ for all $t \in \R$ and thus the Cauchy problem under consideration is, 
\EQ{ \label{EE1}
&\psi_{tt} - \psi_{rr} -\frac{2}{r}\psi_r +\frac{\ell(\ell+1)}{2r^2}\sin (2\psi) =0,\quad  r\geq 1,\\
&\psi(t,1)=0, \quad \forall t,\\
&\psi(0,r)=\psi_0(r),  \, \, \psi_t(0,r)=\psi_1(r),
}
and solutions $\vec \psi(t):= (\psi(t), \psi_t(t))$ to~\eqref{EE1} will be referred to as $\ell$-equivariant exterior wave maps. The  conserved energy for~\eqref{EE1} is given by 
\begin{equation*}
\mathcal{E}_{\ell}(\psi,\psi_t)=\int_1^\infty\frac12  \left(\psi_t^2+\psi_r^2 + \frac{\ell(\ell+1)\sin^2 \psi}{r^2}\right)r^2\,dr.
\end{equation*}
A simple analysis of the last term in the integrand above yields topological information on the wave map if we require the energy to be finite. Indeed, any $\vec \psi(t,r)$  with finite energy and continuous dependence on $t\in I=(t_0,t_1)$ must satisfy $\psi(t, \infty)=n\pi, \forall t\in I$, where $n\in \mathbb{Z}$.  Given the fact that $\psi$ measures the azimuth angle from the north pole, and $\psi(t, 1) = 0$ for all $t \in I$, this means that the integer $\abs{n}$ measures the winding number, or \textit{topological degree} of the map. Note that the case $n  \ge 0$ covers the entire range $n\in \Z$ by the symmetry $\psi \mapsto -\psi$. 

In what follows we will refer to $n \ge 0$ as the \emph{degree} of the map, and we will denote by $\E_{\ell, n}$ the connected component of the metric space of all initial data $(\psi_0, \psi_1)$ with finite energy, obeying the boundary condition $\psi_0(1)  = 0$ and of degree $n$, i.e., 
\ant{
\E_{\ell,n}=\left\{(\psi_0,\psi_1) \mid  \E_\ell(\psi_0,\psi_1)<\infty,  \, \, \psi_0(1)=0,  \, \, \lim_{r\rightarrow+\infty}\psi_0(r)=n\pi\right\}.
}

There are several appealing features of this model that make it an ideal setting in which to study soliton resolution. First, by removing the unit ball in $\R^3$ and imposing the Dirichlet boundary condition, we break the scaling symmetry. This removes the super-criticality at $r=0$ of the $3d$ wave maps problem and effectively renders the problem subcritical relative to the energy.  Global well-posedness in the energy space is then an immediate consequence. Second, the removal of the unit ball also gives rise to an infinite family of stationary solutions $(Q_{\ell,n}(r),0)$, indexed by their topological degree $n \in \mathbb{N}$; see Section~\ref{sec:hm}. In particular, the solution  $(Q_{\ell,n}(r),0)$ satisfies 
\ant{
Q_{\ell, n}(1)  = 0, \quad  \lim_{r \to \infty} Q_{\ell, n}(r)  = n \pi.
}
Moreover, $(Q_{\ell,n}(r),0)$ minimizes the energy in $\E_{\ell, n}$ and is the unique stationary solution in this degree class. Both of these features are in stark contrast to the same equation on $\R^{1+3}$ which is super-critical relative to the energy, is known to develop singularities in finite time,  and has no nontrivial finite energy stationary solutions, see for example Shatah~\cite{Shatah}, and Shatah and Struwe~\cite{ShatahStr}.

For a fixed equivariance class $\ell  \in \mathbb{N}$, the  natural topology in which to place a degree $n=0$ solution is the \emph{energy space} $\mathcal{H}_{\ell,0}=\dot{H}^1_0\times L^2 (\R^3_*)$ with  norm
\begin{equation}
\|\vec{\psi}\|_{\Ho}^2:=\int_1^\infty (\psi_t^2+\psi_r^2)\, r^2\,dr, \hspace{1cm}\vec{\psi}=(\psi,\psi_t) \label{H0norm}.
\end{equation}
Here $\R^3_*:=\R^3\backslash B(0,1)$, and $\dot{H}^1_0(\R^3_*)$ is the  completion of  smooth functions on $\R^3_* $ with compact
support under the first norm on the right-hand side of (\ref{H0norm}).  For $n\geq 1$, denote by $\Hn :=\E_{\ell,n}-(Q_{\ell,n}, 0)$ with ``norm"
\begin{equation*}
\|\vec{\psi}\|_{\Hn}:=\|\vec{\psi}-(Q_{\ell,n}, 0)\|_{\Ho}.
\end{equation*}
We remark that  the boundary condition at $  r=\infty$ is  now $\vec{\psi}-(Q_{\ell,n}, 0)\rightarrow 0$ as $r\rightarrow \infty$ with this notation.

The exterior model was first introduced in the physics literature in~\cite{BSSS}, as an easier alternative to the Skyrmion equation. Recently,~\eqref{EE1} was proposed by  Bizon, Chmaj, and Maliborski in~\cite{BCM12} as a model to study the problem of relaxation to the ground states given by various equivariant harmonic maps.   Both \cite{BSSS, BCM12} stress the analogy of the stationary equation with that of the damped pendulum by demonstrating the existence and uniqueness of the ground state harmonic maps via a phase-plane analysis.  The numerical simulations in \cite{BCM12} indicate that for each equivariance class $\ell\geq 1$,  and each topological class $n\geq 0$, every solution scatters to the unique harmonic map $Q_{\ell,n}$ that lies in $\E_{\ell,n}$, giving evidence that the soliton resolution conjecture  holds true in this exterior model.  More recently, this conjecture was verified for $1$-equivariant (or  co-rotational) exterior wave maps with topological degree $n =0$ by the second and fourth author in~\cite{LS13}, and then for all topological degrees $n \ge 0$ by the first, second and fourth authors in~\cite{KLS}.



In this paper, we  verify the \textit{soliton resolution conjecture} for the exterior wave map problem  for all topological degree classes $n \ge 0$ in the remaining equivariant classes, $\ell \ge 2$.   Our main result is as follows. 
\begin{thm} \label{MainThm}For any smooth energy data in $\E_{\ell,n}$ there exists a unique global smooth solution to \textnormal{(\ref{EE1})}, which scatters to the harmonic map ($Q_{\ell,n}, 0$).
\end{thm}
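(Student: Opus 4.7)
The plan is to implement the Kenig--Merle concentration-compactness/rigidity scheme, following the blueprint of \cite{LS13,KLS} for the $\ell=1$ case but upgrading every linear ingredient from dimension $d=5$ to the effective dimension $d = 2\ell+3$ natural to $\ell$-equivariance. The starting point is a change of variables: write $\psi = Q_{\ell,n} + \varphi$ and set $u = \varphi/r^{\ell}$, so that $u$ is naturally viewed as a radial function on $\R^d \setminus B(0,1)$ with $d = 2\ell+3$. The equation for $u$ then becomes a free radial wave equation in dimension $d$ perturbed by a linear potential term $V_{\ell,n}(r) = O(r^{-2})$ determined by $Q_{\ell,n}$ and by a higher-order nonlinearity. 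Global well-posedness and uniqueness of smooth solutions for data in $\E_{\ell,n}$ is essentially automatic: excising the unit ball breaks scaling, the conserved energy $\E_\ell$ is coercive in the shifted energy space $\Hn$, and the problem is thereby effectively energy-subcritical.

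Given GWP, I would carry out a profile decomposition adapted to $\Hn$ in which the only surviving extraction mechanism is time translation (the boundary $r=1$ and the finite-energy constraint fix spatial scale and translation). Assume for contradiction that some $\vec\psi \in \E_{\ell,n}$ fails to scatter to $(Q_{\ell,n},0)$. Combining the profile decomposition with a standard perturbation lemma and the Kenig--Merle extraction procedure, one obtains a nonzero \emph{critical element} $\vec\psi_*$ whose forward (or bilateral) trajectory is precompact in $\Hn$.

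The heart of the argument is the rigidity step, ruling out such a critical element, which I expect to be by far the main obstacle. Here I would invoke the exterior energy estimates from the companion paper \cite{KLLS} for the free radial wave equation in dimension $d = 2\ell+3$. For odd $d$ these take the form
\[
\max_{\pm}\, \|\nabla_{t,x} u(\pm t)\|_{L^2(r \geq R + |t|)}^2 \,\gtrsim\, \big\|(u_0,u_1) - \pi_R (u_0,u_1)\big\|_{\dot H^1 \times L^2(r \geq R)}^2,
\]
where $\pi_R$ is the orthogonal projection onto an explicit finite-dimensional subspace of radial ``bad'' self-similar modes (essentially certain powers of $r$), and whose dimension grows linearly in $\ell$. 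Precompactness of the orbit of $\vec\psi_*$ forces the left-hand side to vanish after time-averaging, for all $R \geq 1$, so the linearized data for $\vec\psi_*$ must lie in the exceptional subspace $\operatorname{range}(\pi_R)$ at every scale $R$. A careful ODE matching argument, exploiting the Dirichlet condition at $r=1$, the structure of $Q_{\ell,n}$, and the quadratic decay of $V_{\ell,n}$, then forces the trapped data to vanish, producing the contradiction $\vec\psi_* = (Q_{\ell,n},0)$.

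The principal difficulty beyond the $\ell=1$ case is that both the dimension of the exceptional subspace $\operatorname{range}(\pi_R)$ and the spectral complexity of $-\Delta_{\R^d} + V_{\ell,n}$ grow with $\ell$, so the essentially one-dimensional rigidity analysis of \cite{KLS} must be replaced by an $\ell$-dependent iterative elimination of bad modes. Matching the explicit radial profiles against admissible $\Hn$ data and peeling them off one at a time using the potential and the boundary condition at $r=1$ is where the bulk of the new technical work lies.
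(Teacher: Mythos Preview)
Your proposal is correct and follows essentially the same approach as the paper: the reduction to a radial semilinear wave equation in $\R^{2\ell+3}_*$ via $u = r^{-\ell}(\psi - Q_{\ell,n})$, the Kenig--Merle extraction of a precompact critical element, and the rigidity step via the exterior energy estimates of \cite{KLLS} together with an inductive elimination of the (roughly $\ell$) projection coefficients onto $P(R)$ are exactly what the paper does. One minor correction: the linearized potential actually decays like $r^{-2\ell-4}$, not merely $O(r^{-2})$, and this stronger decay is what allows it to be treated perturbatively in the modified exterior Cauchy problem used in the rigidity argument.
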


Here ``scattering to the harmonic map $(Q_{\ell, n}, 0)$" means that for each solution $\vec \psi(t)$ to~\eqref{EE1} we can find solutions $\vec \fy_L^{\pm}$ to the linear equation 
\ant{
\varphi_{tt}-\varphi_{rr} -\frac{2}{r}\varphi_{r}+\frac{\ell(\ell+1)}{r^2}\varphi=0,  \quad r\geq 1,  \quad \varphi(t,1)=0.
} 
so that 
\begin{equation*}
\vec \psi(t)=(Q_{\ell,n},0) + \vec \fy_L^{\pm}(t) + o_{\Ho}(1), \mas  t \to  \pm \infty.
\end{equation*}




We emphasize that only the scattering statement in Theorem~\ref{MainThm} is difficult to prove. We employ the concentration compactness/rigidity method developed by the first author and Merle in~\cite{KM06, KM08}.  Given the previous work,~\cite{LS13, KLS}, we can quickly reduce the proof of Theorem~\ref{MainThm} to the rigidity argument, where the goal is to show that any solution to~\eqref{EE1} with a pre-compact trajectory in the energy space must be a harmonic map; see Sections~\ref{sec:pre},~\ref{sec:cc}. In Section~\ref{sec:rig}, the  rigidity argument is carried out using a version of the `channels of energy' argument introduced by Duyckaerts, the first author, and Merle in~\cite{DKM4, DKM5}. Here we adapt the approach in~\cite{KLS} to all higher equivariance  classes. The proof relies crucially on~\emph{exterior energy estimates} for the free radial wave equation in dimension $d = 2 \ell +3$ where $\ell$ is the equivariance class. These estimates were established  in~\cite{DKM1} for dimension $d=3$, in~\cite{KLS} for dimension~$d=5$,  and in the companion paper~\cite{KLLS} for \emph{all odd dimensions}; see Theorem~\ref{Exterior-Estimate}.




\section{Preliminaries}\label{sec:pre}
In this section we briefly review a few basic properties of the harmonic maps $Q_{\ell,n}$, and reduce the $\ell$-equivariant wave map problem to an exterior semi-linear wave equation in $\R_*^{d}:=\R^d\backslash B(0,1)$,   with a Dirichlet boundary condition at $r=1$, and with $d := 2\ell+3$. 

\subsection{Exterior harmonic maps} \label{sec:hm}
In each energy class $\E_{\ell,n}$, there is a unique finite energy exterior harmonic map, which is a minimizer of the energy $\E_{\ell,n}$ and also a static solution 
  to (\ref{EE1}), i.e.
\EQ{
\label{HarmonicMap} &\partial_{rr}Q_{\ell,n} +\frac{2}{r}\partial_rQ_{\ell,n}  = \frac{\ell(\ell+1)}{2r^2}\sin (2Q_{\ell,n})\\
&Q_{\ell,n}(1)=0,  \quad 
\lim_{r\rightarrow \infty}Q_{\ell,n}(r)=n\pi
}
As in~\cite{KLS} we  change variables, setting  $s:=\log r$, and  $\phi(s):=Q_{\ell,n}(r)$. The equation~\eqref{HarmonicMap} becomes 
\begin{equation} \label{ode} \phi_{ss}+\phi_{s}=\frac{\ell(\ell+1)}{2}\sin (2\phi), \hspace{0.5cm}\phi(0)=0, \hspace{0.5cm}\phi(\infty)=n\pi,
\end{equation}
which in mechanics describes the motion of a pendulum with constant friction. Noting that~\eqref{ode} can be written as an autonomous system in the plane, we can perform  a standard analysis of the phase portrait to deduce the following result -- we refer the reader to~\cite[Lemma~$2.1$]{KLS} for the details of the identical argument when $\ell =1$. 
\begin{lemma} \label{lem:Q-asymptotic}For all $\alpha\in \R$, there exists a unique solution $Q_{\ell, \alpha}\in \dot{H}^1(\R^3_*)$ to \textnormal{(\ref{HarmonicMap})}  with
\begin{equation}\label{Q-asymptotic}
Q_{\ell, \alpha}(r)=n\pi-\frac{\alpha}{r^{\ell+1}} + O(r^{-3(\ell+1)})\hspace{1cm}\text{ as }r\rightarrow \infty
\end{equation}
The $O(\cdot)$ is uniquely determined by $\alpha$ and vanishes for
$\alpha=0$. Moreover, there exist a unique $\alpha_0>0$ such that
$Q_{\alpha_0}(1)=0$, we will denote it as $Q_{\ell,n}$.
\end{lemma}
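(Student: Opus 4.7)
The strategy is the standard phase-plane analysis of the damped pendulum equation~\eqref{ode}, which mirrors the argument in~\cite[Lemma~2.1]{KLS} for the case $\ell=1$; only the eigenvalues at the hyperbolic fixed points change, so that argument extends verbatim to all $\ell\ge 1$.

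I would first rewrite~\eqref{ode} as an autonomous planar system for $(\phi,\phi_s)$ and linearize around the equilibrium $(n\pi,0)$. Setting $\psi:=\phi-n\pi$ and using $\sin(2(n\pi+\psi))=2\psi-\tfrac{4}{3}\psi^3+O(\psi^5)$, the linearized equation $\psi_{ss}+\psi_s-\ell(\ell+1)\psi=0$ has characteristic roots $\lambda_+=\ell$ and $\lambda_-=-(\ell+1)$, so $(n\pi,0)$ is a hyperbolic saddle whose one-dimensional stable manifold is tangent to the $\lambda_-$-eigendirection. By the stable-manifold theorem (or directly via a contraction-mapping ansatz $\psi(s)=-\alpha e^{-(\ell+1)s}+\cdots$), for every $\alpha\in\R$ one obtains a unique trajectory $Q_{\ell,\alpha}$ with $Q_{\ell,\alpha}(r)-n\pi=-\alpha r^{-(\ell+1)}+o(r^{-(\ell+1)})$ as $r\to\infty$. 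The trajectory is smooth on $r\ge 1$, and the decay rate $r^{-(\ell+1)}$ makes $|\partial_r Q_{\ell,\alpha}|^2 r^2$ integrable at infinity, placing $Q_{\ell,\alpha}$ in $\dot H^1(\R^3_*)$.

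To sharpen the expansion to $O(r^{-3(\ell+1)})$ I would write $\psi=-\alpha e^{-(\ell+1)s}+\varepsilon(s)$ and observe that the nonlinearity $\tfrac{\ell(\ell+1)}{2}\sin(2\psi)-\ell(\ell+1)\psi=-\tfrac{2\ell(\ell+1)}{3}\psi^3+O(\psi^5)$ is odd and vanishes to cubic order. Thus $\varepsilon$ satisfies an inhomogeneous linear equation with forcing of size $e^{-3(\ell+1)s}$; the resolvent $\mu\mapsto \mu^2+\mu-\ell(\ell+1)$ takes the nonzero value $2(\ell+1)(4\ell+3)$ at $\mu=-3(\ell+1)$, so a particular solution $\beta(\alpha)e^{-3(\ell+1)s}$ together with a decaying homogeneous correction yields the claimed error. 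Clearly $\varepsilon\equiv 0$ when $\alpha=0$, which gives the asserted vanishing of the $O(\cdot)$ term.

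Finally, to produce the unique $\alpha_0>0$ with $Q_{\ell,\alpha_0}(1)=0$ I would run a shooting argument based on the Lyapunov functional $H(\phi,\phi_s):=\tfrac{1}{2}\phi_s^2+\tfrac{\ell(\ell+1)}{4}\cos(2\phi)$, which satisfies $\tfrac{d}{ds}H=-\phi_s^2\le 0$ along solutions. Tracing the trajectory backward in $s$ from $(n\pi,0)$ along the branch of the stable manifold with $\alpha>0$ (on which $\phi<n\pi$ nearby), $H$ strictly decreases, so the trajectory cannot return to the saddle; continuity in $\alpha$ together with the limiting behavior as $\alpha\to 0^+$ (where $Q_{\ell,\alpha}(1)\to n\pi$) and as $\alpha\to\infty$ (where the trajectory crosses $\phi=0$ in finite backward time) then produces a unique $\alpha_0>0$. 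The main obstacle is the monotonicity/uniqueness step in this shooting argument, but the Lyapunov decrease combined with the fact that the stable manifold is a smooth, non-self-intersecting curve in the phase plane makes it routine, and the argument of~\cite[Lemma~2.1]{KLS} applies with only the eigenvalues replaced by $\ell$ and $-(\ell+1)$.
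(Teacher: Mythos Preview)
Your proposal is correct and follows exactly the approach the paper indicates: the paper does not give a self-contained proof but simply changes variables to the damped pendulum~\eqref{ode}, invokes a phase-portrait analysis, and refers to \cite[Lemma~2.1]{KLS} for the identical argument with $\ell=1$; your sketch fills in precisely those details (eigenvalues $\ell,-(\ell+1)$ at the saddle, stable-manifold parametrization by $\alpha$, cubic-order forcing giving the $O(r^{-3(\ell+1)})$ remainder, and the shooting/Lyapunov argument). One small slip: along backward time the functional $H$ \emph{increases} (since $\tfrac{d}{ds}H=-\phi_s^2$), not decreases, but this only strengthens the conclusion that the trajectory cannot return to the saddle, so the argument goes through unchanged.
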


\subsection{Reduction to an exterior wave equation in high dimensions}\label{Reduction} At this point we fix an arbitrary equivariance class $\ell  \ge 2$ and topological degree $n  \ge 0$ for the remainder of the paper. We reduce~\eqref{EE1} to a semi-linear equation in ~$\R^{2 \ell +3}_*$. To perform this reduction, we first linearize~\eqref{EE1} about the unique $\ell$-harmonic map of degree $n$, $Q_{\ell, n}$. As we have fixed $\ell$ and $n$, we will simplify notation by writing $Q= Q_{\ell, n}$ and we note that when $n =0$ we have $Q \equiv 0$.


For each solution $\vec \psi$ to~\eqref{EE1} we define $\vec \fy$ by 
 \begin{equation*}
\vec  \psi: =(Q, 0) + \vec \fy .
 \end{equation*}
Using the equations for $\vec \psi$ and for $Q$ we see that $\vec \fy$ solves 
 \EQ{ \label{eq:phi}
& \varphi_{tt}-\varphi_{rr}-\frac{2}{r}\varphi_r + \frac{\ell(\ell+1)\cos(2Q)}{r^2}\varphi = Z(r,\varphi)\\
 &\varphi(t,1)=0,  \quad \varphi(t,\infty)=0 \hspace{1cm}\forall t,\\
 &\vec{\varphi}(0)=(\psi_0-Q, \psi_1),
  }
where here 
\[Z(r,\varphi):=\frac{\ell(\ell+1)}{2r^2}[2\varphi- \sin (2\varphi)]\cos (2Q) +(1-\cos(2\varphi))\sin 2Q
\]
The left-hand-side of~\eqref{eq:phi} has more dispersion than a wave equation in $3d$ due to the strong repulsive potential 
\ant{
 \frac{\ell(\ell+1)\cos(2Q)}{r^2} =  \frac{ \ell(\ell+1)}{r^2}  + O(r^{-2 \ell -4}) \mas r \to \infty
}
where we have used the asymptotic behavior of $Q$ from~\eqref{Q-asymptotic} in the expansion above. Indeed, the coefficient $\ell(\ell+1)$ in front of the $r^{-2}$ term indicates that we have the 
the same dispersion as a $d=2\ell+3$-dimensional wave equation. This is made precise by the following standard reduction.

We define $\vec u$ by setting  $\varphi=r^\ell u$.  Then ${\vec u}$ solves the following equation. 
\EQ{
&u_{tt} - u_{rr} - \frac{2\ell+2}{r}u_r + V(r)u  = \N(r,u), \hspace{1cm} r\geq 1\\
&u(t,1)=0, \hspace{1cm}\forall t\in \R\\
&\vec{u}(0) =(u_0, u_1)
\label{u eq}
}
where
\begin{equation}
\begin{aligned}
V(r) &: = \frac{\ell(\ell+1)(\cos2Q -1)}{r^2}
\\
\N(r,u)& := F(r, u) + G(r, u)\\
F(r,u) &:= \frac{\ell(\ell+1)}{r^{\ell+2}}  \sin^2(r^\ell u)\sin 2Q\\
G(r,u) &:= \frac{\ell(\ell+1)}{2r^{\ell+2}}(2r^\ell u-\sin (2r^\ell u))\cos 2Q
\end{aligned}\label{VFG}
\end{equation}
The potential   $V(r)$ is real-valued, radial, bounded, and smooth. Using the asymptotics of $Q$ in  (\ref{Q-asymptotic}), we see that $V$ has the asymptotics
\begin{equation}\label{Vbound}
V(r) =O(r^{-2\ell-4}), \hspace{1cm} \text{as } r\rightarrow \infty
\end{equation}
For the nonlinearity~$\NN = F+ G$ we have 
\begin{equation}
\begin{aligned}
|F(r,u)| &\leq  C_0 r^{-3} |u|^2\\
|G(r,u)| &\leq C_0 r^{2\ell-2}|u|^3
\end{aligned}\label{FGbound}
\end{equation}
The constant $C_0$ here depends only on $d=2\ell+3$ and $Q$.

We will consider  radial initial data  $$(u_0, u_1)\in\mathcal{H} :=\dot{H}^1_0\times L^2 (\R^d_*),$$ where
 \begin{equation*}
\|(u_0,u_1)\|_{\Hd}^2 :=\int_1^\infty  [(\partial_r u_0(r))^2+u_1(r)^2] \, r^{2\ell+2}\, dr
\end{equation*}
and  $\dot{H}^1_0(\R^d_*)$ is the completion under the first norm on the right-hand side above
of all smooth radial compactly supported functions on  $\R^d_*$, with $d = 2 \ell + 3$.

For the remainder of the paper we will work exclusively in the ``$u$-formulation", \eqref{u eq}, rather than with the $\ell$-equivariant wave map angle $\psi(t, r)$ as in~\eqref{EE1}. 
In fact, we claim that   
the Cauchy problem (\ref{EE1}) with data $(\psi_0, \psi_1)\in \E_{\ell,n}$ is equivalent to the problem (\ref{u eq}) with initial data
\begin{equation*}
\HH \ni 
(u_0, u_1): =\frac{1}{r^\ell}(\psi_0-Q, \psi_1).
\end{equation*}
To see this, it remains to  prove that 
\begin{equation} \label{fy u}
\|\vec{u}\|_{\Hd}\simeq \|\vec{\psi}\|_{\Hn}
\end{equation}
Indeed, if we set $\varphi_0=\psi_0-Q= r^{\ell} u$, then we have
\[\varphi_r=r^\ell u_r + \ell r^{\ell-1}u =r^\ell u_r + \ell \frac{\varphi}{r}\]
Note the following versions of Hardy's inequality
\ant{
&\int_1^\infty \varphi^2(r) \, dr\lesssim \int_1^\infty \varphi_r^2(r) \,  r^2 \,  dr \\
&\int_1^\infty u^2(r)\, r^{2\ell}\, dr\lesssim \int_1^\infty u_r^2(r)\,r^{2\ell+2} \, dr
}
These are  proved first for radial functions $\fy \in C^{\infty}_0( \R^3_*)$, respectively $u \in C^{\infty}_0(\R^{2 \ell +3}_*)$ via integration by parts, using the boundary condition $\varphi(1)=0$ and $u(1) = 0$. 
Hence, from~\eqref{fy u} we obtain  
\[\int_1^\infty \varphi_r^2(r) \, r^2\,dr\simeq \int_1^\infty u_r^2(r) \,  r^{2\ell+2}dr.\]
Therefore for each class $\E_{\ell,n}$, the map
\[(\psi_0,\psi_1)\rightarrow \frac{1}{r^\ell} (\psi_0 -Q(r), \psi_1)\]
is an isomorphism between the spaces $\En$ and $\Hd$ respectively.

We thus prove the analogous version of Theorem~\ref{MainThm} in the ``$u$-formulation."  It is clear from the definition of $\vec u(t)$ that Theorem~\ref{MainThm} is true if and only if every solution $\vec u(t)$ to~\eqref{u eq} scatters as $t \to \pm \infty$. Scattering here means that solutions to~\eqref{u eq} approach  free waves in $ \R \times \R^d_*$ in the space $\HH$. A free wave in this context is a solution to~\eqref{u eq} with $V =  \NN = 0$. 

 We prove the following equivalent reformulation of Theorem~\ref{MainThm}.

\begin{thm}\label{thm:main} For any initial data $\vec u(0) \in \HH$, there exist a unique, global-in-time solution $\vec u(t) \in \HH$  to~\eqref{u eq}. Moreover $\vec u(t)$ scatters to free waves as  $t \to \pm \infty$. 
\end{thm}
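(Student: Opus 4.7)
I will follow the Kenig--Merle concentration--compactness/rigidity scheme for \eqref{u eq}, adapted to the exterior setting in effective dimension $d=2\ell+3$, with the approach of \cite{KLS} serving as a template and the companion paper \cite{KLLS} providing the crucial high-dimensional exterior energy estimates for the free radial wave equation.

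\textbf{Step 1 (Cauchy theory and global existence).} First I would set up the local well-posedness of \eqref{u eq} in $\HH$. The linear part is a free radial wave equation on $\R^d_*$ with Dirichlet condition at $r=1$ and a bounded, smooth, short-range potential $V(r)=O(r^{-2\ell-4})$; this enjoys Strichartz estimates equivalent to those of the free radial wave equation on $\R^d$. Combined with the pointwise bounds $|F(r,u)|\lesssim r^{-3}|u|^2$ and $|G(r,u)|\lesssim r^{2\ell-2}|u|^3$ from \eqref{FGbound}, a standard contraction argument yields local well-posedness and a blow-up criterion. Since the energy of $\vec\psi$ is conserved and the norm equivalence \eqref{fy u} identifies $\|\vec u(t)\|_\HH$ with the conserved quantity $\E_\ell(\vec\psi(t))-\E_\ell(Q,0)$ up to lower order terms, the $\HH$ norm is a priori bounded uniformly in $t$, and hence the solution is global.

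\textbf{Step 2 (Small data scattering).} For $\|\vec u(0)\|_\HH$ small, a Strichartz bootstrap using \eqref{FGbound} and the repulsive nature of the effective equation gives a global solution that scatters to a free wave in $\HH$ as $t\to\pm\infty$. This also yields the standard stability/perturbation theory needed for the next step.

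\textbf{Step 3 (Concentration compactness).} Assume Theorem~\ref{thm:main} fails. Since the problem has no scaling or spatial translation symmetry (the unit ball is removed and the equation is posed radially with a fixed boundary), a profile decomposition in $\HH$ for bounded sequences of linear evolutions produces profiles with only \emph{time translation} parameters. The Kenig--Merle machinery then yields a nonzero \textbf{critical element}: a global solution $\vec u_*(t)\in\HH$ that does not scatter and whose full trajectory $K_*:=\{\vec u_*(t): t\in\R\}$ is pre-compact in $\HH$. In particular, $\vec u_*$ is uniformly localized near $r=1$ in the strong sense that for every $\varepsilon>0$ there is $R(\varepsilon)$ with $\int_{R(\varepsilon)}^\infty(u_{*,r}^2+u_{*,t}^2)\,r^{2\ell+2}\,dr<\varepsilon$ for all $t\in\R$.

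\textbf{Step 4 (Rigidity).} The heart of the proof, and the step I expect to be the main obstacle, is showing that no such critical element can exist, so $\vec u_*\equiv 0$. The strategy is the channels-of-energy method of Duyckaerts--Kenig--Merle, in the version from \cite{KLS}. One tests the linear evolution of $\vec u_*(t)$ against the finite-dimensional obstruction space of initial data that radiate \emph{no} energy into $r\geq |t|+R$ for the free $d$-dimensional radial wave equation; by the companion result \cite{KLLS} (quoted in the excerpt as Theorem~\ref{Exterior-Estimate}), in odd dimension $d=2\ell+3$ this obstruction space is explicit and finite-dimensional of dimension $P=P(\ell)$, and orthogonal projection onto its complement yields a quantitative lower bound on the exterior energy of the linear flow. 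One then applies this estimate to $\vec u_*(t)$: the Duhamel term is controlled by the nonlinearity bounds \eqref{FGbound}, which are subcritical in the effective dimension $d=2\ell+3$ and decay fast in $r$, while the pre-compactness of $K_*$ forces the exterior energy of the linear part to go to zero. Combining these yields successive decay estimates on $\vec u_*$ on expanding exterior cones, which bootstrap (following the inductive scheme of \cite{KLS}) into pointwise decay rates incompatible with the existence of a nonzero $\vec u_*$, giving the contradiction. The delicate point, and where higher equivariance $\ell\geq 2$ differs from the $\ell=1$ case, is the bookkeeping of the $P$-dimensional obstruction modes and the matching of the iterative decay rates to the range of exponents $\ell+1,3(\ell+1),\ldots$ appearing in the expansion \eqref{Q-asymptotic} of the harmonic map; this is precisely where the all-odd-dimensional exterior estimates of \cite{KLLS} are indispensable.
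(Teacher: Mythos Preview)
Your proposal is correct and follows essentially the same Kenig--Merle concentration-compactness/rigidity route as the paper: small data theory via Strichartz, extraction of a critical element with pre-compact trajectory via a time-translation-only profile decomposition, and a rigidity argument driven by the exterior energy estimates of \cite{KLLS} together with an inductive bookkeeping of the finite-dimensional obstruction modes. The only point where your outline is thinner than the paper is the endgame of Step~4: the paper shows the projection coefficients force $r^{d-2}u_0(r)\to\vartheta$ with a precise rate, and then separately rules out $\vartheta\neq 0$ by comparing with a shifted harmonic map $Q_{\alpha_0-\vartheta}$ (which violates the Dirichlet condition at $r=1$), before running a compact-support/contraction argument for $\vartheta=0$; you allude to this with the harmonic-map expansion but do not spell out the dichotomy.
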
 


\section{Small data theory and Concentration compactness} \label{sec:cc}

The proof of Theorem~\ref{thm:main}, and hence  of  the equivalent statement Theorem~\ref{MainThm}, proceeds via the concentration compactness/ rigidity method introduced by the first author and Merle in~\cite{KM06, KM08}. The argument can be divided into three separate steps, namely $(1)$ a small data theory, i.e., a proof of Theorem~\ref{thm:main} for initial data with small enough $\HH$-norm; $(2)$ a concentration compactness argument. If Theorem~\ref{thm:main} fails, then there exists a critical element, which is a minimal non-scattering solution with a pre-compact trajectory in $\HH$. The critical element is nonzero, by step $(1)$.  The main ingredient here is an analogue of the  nonlinear profile decomposition of Bahouri and Gerard,~\cite{BG},  adapted to~\eqref{u eq} along with a nonlinear perturbation theory; and finally $(3)$ a rigidity argument. Here one shows that any solution with a pre-compact trajectory as in step $(2)$ must be identically $ \equiv 0$. This contradicts the existence of the critical element from step $(2)$ and completes the proof. 

In this section we give a very brief outline of steps $(1)$ and $(2)$ above. We omit many details, since nearly identical arguments have been carried out in detail for the case $\ell =1$ in both~\cite{LS13} and~\cite{KLS}. 

\subsection{Small data theory}\label{small-cauchy} In this section we establish the small data theory for~\eqref{u eq}. The main ingredient  here are Strichartz estimates for the linear inhomogeneous wave equation perturbed by the radial potential $V$. Indeed, consider 
\EQ{
&u_{tt} - u_{rr} - \frac{d-1}{r}u_r + V(r)u  = \N, \hspace{1cm} r\geq 1\\
&u(t,1)=0, \, \, \, \forall t, \quad 
\vec{u}(0) =(u_0, u_1)\in \Hd.
\label{Lu eq}
}
The conserved energy for \eqref{Lu eq} with  $\NN=0$ is  
\begin{align*}
\E_L(u, u_t) = \frac{1}{2} \int_1^{\infty} (u_t^2 + u_r^2 + V(r)u^2) \, r^{d-1} \,dr
\end{align*}
As shown in~\cite{LS13, KLS} this energy has an important positive definiteness property, namely, 
\ant{
\E_L(u, u_t) = \frac{1}{2} ( \|u_t\|_2^2 + \ang{ H u \mid u} ),\quad H= -\Delta +V
}
It is shown in~\cite{BCM12, LS13} that $H$ is a nonnegative self-adjoint operator in $L^2(\R^d_*)$ (with a Dirichlet condition
at $r=1$), and moreover, that the threshold energy zero is regular; this means that if $Hf=0$ where $f\in H^2\cap \dot H^1_0$
then $f=0$. It is standard to conclude from this spectral information that for some constants $0<c_1<c_2$, 
\ant{
c_1 \|  f\|_{\dot H^1_0 }^2\le  \ang{ Hf\mid f}\le c_2\|  f\|_{\dot H^1_0 }^2 \quad \forall\;f\in  \dot H^1_0(\R^d_*) 
}
In the sequel we will sometimes write $\| \vec u\|_{\E}^2:=\E_L(\vec u)$, which satisfies 
\EQ{\label{norm comp}
\| \vec u\|_{\E} \simeq \| \vec u\|_{\HH} \quad \forall \vec u\in\HH(\R^d_*)
}

We call a triple $(p,q,\gamma)$ admissible if
\begin{equation*}
p>2, q\geq 2,  \quad 
\frac{1}{p}+\frac{d}{q}=\frac{d}{2}-\gamma, \quad 
\frac{1}{p}\leq \frac{d-1}{2}(\frac12-\frac{1}{q})
\end{equation*}
\begin{thm}[Strichartz estimate]\label{Strichartz} Let $(p,q,\gamma), (r,s,\rho)$ be admissible triples, then any solution ${u}$ to \textnormal{(\ref{Lu eq})} with radial initial data satisfies
\begin{equation*}
\||\nabla|^{-\gamma}\nabla u\|_{L^p_tL^q_x(\R^d_*)}\lesssim \|\vec{u}(0)\|_{\Hd}+\||\nabla|^\rho  \N\|_{L^{r'}_tL^{s'}_x(\R^d_*)}, 
\end{equation*}
where here $r', s'$ are the conjugates of $r, s$.
\end{thm}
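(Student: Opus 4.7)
The plan is to establish the estimate in two stages: first for the free exterior Dirichlet wave equation ($V\equiv 0$), and then to incorporate $V$ as a short--range perturbation. For the free piece, write $-\Delta_D$ for the Dirichlet Laplacian on $\R^d_*$ with $d=2\ell+3$. Restricted to radial functions, $-\Delta_D$ is unitarily equivalent, via the conjugation $u\mapsto r^{\ell+1}u$, to the one-dimensional half-line operator $-\partial_r^2+\ell(\ell+1)/r^2$ on $L^2([1,\infty),dr)$ with Dirichlet condition at $r=1$, since $(d-1)/2=\ell+1$ and $(d-1)(d-3)/4=\ell(\ell+1)$. This operator has purely absolutely continuous spectrum $[0,\infty)$, diagonalized by a generalized Hankel-type distorted Fourier transform built from the Bessel functions of order $\ell+\tfrac12$ that vanish at $r=1$.

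Using this explicit spectral representation one obtains the standard frequency-localized dispersive estimate
\[
\bigl\| e^{\pm it\sqrt{-\Delta_D}}P_k f\bigr\|_{L^\infty(\R^d_*)}
\lesssim 2^{\frac{d-1}{2}k}\bigl(1+2^k|t|\bigr)^{-\frac{d-1}{2}}\|P_k f\|_{L^1(\R^d_*)}
\]
for radial $f$, where $P_k$ denotes Littlewood--Paley projection associated with $-\Delta_D$. (Equivalently one can appeal directly to the Strichartz estimates of Smith--Sogge, Burq, and Metcalfe on non-trapping exterior domains with strictly convex obstacle, specialized to radial data.) Interpolating with conservation of $L^2$ and invoking the Keel--Tao abstract framework then yields the full range of non-endpoint admissible Strichartz bounds for $e^{\pm it\sqrt{-\Delta_D}}$; the restriction $p>2$ in the admissibility condition is precisely what excludes the Keel--Tao endpoint in this geometry. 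The fractional derivative losses $|\nabla|^\gamma$ and $|\nabla|^\rho$ are handled by defining them through the functional calculus of $-\Delta_D$ and verifying that they coincide, on the relevant Sobolev range, with the standard notions, which uses the absence of a zero-energy resonance.

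To transfer these bounds to the perturbed propagator $e^{\pm it\sqrt H}$ with $H=-\Delta+V$, I exploit that $V$ is smooth, bounded and satisfies $V(r)=O(r^{-2\ell-4})$, together with the spectral information already recorded in the excerpt: $H\ge 0$ on $L^2(\R^d_*)$ with zero a regular point of its spectrum, whence $\|\cdot\|_{\E}\simeq\|\cdot\|_{\HH}$. Under these short-range and spectral assumptions, Agmon--Kato--Kuroda / Jensen--Nakamura theory produces wave operators $W_\pm$ intertwining $\sqrt H$ with $\sqrt{-\Delta_D}$ that are bounded and invertible on $\HH$, and the free Strichartz bounds transfer verbatim to the perturbed propagator. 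The inhomogeneous piece then follows by Duhamel and the Christ--Kiselev lemma, which applies because $p>2$. As a more hands-on alternative, one may regard $Vu$ as a source term on the right, apply the free Strichartz estimate, and absorb $Vu$ on the left via a bootstrap using the decay of $V$, Hardy's inequality in the form used in Section~\ref{Reduction}, and local smoothing for $-\Delta_D$ on the exterior non-trapping domain.

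The main obstacle is the free dispersive bound on $\R^d_*$ uniformly in frequency, since the Dirichlet boundary forces one to work with the distorted (Hankel) calculus rather than the flat Fourier transform, and to establish the norm equivalence for the $|\nabla|^{\gamma}$ and $|\nabla|^\rho$ on both sides of the estimate. However, the entire scheme was carried out in \cite{LS13, KLS} in the $\ell=1$ cases $d=3,5$, and the extension to general odd $d=2\ell+3$ amounts essentially to replacing the Bessel order $\tfrac32$ or $\tfrac52$ by $\ell+\tfrac12$ throughout, the remaining perturbative arguments going through unchanged.
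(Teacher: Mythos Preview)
Your treatment of the free case is in line with the paper: it simply cites the exterior Strichartz estimates of Smith--Sogge (the reference~\cite{Sogge1}) for $V=0$, and your explicit Hankel-transform description is a more detailed version of the same input.

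For the passage to $V\ne0$, your primary route via wave operators has a gap as written. Agmon--Kato--Kuroda theory gives existence and completeness of $W_\pm$ as $L^2$ isometries, and the equivalence $\|\cdot\|_\E\simeq\|\cdot\|_\HH$ lets you upgrade this to boundedness on $\HH$; but transferring Strichartz bounds through the intertwining relation $e^{it\sqrt H}P_{\mathrm{ac}}=W_\pm e^{it\sqrt{-\Delta_D}}W_\pm^*$ requires $W_\pm$ to be bounded on the $L^q_x$ (or $\dot W^{\gamma,q}$) spaces appearing on the left of the Strichartz inequality, not merely on $\HH$. That is a Yajima-type $L^p$-boundedness statement which does not follow from the spectral hypotheses you invoke, and on exterior Dirichlet domains in dimensions $d\ge7$ it is not available off the shelf.

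Your ``hands-on alternative'' is much closer to what the paper actually does (following \cite[Proposition~5.1]{LS13}), with one correction worth noting. The key ingredient there is a localized energy estimate for the \emph{perturbed} flow, proved via the distorted Fourier transform associated with $H=-\Delta+V$ itself and relying on decay of its spectral measure near zero energy; this is precisely where the absence of a zero eigenvalue or resonance for $H$ enters. Free local smoothing for $-\Delta_D$ together with Hardy and a bootstrap on $Vu$ would only close if $V$ were small in a suitable scale-invariant norm, which is not the case here since $V$ is a fixed $O(1)$ potential on compact regions of $\R^d_*$. Once the perturbed localized energy estimate is in hand, one writes the $H$-flow in Duhamel form against the free $-\Delta_D$ flow, places $Vu$ in the weighted local-energy space, and closes against the free Strichartz estimates exactly as in the $d=5$ argument of~\cite{LS13}.
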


\begin{remark}The case when potential $V=0$ is proved 
in~\cite{Sogge1}.  The case with $V$ as in~(\ref{VFG}) can be proved by adapting the argument in~\cite[Proposition 5.1]{LS13} to dimension $d = 2 \ell + 3$. As in~\cite{LS13} the proof can be reduced to deducing localized energy estimates for~\eqref{Lu eq}  with $V$ as in~\eqref{VFG}. The local energy estimates are proved using the distorted Fourier transform relative to the self-adjoint Schr\"odinger operator $H_V = -\Delta + V$ on $L^2(\R^d_*)$, and rely crucially on decay properties of the corresponding spectral measure.  It is essential here $ H$ has no negative spectrum and that the edge of the continuous spectrum for $H$ is neither  an eigenvalue nor a resonance; see~\cite[Section 5]{LS13} for more details. 
\end{remark}

A standard consequence of the Strichartz estimates is the following small data scattering theory. For a time interval $I$, we denote by $S(I)$ the space $S(I):= L^{\frac{2(d+1)}{d-2}}_{t, x}( I \times \R^d_*)$ with norm 
\EQ{
\| u \|_{S(I)}:= \| u\|_{L^{\frac{2(d+1)}{d-2}}_{t, x}( I \times \R^d_*)}
}

\begin{thm} \label{thm:gwp}The exterior Cauchy problem for~\eqref{u eq} is globally well-posed in $ \HH:=\dot H^1_{0}\times L^2(\R^d_*)$. Moreover, a solution $\vec u$ scatters as $t\to\pm \infty$ to free waves, i.e., solutions $\vec u_L^{\pm}\in \HH$ of  
\EQ{\label{uL free}
\Box u_L^\pm =0, \; r\ge1, \;  u_L^{\pm}(t, 1)=0, \; \forall t\ge0
}
if and only if $$\|u\|_{S(\R_{\pm})}<\infty,$$ where $\R_+:=[0, \infty)$ and $\R_-:=(- \infty, 0]$. In particular, there exists a constant $\delta_0>0$ small 
so that if $\|\vec u(0)\|_{\HH}<\delta_0$, then $\vec u$ scatters to free waves as $t \to\pm\infty$. 
\end{thm}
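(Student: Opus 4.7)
The plan is to execute the standard Kenig--Merle small-data scattering scheme: Theorem~\ref{Strichartz} supplies a master estimate, a contraction argument yields local existence and small-data scattering, and the conservation of the coercive perturbed energy $\E_L$ together with the equivalence~\eqref{norm comp} globalizes the solution. Writing $S_V(t)$ for the linear propagator of $u_{tt}-\Delta u + V u = 0$ on $\R^d_*$ with Dirichlet data at $r=1$, I would set up the Duhamel representation $\vec u(t) = S_V(t)\vec u(0) + \int_0^t S_V(t-\sigma)(0, \N(u(\sigma)))\,d\sigma$ and apply Theorem~\ref{Strichartz} at the energy-critical admissible pair $\bigl(\tfrac{2(d+1)}{d-2}, \tfrac{2(d+1)}{d-2}, 1\bigr)$ on the left paired with a dual admissible pair on the right. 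This yields the master estimate
\[
\|u\|_{S(I)} + \sup_{t\in I}\|\vec u(t)\|_{\HH} \lesssim \|\vec u(0)\|_{\HH} + \|\N(u)\|_{L^1_t L^2_x(I\times\R^d_*)}.
\]

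The core of the argument is to bound $\N = F+G$ via~\eqref{FGbound}. For the quadratic piece $F$, the fast spatial decay $|F(r,u)|\lesssim r^{-3}|u|^2$ combined with the exterior condition $r \ge 1$ makes the estimate subcritical; a straightforward H\"older argument puts one factor of $u$ in a second admissible Strichartz norm and one in $S(I)$, giving a bound of the form $\|F\|_{L^1_t L^2_x} \lesssim \|u\|_{S(I)}^2(1+\|\vec u\|_{L^\infty_t \HH})$. For the cubic piece $G$, the polynomially growing weight $r^{2\ell-2}$ is controlled by combining one factor of $u$ with the radial Sobolev/Strauss bound $|u(r)|\lesssim r^{-(d-2)/2}\|\vec u\|_{\HH} = r^{-\ell - 1/2}\|\vec u\|_{\HH}$, which converts $r^{2\ell-2}$ into an $L^2(\R^d_*)$-integrable factor, and by putting the remaining two factors in $S(I)$. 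The contraction map on a ball of radius $2C\|\vec u(0)\|_{\HH}$ in $L^\infty_t \HH \cap S(I)$ then produces local well-posedness and, for data of $\HH$-norm $<\delta_0$, a global solution with $\|u\|_{S(\R)}<\infty$.

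For arbitrary data the local theory produces a maximal existence interval $(T_-,T_+)$, and conservation of $\E_L$ together with~\eqref{norm comp} keeps $\|\vec u(t)\|_{\HH}$ uniformly bounded. The standard continuation argument says the only obstruction to $T_+ = \infty$ is $\|u\|_{S((T_+-\varepsilon,T_+))} = \infty$ for every $\varepsilon>0$, which I would rule out by applying the small-data theory to a time-translate of the solution near $T_+$, as the translated initial data have $\HH$-norm controlled by the conserved energy. Finally, scattering to a free wave in the sense of~\eqref{uL free} follows from $\|u\|_{S(\R_\pm)}<\infty$ together with the rapid decay~\eqref{Vbound} of $V$: the Duhamel tail $\|\N(u)\|_{L^1_t L^2_x(t\ge T)}$ vanishes as $T\to\infty$, so $\vec u(t) - S_V(t-T)\vec u(T)\to 0$ in $\HH$, and the fast decay of $V$ allows one to pass from the $V$-perturbed flow to the genuinely free exterior wave flow.

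The main obstacle I foresee is the cubic nonlinearity $G$ with its growing weight $r^{2\ell-2}$, which has no analogue in the $\ell=1$ case of~\cite{LS13,KLS} where the weight is $r^0=1$. The resolution is the radial Sobolev/Strauss estimate, whose exponent $(d-2)/2 = \ell + 1/2$ beats the weight growth $2\ell-2$ by a margin that is uniform in $\ell\ge 1$, so the nonlinear estimate closes in every equivariance class. This is also the step where the higher-dimensional exterior Strichartz theorem of Theorem~\ref{Strichartz} is essential: once it is in hand, the remaining bookkeeping is a direct adaptation of the $d=5$ argument in~\cite{LS13} to dimension $d=2\ell+3$.
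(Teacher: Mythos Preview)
Your treatment of the cubic piece $G$ contains an arithmetic error that breaks the estimate precisely in the regime $\ell \ge 2$ the paper is about. You assert that the Strauss decay exponent $(d-2)/2 = \ell + \tfrac12$ ``beats the weight growth $2\ell-2$ by a margin that is uniform in $\ell \ge 1$,'' but the margin is $(\ell+\tfrac12)-(2\ell-2)=\tfrac52-\ell$, which is \emph{negative} for $\ell\ge 3$. Applying Strauss to a single factor of $u$ leaves the weight $r^{\ell-5/2}$, which still grows once $\ell\ge 3$ and is never in $L^2(\R^d_*)$ for $\ell\ge 1$. Even for $\ell=2$, your proposed H\"older splitting---two factors in $S=L^{2(d+1)/(d-2)}_{t,x}$ with the rest forced into $L^1_tL^2_x$---cannot close: with $d=7$ the residual spatial exponent is $\tfrac12-\tfrac{d-2}{d+1}=-\tfrac18<0$. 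The dual norm $L^1_tL^2_x$ is simply too tight for the cubic term in dimensions $d\ge 7$.

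The paper's fix (Remark~\ref{rem:st} and~\eqref{grupw}) is to apply Strauss to a \emph{fractional}, $d$-dependent number of factors, namely $\tfrac{2(d-4)}{d-2}$ of them, so that
\[
|G(r,u)| \;\lesssim\; r^{d-5}\,|u|^{\frac{2(d-4)}{d-2}}\,|u|^{\frac{d+2}{d-2}}
\;\lesssim\; r^{-1}\,\|u\|_{\dot H^1_0}^{\frac{2(d-4)}{d-2}}\,|u|^{\frac{d+2}{d-2}},
\]
leaving exactly the energy-critical power $|u|^{(d+2)/(d-2)}$ with a decaying weight $r^{-1}$. This residual is then placed not in $L^1_tL^2_x$ but in a general admissible dual Strichartz space, using the high-dimensional critical machinery of~\cite{BCLPZ} and~\cite{KVimrn}; compare the proof of Lemma~\ref{linear-decay-bound}, where $G_R$ is explicitly put in a space $\mathcal{Y}\ne L^1_tL^2_x$. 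The analogous split for $F$ produces $r^{-d/2}|u|^{(d+2)/(d-2)}$. Your scheme ``one factor via Strauss, two in $S$'' is the $d=5$ numerology from~\cite{KLS} and does not survive to $d\ge 7$.

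A secondary point: your globalization (``apply the small-data theory to a time-translate near $T_+$'') does not work as written, since energy conservation bounds $\|\vec u(t)\|_{\HH}$ but does not make it small. What actually globalizes the flow is that the extra weights $r^{-1}$, $r^{-d/2}$ render $\N$ effectively \emph{subcritical}, so the local lifespan depends only on $\|\vec u(t_0)\|_{\HH}$; the uniform energy bound via~\eqref{norm comp} then iterates in the standard way.
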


\begin{remark} \label{rem:st}
Theorem~\ref{thm:gwp} follows from the standard contraction mapping argument based on Theorem~\ref{Strichartz} and we omit the proof here. The reason that we can use a Strichartz norm with the same scaling as the energy space is that the nonlinearity $\NN$ is effectively subcritical with respect to energy for radial functions on  $\R^{d}_*$. 
Indeed, the key point here is the Strauss estimate which gives  
\ant{
 \abs{f(r)}  \le C r^{\frac{2-d}{2}} \| f \|_{\dot{H}^1_0(\R^d_*)}
}
for radial functions $f \in \dot{H}^1_0(\R^d_*)$. To use this to prove Theorem~\ref{thm:gwp} for $\ell = 1$ and $d=5$ see the argument after~\cite[Proposition~$3.2$]{KLS}. For $\ell \ge 2$  we have $d \ge 7$ and one can use the more delicate arguments from~\cite{BCLPZ} and the harmonic analysis machinery on exterior domains from~\cite{KVimrn}; see also~\cite{R14}. As a heuristic we note that using~\eqref{FGbound} with $d = 2 \ell +3$ together with the Strauss estimate applied to the nonlinearity $\NN(r, u)  = F(r, u) + G(r, u)$ one has,  
\EQ{ \label{grupw}
\abs{ G(r, u)} \lesssim r^{d-5}\abs{ u}^{\frac{2(d-4)}{d-2}}  \abs{u}^{\frac{d+2}{d-2}} \lesssim r^{-1} \| u\|_{\dot{H}^1_0( \R^d_*)}^{\frac{2d-8}{d-2}}   \abs{u}^{\frac{d+2}{d-2}} \\
\abs{F(r, u)} \lesssim r^{-3} \abs{u}^{\frac{d-6}{d-2}} \abs{u}^{\frac{d+2}{d-2}} \lesssim r^{-\frac{d}{2}} \|u\|_{\dot{H}^1_0(\R^d_*)}^{\frac{d-6}{d-2}}  \abs{u}^{\frac{d+2}{d-2}}
}
where here, as always, we have $r \ge 1$. Note that $(d+2)/(d-2)$ is the energy critical power in $\R^{1+d}$. 
\end{remark}

\subsection{Concentration compactness} By the  concentration compactness methodology in~\cite{KM06, KM08},
 we claim that if Theorem~\ref{MainThm}, (and hence Theorem~\ref{thm:main}), fails,
 we can construct a \textit{critical element}, which is a global non-scattering solution to~\eqref{u eq} with minimal energy and has a pre-compact trajectory in $\HH$. 
Indeed, following an argument that is identical to~\cite[Proof of Proposition~$3.6$]{KLS} we deduce the following result. See also~\cite{Bu}. 

\begin{prop}\label{critical-element}
Suppose that Theorem~\ref{MainThm} fails.  Then there exists a
nonzero, global  solution $\vec{u}_*(t) \in \HH$ to \textnormal{(\ref{u eq})},  such that the trajectory
\[\mathcal{K}:=\{\vec{u}_*(t)|t\in \R\}\]
is pre-compact in $\Hd=\dot{H}^1\times L^2(\R^{d }_*)$. We call $\vec u_*(t)$ a critical element. 
\end{prop}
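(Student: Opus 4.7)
The plan is to follow the Kenig--Merle concentration compactness scheme, essentially in the form carried out for $\ell=1$ in~\cite{KLS}, but keeping in mind that the effective dimension is now $d=2\ell+3$. First I would define the critical scattering energy
\begin{equation*}
E_C := \inf\bigl\{ E > 0 : \exists\, \vec u(0) \in \HH \text{ with } \|\vec u(0)\|_{\E}^2 \le E \text{ whose solution to \eqref{u eq} does not scatter in both time directions}\bigr\}.
\end{equation*}
By the small data scattering theorem (Theorem~\ref{thm:gwp}) together with~\eqref{norm comp}, the threshold satisfies $E_C \ge c\delta_0^2 > 0$, and by the assumed failure of Theorem~\ref{thm:main} we have $E_C < \infty$. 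I would then pick a minimizing sequence of initial data $\vec u_{0,n} \in \HH$ with $\|\vec u_{0,n}\|_{\E}^2 \to E_C$ whose associated solutions $\vec u_n(t)$ do not scatter (say, forward in time), quantified by $\| u_n\|_{S([0,T_n^+))} = +\infty$ for a suitable sequence of maximal times.

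The next step is a linear profile decomposition à la Bahouri--Gerard adapted to the exterior geometry of $\R^d_*$ and to the perturbed linear evolution $\partial_t^2 - \Delta + V$. Because the Dirichlet boundary at $r=1$ breaks scaling and translation invariance, and because $V$ is a short range, smooth, radial potential that satisfies the spectral assumptions recalled after~\eqref{norm comp} (no negative spectrum, no threshold eigenvalue or resonance), the only non-compactness parameter is a sequence of time translations $\{t_n^j\}_{n}$ for each profile. Passing to a subsequence and using the profile decomposition machinery developed for the free wave equation on exterior domains in~\cite{LS13,KLS} (whose extension from $d=5$ to $d=2\ell+3$ is essentially mechanical given the Strichartz estimates of Theorem~\ref{Strichartz} and the spectral facts for $H$), one writes
\begin{equation*}
\vec u_{0,n} = \sum_{j=1}^{J} \vec V_L^{\,j}(-t_n^j) + \vec w_{0,n}^{\,J},
\end{equation*}
with $\|\vec w_{0,n}^{\,J}\|_{S(\R)} \to 0$ as $J, n \to \infty$ in the appropriate iterated sense, and with Pythagorean decoupling of the linear energy $\sum_j \|\vec V_L^{\,j}\|_{\E}^2 + \|\vec w_{0,n}^{\,J}\|_{\E}^2 = \|\vec u_{0,n}\|_{\E}^2 + o_n(1)$.

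I would then associate to each linear profile $\vec V_L^j$ its nonlinear profile $\vec V^j$ (defined by solving~\eqref{u eq} with initial data matching $\vec V_L^j$ at either $t=0$ or at the limit $\pm\infty$ of $-t_n^j$, using the small data or long-time scattering theory). A standard nonlinear perturbation lemma for~\eqref{u eq}, of the type established in~\cite{KLS}, then says: if every nonlinear profile scatters, then the approximate superposition $\sum_{j=1}^J \vec V^j(\cdot + t_n^j) + \vec w_n^J$ is close in the $S$-norm to the actual solution $\vec u_n$, forcing $\| u_n\|_{S(\R)} < \infty$ for large $n$, a contradiction. Hence at least one nonlinear profile, say $\vec V^1$, must fail to scatter. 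The Pythagorean energy decoupling combined with the minimality of $E_C$ then forces $J=1$, $\vec w_{0,n}^{\,1} \to 0$ in $\HH$, and $\|\vec V^1\|_{\E}^2 = E_C$. Taking $\vec u_* := \vec V^1$ gives a global (by minimality; any finite-time blow-up of $\vec u_*$ would violate small data theory applied to a subsequence of rescaled/translated data) non-scattering solution at exactly the critical level.

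Finally, pre-compactness of $\mathcal K = \{\vec u_*(t) : t \in \R\}$ in $\HH$ follows by the same concentration-compactness argument applied to the sequence $\{\vec u_*(t_n)\}$ for an arbitrary sequence $t_n \in \R$: any profile decomposition of this sequence must reduce to a single profile by minimality, which exactly says that $\vec u_*(t_n)$ has a convergent subsequence in $\HH$. The main obstacle is the construction of the linear profile decomposition and the associated perturbation theory in the exterior setting with potential $V$ for general $d = 2\ell+3$; however, because the Strichartz estimates of Theorem~\ref{Strichartz} hold at the energy-critical scaling for radial data (via the Strauss-type bounds of Remark~\ref{rem:st}) and the spectral hypotheses on $H$ are inherited uniformly from~\cite{BCM12,LS13}, the arguments of~\cite[Proof of Proposition~3.6]{KLS} go through with only dimensional bookkeeping, and the details may be omitted as the authors indicate.
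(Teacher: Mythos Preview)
Your proposal is correct and follows exactly the approach the paper takes: the paper omits the proof entirely, simply referring to the Kenig--Merle concentration compactness argument as carried out in~\cite[Proof of Proposition~3.6]{KLS}, with the Bahouri--G\'erard profile decomposition (Lemma~\ref{lem:BGd}) and nonlinear perturbation theory as the key ingredients, and your sketch is a faithful expansion of precisely that standard scheme. One small remark: globality of $\vec u_*$ is automatic here since Theorem~\ref{thm:gwp} already gives global well-posedness for \emph{all} data in $\HH$, so no argument from minimality is needed for that step.
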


The key ingredients in the proof of Proposition~\ref{critical-element} are a Bahouri-G\'erard profile decomposition and a nonlinear perturbation theory, see~\cite[Lemma~$3.4$ and Lemma~$3.5$]{KLS}. We omit the details and just  formulate the concentration compactness principle relative to the linear
wave equation with a potential, i.e., ~\eqref{Lu eq} with $\NN=0$. 
We note that any solution to \eqref{Lu eq} with $\NN=0$, which
is in $S(\R)$ must scatter to ``free" waves, where ``free" is in the sense of Theorem~\ref{thm:gwp}.

\begin{lemma}\label{lem:BGd}\cite[Lemma $3.4$]{KLS}
Let $\{u_n\}$ be a sequence of free radial waves, which are uniformly bounded in $\HH=\dot H^1_{0}\times L^2(\R^d_*)$. After passing to a subsequence,  
there exists a sequence of solutions $V^j_L$ to~\eqref{Lu eq} with $\NN = 0$, which are bounded in $\HH$, and sequences of times $\{t_n^j\} \subset \R$ such that for  errors $w_{n, L}^k$ defined by
\ant{
  u_n(t) = \sum_{1\le j<k} V^j_L(t-t_n^j) + w_{n, L}^k(t) 
  }
we have for any $j<k$, 
\ant{
\vec w_{n, L}^k(t_n^j) \rightharpoonup 0
}
 weakly in $\HH$ as $n\to\I$,  as well as 
\ant{
 \lim_{n\to \infty} |t_n^j-t_n^k| = \infty    
}
and the errors $w_{n}^{k}$ vanish asymptotically
in the sense that
\ant{
 \pt \lim_{k\to \infty} \limsup_{n\to \infty} \|w_{n, L}^k\|_{(L^\I_tL^p_x\cap S)(\R\times\R^d_*)}=0 \quad \forall \; \frac{2d}{d-2}<p<\infty 
}
Finally, one has orthogonality of the free energy with a potential, cf.~\eqref{norm comp}, 
\ant{
 \| \vec u_n \|_{\E}^2 = \sum_{1\le j<k} \|\vec V^j_L\|_{\E}^2 + \|\vec\ga_n^k\|_{\E}^2 +o_n(1)
} 
as $n\to\infty$. 
\end{lemma}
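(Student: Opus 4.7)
The plan is to mimic the now-standard Bahouri--G\'erard profile extraction argument, adapted to the exterior linear wave equation with potential $V$. The key simplification relative to the original setting of~\cite{BG} is that, in this radial exterior problem, neither spatial dilation nor spatial translation can cause loss of compactness: the Dirichlet boundary at $r=1$ breaks scaling, and radiality eliminates translations. Hence the only profile parameters are time translations $t_n^j$. I would decompose along the propagator $S_V(t)$ associated to~\eqref{Lu eq} with $\NN=0$, and use freely the norm equivalence~\eqref{norm comp}, the Strichartz bounds of Theorem~\ref{Strichartz}, and the dispersive decay of $S_V$ on radial functions on $\R^d_*$, each of which is available because $H = -\Delta + V$ is nonnegative with zero a regular point of its spectrum.

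First I would extract profiles iteratively. Fix any $p$ with $2d/(d-2) < p < \infty$, and set $\varepsilon_1 := \limsup_n \|u_n\|_{(L^\infty_t L^p_x \cap S)(\R \times \R^d_*)}$. If $\varepsilon_1 = 0$ the decomposition is trivial; otherwise an inverse-Strichartz-type argument (which for radial data on $\R^d_*$ only detects time concentrations) yields a subsequence and times $t_n^1$ such that $\vec u_n(-t_n^1) \rightharpoonup \vec V_0^1 \neq 0$ weakly in $\HH$. Define $V_L^1(t) := S_V(t)\vec V_0^1$ and $w_{n,L}^1(t) := u_n(t) - V_L^1(t - t_n^1)$. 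Iterating on $\{w_{n,L}^1\}$ extracts $(V_L^2, t_n^2)$, and so on. The time-orthogonality $|t_n^j - t_n^k|\to\infty$ is forced by construction: if $t_n^k - t_n^{k-1}$ remained bounded along a subsequence converging to some $\tau$, then $\vec w_{n,L}^{k-1}(-t_n^k) = S_V(t_n^{k-1} - t_n^k)\vec w_{n,L}^{k-1}(-t_n^{k-1})$ would tend weakly to $S_V(-\tau)\cdot 0 = 0$, contradicting $\vec V_0^k \neq 0$. The weak-vanishing relations $\vec w_{n,L}^k(t_n^j) \rightharpoonup 0$ for $j<k$ then propagate by induction.

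Next I would establish the energy orthogonality. Evaluating the conserved quantity $\|\vec u_n\|_\E^2$ at $t = t_n^j$, all cross terms of the form $\langle \vec V_L^j(0), \vec V_L^k(t_n^j - t_n^k)\rangle_\E$ vanish as $n\to\infty$ because $\vec V_L^k(s) \rightharpoonup 0$ as $|s|\to\infty$ by the dispersive decay of $S_V$, while cross terms with $\vec w_{n,L}^k(t_n^j)$ vanish by the weak-vanishing relations. The summability $\sum_j \|\vec V_L^j\|_\E^2 \le \limsup_n \|\vec u_n\|_\E^2 < \infty$ then forces $\|\vec V_L^j\|_\E \to 0$, which combined with the greedy choice of the largest dispersive piece at each stage and the Strichartz bound yields the claimed vanishing of $\|w_{n,L}^k\|_S$; the $L^\infty_t L^p_x$ vanishing follows by interpolating against the uniform $\HH$ bound and Sobolev embedding.

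The main obstacle is the inverse-Strichartz extraction in the presence of the potential $V$: classical proofs for free waves on $\R^d$ exploit the exact Fourier structure of the propagator. I would bypass this in two steps. First, I would establish the inverse Strichartz statement for the genuinely free radial propagator on $\R^d_*$ using dispersive estimates in the style of~\cite{KVimrn, R14, BCLPZ}. Second, I would transfer the statement to $S_V(t)$ via the distorted Fourier framework of~\cite[Section~5]{LS13}, which is available precisely because $H$ has no negative spectrum and zero is neither an eigenvalue nor a resonance, so that the spectral measure of $H$ enjoys the same decay bounds as the free one. Once these two inputs are in place, the remainder of the argument is structurally identical to the $\ell=1$ case treated in~\cite[Lemma~3.4]{KLS}, and merely needs to be re-run in dimension $d = 2\ell + 3$.
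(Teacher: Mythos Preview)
Your outline is the standard Bahouri--G\'erard extraction adapted to the radial exterior setting with potential, and it is correct in spirit; note, however, that the paper itself does not give a proof of this lemma at all---it simply states the result and refers to the identical argument carried out for $\ell=1$ in~\cite[Lemma~3.4]{KLS} (and~\cite{LS13}). Your sketch is precisely the approach taken there, so there is nothing to contrast.
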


\section{Exterior energy estimates}\label{Chapter-exterior}
In this section, we recall the exterior energy estimates proved
in~\cite{KLLS} for the free radial wave equation in $\R^{1+d}$ .

\subsection{Exterior energy estimates for free radial waves in all odd dimensions} \label{sec:ext}
 We note that we will use  the notation $[x]$ denotes the largest integer $k \in \Z, k \leq x$.
\begin{thm}\cite[Theorem~$2$]{KLLS} \label{Exterior-Estimate}In any odd dimension $d>0$, every radial energy solution of $\Box u=0, u(0)=f, u_t(0)=g $ in $\R^{1+d}_{t,x}$   satisfies  the following estimate:
For every $R>0$
\begin{equation}{\label{extd} \max_{\pm}\lim_{t\to\pm\infty} \int_{r\geq |t|+R}
|\nabla_{t,x} u(t,r)|^2\, r^{d-1}\, dr \ge \frac12 \|
\pi_{R}^\perp\, (f,g)\|_{\dot H^1\times L^2(r\geq R;r^{d-1}dr)}^2
}\end{equation} Here
 \[P(R):=\textnormal{span}\left\{(r^{2k_1-d},0), (0,r^{2k_2-d}) \mid k_1=1, 2,\cdots [\frac{d+2}{4}]; k_2=1, 2,\cdots [\frac{d}{4}]\right\}\]
and $\pi_{R}^\perp$ denotes the orthogonal projection onto the
complement of the plane $P(R)$ in $(\dot H^1\times
L^2)(r\geq R;r^{d-1}dr)$.

The inequality becomes an equality for data of the form $(0,g)$
and $(f,0)$. Moreover,  the left-hand side of~\eqref{extd}
vanishes exactly for all data in~$P(R)$.
\end{thm}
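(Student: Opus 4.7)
I would prove Theorem~\ref{Exterior-Estimate} by reducing the radial free wave equation in $\mathbb{R}^{1+d}$ with $d = 2m+1$ odd to an essentially one-dimensional problem via the classical descent formulas, then analyzing the kernel of the exterior flux map to identify $P(R)$ and to extract the sharp constant.

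\textbf{Step 1: Descent to one dimension.} In odd dimension $d = 2m+1$ every radial solution $u$ of $\Box u = 0$ can be written as an explicit linear combination of $(r^{-1}\partial_r)^j [r^{-1} v(t,r)]$ for $0 \le j \le m-1$, where $v(t,r)$ solves the $(1+1)$-dimensional wave equation with an odd extension across $r = 0$. After applying d'Alembert, $v(t,r) = a(r-t) + b(r+t)$, and the profiles $(a,b)$ are recovered from the data $(f,g)$ via an explicit Abel/Hankel-type transform that is a linear isomorphism onto a suitable weighted function space. Strong Huygens in odd dimensions ensures that the values of $u$ on $\{r \ge |t|+R\}$ depend only on the traces of $a,b$ on $[R,\infty)$, which in turn are determined by $(f,g)|_{r \ge R}$.

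\textbf{Step 2: Flux computation and identification of $P(R)$.} A direct computation using the descent representation yields
\[
\lim_{t\to+\infty}\int_{r\ge t+R}|\nabla_{t,x}u|^2\,r^{d-1}\,dr \;=\; \|a'\|_{L^2(R,\infty)}^2,
\]
and analogously for $t\to-\infty$ with $b$ in place of $a$. The obstruction space $P(R)$ is then exactly the preimage under the descent of those profiles which are polynomials of degree $\le m-1$ on $[R,\infty)$ --- precisely the data for which $a'$ and $b'$ vanish on $(R,\infty)$. Pulling this back to the level of $(f,g)$ singles out the harmonic monomials $r^{2k_1-d}$ in the first slot and $r^{2k_2-d}$ in the second; the ranges $k_1 \le [(d+2)/4]$ and $k_2 \le [d/4]$ come from demanding square-integrability of the gradient in the first slot ($4k_1 < d+2$) and $L^2$-integrability of the function itself in the second ($4k_2 < d$), both against the weight $r^{d-1}dr$ on $r \ge R$.

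\textbf{Step 3: Sharp lower bound.} For $(f,g) \perp P(R)$ the profiles $(a,b)$ have nontrivial $L^2$ traces on $[R,\infty)$, and an $L^2$-isometry between the descent profile and the projected data gives
\[
\|a'\|_{L^2(R,\infty)}^2 + \|b'\|_{L^2(R,\infty)}^2 \;\ge\; \|\pi_R^\perp(f,g)\|_{\dot H^1\times L^2(r\ge R;\,r^{d-1}dr)}^2.
\]
Hence at least one of the two exterior fluxes (at $t\to\pm\infty$) is bounded below by one half of the right-hand side, which yields the factor $\tfrac12$. For pure position data $(f,0)$ the descent produces the symmetric configuration $a=b$, so both fluxes are equal and the inequality is saturated; similarly $(0,g)$ gives the antisymmetric case $a=-b$ and again equality.

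\textbf{Main obstacle.} The principal difficulty lies in Step~2: explicitly inverting the descent transform and verifying that $P(R)$ is \emph{exactly} the kernel of the flux map, rather than merely being contained in it. This requires a careful combinatorial analysis of the polynomial tails that are compatible with the required Sobolev regularity, and is the source of the floor-function dimension counts $[(d+2)/4]$ and $[d/4]$, which behave differently depending on the residue of $d$ modulo $4$. An inductive scheme on $m$, together with a careful tracking of boundary contributions at $r=R$, should handle the bookkeeping.
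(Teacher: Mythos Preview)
The paper does not itself prove this theorem; it is quoted from the companion paper~[KLLS]. The only hint given here about that proof is the remark that ``the \textit{Cauchy matrix} played an important role,'' together with the extensive Cauchy-matrix machinery that follows (the explicit projection formulas \eqref{gdata-A}--\eqref{di-formula} and the contour-integral identities of Lemma~\ref{contour-integral}, itself cited from~[KLLS]). This indicates that the argument in~[KLLS] goes through the radial Fourier/Hankel side, with the Cauchy matrix governing the Gram structure of $P(R)$ and its orthogonal complement --- a different route from your descent-to-1D approach.

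Your outline is a reasonable alternative, and in low dimensions ($d=3,5$) it is close to how the earlier results were obtained. But the substantive content is hidden in the phrase ``an $L^2$-isometry between the descent profile and the projected data'' in Step~3. The descent operator $(r^{-1}\partial_r)^{m-1}\circ r^{-1}$ is not isometric in any elementary sense; showing that the map $(f,g)|_{r\ge R}\mapsto (a',b')|_{[R,\infty)}$ becomes unitary \emph{exactly} after quotienting by $P(R)$ is the theorem, not a step toward it. In the~[KLLS] framework this is where the Cauchy-matrix inversion does the work; in yours it requires tracking how the $m$ iterated derivatives interact with the exterior energy and verifying that all cross terms and boundary contributions at $r=R+|t|$ either vanish in the limit or reassemble into the Gram form of $P(R)$. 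Your ``main obstacle'' paragraph flags the kernel identification in Step~2, but the sharp-constant isometry is at least as delicate, and the sketch does not address it --- writing $\ge$ in the displayed line of Step~3 while simultaneously asserting equality for $(f,0)$ and $(0,g)$ signals that this point has not been resolved.
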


We remark  that, the \textit{Cauchy matrix}
played an important role in the proof of this theorem.  Let us
review a few facts from linear algebra that will be important in the upcoming sections.

\begin{enumerate}
\item
Let $a_1,\cdots, a_k$ be linearly independent vectors in an  inner product space $(\cV, \ang{ \, , \, })$, which span the subspace $W$, that is,
  \ant{
  W=\spa\{a_1,\cdots a_k\}
  }
  For any vector $u\in \cV$, the orthogonal projection onto $W^{\perp}$ can be written as 
   \ant{
  \Proj_{W^\perp} u =u -(\lambda_1 a_1 +\cdots \lambda_k a_k) 
  }
where the coefficients satisfy
\ant{
\ang{ \Proj_{W^\perp} u, a_j\rangle =\langle   u, a_j} -\sum_i \lambda_i\ang{ a_i, a_j}=0
}
We set 
 \ant{
 U:=\bmat{\langle u, a_i \rangle}_{1\times k}, \quad \La =\bmat{\lambda_i}_{1\times k},\quad A=\bmat{\langle a_i, a_j\rangle}_{k\times k}
 }
so that 
\EQ{ \label{Lambda-formula}
U=\La A
}
Using that $A$ is symmetric, invertible and positive definite, we  compute
\EQ{ \label{proj0}
\|\Proj_{W^\perp} u\|^2 &= \ang{ u, u} -\sum_{i,j=1}^k \lambda_i\lambda_j \langle a_i, a_j\rangle \\
& =\langle u,u \rangle -\Lambda A \Lambda^t =\langle u,u \rangle -  U A^{-1}U^t
}
Let us simplify the notation by setting $A=[a_{ij}]$, $B=A^{-1}=[b_{ij}]$,
\EQ{\label{proj}
\|\Proj_{W^\perp} u\|^2 = \ang{ u, u} -   \sum_{i,j} b_{ij} \ang{ u,a_i } \ang{ u,a_j }
}

\item
Next, we introduce the  \textbf{Cauchy Matrix}~\cite{CauchyMatrix}, which is an $m\times m$ matrix of the form 
\ant{
A=\bmat{\frac{1}{x_i-y_j}}, \quad x_i-y_j\not=0; \, \, 1\le i,j\le m 
}
Its determinant  can be computed to be 
\ant{
\det(A)=\frac{\prod_{i<j} (x_i-x_j)(y_j-y_i)}{\prod_{i,j=1}^m
(x_i-y_j)}
}
from which we conclude that the Cauchy matrix is invertible. 
Using Cramer's rule,  we can obtain an explicit formula for its
inverse matrix 
\EQ{
\label{CauchyInverse}B=[b_{ij}]=A^{-1}\quad b_{ij} =(x_j-y_i) A_j(y_i)B_i(x_j)
}
 where $A_i(x)$ and $B_j(y)$ are the Lagrange polynomials for $(x_i), (y_j)$ respectively, that is, 
 \EQ{
 \label{poly} A_i(x) &=\frac{\prod_{\ell\not =i} (x-x_\ell)}{\prod_{\ell\not = i}(x_i-x_\ell)} =\prod_{1\leq \ell\leq  m, \ell\not = i} \frac{x-x_\ell}{x_i-x_\ell}\\
 \displaystyle B_j(y) &= \prod_{1\leq \ell\leq  m, \ell\not = j} \frac{y-y_\ell }{y_j-y_\ell }
 }
 \item
 Now we compute the explicit formula for the projection in Theorem~\ref{Exterior-Estimate}.
 If we set $\cV=L^2(r\ge R, r^{d-1}\,dr)$ 
with $a_i=r^{2i-d}$  
\EQ{\label{g-w}
W=\spa\Big\{r^{2i-d}\mid i=1,\cdots k=[\frac{d}{4}]; r \geq R\Big\}
}
  \EQ{ \label{gdata-A}
  a_{ij}(R)=\langle r^{2i-d}, r^{2j-d}\rangle_\cV =\int_R^\infty r^{2i-d +2j-d}r^{d-1}\,dr =\frac{R^{2i+2j-d}}{d-2i-2j}
  }
and we have a Cauchy matrix when $R=1$
\[A(1)=\Big[\frac{1}{d-2i-2j}\Big]_{k\times k}\]
Let $x_i=d-2i, y_j =2j$. Then using \eqref{CauchyInverse} and  \eqref{poly}, we deduce that 
\begin{equation}\label{Bformula}\begin{aligned}
b_{ij}(1) = & (d-2i-2j)\frac{\prod_{1\le \ell\le  k, \ell\not =j} (2i+2\ell-d)}{\prod_{1\leq \ell\le  k, \ell\not =j} (2\ell-2j)}\frac{\prod_{1\le \ell\le  k, \ell\not =i} (2j+2\ell-d)}{\prod_{1\le \ell\le  k, \ell\not =i} (2\ell-2i)}\\
=&\frac{1}{d-2i-2j}\frac{\prod_{1\le \ell\le  k } (2i+2\ell-d)}{\prod_{1\leq \ell\le  k, \ell\not =j} (2\ell-2j)}\frac{\prod_{1\le \ell\le  k} (2j+2\ell-d)}{\prod_{1\le \ell\le  k, \ell\not =i} (2\ell-2i)}
\end{aligned}\end{equation}
We thus obtain the inverse $B(R)=A(R)^{-1}$ with 
\EQ{ \label{gdata-B}
b_{ij}(R)=b_{ij}(1)R^{d-2i-2j}.
} 
Moreover,  we have established   the projection formula
\ant{
& \|\mathrm{Proj}_{W^{\perp}}g\|^2_{L^2(r\geq R, r^{d-1}\,dr)}\\
=&\int_R^\infty g^2(r) r^{d-1}\,dr -\sum_{i,j=1}^k\frac{R^{d-2i-2j}}{d-2i-2j}c_ic_j\int_R^\infty g(r)r^{2i-1}\,dr\int_R^\infty \overline{ g(r)}r^{2j-1}\,dr 
}
with 
\EQ{\label{ci-formula}
c_j=\frac{\prod_{1\leq \ell\leq  k } (d-2j-2\ell)}{\prod_{1\leq \ell\leq  k, \ell\not =j} (2\ell-2j)}, \quad 1\leq j\leq k=[\frac{d}{4}]
}

\vspace{\baselineskip}

\noindent Next,  let $\ti{V}=\dot{H}^1(r\geq R, r^{d-1}\,dr)$.  With   $\tilde{a}_i=r^{2i-d}$ one has 
\EQ{\label{tdw} 
\ti{W}=\spa\Big\{r^{2i-d}\mid i=1,\cdots \tdk:=[\frac{d+2}{4}]; r\geq R\Big\}
}
An identical computation as before gives  
\EQ{ \label{fdata-A}
\ti{a}_{ij}(R)=(2i-d)(2j-d)\frac{R^{2i+2j-d-2}}{d+2-2i-2j}=(2i-d)(2j-d)R^{2i+2j-d-2} \alpha_{ij}
}
We find the inverse of the Cauchy matrix $ \bmat{\al_{ij}}_{\ti k \times  \ti k}  := [\frac{1}{d+2-2i-2j}]_{\tdk\times \tdk}$, which is
\ant{
\bmat{\frac{1}{d+2-2i-2j}}_{\tdk\times \tdk}^{-1} =\bmat{ \frac{1}{d+2-2i-2j}\frac{\prod_{1\leq \ell \leq \tdk}(d+2-2\ell-2i)\prod_{1\leq \ell \leq \tdk}(d+2-2\ell-2j)}{\prod_{1\leq \ell\leq  \tdk, \ell\not=i}(2\ell-2i)\prod_{1\leq \ell\leq  \tdk, \ell\not=j}(2\ell-2j) }}_{\tdk\times \tdk}
}
This yields the inverse for $\tilde{A}(R)=[\tilde{a}_{ij}(R)]_{\tdk\times \tdk}$, which we denote by $\tilde{B}(R)=[\tilde{b}_{ij}(R)]_{\tdk\times \tdk}$ where 
\begin{multline} \label{fdata-B}
 \tilde{b}_{ij}(R) = \frac{R^{d+2-2i-2j}}{(d-2i)(d-2j)}\frac{1}{d+2-2i-2j} \times \\  \frac{\prod_{1\leq \ell \leq  \tdk}(d+2-2\ell-2i) \prod_{1\leq \ell \leq \tdk}(d+2-2\ell-2j)}{\prod_{1\leq \ell\leq  \tdk, \ell\not=i}(2\ell-2i)\prod_{1\leq \ell\leq  \tdk, \ell\not=j}(2\ell-2j) }  
 \end{multline}
 We thus have the projection formula
\ant{
 &\|\mathrm{Proj}_{\tilde{W}^{\perp}}f\|^2_{\dot{H}^1(r\geq R, r^{d-1}\,dr)}\\
=&\int_R^\infty |f'(r)|^2 r^{d-1}\,dr - \sum_{i,j=1}^{\tdk}\frac{R^{d+2-2i-2j}}{d+2-2i-2j}d_id_j\int_R^\infty f'(r) r^{2i-2}\,dr \int_R^\infty \overline{f'(r)} r^{2j-2}\,dr
}
 with
\EQ{
\label{di-formula}d_j =\frac{ \prod_{1\leq \ell \leq  \tdk}(d+2-2\ell-2j)}{ \prod_{1\leq \ell\leq  \tdk, \ell\not=j}(2\ell-2j) }, \quad 1\leq j\leq \tdk=[\frac{d+2}{4}]
}
\end{enumerate}
\begin{remark} From now on, the spaces $W, \ti{W}$ will be fixed as in \eqref{g-w}, \eqref{tdw} and the formulas for $c_i, d_i$ will be fixed as in~\eqref{ci-formula},~\eqref{di-formula}.  \end{remark}

Now let us  collect some useful facts  concerning the coefficients $c_j, d_j$. 
\begin{lemma}\cite[Lemma $15$]{KLLS}\label{contour-integral} Given the coefficients  $c_j, 1\leq j\leq k=[\frac{d}{4}]$ and $d_j, 1\leq j\leq \tdk=[\frac{d+2}{4}]$ defined as in  {\eqref{ci-formula}, \eqref{di-formula}}, we have the following identities
\EQ{
\label{ci-identity-1}\sum_{j=1}^{k}\frac{c_j}{d-2m-2j}=1, 
\quad  \textrm{ for any } m\in \Z, 1\leq m\leq k 
}
\EQ{\label{ci-identity-2}\sum_{j=1}^{k}\frac{c_j}{2j}+1=\prod_{\ell=1}^k\frac{d-2\ell}{2\ell}
}
Similarly we have 
\EQ{\label{di-identity-1}\sum_{j=1}^{\tdk}\frac{d_j}{d+2-2m-2j}=1, 
\quad  \text{ for any } m\in \Z, 1\leq m\leq \tdk
}
\EQ{\label{di-identity-2}\sum_{j=1}^{\tdk}\frac{d_j}{2j}+1=\prod_{\ell=1}^{\tdk}\frac{d+2-2\ell}{2\ell}
}
\end{lemma}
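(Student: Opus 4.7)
The plan is to recognize each coefficient $c_j$ (resp.\ $d_j$) as, up to a uniform sign, the residue of a rational function at the pole $z = 2j$, and then to apply the residue theorem on the Riemann sphere to collapse the finite sums. The name of the lemma already telegraphs this strategy.

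For \eqref{ci-identity-1}, I would fix $m$ with $1 \le m \le k$ and introduce the auxiliary rational function
\[
F_m(z) := \frac{\prod_{\ell=1}^{k}(d-2\ell-z)}{(d-2m-z)\,\prod_{\ell=1}^{k}(z-2\ell)}.
\]
A direct calculation gives $\operatorname{Res}_{z=2j} F_m = (-1)^{k-1}\,c_j/(d-2m-2j)$ for $1 \le j \le k$; up to the uniform sign $(-1)^{k-1}$ this is exactly the summand in \eqref{ci-identity-1}. The key observation is that the apparent pole at $z = d - 2m$ is actually removable: substituting $z = d-2m$ into the numerator yields $\prod_{\ell=1}^{k}(2m-2\ell)$, which vanishes at $\ell = m$ precisely because $1 \le m \le k$. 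Since $d$ is odd, the points $2\ell$ are disjoint from $d-2m$, so there are no other pole coincidences. Counting degrees shows $F_m(z) = (-1)^{k+1}/z + O(z^{-2})$ at infinity, whence $\operatorname{Res}_{\infty} F_m = (-1)^{k}$. The vanishing of the sum of all residues on the Riemann sphere then produces \eqref{ci-identity-1} after dividing by $(-1)^{k-1}$.

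For \eqref{ci-identity-2}, I would run the same argument on
\[
G(z) := \frac{\prod_{\ell=1}^{k}(d-2\ell-z)}{z\,\prod_{\ell=1}^{k}(z-2\ell)},
\]
whose residues at $z = 2j$ again give $(-1)^{k-1}c_j/(2j)$, with an additional pole at $z = 0$ whose residue is exactly $(-1)^{k}\prod_{\ell=1}^{k}(d-2\ell)/(2\ell)$. Since $G(z) \sim (-1)^{k}/z$ at infinity, the residue sum collapses to \eqref{ci-identity-2}. The identities \eqref{di-identity-1}--\eqref{di-identity-2} are formally the same statements with $d$ replaced by $d+2$ and $k$ by $\tilde{k}$, so they follow at once from the analogous residue computation applied to $\tilde F_m(z) = \prod_{\ell=1}^{\tilde k}(d+2-2\ell-z)/((d+2-2m-z)\prod_{\ell=1}^{\tilde k}(z-2\ell))$ and its $z = 0$ analogue. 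The ranges $1 \le m \le k$ (resp.\ $\tilde k$) guarantee that the denominators $d-2m-2j$ (resp.\ $d+2-2m-2j$) do not vanish, since $2m+2j \le 2k < d$.

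I do not foresee any serious obstacle. The only non-mechanical point is the removability of the pole at $z = d-2m$ (and $z = d+2-2m$), which is forced by the range of $m$; everything else is sign bookkeeping and the check of the behavior at $\infty$. If one prefers to avoid residue calculus, the same identities can be read off from the partial fraction decomposition of $F_m$ and $G$, since the $c_j$ are, up to sign, precisely the partial fraction coefficients of $\prod_{\ell=1}^k(d-2\ell-z)/\prod_{\ell=1}^k(z-2\ell)$.
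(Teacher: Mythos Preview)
Your approach is correct and is essentially the same residue/contour-integration argument the paper invokes (see the companion paper \cite{KLLS} and the identical setup in Remark~\ref{number1}, where the authors take $\alpha(z)=\prod_{m=1}^k(z-2m)$, $\beta(z)=\prod_{m=1}^k(z-(d-2m))$ and integrate $\beta/\alpha$ against simple poles). One tiny slip: your final parenthetical bound ``$2m+2j\le 2k<d$'' should read $2m+2j\le 4k<d$ (or simply appeal to parity, since $d$ is odd while $2m+2j$ is even); this does not affect the argument.
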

Lemma~\ref{contour-integral} can be proved using contour integration. We refer the reader to~\cite[Proof of Lemma~$15$]{KLLS} for the precise details of the argument.  

\subsection{Algebraic identities}
The exterior energy estimates in Theorem~\ref{Exterior-Estimate} will play a crucial role in the proof of Theorem~\ref{thm:main}. 
Here we prove a few algebraic identities to relate the exterior energy of the  projected solution $\pi_R^{\perp} \vec u(t, r)$ with the projection coefficients. The relations proved in this subsection will be essential  ingredients in our adaptation of  the argument from~\cite[Section $5$]{KLS} to all equivariance classes.  We note that in~\cite{KLS}, the $\ell =1$ equivariant case treated there corresponds to $d = 5$ in the previous subsection and the subspace $P(R)$ from Theorem~\ref{Exterior-Estimate} is simply a plane.

Using the notation in Theorem~\ref{Exterior-Estimate}, we define  
  $\lambda_i(t,R), \mu_j(t,R)$ as the  coefficients of the orthogonal projection onto the space $P(R)$  of $\vec{u}(t, r)=(u(t,r),u_t(t,r))$ with respect to  a suitable basis of $P(R)$ as in point $(3)$ of the remarks in the previous subsection. 
   \begin{equation}\label{u-projection}
\pipp\vec{u}(t, r) =\left(u(t, r) - \sum_{i=1}^{\tilde{k} }\lambda_i(t,R) r^{2i-d}, \, \,  u_t(t, r)
-\sum_{j=1}^{k } \mu_j(t,R) r^{2j-d} \right)
\end{equation}
Here 
\EQ{ \label{k def}
k:=\left[\frac{d}{4}\right],  \quad \tilde{k}:=\left[\frac{d+2}{4}\right]
}
 and we will fix this  from now on. 
Using
\eqref{Lambda-formula}, ~\eqref{gdata-B}, and~\eqref{fdata-B}, we have the
explicit formulae
\begin{equation}\label{lambda-explicit}
\lambda_j(t,R) =\sum_{i=1}^{ \tdk} 
\frac{-R^{d+2-2i-2j}}{(d-2j) (d+2-2i-2j)}d_id_j
 \int_R^\infty
u_r(t, r) \,  r^{2i-2} \, dr,  \hspace{0.3cm}\forall 1\leq j\leq \tdk
\end{equation}
and
\begin{equation}\label{mu-explicit}
\mu_j(t,R)=
\sum_{i=1}^{k}\frac{R^{d-2i-2j}}{d-2i-2j}c_ic_j
\int_R^\infty u_t(t, r) \,  r^{2i-1} \, dr, \hspace{0.5cm}\forall 1\leq j\leq k
\end{equation}
with $c_i, d_i$   defined in
(\ref{ci-formula}) and (\ref{di-formula}). 
From \eqref{Lambda-formula}, \eqref{gdata-A}, and \eqref{fdata-A}, we also have the following identities 
\begin{equation}\label{ur-lambda}
\int_R^\infty u_r(t, r) r^{2i-2}dr = -\sum_{j=1}^{\tdk} \frac{R^{2i+2j-d-2} (d-2j)}{d+2-2i-2j}\lambda_j(t,R), \hspace{0.3cm}\forall 1\leq i\leq \tdk\end{equation}
\begin{equation}\label{ut-mu}\int_R^\infty u_t(t, r) r^{2i-1}dr = \sum_{j=1}^{ k} \frac{R^{2i+2j-d}}{d-2i-2j}\mu_j(t,R), \hspace{0.3cm}\forall 1\leq i\leq k\end{equation}
We can rewrite $\lambda_j$ using integration by parts to obtain the following formula, which will be useful in later sections. 
\begin{lemma}\label{lem:ld-explicit} 
\ant{
 \lambda_j(t,R) =\frac{d_j }{d-2j} \left(u(R)R^{d-2j}+\sum_{i=1}^{\tdk-1}\frac{(2i)d_{i+1} R^{d-2i-2j}}{(d-2i-2j)}\int_R^\infty u(t,r)r^{2i-1}dr\right)
}
\end{lemma}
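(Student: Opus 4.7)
The plan is to begin from the explicit formula \eqref{lambda-explicit} and perform a single integration by parts on each integral $\int_R^\infty u_r(t,r)\, r^{2i-2}\, dr$, then use the identity \eqref{di-identity-1} to collapse one of the resulting double sums.

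First I would integrate by parts:
\ant{
\int_R^\infty u_r(t,r)\, r^{2i-2}\, dr = \bigl[u(t,r)\, r^{2i-2}\bigr]_R^\infty - (2i-2)\int_R^\infty u(t,r)\, r^{2i-3}\, dr.
}
The boundary term at $r=\infty$ vanishes for data in the energy space $\HH$ by the Strauss estimate $|u(r)|\lesssim r^{(2-d)/2}\|u\|_{\dot H^1_0}$, which gives $u(r)\, r^{2i-2}\to 0$ as $r\to\infty$ whenever $2i-2 < (d-2)/2$, and this is satisfied for all $1\le i\le \tilde k=[(d+2)/4]$. The boundary term at $r=R$ produces $-u(t,R)R^{2i-2}$; note in particular that when $i=1$ the remaining integral drops out because of the factor $2i-2=0$.

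Substituting this into \eqref{lambda-explicit} splits $\lambda_j$ into two pieces. The coefficient of $u(t,R)$ becomes
\ant{
\sum_{i=1}^{\tilde k} \frac{R^{d+2-2i-2j}\cdot R^{2i-2}}{(d-2j)(d+2-2i-2j)}\, d_i d_j \;=\; \frac{d_j\, R^{d-2j}}{d-2j}\sum_{i=1}^{\tilde k}\frac{d_i}{d+2-2i-2j},
}
and by \eqref{di-identity-1} (with $m=j$, which lies in the admissible range $1\le j\le \tilde k$), the inner sum equals $1$. This yields exactly the first term $\frac{d_j}{d-2j}u(t,R)R^{d-2j}$ in the claimed formula.

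For the remaining contribution, the $i=1$ term vanishes by the remark above, so after the shift $i\mapsto i+1$ the sum runs over $1\le i\le \tilde k-1$, giving
\ant{
\sum_{i=1}^{\tilde k-1}\frac{(2i)\, R^{d-2i-2j}}{(d-2j)(d-2i-2j)}\, d_{i+1} d_j \int_R^\infty u(t,r)\, r^{2i-1}\, dr,
}
which factors as $\frac{d_j}{d-2j}\sum_{i=1}^{\tilde k-1}\frac{(2i)\, d_{i+1}\, R^{d-2i-2j}}{d-2i-2j}\int_R^\infty u(t,r) r^{2i-1}\,dr$. Combining the two pieces yields the stated identity. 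The only genuine step is applying \eqref{di-identity-1}; the rest is bookkeeping with the index shift and the boundary terms, so I don't anticipate a serious obstacle.
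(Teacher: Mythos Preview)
Your proof is correct and follows essentially the same approach as the paper's: integrate by parts in \eqref{lambda-explicit}, apply the identity \eqref{di-identity-1} to collapse the boundary-term sum, and shift the index in the remaining sum. Your explicit justification of the vanishing boundary term at $r=\infty$ via the Strauss estimate is a detail the paper leaves implicit, but otherwise the arguments are identical.
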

 \begin{proof}
 By applying integration by parts to (\ref{lambda-explicit}), we get 
\EQ{ \label{laj} 
\lambda_j(t,R) =&\frac{d_j}{d-2j}\sum_{i=1}^{ \tdk}  
\frac{u(R)R^{d-2j}d_i}{(d+2-2i-2j)} \\
&+ \frac{d_j}{d-2j}\sum_{i=2}^{ \tdk}  \frac{(2i-2)d_iR^{d+2-2i-2j}}{(d+2-2i-2j)}\int_R^\infty u(t,r)r^{2i-3}dr
}
Suming up the first term in the expression above using (\ref{di-identity-1}) yields 
\[u(R)R^{d-2j} \sum_{i=1}^{ \tdk} 
\frac{d_i}{(d+2-2i-2j)} = u(R)R^{d-2j} \]
The lemma follows by relabeling   $i=i'+1$ in the second term on the right-hand-side of~\eqref{laj}.
 \end{proof}

\begin{lemma}\label{ldmu-identity}  For any function $\vec{u}\in \Hd$, 
let $\lambda_j(t,R)$ and $
\mu_j(t,R)$ be the projection coefficients  defined as in \textnormal{(\ref{u-projection})}.  Then  the following inequalities hold:    
\begin{equation}\label{pip-control}
\|\pip \vec{u}\|^2_{\hrr}\simeq \sum_{i=1}^{\tdk}\left(\lambda_i(t,R)
R^{2i-\frac{d+2}{2}}\right)^2 +\sum_{i=1}^k \left(\mu_i(t,R) R^{2i-\frac{d}{2}}\right)^2
\end{equation}
\begin{equation} \label{pipp-control}
  \|\pipp
\vec{u}\|^2_{\hrr}\simeq\int_R^\infty   \left( \sum_{i=1}^{\tdk}  \left(\partial_r\lambda_i(t,r)r^{2i-\frac{d+1}{2}} \right)^2  +
\sum_{i=1}^k\left(  {\partial_r\mu_i(t,r)
r^{2i-\frac{d-1}{2}}} \right)^2 \right) \,dr \end{equation}
Here $X\simeq Y $ means $c_1 Y\leq X \leq c_2 Y$, for some positive constants $c_1, c_2$, which depend only on $d$ -- in particular, the constants are  independent of $t$ and $R$. 
\end{lemma}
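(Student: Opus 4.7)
\textbf{Proof of \eqref{pip-control}.} The plan for \eqref{pip-control} is to expand $\pip\vec u$ in the basis $\{r^{2i-d}\}$ and compute directly
\[
\|\pip\vec u\|_{\hrr}^2 = \sum_{i,j=1}^{\tdk}\tilde a_{ij}(R)\lambda_i\lambda_j + \sum_{i,j=1}^{k}a_{ij}(R)\mu_i\mu_j,
\]
with $a_{ij}(R)$, $\tilde a_{ij}(R)$ as in \eqref{gdata-A}, \eqref{fdata-A}. The rescalings $\tilde\lambda_i := \lambda_i R^{2i-(d+2)/2}$ and $\tilde\mu_j := \mu_j R^{2j-d/2}$ are designed so that every power of $R$ cancels inside each quadratic form, leaving two $R$-independent, symmetric, positive-definite matrices (these are Gram matrices of the linearly independent families $\{r^{2i-d}\}$ in the appropriate Hilbert spaces, of Cauchy type up to conjugation by diagonals). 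Sandwiching between their extremal eigenvalues then yields \eqref{pip-control}.

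\textbf{Proof of \eqref{pipp-control}, step 1.} I would set $\tilde v(r,R) := u(t,r) - \sum_i\lambda_i(t,R) r^{2i-d}$ and $v(r,R) := u_t(t,r) - \sum_j\mu_j(t,R) r^{2j-d}$, so $\pipp\vec u = (\tilde v(\cdot,R), v(\cdot,R))$; by construction $\tilde v(\cdot,R)\perp\tilde W$ in $\dot{H}^1$ and $v(\cdot,R)\perp W$ in $L^2$ on $r\ge R$. Differentiating in $R$ under the integral sign, the interior contributions are multiples of $\partial_R\lambda_i\int_R^\infty\tilde v_r\,r^{2i-2}\,dr$ and $\partial_R\mu_j\int_R^\infty v\,r^{2j-1}\,dr$, and these vanish by the orthogonality relations; only the boundary at $r=R$ survives, giving
\[
\frac{d}{dR}\|\pipp\vec u\|_{\hrr}^2 = -R^{d-1}\bigl(\tilde v_r(R,R)^2 + v(R,R)^2\bigr).
\]
Integrating on $[R,\infty)$ produces $\|\pipp\vec u\|_{\hrr}^2 = \int_R^\infty r^{d-1}(\tilde v_r(r,r)^2 + v(r,r)^2)\,dr$.

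\textbf{Step 2: identifying the residuals with $\partial_R\lambda_j,\partial_R\mu_j$.} Differentiating \eqref{lambda-explicit}, \eqref{mu-explicit} in $R$ and collapsing the resulting double sums by the Lagrange-interpolation identities \eqref{ci-identity-1}, \eqref{di-identity-1} from Lemma \ref{contour-integral}, I expect to find, for \emph{every} admissible index $j$,
\[
\partial_R\mu_j(t,R) = -c_j R^{d-2j-1}\,v(R,R), \qquad \partial_R\lambda_j(t,R) = \frac{d_j R^{d-2j}}{d-2j}\,\tilde v_r(R,R).
\]
The striking feature is that $\partial_R\mu_j/(c_j R^{d-2j-1})$ and $(d-2j)\partial_R\lambda_j/(d_j R^{d-2j})$ are independent of $j$. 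Squaring, multiplying by the appropriate power of $r$, and summing in $j$ then yields
\[
\sum_j\bigl(\partial_r\mu_j\,r^{2j-(d-1)/2}\bigr)^2 = \Bigl(\sum_j c_j^2\Bigr)\,r^{d-1} v(r,r)^2,
\]
and the analogue for $\lambda_j$ with constant $\sum_j d_j^2/(d-2j)^2$. Integrating on $[R,\infty)$ and comparing with the identity from step 1 gives \eqref{pipp-control} with explicit equivalence constants.

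The main obstacle is precisely step 2: the algebraic bookkeeping in the differentiation of \eqref{lambda-explicit}, \eqref{mu-explicit} and in the telescoping provided by \eqref{ci-identity-1}, \eqref{di-identity-1}. The nontrivial $j$-independence of the rescaled derivatives is what lets the sum over $j$ close up, and this relies crucially on the fact that only a single index in each double sum survives after applying the identities from Lemma \ref{contour-integral}.
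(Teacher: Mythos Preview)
Your argument is correct and in fact streamlines the paper's proof of \eqref{pipp-control}. For \eqref{pip-control} the two arguments coincide. For \eqref{pipp-control} both approaches begin by differentiating $\|\pipp\vec u\|^2_{\hrr}$ in $R$; your use of orthogonality to kill the interior terms is a cleaner way to reach the paper's identity \eqref{derivative-identity-0}, which in your notation reads $-\partial_R\|\pipp\vec u\|^2 = R^{d-1}\bigl(\tilde v_r(R,R)^2 + v(R,R)^2\bigr)$. The real divergence is in Step~2. The paper differentiates the \emph{forward} relations \eqref{ur-lambda}, \eqref{ut-mu} to express $u_r(t,R)$, $u_t(t,R)$ via $\lambda_j,\mu_j,\partial_R\lambda_j,\partial_R\mu_j$; this produces the constraint systems \eqref{f-identity}, \eqref{g-identity}, and the paper then needs the separate Lemma~\ref{algebra-fact} (inverting a Cauchy sub-matrix and invoking \eqref{ci-identity-1}--\eqref{ci-identity-2}) to conclude that $(\sum a_j)^2\simeq\sum a_j^2$ on the constraint set. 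You instead differentiate the \emph{inverse} relations \eqref{lambda-explicit}, \eqref{mu-explicit} directly; the identities \eqref{ci-identity-1}, \eqref{di-identity-1} then collapse both the boundary term and (via \eqref{ut-mu}, \eqref{ur-lambda}) the integral term, yielding the explicit proportionalities $\partial_R\mu_j=-c_j R^{d-2j-1}v(R,R)$ and $\partial_R\lambda_j=\tfrac{d_j}{d-2j}R^{d-2j}\tilde v_r(R,R)$. This makes the $j$-independence of the rescaled derivatives manifest and renders Lemma~\ref{algebra-fact} unnecessary: since the vectors $(\partial_R\lambda_j R^{2j-(d+1)/2})_j$ and $(\partial_R\mu_j R^{2j-(d-1)/2})_j$ each lie on a fixed line in $\R^{\tdk}$ (resp.\ $\R^k$), every quadratic form on that line is trivially comparable. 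Your route thus gives \eqref{pipp-control} with the explicit constants $\sum_j d_j^2/(d-2j)^2$ and $\sum_j c_j^2$, which the paper's argument does not extract.
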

\begin{proof}

From \eqref{proj0} as well as  \eqref{gdata-A} and~\eqref{fdata-A} we
see that 
\begin{equation}\begin{aligned}\|\pip \vec{u}\|^2_{\hrr}=
& \sum_{i,j=1}^{\tilde{k}}\lambda_i(t,R)\lambda_j(t,R)\langle r^{2i-d},
r^{2j-d}\rangle_{\dot{H}^1(r\geq R, r^{d-1}dr)} \\
&+\sum_{i,j=1}^k \mu_i(t,R)\mu_j(t,R)\langle r^{2i-d},
r^{2j-d}\rangle_{L^2(r\geq R,
r^{d-1}dr)}\end{aligned}\end{equation} 
Using the notation from point $(3)$ in Section~\ref{sec:ext}, we know that the  Gram matrices $A(R)$ and $\tilde{A}(R)$ are positive definite with 
\begin{equation}\label{AR-matrix}A(R)=\bmat{\langle r^{2i-d}, r^{2j-d}\rangle}_{L^2(r\geq R,
r^{d-1}dr)} = \bmat{\frac{R^{2i+2j-d}}{d-2i-2j}}_{k\times k}
\end{equation}
\begin{equation}\label{TAR-matrix}\tilde{A}(R)=\bmat{\langle r^{2i-d},
r^{2j-d}\rangle}_{\dot{H}^1(r\geq R, r^{d-1}dr)}
 =\bmat{\frac{R^{2i+2j-d-2}(d-2i)(d-2j)}{d+2-2i-2j}}_{\tilde{k}\times \tilde{k}}
\end{equation} 
We have the equality 
  \begin{align*}\|\pip \vec{u}\|^2_{\hrr} =&
 \sum_{i,j=1}^{\tilde{k}}(\lambda_i(t,R)R^{2i-\frac{d+2}{2}}) (\lambda_j(t,R) R^{2j-\frac{d+2}{2}})\frac{ (d-2i)(d-2j)}{d+2-2i-2j} \\
&+\sum_{i,j=1}^k (\mu_i(t,R)R^{2i-\frac{d}{2}})(\mu_j(t,R)R^{2j-\frac{d}{2}}) \frac{1}{d-2i-2j}\end{align*}
which implies~(\ref{pip-control}) since $A(1) = \bmat{\frac{1}{d-2i-2j}}_{k\times k}$ and $ \tilde{A}(1) = \bmat{\frac{(d-2i)(d-2j)}{d+2-2i-2j}}_{\tilde{k}\times \tilde{k}}$ are positive definite.

 To prove (\ref{pipp-control}),  we note that  we have the explicit formula
\begin{equation*}\begin{aligned}\|\pipp \vec{u}\|^2_{\hrr}=& \|\vec{u}\|_{\hrr}^2-\|\pip
\vec{u}\|^2_{\hrr}\\
 =& \int_R^\infty (u_r)^2r^{d-1}dr -
\sum_{i,j=1}^{\tilde{k}}\lambda_i(t,R)\lambda_j(t,R)
\frac{R^{2i+2j-d-2}(d-2i)(d-2j)}{d+2-2i-2j}
 \\
 &+\int_R^\infty (u_t)^2r^{d-1}dr  - \sum_{i,j=1}^k
\mu_i(t,R)\mu_j(t,R)\frac{R^{2i+2j-d}}{d-2i-2j}
\end{aligned}\end{equation*}
Differentiating  the above with respect to $R$ gives 
\EQ{
\label{Channel-derivative}
 \partial_R\|\pipp
\vec{u}\|^2_{\hrr} &=
  -(\partial_Ru(t, R))^2R^{d-1} \\
 & - 2
\sum_{i,j=1}^{\tilde{k}}\partial_R\lambda_i(t,R)\lambda_j(t,R)
\frac{R^{2i+2j-d-2}(d-2i)(d-2j)}{d+2-2i-2j}  \\& +
\sum_{i,j=1}^{\tilde{k}}\lambda_i(t,R)\lambda_j(t,R)  {R^{2i+2j-d-3}
(d-2i)(d-2j)}
 \\
 & - u_t^2(t, R)R^{d-1}  - 2\sum_{i,j=1}^k
\partial_R\mu_i(t,R)\mu_j(t,R)\frac{R^{2i+2j-d}}{d-2i-2j} \\&+ \sum_{i,j=1}^k \mu_i(t,R)\mu_j(t,R)
{R^{2i+2j-d-1}}
}
We seek to replace $\p_R u(R)$ and $\p_t u(R)$ in the above expression with expressions involving $\la_j(t, R)$ and $\mu_j(t, R)$. With this in mind we differentiate  the relation (\ref{ur-lambda}) with respect to $R$, which gives 
\[ R^{2i-2}\partial_R u(t, R)  = \sum_{j=1}^{\tdk}\partial_R\lambda_j(t,R) \frac{R^{2i+2j-d-2} (d-2j)}{d+2-2i-2j} - \sum_{j=1}^{\tdk}\lambda_j(t,R) R^{2i+2j-d-3} (d-2j) \]
Dividing through by $R^{2i -2}$ yields the following expression for $ \p_R u(R)$, 
\begin{equation}\label{f-lambda-formula}\partial_R u(t, R)   =
\sum_{j=1}^{\tdk}\partial_R\lambda_j(t,R) \frac{R^{ 2j-d} (d-2j)}{d+2-2i-2j} -
\sum_{j=1}^{\tdk}\lambda_j(t,R) R^{2j-d-1} (d-2j) 
\end{equation} 
Note that this identity holds for all $1\leq i\leq \tdk$, which means that for each $1 \le i, m \le  \ti k$ we have 
\begin{equation}\label{f-identity}\sum_{j=1}^{\tdk}\partial_R\lambda_j(t,R) \frac{R^{ 2j-d}
(d-2j)}{d+2-2i-2j} =\sum_{j=1}^{\tdk}\partial_R\lambda_j(t,R) \frac{R^{ 2j-d}
(d-2j)}{d+2-2m-2j}\hspace{0.2cm} 
\end{equation}
Similarly we can differentiate (\ref{ut-mu}) to obtain 
\begin{equation}\label{g-mu-formula}-u_t(t, R)   = \sum_{j=1}^{
k}\partial_R\mu_j(t,R) \frac{R^{2j-d+1}}{d-2i-2j} - \sum_{j=1}^{ k}\mu_j(t,R)
R^{2j-d},  \, \, \forall 1\leq i\leq k\end{equation}
By the same logic as above we see that for all $1 \le i, m \le k$  we have the  identity
\begin{equation}\label{g-identity} \sum_{j=1}^{ k}\partial_R\mu_j(t,R)
\frac{R^{2j-d+1}}{d-2i-2j} = \sum_{j=1}^{ k}\partial_R\mu_j(t,R)
\frac{R^{2j-d+1}}{d-2m-2j}\hspace{0.2cm} 
\end{equation}
Now we can plug the formulae (\ref{f-lambda-formula}) and  (\ref{g-mu-formula})   into
(\ref{Channel-derivative}) (note that by (\ref{f-identity}) and (\ref{g-identity}) we are free to choose  $i=1$ in both expressions) to
obtain 
\begin{equation}\begin{aligned}
\label{derivative-identity-0} & \partial_R\|\pipp
\vec{u}\|^2_{\hrr} = \\
 =& - \sum_{i,j=1}^{\tdk}\partial_R\lambda_i(t,R)\partial_R\lambda_j(t,R)
R^{2i-d}R^{2j-d}R^{d-1} \\& - \sum_{i,j=1}^k\frac{\partial_R\mu_i(t,R)\partial_R\mu_j(t,R)
R^{2i+2j-d+1}}{(d-2i-2)(d-2j-2)} \\
 =& - (\sum_{i=1}^{\tdk} \partial_R\lambda_i(t,R)R^{2i-d} )^2 R^{d-1} -
\left(\sum_{i=1}^k \frac{\partial_R\mu_i(t,R)
R^{2i-d+1}}{d-2-2i}\right)^2R^{d-1} \end{aligned}\end{equation} 
Upon further investigation of the relations \eqref{f-identity} and \eqref{g-identity}, and plugging these into the last line above we obtain a system of identities indexed by $m_1\in \{1, \dots, \ti k\}$ and  $m_2\in \{1, \dots,  k\}$.
\begin{equation}\begin{aligned}
\label{derivative-identity} - \partial_R\|\pipp
\vec{u}\|^2_{\hrr} &= 
  \left(\sum_{i=1}^{\tdk} \partial_R\lambda_i(t,R)R^{2i-\frac{d+1}{2}}\frac{d-2i}{d+2-2m_1-2i} \right)^2   \\
 & \quad +
\left(\sum_{i=1}^k \frac{\partial_R\mu_i(t,R)
R^{2i-\frac{d-1}{2}}}{d-2m_2-2i}\right)^2 \end{aligned}\end{equation}
We claim that \eqref{derivative-identity-0} and~\eqref{derivative-identity} together imply that 
\begin{equation} \label{pipp-2}
  -\partial_R\|\pipp
\vec{u}\|^2_{\hrr}\simeq \sum_{i=1}^{\tdk}  \big(\partial_R\lambda_i(t,R)R^{2i-\frac{d+1}{2}} \big)^2  +
\sum_{i=1}^k \big(  {\partial_R\mu_i(t,R)
R^{2i-\frac{d-1}{2}}} \big)^2 \end{equation}
 with constants depending only on dimension $d$. Once we establish~\eqref{pipp-2} we simply integrate from $R$ to $\infty$ to obtain (\ref{pipp-control}). Hence it remains to prove~\eqref{pipp-2}, which, given \eqref{derivative-identity-0} and~\eqref{derivative-identity}, is an immediate consequence,  of the following lemma. 

 \begin{lemma}\label{algebra-fact}  Let $d\geq 7$ be an odd integer. Given numbers $a_i\in\R, 1\leq i\leq \tdk=[\frac{d+2}{4}]$ satisfying 
 \begin{equation}\label{a-identity}\sum_{j=1}^{\tdk}a_j \frac{ 
(d-2j)}{d+2-2i-2j} =\sum_{j=1}^{\tdk}a_j \frac{ (d-2j)}{d+2-2m-2j}\hspace{0.2cm} \forall 1\leq i,m\leq
\tdk\end{equation}
it follows that  
\begin{equation}\label{a-inequality}
\bigg(\sum_{j=1}^{\tdk}a_j   \bigg)^2 \simeq \sum_{j=1}^{\tdk}a_j^2 
\end{equation}
Similarly, if $b_i\in\R, 1\leq i\leq k=[\frac{d}{4}]$ satisfies  
\begin{equation}\label{b-identity} \sum_{j=1}^{ k}b_j
\frac{1}{d-2i-2j} = \sum_{j=1}^{ k}b_j\frac{1}{d-2m-2j}\hspace{0.2cm} \forall 1\leq i, m\leq k
\end{equation}
then we have 
\begin{equation}\label{b-inequality}
\bigg( \sum_{j=1}^{ k}b_j
\frac{1}{d-2-2j}  \bigg)^2  \simeq \sum_{j=1}^{k}b_j^2 \end{equation} with constants depending only on $d$.
 \end{lemma}
 \begin{remark} When $d=3, 5$, (\ref{derivative-identity}) implies (\ref{pipp-control}) because $k,\tdk  \in \{0, 1\}$ 
 \end{remark}
   \begin{proof}
Setting $ i =1$ and letting   $m$ run through $2,\cdots \tdk$ in  (\ref{a-identity}), we obtain  a system of equations 
\begin{equation}\label{a-system-0}\sum_{j =1}^{\tdk}  \frac{a_j}{d+2-2m-2j}=0, \hspace{0.5cm}2\leq m\leq \tdk\end{equation}
We can rewrite the above  as \begin{equation}\label{a-system}
\left( \begin{array}{c}
\frac{1}{d-4}\\
 \frac{1}{d-6}\\
 \ldots\\
 \frac{1}{d-2\tdk}
\end{array} \right)a_1 + M
\left( \begin{array}{c}
a_2 \\
a_3 \\ \ldots\\a_{\tdk}
\end{array} \right)=0\end{equation}
where \[M= \bmat{\frac{1}{d+2-2m-2j}}_{2\leq m, j\leq \tdk}=\bmat{\frac{1}{d'-2m'-2j'}}_{1\leq m', j'\leq \tdk-1}\]
with $d'=d-2, m'=m-1,  j'=j-1$. 
This is precisely  the matrix (\ref{gdata-A}) with $R=1$ for  dimension $d'=d-2$ and  $k'=[\frac{d'}{4}]=[\frac{d+2}{4}]-1=\tdk-1$.

Since $M $ is positive definite and invertible, it follows that 
\EQ{\label{a1aj}
 a_1^2 \simeq  \sum_1^{\tdk} a_j^2
 }
To prove (\ref{a-inequality}), it will then suffice  to show that $(\sum_1^{\tdk}a_j)^2$ is a nonzero constant multiple of $a_1^2$.
Indeed, from    (\ref{gdata-A}) and (\ref{gdata-B}), the inverse of $M$ is given by  
\[M^{-1}=  \bmat{ \frac{1}{d'-2i-2j}
\frac{\prod_{1\leq l\leq k' } (d'-2j-2l)}{\prod_{1\leq l\leq k', l\not =j} (2l-2j)}\frac{\prod_{1\leq l\leq k' } (d'-2i-2l)}{\prod_{1\leq l\leq k', l\not =i} (2l-2i)}}_{1\leq i,j\leq k'} \]
which implies using~\eqref{a-system} that 
\begin{equation*}\begin{aligned}a_{i+1}=&-a_1 \sum_{j=1}^{k'} \frac{1}{(d'-2i)(d'-2i-2j)}\\&\times
\frac{\prod_{1\leq l\leq k' } (d'-2j-2l)}{\prod_{1\leq l\leq k', l\not =j} (2l-2j)}\frac{\prod_{1\leq l\leq k' } (d'-2i-2l)}{\prod_{1\leq l\leq k', l\not =i} (2l-2i)}\end{aligned}\end{equation*}
for $1\leq i\leq k'=\tdk-1$.
So we can check that 
\begin{equation} \label{sum-a}
\sum_1^{\tdk} a_j = a_1 \left(1 - \sum_{i,j=1}^{k'} \frac{c_i'c_j'}{(d'-2i)(d'-2i-2j)}\right)  \end{equation}
where here, from  (\ref{ci-formula}) we have 
 \[c_{i}'=\frac{\prod_{1\leq l\leq k' } (d'-2i-2l)}{\prod_{1\leq l\leq k', l\not =i} (2l-2i)}\]
In (\ref{sum-a}) we first sum in $j$ using (\ref{ci-identity-1}) and then sum in $i$ using (\ref{ci-identity-2}). We are left with 
\[\sum_{j=1}^{\tdk} a_j =a_1 \prod_{l=1}^{k'}\frac{ 2l} {d'-2l}\]
which proves that $(\sum_{j=1}^{\tdk} a_j)^2  = C(d) a_1^2$. In light of~\eqref{a1aj} we have finished the proof of~\eqref{a-inequality}.


We argue similarly to prove~\eqref{b-inequality}. First we write down a system of equations for $b_j$ by setting  $i=1$, and letting $m$ run through $2,\dots, k$ in~\eqref{b-identity}. 
\[\sum_{1}^k \frac{b_j}{(d-2-2j)(d-2m-2j)}=0, \hspace{0.5cm} 2\leq m\leq k\]
 Define $\bar{b}_j=\frac{b_j}{d-2-2j}, \bar{d}=d-2$, and $ \bar{k}=[\frac{\bar{d}+2}{4}]=[\frac{d}{4}]=k$. We then have  
\[\sum_{1}^k \frac{\bar{b}_j}{ (\bar{d}+2-2m-2j)}=0, \hspace{0.5cm} 2\leq m\leq k\]
which is exactly the same as (\ref{a-system-0}). Hence the same proof gives us 
\[(\sum_{j=1}^{k}\bar{b}_j)^2\simeq\sum_{j=1}^{k}\bar{b}^2_j\] Transfering the above formula back to the $b_j$ notation yields 
\[(\sum_{j=1}^{k}\frac{b_j}{d-2-2j})^2\simeq \sum_{j=1}^{k}(\frac{b_j}{d-2-2j})^2\simeq \sum_{j=1}^{k}b_j^2\]
as desired 
\end{proof}
This completes the proof of Lemma~\ref{ldmu-identity}. 
 \end{proof}

 \section{Rigidity Argument} \label{sec:rig} 
 The remainder of this paper will be devoted to showing that the critical element $\vec u_*$ from Proposition~\ref{critical-element} does not exist. In particular, we will prove the following rigidity theorem. 
 
\begin{thm}[Rigidity Theorem]\label{Rigidity} Let $\vec{u}(t)\in
\Hd=\dot{H}^1\times L^2(\R^{d }_*)$ be a  global solution to
\textnormal{(\ref{u eq})} such that the trajectory
\[\mathcal{K}:=\{\vec{u}(t) | t\in \R\}\] is pre-compact in
$\Hd $.  Then $u\equiv 0$.
\end{thm}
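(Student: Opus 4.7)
The plan is to run a ``channels of energy'' rigidity argument in the spirit of Duyckaerts--Kenig--Merle, adapting the strategy of~\cite{KLS} from the co-rotational case ($\ell=1$, $d=5$) to the general equivariance class $\ell\ge 2$, $d=2\ell+3$. The main inputs are the exterior energy inequality of Theorem~\ref{Exterior-Estimate} and the algebraic machinery developed in Lemmas~\ref{contour-integral} and~\ref{ldmu-identity}.

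First, for each fixed time $t_0$, I would compare $\vec u(t)$ to the linear radial wave $u_L$ with data $\vec u(t_0)$. Theorem~\ref{Exterior-Estimate} says that at least half of $\|\pi_R^\perp\vec u(t_0)\|_{\dot H^1\times L^2(r\ge R)}^2$ escapes to infinity inside one of the forward or backward light cones $r\ge |t-t_0|+R$. Using the decay estimate $V(r)=O(r^{-2\ell-4})$ from~\eqref{Vbound}, the pointwise bounds~\eqref{FGbound} on the nonlinearity together with the Strauss estimate on radial $\dot H^1_0(\R^d_*)$, and Theorem~\ref{Strichartz}, a Duhamel perturbation argument (parallel to~\cite[Sec.~5]{KLS}) transfers this lower bound to the full nonlinear exterior energy $\int_{r\ge |t-t_0|+R}|\nabla_{t,x}u|^2\,r^{d-1}dr$, modulo errors that vanish as $R\to\infty$. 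On the other hand, pre-compactness of $\mathcal K$ in $\HH$ yields, for every $\varepsilon>0$, an $R_\varepsilon$ with $\sup_t\|\vec u(t)\|_{\HH(r\ge R_\varepsilon)}<\varepsilon$, so the exterior energy in the expanding cone tends to $0$ as $|t-t_0|\to\infty$. Comparing the two conclusions gives $\pi_R^\perp\vec u(t_0)=0$ for all $R\ge R_0$ and all $t_0\in\R$.

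Second, this vanishing translates via~\eqref{u-projection} into explicit expansions
\[u(t,r)=\sum_{i=1}^{\tdk}\lambda_i(t)\,r^{2i-d},\qquad u_t(t,r)=\sum_{j=1}^{k}\mu_j(t)\,r^{2j-d}\quad\text{on } r\ge R_0,\]
with coefficients independent of $R\ge R_0$. Matching powers of $r$ in~\eqref{u eq}, noting that $Vu$ and $\N(r,u)$ are both $O(r^{1-2d})$ and hence cannot cancel the leading polynomial terms, produces a triangular ODE system $\lambda_i''(t)=2i(2i+2-d)\lambda_{i+1}(t)$ for $1\le i\le \tdk-1$ with $\lambda_{\tdk}''=0$, together with the consistency relations $\mu_j=\lambda_j'$ on the overlap. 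Consequently each $\lambda_i$ is a polynomial in $t$; but pre-compactness of $\mathcal K$ forces uniform boundedness of the integrals in~\eqref{lambda-explicit}--\eqref{mu-explicit}, hence of each $\lambda_i(t)$, so these polynomials must be constants, and the recursion then yields $\lambda_i\equiv 0$ for $i\ge 2$ and $\mu_j\equiv 0$ for every $j$. Thus $\vec u$ is time-independent on $r\ge R_0$ and equal to $(\lambda_1 r^{2-d},0)$ there. Since $u_t$ vanishes on $r\ge R_0$ for every $t$, it solves a linear wave equation with zero Cauchy data on that half-line; finite speed of propagation, the Dirichlet condition at $r=1$, and propagation of the zero set show $u_t\equiv 0$ on $r\ge 1$. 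Returning to the wave-map picture via $\psi=Q+r^\ell u$, the static $\psi$ lies in $\E_{\ell,n}$ and the uniqueness statement in Lemma~\ref{lem:Q-asymptotic} then forces $\psi=Q_{\ell,n}$, i.e., $u\equiv 0$, contradicting Proposition~\ref{critical-element}.

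The principal new difficulty relative to~\cite{KLS} is the higher-dimensional algebraic structure: in dimension $d=2\ell+3$ the space $P(R)$ has dimension $k+\tdk$ with $k,\tdk$ growing in $\ell$, so both the perturbation argument and the exploitation of pre-compactness must handle a full vector of projection coefficients rather than the scalar pair available in $d=5$. The Cauchy-matrix identities in Lemma~\ref{ldmu-identity} and the residue-based identities in Lemma~\ref{contour-integral} are precisely what allow $\|\pi_R^\perp\vec u\|_{\HH(r\ge R)}$ to be expressed as a clean positive-definite sum in $(\lambda_i,\mu_j)$ and thereby keep the exterior energy lower bound from degenerating as $\ell$ grows. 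Controlling the cross-terms produced by $V$ and $\N$ against this multi-parameter projection, uniformly in $R$ and $t$, is the main technical obstacle; everything else follows from DKM-style perturbation together with the compactness hypothesis.
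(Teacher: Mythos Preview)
There is a genuine gap in your first step. The Duhamel perturbation argument does \emph{not} yield $\pi_R^\perp\vec u(t_0)=0$ for all $R\ge R_0$. What it yields is the inequality of Proposition~\ref{Decay-Thm}:
\[
\|\pi_R^\perp\vec u(t_0)\|_{\HH(r\ge R)}\lesssim R^{1-d}\|\pi_R\vec u(t_0)\|+R^{-d/2}\|\pi_R\vec u(t_0)\|^2+R^{-1}\|\pi_R\vec u(t_0)\|^3.
\]
The perturbation error between the nonlinear and linear flows in the cone $\{r\ge R+|t|\}$ is controlled by the \emph{full} truncated data $\|\vec u(t_0)\|_{\HH(r\ge R)}$, which includes the $\pi_R$ component. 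For fixed $R$ this error is nonzero whenever $\pi_R\vec u(t_0)\neq 0$; it only tends to zero as $R\to\infty$, which gives nothing beyond the trivial statement $\|\pi_R^\perp\vec u(t_0)\|_{\HH(r\ge R)}\to 0$. So you cannot conclude that $\vec u(t_0)$ is an exact linear combination of $r^{2i-d}$ on any half-line, and hence your clean ODE system for $R$-independent coefficients $\lambda_i(t)$ never materializes.

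This is precisely the difficulty the paper spends Section~\ref{Section:FA} overcoming. The projection coefficients $\lambda_i(t,R),\mu_j(t,R)$ genuinely depend on $R$, and the content of the argument is an iterative scheme on the difference estimates of Lemma~\ref{lem:all-difference} and Corollary~\ref{cor:all-difference}: one bootstraps from the a priori bound~\eqref{small-delta-control} through successively sharper growth/decay estimates (Lemma~\ref{Cycle1} and Propositions~\ref{weak-stat},~\ref{O-weak-stat}), at each stage proving that a pair $(\lambda_{j},\mu_{j-1})$ has a limit as $R\to\infty$, then using Lemma~\ref{mu-computation} and the equation to show those limits vanish. Only after this induction does one obtain the asymptotics $u_0(r)=\vartheta_1 r^{2-d}+O(r^{3-2d})$ of Proposition~\ref{prop:as}, and the final contradiction (Section~\ref{kill}) then splits according to whether $\vartheta_1=0$ or not. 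Your polynomial-matching shortcut bypasses all of this, but it rests on an exact vanishing that the channels-of-energy argument simply does not deliver. A secondary issue: even granting your expansion, the claim that $u_t\equiv 0$ on $r\ge R_0$ propagates inward to $r\ge 1$ by finite speed is backwards---domains of dependence go the other way.
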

Note that the pre-compactness of the trajectory $\K$  implies that the $\HH$-norm of $\vec u(t)$ decays on the exterior cone $\{r \ge R+ \abs{t}\}$ as $t \to \pm \infty$.  
\begin{cor} \label{Exterior-Decay}Given $\vec u(t)$ as in Theorem~\ref{Rigidity} and any $R\geq 1$,  we have
\begin{equation} \label{extvan}
\|\vec{u}(t)\|_{\Hd(r\geq R+|t|)}\rightarrow 0 \text{ as }
t\rightarrow \pm \infty.
\end{equation}
\end{cor}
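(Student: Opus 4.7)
The plan is a standard tail-control argument that converts the pre-compactness of $\K$ in $\HH$ into uniform exterior vanishing; nothing dynamical enters. First, I would observe that every individual element $\vec v \in \HH$ has vanishing tails, i.e.\ $\|\vec v\|_{\HH(r\geq \rho)} \to 0$ as $\rho \to \infty$. This is immediate from the definition of $\dot H^1_0 \times L^2(\R^d_*)$ as the completion of smooth, radial, compactly supported data: approximate $\vec v$ in $\HH$ by such $\vec v_k$, fix $k$ so that $\|\vec v - \vec v_k\|_{\HH} < \e/2$, and then take $\rho$ beyond the support of $\vec v_k$.

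Next, I would upgrade this to a \emph{uniform} tail estimate along the orbit $\K$. Given $\e > 0$, the pre-compact set $\K$ is totally bounded in $\HH$, so we may cover it by finitely many balls $B_{\HH}(\vec v_j, \e/2)$, $j=1,\dots,N$. By the single-element statement above, we may choose $\rho_\e$ so large that $\|\vec v_j\|_{\HH(r\geq \rho_\e)} < \e/2$ simultaneously for every $j=1,\dots,N$. For each $t \in \R$, select $j(t)$ with $\|\vec u(t) - \vec v_{j(t)}\|_{\HH} < \e/2$; restricting the norm to $\{r\geq\rho_\e\}$ and applying the triangle inequality gives
\[
\sup_{t \in \R}\, \|\vec u(t)\|_{\HH(r \geq \rho_\e)} \;<\; \e.
\]

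Finally, for a fixed $R \geq 1$ and the above $\e$, whenever $|t|$ is large enough that $R + |t| \geq \rho_\e$, monotonicity of the norm with respect to the integration domain yields
\[
\|\vec u(t)\|_{\HH(r \geq R + |t|)} \;\leq\; \|\vec u(t)\|_{\HH(r \geq \rho_\e)} \;<\; \e,
\]
which is exactly \eqref{extvan}. There is no substantive obstacle here: the argument is routine once Theorem~\ref{Rigidity}'s pre-compactness hypothesis is in hand, and the only point that genuinely requires the structure of the space (rather than abstract nonsense about compact sets in metric spaces) is the per-element tail vanishing, which reflects the density of $C_0^\infty(\R^d_*)$ in $\dot H^1_0 \times L^2(\R^d_*)$ via the Hardy inequalities recorded in Section~\ref{Reduction}.
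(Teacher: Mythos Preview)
Your argument is correct and is precisely the standard tail-control argument the paper has in mind; the paper itself does not spell out a proof, treating the corollary as an immediate consequence of pre-compactness (``Note that the pre-compactness of the trajectory $\K$ implies\ldots''), and your three steps simply make that implication explicit.
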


Following the outline of the argument in~\cite[Section $5$]{KLS}, we divide 
 the proof of Theorem~\ref{Rigidity}  into several steps. The main idea is to combine Theorem~\ref{Exterior-Estimate} with Corollary~\ref{Exterior-Decay} to establish the precise spacial asymptotic behavior of $\vec u(t)$. In particular we will show that $\vec u(0)$ has same the spacial decay as $\frac{1}{r}(Q(r) - n\pi)$ where $Q$ is a solution to the elliptic equation~\eqref{HarmonicMap} as in Lemma~\ref{lem:Q-asymptotic} -- note that this is better decay than what is expected for generic energy data. Indeed, we prove that 
\EQ{
\label{initial-a} u_0(r) &=  \vartheta r^{2-d}+O(r^{3-2d}) \text{ as } r\rightarrow \infty
\\
\int_r^\infty u_1(s)s^{2i-1} ds&=O(r^{2i+2-2d}) \text{ as } r\rightarrow \infty,  \quad  \forall 1\leq i\leq k
}
where $\vartheta$ is some constant, and $k := [\frac{d}{4}] $. 

We then argue by contradiction to show that $\vec u(t) = (0, 0)$ is the only solution with pre-compact trajectory $\K$ as in Theorem~\ref{Rigidity} and data that decay like~\eqref{initial-a}.

  \subsection{Rigidity argument. Step I:  estimating $\pi^{\perp}_R \vec u$ in $\HH(r \ge R)$}

 The goal of this section is to prove the following consequence of Theorem~\ref{Exterior-Estimate} and Corollary~\ref{Exterior-Decay}. 
 
 \begin{prop}~\label{Decay-Thm} Given a  radial global solution $\vec u(t) $ to \textnormal{(\ref{u eq})} with a pre-compact trajectory, 
 there exist  a number $R_0>1$ such that for every $R>R_0$,  we have the following estimate uniformly in time $t\in\R$. 
\begin{equation}\label{decay-estimate}\begin{aligned}\|\pi_R^\perp \vec{u}(t)\|_{\mathcal{H}(r\geq
R)}\lesssim&  R^{1-d}\|\pip \vec{u}(t)\|_{\hrr}\\ &
+R^{-\frac{d}{2}}\|\pip \vec{u}(t)\|_{\hrr}^2 + R^{-1}\|\pip
\vec{u}(t)\|_{\hrr}^3
\end{aligned}\end{equation}  Here $\hrr:=\dot{H}^1\times L^2 (\R^d\backslash B(0,R))$, $\pi_R$  and
$\pi_R^\perp$  are defined as in
Theorem~\ref{Exterior-Estimate}.   
\end{prop}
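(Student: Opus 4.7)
The plan is to combine Theorem~\ref{Exterior-Estimate} and Corollary~\ref{Exterior-Decay} with the Duhamel representation for~\eqref{u eq}, reducing the proposition to an integral bound for the forcing $-Vu+\N(u)$ on the shifted exterior cone, then to estimate that bound using the pointwise decay of $V$, $F$, $G$ from~\eqref{Vbound},~\eqref{FGbound} together with the radial Strauss embedding $|u(r)|\lesssim r^{(2-d)/2}\|u\|_{\dot H^1(r'\geq r)}$ in $\dot H^1_0(\R^d_\ast)$.

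\textbf{Reduction.} By time translation it is enough to work at $t=0$. Let $S(t)$ denote the propagator of $\Box u=0$ on $\R^d_\ast$ with Dirichlet condition; Duhamel's formula reads $S(t)\vec u(0)=\vec u(t)-\int_0^t S(t-s)(0,\,-Vu(s)+\N(u(s)))\,ds$. Applying Theorem~\ref{Exterior-Estimate} to the free wave $t\mapsto S(t)\vec u(0)$ yields the lower bound $\tfrac12\|\pipp\vec u(0)\|_{\HH(r\geq R)}^2$ for $\max_\pm \lim_{t\to\pm\infty}\|S(t)\vec u(0)\|_{\HH(r\geq R+|t|)}^2$. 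The contribution from $\vec u(t)$ on the exterior cone vanishes by Corollary~\ref{Exterior-Decay}, while finite speed of propagation bounds the Duhamel integral on $\{r\geq R+|t|\}$ by $\int_0^{|t|}\|(-Vu+\N)(s)\|_{L^2(r\geq R+|s|;\,r^{d-1}dr)}\,ds$ (the backward cone from $r\geq R+|t|$ at time $t$ meets $\{s\}\times\R^d_\ast$ in $\{r\geq R+|s|\}$). Taking $|t|\to\infty$ in both directions yields
\begin{equation*}
\|\pipp\vec u(0)\|_{\HH(r\geq R)}\lesssim \int_\R \|{-}Vu(s)+\N(u(s))\|_{L^2(r\geq R+|s|;\,r^{d-1}dr)}\,ds.
\end{equation*}

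\textbf{Pointwise estimates and time integration.} Inserting $|V(r)|\lesssim r^{-d-1}$ from~\eqref{Vbound} into the integrand and using the localized Strauss bound in $L^2(r\geq\rho_s;\,r^{d-1}dr)$, with $\rho_s:=R+|s|$, gives $\|Vu(s)\|_{L^2(r\geq\rho_s)}\lesssim \rho_s^{-d}\|u(s)\|_{\dot H^1(r\geq\rho_s)}$. The same scheme applied with $|F(r,u)|\lesssim r^{-3}|u|^2$ and $|G(r,u)|\lesssim r^{d-5}|u|^3$ from~\eqref{FGbound} produces $\|F(u(s))\|_{L^2(r\geq\rho_s)}\lesssim \rho_s^{-(d+2)/2}\|u(s)\|_{\dot H^1(r\geq\rho_s)}^2$ and $\|G(u(s))\|_{L^2(r\geq\rho_s)}\lesssim \rho_s^{-2}\|u(s)\|_{\dot H^1(r\geq\rho_s)}^3$. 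Integrating these bounds against $ds$ on $\R$ produces the prefactors $R^{1-d}$, $R^{-d/2}$ and $R^{-1}$ that appear in~\eqref{decay-estimate}, provided $\sup_s\|u(s)\|_{\dot H^1(r\geq\rho_s)}$ can be controlled by a static quantity depending on $\vec u(0)$ restricted to $\{r\geq R\}$.

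\textbf{Cone-energy control and absorption.} A finite-speed-of-propagation identity for~\eqref{u eq}---differentiating $\|\vec u(s)\|_{\HH(r\geq\rho_s)}^2$ in $s$, noting that the free-wave boundary flux has the correct sign and estimating the source contribution $(-Vu+\N)u_t$ by Cauchy--Schwarz against the bounds above---gives $\sup_s\|\vec u(s)\|_{\HH(r\geq\rho_s)}\lesssim \|\vec u(0)\|_{\HH(r\geq R)}$, and the latter is bounded by $A:=\|\pip\vec u(0)\|_{\HH(r\geq R)}+\|\pipp\vec u(0)\|_{\HH(r\geq R)}$. Substituting back produces an inequality of the shape $\|\pipp\vec u(0)\|\lesssim R^{1-d}A+R^{-d/2}A^2+R^{-1}A^3$. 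Since pre-compactness of $\K$ forces $\|\vec u(t)\|_{\HH(r\geq R)}\to 0$ uniformly in $t$ as $R\to\infty$, $A$ is small for $R\geq R_0$, so the coefficient multiplying $\|\pipp\vec u(0)\|$ on the right (taken from the sum of all three terms) is strictly less than one and can be absorbed into the left-hand side, leaving the claimed inequality~\eqref{decay-estimate}.

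\textbf{Main obstacle.} The delicate step is the cone-energy comparison $\sup_s\|\vec u(s)\|_{\HH(r\geq\rho_s)}\lesssim \|\vec u(0)\|_{\HH(r\geq R)}$: the standard free-wave monotonicity must survive the perturbation by the potential $V$ and by the nonlinearity $\N$, with losses small enough to remain absorbable within the scheme above. Pre-compactness of $\K$, which ensures that exterior cone norms of $\vec u$ are uniformly small for large $R$, is the key input that makes this absorption effective simultaneously in the linear, quadratic, and cubic terms of~\eqref{decay-estimate}.
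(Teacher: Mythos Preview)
Your argument is correct and reaches the same conclusion, but it proceeds along a genuinely different path from the paper's. The paper introduces an auxiliary construction: it truncates the data at $R$ (extending linearly on $[1,R]$) and modifies $V,F,G$ to take their values at $r=|t|+R$ inside the cone, so that the resulting Cauchy problem~\eqref{Modified} has globally small potential and nonlinearity. It then invokes the full Strichartz machinery (Lemma~\ref{linear-decay-bound}) to compare the nonlinear and free flows of this modified problem on all of $\R^d_\ast$, obtaining~\eqref{decay2}, and only afterwards restricts to the exterior cone by finite speed. Your route is more direct and more elementary: you never truncate or modify the equation, you work entirely on the shifted cone $\{r\ge R+|s|\}$, and you replace Strichartz by the energy-level Duhamel bound $\|S(t-s)(0,f)\|_{\HH(r\ge R+|t|)}\le \|f\|_{L^2(r\ge R+|s|)}$ together with the radial Strauss inequality. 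The price you pay is the extra cone-energy monotonicity step to control $\sup_s\|\vec u(s)\|_{\HH(r\ge\rho_s)}$ by $\|\vec u(0)\|_{\HH(r\ge R)}$; in the paper this step is effectively hidden inside the modified small-data problem. What your approach buys is that it avoids the Strichartz estimates on exterior domains altogether (so no appeal to the harmonic-analysis machinery of Theorem~\ref{Strichartz} or to~\cite{BCLPZ, KVimrn} is needed for this particular step). What the paper's approach buys is a cleaner, reusable black box (Lemma~\ref{linear-decay-bound}) that separates the small-data perturbation theory from the channel-of-energy input, and that applies verbatim when the argument is rerun for $\vec u_\vartheta$ in Section~\ref{kill}.
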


 In order to prove Proposition~\ref{Decay-Thm}  we require  a preliminary result concerning the nonlinear evolution of a modified Cauchy problem that  is adapted to capture the dynamics  of our pre-compact  solution $\vec u(t)$ restricted to the exterior cone $\Omega_R:= \{(t, r) \mid r \ge R+ \abs{t}\}$. Since we will only deal with  the evolution  -- and in particular we plan to make use of the vanishing property \eqref{extvan} -- on $\Omega_R$, we can  alter the nonlinearity and the potential term in \eqref{u eq} on the interior cone $\{1 \le r \le R + \abs{t}\}$ without affecting the flow on the exterior cone -- this is a consequence of the finite speed of propagation.  In particular, we can make the potential and the nonlinearity small on the interior of the cone so that for small initial data we can treat the  potential and nonlinearity as small perturbations.  This idea originates in~\cite{DKM4} and is used for the first time in~\cite{DKM5} to prove rigidity theorems.


To be precise we study the modified exterior wave equation
\EQ{
&h_{tt} - h_{rr} - \frac{2\ell+2}{r}h_r   = \ti{\N}_R(t,r,h), \hspace{1cm} r\geq 1\\
&h(t,1)=0, \quad \forall t\in \R\\
&\vec{h}(0)=(h_0, h_1)\label{Modified}
}
where $d  = 2 \ell +3$ and 
where we set 
\begin{equation*}
\begin{aligned}
V_R(r) &: = \left\{\begin{aligned}V(|t|+R),& ~~~~~  r\leq |t|+R\\
V(r), & ~~~~~r\geq |t|+R \end{aligned}\right.\\
F_R(r,h) &:= \left\{\begin{aligned}F(|t|+R, h)
,& ~~~~~  r\leq |t|+R\\
F(r,h)
, &~~~~~ r\geq |t|+R \end{aligned}\right.\\
G_R(r,h)&:=\left\{\begin{aligned}G(|t|+R, h),&~~~~~   r\leq |t|+R\\
G(r,h)
, &~~~~~  r\geq |t|+R \end{aligned}\right.\\
\tilde{\N}_R(t,r, h) &:= -V_R(r)h + F_R(r,h)  +G_R(r,h)
 \end{aligned}
\end{equation*}

Again, we remarks the the point of this modification is that everything coincides with equation (\ref{u eq}) in the exterior region
$\Omega_R=\{(t,r)| r \geq |t|+R\}$. Hence by finite speed of propagation, the solutions to both problems  will agree on
$\Omega_R$.

We note that the asymptotic behavior for $Q$ from 
(\ref{Q-asymptotic}) together with the explicit formulas for $V(r), F(r,h), G(r,h)$ from (\ref{VFG}), yield the estimates 
\begin{equation}\label{VFGR-bound}
\begin{aligned}
|V_R(r)|& \lesssim \left\{\begin{aligned} (|t|+R)^{-2\ell-4},& ~~~~~  \text{ for }r\leq |t|+R\\
r^{-2\ell-4}, & ~~~~~\text{ for }r\geq |t|+R
\end{aligned}\right.\\
|F_R(r,h)| &\lesssim \left\{\begin{aligned} (|t|+R)^{-3}h^2
,& ~~~~~  \text{ for }r\leq |t|+R\\
r^{-3}h^2
, &~~~~~ \text{ for }r\geq |t|+R \end{aligned}\right.\\
|G_R(r,h)|&\lesssim\left\{\begin{aligned}(|t|+R)^{2\ell-2}h^3,&~~~~~ \text{ for }  r\leq |t|+R\\
r^{2\ell-2}h^3
, &~~~~~ \text{ for } r\geq |t|+R \end{aligned}\right.\\
\end{aligned}
\end{equation}

The advantage of the modified equation (\ref{Modified}) is that by
choosing $R$ large, we can make the potential term $V_R(r)$
and nonlinearities $F_R(r, h), G_R(r, h )$ small in space and time and thus treat 
them as small perturbations.
\begin{lemma}\label{linear-decay-bound}
There exists a $R_*>0$ large  and $\delta_*>0$ small enough, such
that for all $R>R_*$ and initial data $\vec h(0) = (h_0, h_1)$  with
\begin{equation*}
\|\vec{h}(0)\|_{\mathcal{H}}\leq \delta_*\end{equation*}
there exists a
unique global solution to the modified  equation \textnormal{(\ref{Modified})}.
In addition, $\vec{h}(t)$ satisfies
\begin{equation*}
\|h\|_{L^{\frac{2(d+1)}{d-2}}_{t, x}( I \times ( \R^d_*))}\lesssim  \|\vec{h}(0)\|_{\mathcal{H}}\lesssim
\delta_*\end{equation*}
 with $d=2 \ell +3$.

Also, if we let $h_L(t):=S_0(t)\vec{h}(0)$ denote solution to free
exterior wave equation with initial data
$\vec{h}(0)$, we have
 \begin{equation}\label{decay2}\sup_{t\in
\mathbb{R}}\|\vec{h}-\vec{h}_L\|_{\mathcal{H}}\lesssim R^{1-d}
\|\vec{h}(0)\|_{\mathcal{H}} +
R^{-\frac{d}{2}}\|\vec{h}(0)\|_{\mathcal{H}}^2 +
R^{-1}\|\vec{h}(0)\|_{\mathcal{H}}^3.
\end{equation}
\end{lemma}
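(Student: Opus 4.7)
My plan is to treat~\eqref{Modified} as a small, time-integrable perturbation of the free radial wave equation in $\R^{1+d}_*$ with Dirichlet data at $r=1$: the left-hand side of~\eqref{Modified} is precisely $h_{tt}-\Delta_d h$ (since $d-1=2\ell+2$), and by~\eqref{VFGR-bound} each of $V_R,F_R,G_R$ becomes small as $R\to\infty$ in an integrable-in-time sense. Concretely, the global well-posedness for $\|\vec h(0)\|_{\mathcal H}\le \delta_*$ small and $R\ge R_*$ large, together with the scattering norm bound $\|h\|_{S(\R)}\lesssim\|\vec h(0)\|_{\mathcal H}$, will follow from a contraction mapping argument in $L^\infty_t\mathcal H\cap S(\R)$ using Theorem~\ref{Strichartz} applied with $V\equiv 0$, together with the Strauss radial embedding $|h(t,r)|\lesssim r^{-(d-2)/2}\|\vec h(t)\|_{\mathcal H}$. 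The argument parallels Theorem~\ref{thm:gwp} and the heuristic in Remark~\ref{rem:st} line by line; in particular I would absorb the linear term $V_R h$ into the nonlinear perturbation, which is legitimate because $\|V_R\|_{L^\infty_x}\lesssim (|t|+R)^{-(2\ell+4)}$ is integrable in $t$ for $R$ large, so no separate Strichartz theory for $-\Delta+V_R$ is required.

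For the quantitative bound~\eqref{decay2}, set $\vec w:=\vec h-\vec h_L$; then $\vec w(0)=0$ and $\Box w=\tilde{\mathcal N}_R(t,r,h)$, so the $(\infty,2)$-endpoint Strichartz (i.e.\ the energy identity) gives
\ant{
\sup_{t\in\R}\|\vec w(t)\|_{\mathcal H}\ \lesssim\ \|\tilde{\mathcal N}_R\|_{L^1_t L^2_x}.
}
I would then estimate each of the three pieces $V_R h,\ F_R,\ G_R$ to produce the corresponding power of $R$. For $V_R h$: combine the uniform bound $\|r\,V_R(t,\cdot)\|_{L^\infty_r}\lesssim (|t|+R)^{-d}$ (immediate from~\eqref{VFGR-bound} in both cases after absorbing one factor of $r$) with Hardy's inequality $\|h/r\|_{L^2_x}\lesssim\|\vec h\|_{\mathcal H}$ to get $\|V_R h\|_{L^2_x}\lesssim (|t|+R)^{-d}\|\vec h\|_{\mathcal H}$, which integrates in $t$ to the $R^{1-d}\|\vec h(0)\|_{\mathcal H}$ term. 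For $F_R$: split at $r=|t|+R$ and apply Strauss pointwise on the exterior piece, giving $|F_R|^2 r^{d-1}\lesssim r^{-d-3}\|\vec h\|^4$ and hence $\|F_R\|_{L^2_x(r\ge |t|+R)}\lesssim (|t|+R)^{-(d+2)/2}\|\vec h\|^2$, producing the $R^{-d/2}\|\vec h(0)\|^2$ contribution. For $G_R$ on the exterior: $|G_R|\lesssim r^{2\ell-2}|h|^3$ combined with Strauss yields $\|G_R\|_{L^2_x(r\ge |t|+R)}\lesssim (|t|+R)^{-2}\|\vec h\|^3$, integrating to the $R^{-1}\|\vec h(0)\|^3$ term.

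The hard part, and the genuinely new difficulty for $\ell\ge 2$ as compared with the $\ell=1$ case of~\cite{KLS}, is controlling $G_R$ on the \emph{interior} cone: the frozen coefficient $(|t|+R)^{2\ell-2}$ now grows in $t$, so the naive cubic bound $|G_R|\lesssim (|t|+R)^{2\ell-2}|h|^3$ combined with Strauss is not time-integrable. The resolution is to exploit the full structure
\ant{
G(r_0,h)\ =\ \frac{\ell(\ell+1)}{2r_0^{\ell+2}}\bigl(2r_0^\ell h-\sin(2r_0^\ell h)\bigr)\cos 2Q,
}
which gives the cubic bound $|G(r_0,h)|\lesssim r_0^{2\ell-2}|h|^3$ only when $|r_0^\ell h|\lesssim 1$, but always admits the linear bound $|G(r_0,h)|\lesssim r_0^{-2}|h|$ (since $|2x-\sin 2x|\lesssim |x|$ uniformly). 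Splitting the interior region at the threshold $|h|\sim r_0^{-\ell}$ where the two bounds balance and applying Strauss pointwise on each piece, the interior contribution to $\|G_R\|_{L^1_t L^2_x}$ should reduce to a power of $R$ compatible with the claimed $R^{-1}\|\vec h(0)\|^3$; an analogous cubic-vs-linear interpolation will handle the interior of $F_R$ in high dimensions. As a fallback I would replace the $(\infty,2)$-endpoint by a non-endpoint Strichartz pair, so that only an $L^{q'}_t L^{s'}_x$ norm with $q>1$ is needed on $G_R$, allowing some of the $(|t|+R)^{2\ell-2}$ growth to be absorbed into the $L^{q'}_t$ integration.
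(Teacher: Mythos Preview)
Your overall strategy---treat~\eqref{Modified} as a perturbation of the free exterior wave equation and control $\vec h-\vec h_L$ by Duhamel plus Strichartz---is exactly the paper's. Your treatment of $V_Rh$ and $F_R$ in $L^1_tL^2_x$ via Hardy/Strauss and the $(|t|+R)^{-d},(|t|+R)^{-3}$ decay is equivalent to what the paper does; the only cosmetic difference is that the paper pairs $V_R$ with $\|h\|_{S}$ by H\"older (placing $V_R\in L^{\frac{2(d+1)}{d+4}}_tL^{\frac{2(d+1)}{3}}_x$) and bounds $F_R$ by $R^{-d/2}\|h\|_W\|h\|_S$, whereas you use $\|h\|_{L^\infty_t\mathcal H}$ throughout. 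Both routes give the same powers of $R$.

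The genuine issue is your handling of the interior $G_R$. Your cubic--linear splitting at $|h|\sim r_0^{-\ell}$ does \emph{not} close in $L^1_tL^2_x$: the best pointwise bound you can extract from it is $|G_R|\lesssim r_0^{-2}|h|$, and combining this with Hardy gives $\|G_R(t)\|_{L^2_x(r\le r_0)}\lesssim r_0^{-1}\|\vec h\|_{\mathcal H}$ with $r_0=|t|+R$, which is still not $L^1_t$-integrable (and, worse, yields a linear-in-$\|\vec h(0)\|$ term rather than a cubic one). So the splitting argument as sketched fails. Your ``fallback'' is in fact the correct and only route, and is precisely what the paper does: it places $G_R$ from the outset in a non-endpoint dual Strichartz space $\mathcal Y$, uses the Strauss-based heuristic~\eqref{grupw} to write $|G_R|\lesssim r_0^{-1}\|h\|_{\dot H^1_0}^{\frac{2d-8}{d-2}}|h|^{\frac{d+2}{d-2}}$, and invokes the exterior--domain machinery of~\cite{BCLPZ} to conclude $\|G_R\|_{\mathcal Y}\lesssim R^{-1}\|h\|_{\mathcal X}^3$. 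You should therefore promote the non-endpoint Strichartz argument to the main line for $G_R$ and drop the $L^1_tL^2_x$ attempt.
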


\begin{proof} 
We sketch how to deduce~\eqref{decay2}. We simplify the
notation  by writing
\[\|u\|_{S}:=\|u\|_{L^{\frac{2(d+1)}{d-2}}_{t, x}( I \times ( \R^d_*))}
\hspace{0.5cm}\|u\|_{W}: =\|u\|_{L^\infty_t\dot{H}_0^1(\R\times\R^d_*)}\]
By Duhamel's formula and Strichartz estimates from Theorem~\ref{Strichartz} we have 
\EQ{ \label{hhL}
\|h-h_L\|_{L^\infty_t \mathcal{H}}\lesssim \|V_Rh + F_R\|_{L^1_t L^2_x(\R \times \R^d_*)} + \| G_R\|_{\mathcal{Y}(\R \times \R^d_*)}
}
where $\mathcal{Y}(\R)$ is any sum of energy-admissible dual Strichartz spaces.   Arguing as in Remark~\ref{rem:st} (see e.g., ~\eqref{grupw} and~\cite{BCLPZ}) one controls the term involving  $ G_R$ by  
$
R^{-1}\| h\|_{\mathcal{X}(\R)}^3
$
where $\mathcal{X}(\R)$ is the intersection of all energy-admissible Strichartz spaces. 
Using (\ref{VFGR-bound}) we see that
\begin{equation*}
\|V_R(r)h \|_{L^1_tL^2_x}\lesssim\|V_R\|_{L^\frac{2(d+1)}{d+4}_tL^{\frac{2(d+1)}{3}}_x}\|h\|_{L^{\frac{2(d+1)}{d-2}}_tL_x^{\frac{2(d+1)}{d-2}}}
\lesssim R^{1-d}\|h\|_{S}\end{equation*} 
and similarly using~\eqref{VFGR-bound} and the Strauss estimate we have 
\ant{
\| F_R\|_{L^1_t L^2_x} \lesssim R^{-\frac{d}{2}} \| h\|_W \|h \|_S.
}
By the standard contraction mapping and continuity arguments we can find $R_*>0$   and $\delta_*>0$, so that  
\[ \| h\|_{\mathcal{X}}, \|h\|_{S}, \|h\|_{W}\lesssim  \|\vec{h}(0)\|_{\mathcal{H}.}\]
By~\eqref{hhL} and the above (\ref{decay2}) follows as well.
\end{proof}
We can now prove Proposition~\ref{Decay-Thm}. 

\begin{proof}[Proof of Proposition~\ref{Decay-Thm}] The proof is similar to~\cite[Lemma 5.3]{KLS}. We first prove the case $t=0$.
For each $R> 1$, we truncate the initial data, defining 
$\vec{u}_R(0)=(u_{0,R}, u_{1,R})$ by 
\[ u_{0,R}(r):=\left\{\begin{aligned}  u_0(r),& ~~~~~  \text{ for }r\ge  R\\
u_0(R)\frac{r-1}{R-1}, & ~~~~~\text{ for }r\le  R
\end{aligned}\right.\]
\[ u_{1,R}(r):=\left\{\begin{aligned}  u_1(r),& ~~~~~  \text{ for }r\ge  R\\
0, & ~~~~~\text{ for }r\le  R
\end{aligned}\right.\]
Note that  
\begin{equation}\label{URU}\|\vec{u}_R(0)\|_{\Hd}\lesssim
\|\vec{u}(0)\|_{\Hd(r\geq R)}\end{equation}
and hence there exists
$R_0> R_*$, so that for all $R\geq R_0$
\begin{equation*}
\|\vec{u}_R(0)\|_{\Hd}\leq \delta_*\end{equation*}
Here $R_*$ and $\delta_*$ are the constants in
Lemma~\ref{linear-decay-bound}.

Denote by $\vec{u}_R(t)$ the solution to (\ref{Modified}) with
initial data $\vec{u}_R(0)$.  By finite speed of
propagation,\[\vec{u}_R(t,r)=\vec{u}(t,r), \hspace{0.5cm}\forall
t\in\R, \, \,  r\geq R+|t|\]
where $\vec u(t)$ is our solution to~\eqref{u eq} as in Proposition~\ref{Decay-Thm}. 

Now denote by $\vec{u}_{R,L}(t)=S_0(t)\vec{u}_R(0)$ the free evolution of this data. From
(\ref{decay2}) in Lemma~\ref{linear-decay-bound}, we obtain 
\[\sup_t\|\vec{u}_R-\vec{u}_{R,L}\|_{\mathcal{H}}\lesssim R^{1-d}\|\vec{u}_R(0)\|_{\HH} +R^{-\frac{d}{2}}\|\vec{u}_R(0)\|_\HH^2 +R^{-1}\|\vec{u}_R(0)\|_\HH^3\]
Hence we have
\begin{equation*}\begin{aligned}&\|\vec{u}(t)\|_{\mathcal{H}(r\geq
R+|t|)}= \|\vec{u}_{R}(t)\|_{\mathcal{H}(r\geq R+|t|)} \\
 \geq& \|\vec{u}_{R,L}(t)\|_{\mathcal{H}(r\geq R+|t|)} -
C_0(R^{1-d}\|\vec{u}_R(0)\|_\HH +R^{-\frac{d}{2}}\|\vec{u}_R(0)\|^2_\HH
+R^{-1}\|\vec{u}_R(0)\|^3_\HH)\end{aligned}\end{equation*} 
Now, the
left hand side vanishes as $\abs{t} \to \infty$  by Corollary~\ref{Exterior-Decay}. 
Using the exterior energy estimates from Theorem~\ref{Exterior-Estimate} (choosing either $t \to  \infty$ or $t \to - \infty$ according to~\eqref{extd}), we deduce that 
 \begin{equation}
\label{u0control}\|\pipp \vec{u}_R(0)\|_{\hrr} \lesssim
R^{1-d}\|\vec{u}_R(0)\|_\HH +R^{-\frac{d}{2}}\|\vec{u}_R(0)\|^2_\HH
+R^{-1}\|\vec{u}_R(0)\|^3_\HH\end{equation} 
From~(\ref{URU}) 
and the fact that by definition 
\[\|\pipp \vec{u}_R(0)\|_{\hrr} =\|\pipp \vec{u}(0)\|_{\hrr} \] 
see that 
 \[
\label{u0control-1}\|\pipp \vec{u}(0)\|_{\hrr} \lesssim
R^{1-d}\|\vec{u}(0)\|_{\hrr} +R^{-\frac{d}{2}}\|\vec{u}(0)\|_{\hrr}^2
+R^{-1}\|\vec{u}(0)\|_{\hrr}^3\]
Finally, writing 
\[\|\vec{u}(0)\|_{\hrr}^2=\|\pip \vec{u}(0)\|_{\hrr}^2+\|\pipp \vec{u}(0)\|^2_{\hrr}\]
and by choosing  $R_0$ large enough, we can absorb all of the $\|\pipp
\vec{u}(0)\|_{\hrr}$ terms to the left hand side of (\ref{u0control}),
and obtain
\begin{equation*}
\|\pipp \vec{u}(0)|_{\hrr} \lesssim R^{1-d}\|\pip \vec{u}(0)\|
+R^{-\frac{d}{2}}\|\pip \vec{u}(0)\|^2 +R^{-1}\|\pip
\vec{u}(0)\|^3 \end{equation*}
This completes the proof for $t = 0$. For $t=t_0$ we can repeat the same argument by setting 
\[ \tilde{u}_{0,R}(r):=\left\{\begin{aligned}  u(t_0, r),& ~~~~~  \text{ for }r\ge  R\\
u(t_0, R)\frac{r-1}{R-1}, & ~~~~~\text{ for }r\le  R
\end{aligned}\right.\]
\[ \tilde{u}_{1,R}(r):=\left\{\begin{aligned}  u_1(t_0, r),& ~~~~~  \text{ for }r\ge  R\\
0, & ~~~~~\text{ for }r\le  R
\end{aligned}\right.\]
The key point here is  that by the pre-compactness of the
trajectory $\K$, we can find $R_0=R_0(\delta_*)$ such that $\forall
R\geq R_0$, we have
\[\|\vec{u}(t)\|_{\hrr}\leq \delta_*\]
uniformly in $t\in \R$.
\end{proof}

\subsection{Rigitity argument. Step II:  asymptotics for $\vec u(0, r)$}\label{Section:FA}

 In this step we use Proposition~\ref{Decay-Thm} to establish the asymptotic behavior of $\vec{u}(0,r)$ as $r \to \infty$ described in \eqref{initial-a}. To be precise we prove the following proposition. 
\begin{prop}\label{prop:as} Let $\vec u(t)$ be as in Theorem~\ref{Rigidity} with $\vec u(0) = (u_0, u_1)$. Then there exists $\vartheta_1  \in \R$ so that  
\begin{align*}
&r^{d-2} u_0(r)  \to \vartheta_1 \mas r \to \infty \\
&\int_r^\infty u_1(s)s^{2i-1} ds \to 0 \mas r \to \infty  \quad  \forall 1\leq i\leq k
\end{align*}
where  $k := [\frac{d}{4}] $. Moreover, we have the following estimates for the rates of convergence, 
\ant{
&\abs{r^{d-2} u_0(r) -   \vartheta_1}  = O(r^{-d+1}) \mas r \to \infty\\
&\abs{\int_r^\infty u_1(s)s^{2i-1} ds} =O(r^{2i+2-2d}) \mas r \to \infty\quad  \forall 1\leq i\leq k
}
\end{prop}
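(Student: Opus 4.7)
The plan is to exploit Proposition~\ref{Decay-Thm} together with the algebraic identities from Lemma~\ref{ldmu-identity} in order to translate the norm smallness of $\pipp \vec{u}(0)$ on exterior regions into pointwise information about the projection coefficients $\la_j(0,R)$ and $\mu_j(0,R)$, and then invert via Lemma~\ref{lem:ld-explicit} and \eqref{ut-mu} to recover the desired asymptotics for $u_0(r)$ and for $\int_r^\I u_1(s)s^{2i-1}\,ds$.

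Since $\vec{u}(0)\in\HH$, one has $\|\vec{u}(0)\|_{\HH(r\geq R)}\to 0$ as $R\to\I$. For $R$ sufficiently large, Proposition~\ref{Decay-Thm} then gives, after absorbing the quadratic and cubic contributions into the linear one,
\[
\|\pipp\vec{u}(0)\|_{\HH(r\geq R)}\lesssim R^{1-d}\,\|\vec{u}(0)\|_{\HH(r\geq R)}.
\]
Combined with formula~\eqref{pipp-control}, this yields weighted $L^2$ bounds
\[
\int_R^\I (\p_r\la_i(0,r))^2 r^{4i-d-1}\,dr+\int_R^\I(\p_r\mu_j(0,r))^2 r^{4j-d+1}\,dr\lesssim R^{2-2d}\,\|\vec{u}(0)\|_{\HH(r\geq R)}^2
\]
for every $1\leq i\leq\tdk$ and $1\leq j\leq k$. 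Applying Cauchy--Schwarz over dyadic intervals $[R,2R]$ and summing shows that the limits $L_i^\la:=\lim_{R\to\I}\la_i(0,R)$ and $L_j^\mu:=\lim_{R\to\I}\mu_j(0,R)$ exist, with quantitative convergence at rates $R^{2-d/2-2i}$ and $R^{1-d/2-2j}$ respectively.

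To identify these limits, I would substitute the representation of $\la_j$ from Lemma~\ref{lem:ld-explicit} and use the cancellation identities of Lemma~\ref{contour-integral} to invert the associated Cauchy-type linear systems. Combining the Strauss estimate $|u_0(r)|\lesssim r^{(2-d)/2}$ with the decay $\|\vec{u}(0)\|_{\HH(r\geq R)}\to 0$ eliminates the higher-order limits, giving $L_j^\la=0$ for $j\geq 2$ and $L_1^\la=c_d\vartheta_1$ for a dimensional constant $c_d>0$ and some $\vartheta_1\in\R$, which is then identified with $\lim_{R\to\I}R^{d-2}u_0(R)$. Similarly, pre-compactness of $\K$ implies $\|u_1\|_{L^2(r\geq R,\,r^{d-1}dr)}\to 0$, so by Cauchy--Schwarz $\int_R^\I u_1\,s^{2i-1}\,ds=o(R^{2i-d/2})$; plugging this into \eqref{mu-explicit} and invoking Lemma~\ref{contour-integral} forces $L_j^\mu=0$ for every $j$.

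The preliminary rates obtained this way are weaker than the claimed $O(r^{-d+1})$ and $O(r^{2i+2-2d})$. To close the gap one bootstraps: given a first asymptotic on $u_0$ and on the integrals of $u_1$, one re-applies Proposition~\ref{Decay-Thm} with the sharpened bound on $\|\pip\vec{u}(0)\|_{\HH(r\geq R)}$ that these asymptotics allow, which in turn tightens the weighted bounds on $\p_r\la_i,\p_r\mu_j$ and hence the convergence rates of $\la_i(0,R),\mu_j(0,R)$ to their limits; iterating until the rates saturate produces the claimed asymptotics. I expect the principal obstacle to lie precisely in this identification-plus-bootstrap loop: at each stage one must invoke the algebraic identities of Lemma~\ref{contour-integral} to ensure that the sums arising from inverting the Cauchy-type linear systems collapse correctly, so that no spurious lower-order terms obstruct the sharp rates $O(r^{-d+1})$ and $O(r^{2i+2-2d})$.
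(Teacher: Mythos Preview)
Your proposal has two genuine gaps, and both stem from trying to work entirely at the fixed time $t=0$.

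\textbf{First gap: the absorption step is circular.} You claim that for $R$ large one may absorb the quadratic and cubic terms in Proposition~\ref{Decay-Thm} into the linear one, yielding $\|\pipp\vec{u}(0)\|_{\HH(r\geq R)}\lesssim R^{1-d}\|\vec{u}(0)\|_{\HH(r\geq R)}$. But compare the cubic term $R^{-1}\|\pip\vec{u}(0)\|^3$ with the linear term $R^{1-d}\|\pip\vec{u}(0)\|$: the cubic dominates unless $\|\pip\vec{u}(0)\|_{\HH(r\geq R)}\lesssim R^{(2-d)/2}$. A priori you only know this quantity is $o(1)$ (from $\vec u(0)\in\HH$), which is far too weak; the bound $R^{(2-d)/2}$ is essentially what you are trying to prove. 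This is precisely why the paper cannot start from such an estimate and instead first establishes the ``$\epsilon$-growth'' control (Lemma~\ref{Cycle1}/\ref{OCycle1}) via a careful dyadic iteration on the quantity $H_n$, obtaining only $|\la_j|,|\mu_j|\lesssim r^{C\epsilon}$ at the outset.

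\textbf{Second gap: Strauss does not kill the higher limits.} Even granting the existence of the limits $L_j^\la$, the Strauss bound $|u_0(r)|=o(r^{(2-d)/2})$ is too weak to force $L_j^\la=0$ for $j\ge 2$. Indeed the top coefficient $\la_{\tdk}$ multiplies $r^{2\tdk-d}$, and one checks (using $d=4\tdk-1$ for $\ell$ even, $d=4k+1$ for $\ell$ odd) that $2\tdk-d<\tfrac{2-d}{2}$, so a nonzero $L_{\tdk}^\la$ is perfectly consistent with Strauss. The same issue arises for $L_j^\mu$: Cauchy--Schwarz on $u_1$ only gives $\int_R^\infty u_1 s^{2i-1}\,ds=o(R^{2i-d/2})$, which from \eqref{mu-explicit} is compatible with $L_j^\mu\ne 0$.

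What the paper does instead is fundamentally dynamical: it uses pre-compactness of the trajectory \emph{for all} $t$, together with the equation itself. One first shows each limit $\vartheta_j(t)=\lim_{r\to\infty}\la_j(t,r)$ is independent of $t$ (via \eqref{ldjldj}), then feeds the equation for $u_{tt}$ into the formula for $\mu_j$ (Lemma~\ref{mu-computation}) to obtain a relation of the form $|\varrho_{j-1}(t_1)-\varrho_{j-1}(t_2)|=C|t_1-t_2|\,|\vartheta_j|+o(1)$; since $\varrho_{j-1}$ is bounded in time, letting $|t_1-t_2|\to\infty$ forces $\vartheta_j=0$. A similar time-integration argument then forces $\varrho_{j-1}=0$. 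This is done inductively down from $j=\tdk$ to $j=2$, and only then can one extract the limit $\vartheta_1$ of $\la_1$. None of this structure is available if you restrict to $t=0$.
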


We remark that the analog of Proposition~\ref{prop:as} for $\ell =1$ and $d =5$ was proved in~\cite[Lemma~5.3]{KLS}. Here the proof of Proposition~\ref{prop:as} will consist of a rather lengthy argument which is complicated by the increasing dimension of the subspace $P(R)$ in Theorem~\ref{Exterior-Estimate}. To highlight the structure of the argument and illustrate the key differences from~\cite{KLS}, we will first treat the case $ \ell =2$ and $d=7$ where the dimension of $P(R)$ is $3$. We then divide the argument for general $\ell  >2$ into two cases determined by the parity of $\ell$ as there are subtle differences that arise when $\ell$ is odd as opposed to even.   

First, we establish  difference estimates for the coefficients $\la_j(t, R)$ and $\mu_j(t, R)$ of the projection of $ \vec u(t)$ onto $P(R)$, which  hold for all $\ell \ge 2$.

 Let $u(t)$ be as in Theorem~\ref{Rigidity} with the projection coefficients $\la_j(t, R)$ and $\mu_j(t, R)$ defined as in (\ref{u-projection}).   We recall the conclusions from 
 Lemma~\ref{ldmu-identity}, where $k=[\frac{d}{4}],$ and $ \ti k  = [\frac{d+2}{4}].$ 
\EQ{ \label{all-pip-identity}
&\|\pip \vec{u}\|^2_{\hrr}\simeq \sum_{i=1}^{\tdk}\left(\lambda_i(t,R)
R^{2i-\frac{d+2}{2}}\right)^2 +\sum_{i=1}^k \left(\mu_i(t,R) R^{2i-\frac{d}{2}}\right)^2
 \\
&  \|\pipp \vec{u}\|^2_{\hrr}\simeq \\  \quad &  \int_R^\infty  \left( \sum_{i=1}^{\tdk}  \left(\partial_r\lambda_i(t,r)r^{2i-\frac{d+1}{2}} \right)^2  +
\sum_{i=1}^k\left(  {\partial_r\mu_i(t,r)
r^{2i-\frac{d-1}{2}}} \right)^2\right)\,dr 
}
with uniform-in-time constants which  depend only on dimension $d=2\ell+3$. 
 Hence we can rewrite Proposition~\ref{Decay-Thm} as 
\begin{lemma}\label{all-decay} There exists $R_0>1$ so that for all $R>R_0$ we have 
\begin{equation*}
\begin{aligned} 
&\int_R^\infty  \left( \sum_{i=1}^{\tdk}  \left(\partial_r\lambda_i(t,r)r^{2i-\frac{d+1}{2}} \right)^2  +
\sum_{i=1}^k\left(  \partial_r\mu_i(t,r)
r^{2i-\frac{d-1}{2}} \right)^2 \right)\,dr 
\\
\lesssim & \sum_{i=1}^{\tdk}\left(R^{4i-3d}\lambda_i^2(t,R) + R^{8i-3d-4}\lambda_i^4(t,R) +R^{12i-3d-8}\lambda_i^6(t,R)\right)\\
&+ \sum_{i=1}^k \left(R^{4i+2-3d}\mu_i^2(t,R) + R^{8i-3d}\mu_i^4(t,R) +R^{12i-3d-2}\mu_i^6(t,R)\right)
 \end{aligned}\end{equation*}
\end{lemma}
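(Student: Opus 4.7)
The plan is to deduce this lemma as a direct translation of Proposition~\ref{Decay-Thm} into the explicit coefficient language, using Lemma~\ref{ldmu-identity} on both sides. First I would square the conclusion of Proposition~\ref{Decay-Thm} and use $(a+b+c)^2 \lesssim a^2+b^2+c^2$ to obtain
\[
\|\pi_R^\perp \vec u(t)\|_{\HH(r\geq R)}^2 \lesssim R^{2-2d}\|\pi_R \vec u(t)\|_{\HH(r\geq R)}^2 + R^{-d}\|\pi_R \vec u(t)\|_{\HH(r\geq R)}^4 + R^{-2}\|\pi_R \vec u(t)\|_{\HH(r\geq R)}^6,
\]
valid for all $R > R_0$ uniformly in $t$.

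Next I would apply the equivalences from \eqref{all-pip-identity}. On the left-hand side this produces precisely the integral expression $\int_R^\infty \bigl(\sum_i (\partial_r\lambda_i \, r^{2i-(d+1)/2})^2 + \sum_i (\partial_r\mu_i \, r^{2i-(d-1)/2})^2\bigr)\,dr$ that appears in the lemma. On the right-hand side I would replace each factor of $\|\pi_R \vec u(t)\|_{\HH(r\geq R)}^2$ by the finite sum
\[
\sum_{i=1}^{\tilde k} \lambda_i^2(t,R)\, R^{4i-d-2} + \sum_{i=1}^k \mu_i^2(t,R)\, R^{4i-d}.
\]

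The final step is to deal with the quartic and sextic factors, which a priori produce cross terms such as $\lambda_i^2\lambda_j^2$ or $\lambda_i^2 \mu_j^2\mu_m^2$. Since the number of summands is $\tilde k + k$ and depends only on $d$, the elementary convexity bound $(\sum_j x_j)^p \lesssim_{p,N} \sum_j x_j^p$ (for nonnegative $x_j$, $1 \le j \le N$) lets me absorb every cross term into a diagonal term of the form $\lambda_i^{2p} R^{(4i-d-2)p}$ or $\mu_i^{2p} R^{(4i-d)p}$. Matching the exponents then gives exactly the stated bound: the $R^{2-2d}$ piece contributes the $\lambda_i^2 R^{4i-3d}$ and $\mu_i^2 R^{4i+2-3d}$ terms, the $R^{-d}$ piece contributes the $\lambda_i^4 R^{8i-3d-4}$ and $\mu_i^4 R^{8i-3d}$ terms, and the $R^{-2}$ piece contributes the $\lambda_i^6 R^{12i-3d-8}$ and $\mu_i^6 R^{12i-3d-2}$ terms.

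The main obstacle here is purely bookkeeping. There is no new analytic ingredient beyond Proposition~\ref{Decay-Thm} and Lemma~\ref{ldmu-identity}; the only subtlety is checking that convexity is legal (which it is, with constants depending only on $d$, because $\tilde k,k \le \lceil d/4 \rceil$ are fixed) and that the arithmetic of exponents produces precisely the powers of $R$ claimed in the statement.
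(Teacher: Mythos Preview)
Your proposal is correct and matches the paper's approach exactly: the paper simply introduces this lemma with the phrase ``Hence we can rewrite Proposition~\ref{Decay-Thm} as,'' relying on the equivalences~\eqref{all-pip-identity} from Lemma~\ref{ldmu-identity} without spelling out the bookkeeping you describe. Your exponent arithmetic and the convexity reduction of cross terms to diagonal ones are precisely what is implicit in that one-line remark.
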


Let  $ \delta_1>0$ be a small number to be determined precisely later such that $\delta_1<\delta_*$. By the pre-compactness of the trajectory $\K$, we can find $R_1=R_1(\delta_1)>R_0$ (Here  $\delta_*$ and $R_0$ are as in Lemma~\ref{linear-decay-bound} and Proposition~\ref{Decay-Thm}) such that 
\begin{equation}\begin{aligned}\label{small-initial-all}\|\vec{u}(t)\|_{\hrr}&\leq \delta_1  \hspace{0.5cm}\forall R\geq R_1, \forall t\in \R\\
R_1^{1- d} &\leq \delta_1.\end{aligned}\end{equation}
An immediate consequence of   (\ref{all-pip-identity}) and (\ref{small-initial-all}) is that the following estimates hold uniformly in time. 
\begin{equation}\label{small-delta-control}|\lambda_i(t,r)|
r^{2i-\frac{d+2}{2}},  |\mu_j(t,r)| r^{2j-\frac{d}{2}}\leq\delta_1\hspace{0.2cm} \forall r\geq R_1, 1\leq i\leq \tdk, 1\leq j\leq k\end{equation}
We prove the following difference estimates.  
\begin{lemma}\label{lem:all-difference}
Let $R_1$ be as in~\eqref{small-initial-all}.    For 
  all $r, r'$ such that $R_1\leq r\leq r'\leq 2r$, the following difference estimates hold  uniformly in $t\in\R$. 
\begin{equation}\begin{aligned}\label{difference-all-ld}
&|\lambda_j(t,r)-\lambda_j(t,r')|
\\
 \lesssim & r^{-2j-d} \sum_{i=1}^{\tdk}[r^{2i+1}|\lambda_i(t,r)| + r^{4i-1}|\lambda_i(t,r)|^2 +r^{6i-3}
 |\lambda_i(t,r)|^3]\\
+& r^{-2j-d}\sum_{i=1}^k [r^{2i+2}|\mu_i(t,r)| + r^{4i+1}|\mu_i(t,r)|^2 +r^{6i}|\mu_i(t,r)|^3]
\end{aligned}\end{equation}
and
\begin{equation}\begin{aligned}\label{difference-all-mu}
&|\mu_j(t,r)-\mu_j(t,r')| 
\\
 \lesssim &   r^{-2j-d-1}\sum_{i=1}^{\tdk}[r^{2i+1}|\lambda_i(t,r)| + r^{4i-1}|\lambda_i(t,r)|^2 +r^{6i-3}
 |\lambda_i(t,r)|^3]\\
+&  r^{-2j-d-1}\sum_{i=1}^k [r^{2i+2}|\mu_i(t,r)| + r^{4i+1}|\mu_i(t,r)|^2 +r^{6i}|\mu_i(t,r)|^3]
\end{aligned}\end{equation} 
\end{lemma}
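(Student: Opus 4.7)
\textbf{Proof proposal for Lemma~\ref{lem:all-difference}.}

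The plan is to derive both difference estimates from the fundamental theorem of calculus, Cauchy--Schwarz with the natural weights appearing in Lemma~\ref{ldmu-identity}, and then insert the bound coming from Proposition~\ref{Decay-Thm} (restated as Lemma~\ref{all-decay}). The hypothesis $R_1\le r\le r'\le 2r$ is used only to collapse the auxiliary weight integral to a single power of $r$.

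First, for fixed $t\in\R$ and $1\le j\le \tdk$, write
\[
\lambda_j(t,r)-\lambda_j(t,r')=-\int_r^{r'}\partial_s\lambda_j(t,s)\,ds
=-\int_r^{r'}\bigl(\partial_s\lambda_j(t,s)\,s^{2j-\frac{d+1}{2}}\bigr)\,s^{\frac{d+1}{2}-2j}\,ds .
\]
By Cauchy--Schwarz,
\[
|\lambda_j(t,r)-\lambda_j(t,r')|^2\le \Bigl(\int_r^{r'}(\partial_s\lambda_j)^2 s^{4j-d-1}\,ds\Bigr)\Bigl(\int_r^{r'} s^{d+1-4j}\,ds\Bigr).
\]
The first factor is bounded by $\|\pipp\vec u(t)\|^2_{\HH(r'\geq r)}$ via Lemma~\ref{ldmu-identity}, and then by the RHS of Lemma~\ref{all-decay} applied at radius $R=r$ (allowed because $r\ge R_1\ge R_0$). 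The second factor, thanks to $r'\le 2r$, is $\lesssim r^{d+2-4j}$, with a constant depending only on $d$.

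Multiplying and taking square roots gives
\[
|\lambda_j(t,r)-\lambda_j(t,r')|\lesssim r^{\frac{d+2-4j}{2}}\,\Bigl(\text{RHS of Lemma~\ref{all-decay} at }R=r\Bigr)^{1/2}.
\]
A term-by-term check of the exponents shows this is exactly \eqref{difference-all-ld}. For instance, a typical $\lambda_i$-term produces
\[
r^{\frac{d+2-4j}{2}}\cdot r^{\frac{4i-3d}{2}}|\lambda_i|=r^{-2j-d}\cdot r^{2i+1}|\lambda_i|,
\]
and analogous identities hold for the quartic and sextic contributions, as well as for the $\mu_i$ terms (where the exponent match $r^{\frac{d+2-4j}{2}+\frac{4i+2-3d}{2}}=r^{-2j-d+2i+2}$, etc.). The inequality $\sqrt{A+B+\cdots}\le \sqrt A+\sqrt B+\cdots$ turns the sum inside the square root into a sum of the desired monomials.

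The $\mu_j$ estimate is proved in exactly the same way, now with the weight $s^{2j-\frac{d-1}{2}}$ dictated by the $\mu$-part of Lemma~\ref{ldmu-identity}. After Cauchy--Schwarz one obtains
\[
|\mu_j(t,r)-\mu_j(t,r')|^2\lesssim r^{d-4j}\cdot\bigl(\text{RHS of Lemma~\ref{all-decay} at }R=r\bigr),
\]
and the exponent arithmetic (e.g.\ $r^{(d-4j)/2}\cdot r^{(4i-3d)/2}|\lambda_i|=r^{-2j-d-1}\cdot r^{2i+1}|\lambda_i|$) reproduces \eqref{difference-all-mu}. There is no genuine obstacle here: the only thing to verify is that the exponents $d+1-4j$ and $d-1-4j$ in the auxiliary integrals stay in a range where the upper endpoint dominates under $r'\le 2r$; this is automatic since $j\le\tdk=[\frac{d+2}{4}]$ for the $\lambda$-case and $j\le k=[\frac{d}{4}]$ for the $\mu$-case. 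The smallness hypothesis \eqref{small-initial-all} is not used in the derivation per se, but it guarantees that Lemma~\ref{all-decay} applies uniformly in $t$ at every radius $R\ge R_1$, so that the estimates are genuinely uniform in $t\in\R$.
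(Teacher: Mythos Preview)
Your proof is correct and follows essentially the same approach as the paper: fundamental theorem of calculus, Cauchy--Schwarz with the weights $s^{2j-\frac{d+1}{2}}$ (resp.\ $s^{2j-\frac{d-1}{2}}$) from Lemma~\ref{ldmu-identity}, and then insertion of the bound from Lemma~\ref{all-decay}. Your extra remark about the sign of the exponent in the auxiliary integral $\int_r^{r'} s^{d+1-4j}\,ds$ is harmless but unnecessary, since for $r\le r'\le 2r$ one has $\int_r^{r'} s^{\alpha}\,ds\lesssim r^{\alpha+1}$ regardless of the sign of~$\alpha$.
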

\begin{remark}\label{domination} A quick observation is that for each index $i$ in the summations above,   
if  $|\lambda_i(t,r)|r^{2i-2}\gtrsim 1$, then the cubic term dominates the growth rate in the terms involving $\lambda_i(t,r)$.  When $|\lambda_i(t,r)|r^{2i-2}\lesssim 1$, the linear term dominates. 
Similarly, 
 if $|\mu_i(t,r)|r^{2i-1}\gtrsim 1$, the cubic term dominates the growth rate in the terms involving $\mu_i(t,r)$. When $|\mu_i(t,r)|r^{2i-1}\lesssim 1$, the linear term dominates. 
\end{remark}
\begin{proof} The lemma is a simple consequence of Lemma~\ref{all-decay}. 
\begin{align*}
&|\lambda_j(t,r)-\lambda_j(t,r')|^2 =  \abs{\int_r^{r'}\partial_s\lambda_j(t,s)ds}^2
\\
\lesssim &  \abs{\int_r^{r'}(\partial_s\lambda_j(t,s)s^{2j-\frac{d+1}{2}})^2ds}
 \abs{\int_r^{r'}( s^{-2j+\frac{d+1}{2}})^2ds} \\
 \lesssim & r^{-4j+d+2} \sum_{i=1}^{\tdk}[r^{4i-3d}\lambda_i^2(t,r) + r^{8i-3d-4}\lambda_i^4(t,r) +r^{12i-3d-8}\lambda_i^6(t,r)]\\
&+r^{-4j+d+2} \sum_{i=1}^k [r^{4i+2-3d}\mu_i^2(t,r) + r^{8i-3d}\mu_i^4(t,r) +r^{12i-3d-2}\mu_i^6(t,r)]
\end{align*}
and 
\begin{align*}
&|\abs{\mu_j(t,r)-\mu_j(t,r')}^2 =  \abs{\int_r^{r'}\partial_s\mu_j(t,s)ds}^2
\\
\lesssim &  \abs{\int_r^{r'}(\partial_s\mu_j(t,s)s^{2j-\frac{d-1}{2}})^2ds}
 \abs{\int_r^{r'}( s^{-2j+\frac{d-1}{2}})^2ds} \\
 \lesssim & r^{-4j+d} \sum_{i=1}^{\tdk}[r^{4i-3d}\lambda_i^2(t,r) + r^{8i-3d-4}\lambda_i^4(t,r) +r^{12i-3d-8}\lambda_i^6(t,r)]\\
&+r^{-4j+d} \sum_{i=1}^k [r^{4i+2-3d}\mu_i^2(t,r) + r^{8i-3d}\mu_i^4(t,r) +r^{12i-3d-2}\mu_i^6(t,r)]
\end{align*}
This completes the proofs of (\ref{difference-all-ld}) and (\ref{difference-all-mu}).
\end{proof}
The following corollary is an immediate consequence of~(\ref{small-delta-control}).
 \begin{cor}\label{cor:all-difference} Let $R_1$ be as above.    For 
  all $r, r'$ such that $R_1\leq r\leq r'\leq 2r$, the following difference estimates hold uniformly in $t\in\R$. 
  \begin{align}\label{all-difference-delta}
 |\lambda_j(t,r)-\lambda_j(t,r')|
\lesssim & \delta_1 [\sum_{i=1}^{\tdk} r^{2i-2j}|\lambda_i(t,r)| + \sum_{i=1}^k  r^{2i-2j+1} |\mu_i(t,r)|]\\
|\mu_j(t,r)-\mu_j(t,r')| 
 \lesssim & \frac{\delta_1}{r} [\sum_{i=1}^{\tdk} r^{2i-2j}|\lambda_i(t,r)| + \sum_{i=1}^k  r^{2i-2j+1} |\mu_i(t,r)|] \label{all-dd-mu}
  \end{align}
 \end{cor}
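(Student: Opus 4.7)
The plan is to derive both estimates directly from Lemma~\ref{lem:all-difference} by using the pointwise smallness bounds \eqref{small-delta-control} to trade one factor of $|\lambda_i(t,r)|$ or $|\mu_i(t,r)|$ for a factor of $\delta_1$ in every quadratic and cubic term appearing on the right-hand sides of \eqref{difference-all-ld} and \eqref{difference-all-mu}. Explicitly, for $r \ge R_1$ the bound \eqref{small-delta-control} gives
$$|\lambda_i(t,r)| \le \delta_1\, r^{\frac{d+2}{2}-2i}, \qquad |\mu_i(t,r)| \le \delta_1\, r^{\frac{d}{2}-2i},$$
so replacing one copy of $|\lambda_i|$ (respectively $|\mu_i|$) by the right-hand side above exchanges it for a factor of $\delta_1$ up to an explicit power of $r$.

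First I would handle the $\lambda_j$ estimate. Dividing the weight on the right-hand side of \eqref{difference-all-ld} applied to $|\lambda_i(t,r)|$ by the target weight $r^{2i-2j}|\lambda_i(t,r)|$, and similarly for the $|\mu_i|$ terms against $r^{2i-2j+1}|\mu_i(t,r)|$, reduces the corollary to showing that for $r \ge R_1$ each of the expressions
$$r^{1-d}, \quad r^{2i-d-1}|\lambda_i(t,r)|, \quad r^{4i-d-3}|\lambda_i(t,r)|^2, \quad r^{2i-d}|\mu_i(t,r)|, \quad r^{4i-d-1}|\mu_i(t,r)|^2$$
is bounded by a constant multiple of $\delta_1$. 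The first quantity is controlled by $R_1^{1-d} \le \delta_1$ directly from \eqref{small-initial-all}. Applying \eqref{small-delta-control} once to the middle pair gives residual factors of $\delta_1\, r^{-d/2}$, and applying it twice to the final pair gives $\delta_1^2\, r^{-1}$. Since $d \ge 7$ and $r \ge R_1 \ge 1$, every residual power of $r$ is nonpositive and bounded by $1$, establishing \eqref{all-difference-delta}.

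The estimate \eqref{all-dd-mu} for $\mu_j$-differences is then immediate: the right-hand side of \eqref{difference-all-mu} is identical to that of \eqref{difference-all-ld} except that the outer prefactor is $r^{-2j-d-1}$ rather than $r^{-2j-d}$, so the same bookkeeping yields exactly one additional factor of $r^{-1}$, which matches the $\delta_1/r$ on the right-hand side of \eqref{all-dd-mu}.

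I do not expect any real obstacle here beyond keeping the exponents straight; the statement is essentially algebraic once \eqref{small-delta-control} is in hand. The one point worth flagging is that the argument uses $d = 2\ell + 3 \ge 7$, i.e.\ $\ell \ge 2$ as fixed in Section~\ref{Reduction}, so that the residual $r$-powers obtained after absorbing \eqref{small-delta-control} into the quadratic and cubic terms are nonpositive on $r \ge R_1$, and in particular the cubic terms are genuinely dominated by the linear weight on that range.
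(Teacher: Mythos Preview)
Your proposal is correct and is exactly the intended argument: the paper itself offers no proof beyond the one-line remark that the corollary is ``an immediate consequence of~\eqref{small-delta-control},'' and you have simply written out that consequence explicitly by trading powers of $|\lambda_i|$ and $|\mu_i|$ in the quadratic and cubic terms of Lemma~\ref{lem:all-difference} for factors of $\delta_1$ via~\eqref{small-delta-control}, and handling the linear term with $R_1^{1-d}\le\delta_1$ from~\eqref{small-initial-all}. One small overcaution: the residual powers $r^{-d/2}$ and $r^{-1}$ are nonpositive on $r\ge 1$ for any odd $d\ge 3$, not just $d\ge 7$, so the restriction to $\ell\ge 2$ is not actually what makes the exponents work here---but since the paper has already fixed $\ell\ge 2$, this is harmless.
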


Next, we make a few observations relating $\vec u(t, r)$ and the projection coefficients $\la_j(t, r)$ and $\mu_j(t, r)$.  

\begin{lemma}\label{u-ld-mu}  For each fixed $R>1$ and for all $(t, r) \in \Omega_R = \{ r \ge R + \abs{t}\}$ we have 
\begin{align}\label{u-ld}
&u(t,r)=\sum_{j=1}^{\tdk}  \lambda_j(t,r)r^{2j-d}
\\
\label{u-mu}
&\int_r^\infty u_t(t,s) s^{2i-1}ds = \sum_{j=1}^{ k}\mu_j(t,r) \frac{r^{2i+2j-d}}{d-2i-2j},  \quad \forall 1\leq i \leq k  
\\
\label{mu-exp}
&\mu_j(t,r)=
\sum_{i=1}^{k}\frac{r^{d-2i-2j}}{d-2i-2j}c_ic_j
\int_r^\infty u_t(t,s) s^{2i-1}ds,  \quad  \forall 1\leq j\leq k
\\
\label{u-ldj} 
& \lambda_j(t,r) =\frac{d_j }{d-2j} \left(u(t,r)r^{d-2j}+\sum_{i=1}^{\tdk-1}\frac{(2i)d_{i+1}r^{d-2i-2j}}{(d-2i-2j)}\int_r^\infty u(t,s)s^{2i-1}ds \right),  
\end{align}
with the last line holding for all $ j\leq \tdk$ and where $d_j$ is defined in~\eqref{di-formula}. 
 Moreover, for any $1\leq j, j'\leq \tdk$, we have 
 \begin{multline}\label{ldjldj}
  |\lambda_j(t_1,r)-\lambda_j(t_2,r)| \\ \lesssim  r^{2j'-2j}\abs{\lambda_{j'}(t_1,r)-\lambda_{j'}(t_2,r)} +  \sum_{m=1}^k\abs{    r^{2m-2j}\int_{t_2}^{t_1}\mu_m(t,r)dt} 
 \end{multline}
\end{lemma}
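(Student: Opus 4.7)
The identities $(\ref{u-mu})$, $(\ref{mu-exp})$, and $(\ref{u-ldj})$ are restatements, with $R$ replaced by $r$, of the formulas $(\ref{ut-mu})$, $(\ref{mu-explicit})$, and Lemma~\ref{lem:ld-explicit} respectively. The restriction $(t,r)\in\Omega_R$ is used only to guarantee the pointwise decay of $u(t,\cdot)$ at infinity required to justify the integration by parts behind $(\ref{u-ldj})$; this decay follows uniformly in $t$ from the pre-compactness of $\K$ together with Corollary~\ref{Exterior-Decay}. Hence these three identities require no new argument.

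To prove $(\ref{u-ld})$, the plan is to substitute the expression $(\ref{u-ldj})$ for each $\lambda_j(t,r)$ into $\sum_{j=1}^{\tdk}\lambda_j(t,r)\, r^{2j-d}$ and simplify. The coefficient of $u(t,r)$ that appears is $\sum_{j=1}^{\tdk} d_j/(d-2j)$, which equals $1$ by identity $(\ref{di-identity-1})$ of Lemma~\ref{contour-integral} with $m=1$. The coefficient of $\int_r^\infty u(t,s)\, s^{2i-1}\,ds$ for $1\le i\le\tdk-1$ is $(2i)\,d_{i+1}\sum_{j=1}^{\tdk}\frac{d_j}{(d-2j)(d-2i-2j)}$; applying the partial fraction decomposition
\[
\frac{1}{(d-2j)(d-2i-2j)}=\frac{1}{2i}\left(\frac{1}{d-2i-2j}-\frac{1}{d-2j}\right)
\]
and invoking $(\ref{di-identity-1})$ once with $m=1$ and once with $m=i+1$ (the latter being admissible precisely because $i\le\tdk-1$) shows that both $\sum_j d_j/(d-2j)$ and $\sum_j d_j/(d-2i-2j)$ equal $1$, so this coefficient vanishes and $(\ref{u-ld})$ follows. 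This algebraic cancellation, built from the Cauchy-matrix identities of Lemma~\ref{contour-integral}, is the only substantive step in the proof.

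For the difference estimate $(\ref{ldjldj})$, the strategy is to subtract the formula $(\ref{u-ldj})$ at index $j'$ from the same formula at index $j$ in order to cancel the boundary term $u(t,r)$. What remains is the identity
\[
\frac{d-2j}{d_j}r^{2j-d}\lambda_j(t,r)-\frac{d-2j'}{d_{j'}}r^{2j'-d}\lambda_{j'}(t,r)=\sum_{i=1}^{\tdk-1}(2i)d_{i+1}\,r^{-2i}\Big[\tfrac{1}{d-2i-2j}-\tfrac{1}{d-2i-2j'}\Big]\int_r^\infty u(t,s)s^{2i-1}\,ds.
\]
Evaluating at $t=t_1$ minus $t=t_2$, writing $u(t_1,s)-u(t_2,s)=\int_{t_2}^{t_1}u_t(t,s)\,dt$, applying Fubini, and using $(\ref{u-mu})$ to replace the inner spatial tail by $\sum_{m=1}^k\mu_m(t,r)\frac{r^{2i+2m-d}}{d-2i-2m}$ converts the right-hand side into $\sum_{m=1}^k r^{2m-d}C_m(d,j,j')\int_{t_2}^{t_1}\mu_m(t,r)\,dt$ for constants $C_m$ depending only on $d$, $j$, and $j'$. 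Solving for $\lambda_j(t_1,r)-\lambda_j(t_2,r)$, multiplying through by $\frac{d_j}{d-2j}r^{d-2j}$, and applying the triangle inequality yields $(\ref{ldjldj})$; this last step is routine bookkeeping once the algebraic identity underpinning $(\ref{u-ld})$ is in hand.
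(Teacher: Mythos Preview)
Your argument is correct, and for $(\ref{u-mu})$, $(\ref{mu-exp})$, $(\ref{u-ldj})$, and $(\ref{ldjldj})$ it follows essentially the same route as the paper. The only genuine difference is in how you derive $(\ref{u-ld})$: you substitute $(\ref{u-ldj})$ back into the sum and invoke the Cauchy-matrix identities of Lemma~\ref{contour-integral} (via the partial-fraction cancellation) to collapse everything to $u(t,r)$. This works, but the paper obtains $(\ref{u-ld})$ in one line by setting $i=1$ in $(\ref{ur-lambda})$, which gives $\int_r^\infty u_r(t,s)\,ds=-\sum_{j=1}^{\tdk}r^{2j-d}\lambda_j(t,r)$ and hence $u(t,r)=\sum_j\lambda_j(t,r)r^{2j-d}$ immediately. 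Your approach has the virtue of showing explicitly that $(\ref{u-ld})$ and $(\ref{u-ldj})$ are mutually consistent via the algebraic identities, but it is considerably longer than necessary. As a minor aside, the decay of $u(t,\cdot)$ at infinity needed for the integrations by parts already follows from $\vec u(t)\in\HH$ and the Strauss estimate, so the restriction to $\Omega_R$ plays no real role in any of these identities.
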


\begin{remark}
In~\cite{KLS}, where $\ell = 1$, $d =5$ and $k= \ti k =1$, the authors used the notation $\la_1(t, r) = v_0(t, r)$ and $\mu_1(t, r) = v_1(t, r)$; see~\cite[equation $5.20$]{KLS} which contains the analogs of~\eqref{u-ld} and~\eqref{u-mu} above. 
To avoid possible confusion, we remark that the expression~\eqref{u-ld} does not mean that we have proved that  $u(t, r_0)$ is an element of $P(r_0)$ -- in this case one would see $\la_j(t, r_0)r^{2j-d}$ on the right hand side of~\eqref{u-ld} rather than $\la_j(r_0)r_0^{2j-d}$. A similar remark holds for~\eqref{u-mu} as well. 
\end{remark} 

\begin{proof} 

 First, note that  (\ref{u-ld}) follows by setting $i=1$ in~(\ref{ur-lambda}).  Next, we observe that (\ref{u-mu})  and (\ref{mu-exp}) are just restatements of  (\ref{ut-mu}) and (\ref{mu-explicit}) and  that (\ref{u-ldj}) was proved in Lemma~\ref{lem:ld-explicit}.  So we are left to prove (\ref{ldjldj}).

Choosing times $t_1 \neq t_2$ and plugging  (\ref{u-mu}) into (\ref{u-ldj}) yields 
\begin{align*}& \frac{d-2j}{d_j}r^{2j-d}(\lambda_j(t_1,r)-\lambda_j(t_2,r))\\ = &(u(t_1,r)-u(t_2,r)) 
+ \sum_{i=1}^{\tdk-1}\frac{(2i)d_{i+1}r^{-2i}}{(d-2i-2j)}\int_r^\infty (u(t_1,s)-u(t_2,s))s^{2i-1}ds\\
= &(u(t_1,r)-u(t_2,r)) 
+ \sum_{i=1}^{\tdk-1}\frac{(2i)d_{i+1}r^{-2i}}{(d-2i-2j)}\int_{t_2}^{t_1}\int_r^\infty u_t(t,s) s^{2i-1}ds\\
= &(u(t_1,r)-u(t_2,r)) 
+ \sum_{i=1}^{\tdk-1}\sum_{m=1}^k \frac{(2i)d_{i+1} r^{2m-d}}{(d-2i-2j)(d-2i-2m)}\int_{t_2}^{t_1}\mu_m(t,r)dt
\end{align*}
Performing the same computation for $j'$  then gives 
\begin{align*}&\frac{d-2j}{d_j}r^{2j-d}(\lambda_j(t_1,r)-\lambda_j(t_2,r))-\sum_{i=1}^{\tdk-1}\sum_{m=1}^k \frac{(2i)d_{i+1}  r^{2m-d}\int_{t_2}^{t_1}\mu_m(t,r)dt}{(d-2i-2j)(d-2i-2m)} \\
=& \frac{d-2j'}{d_{j'}}r^{2j'-d}(\lambda_{j'}(t_1,r)-\lambda_{j'}(t_2,r))-\sum_{i=1}^{\tdk-1}\sum_{m=1}^k \frac{(2i)d_{i+1}  r^{2m-d}\int_{t_2}^{t_1}\mu_m(t,r)dt}{(d-2i-2j')(d-2i-2m)} 
\end{align*}
From the above we obtain  
\begin{align*}& (\lambda_j(t_1,r)-\lambda_j(t_2,r))\\ =&\frac{d_j(d-2j')}{d_{j'}(d-2j)}r^{2j'-2j}(\lambda_{j'}(t_1,r)-\lambda_{j'}(t_2,r)) \\ &+\frac{d_j }{(d-2j)} \sum_{i=1}^{\tdk-1}\sum_{m=1}^k \frac{2(j-j')(2i)d_{i+1} r^{2m-2j}}{(d-2i-2j)(d-2i-2j')(d-2i-2m)}\int_{t_2}^{t_1}\mu_m(t,r)dt\end{align*}
which implies (\ref{ldjldj}).
\end{proof}

Next,   we   record a computation regarding  an averaged difference of the  $\mu_j$'s at two distinct times $t_1 \neq t_2$.  
\begin{lemma}\label{mu-computation} For each $1\leq j\leq k$, 
, $t_1\not= t_2$, and $R$  large enough, we have 
\begin{equation*}
\frac{1}{R}\int_R^{2R}\mu_j(t_1,r)-\mu_j(t_2, r) dr =  \sum_{i=1}^{k}\frac{c_ic_{j}}{d-2-2j}\int_{t_1}^{t_2} \left(I(i,j)  + II(i,j) \right) \, dt
\end{equation*}
with $I(i,j), II(i,j)$ as follows
\begin{equation}\label{I-formula}\begin{aligned}
I(i,j)  = & \frac{1}{R}\int_R^{2R} r^{d-2i-2j}\int_r^\infty  [u_{ss}(t,s)+\frac{2\ell+2}{s}u_s(t,s)]s^{2i-1}\,ds\,dr\\
 = & -\frac{1}{R} (u(t,r)r^{d-2j-1})\Big|_{r=R}^{r=2R} + (2i-2j-1)\frac{1}{R}\int_{R}^{2R} u(t,r)r^{d-2j-2} dr\\
& - \frac{(2\ell-2i+3)(2i-2)}{R}\int_R^{2R}r^{d-2i-2j}\int_r^\infty u(t,s)s^{2i-3}ds\,dr
\end{aligned}\end{equation}
\begin{equation}\label{II-formula}
II(i,j)=  \frac{1}{R}\int_R^{2R}  r^{d-2i-2j}\int_r^\infty\!\!\!\! \int_{t_1}^{t_2}[-V(s)u(t,s)+ \N(s,u(t,s))]s^{2i-1} \,ds\,dr
\end{equation}  
\end{lemma}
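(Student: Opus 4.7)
The plan is to start from the explicit representation (\ref{mu-exp}),
\[
\mu_j(t,r) = \sum_{i=1}^{k}\frac{c_i c_j\,r^{d-2i-2j}}{d-2i-2j}\int_r^\infty u_t(t,s)\,s^{2i-1}\,ds,
\]
differentiate in $t$, and then use the equation (\ref{u eq}) to substitute for $u_{tt}$. Since $u_{tt}=u_{ss}+\tfrac{2\ell+2}{s}u_s-V(s)u+\N(s,u)$, this cleanly splits $\partial_t\mu_j(t,r)$ into a linear ``free-wave'' piece and a ``potential-plus-nonlinearity'' piece. Averaging in $r$ over $[R,2R]$ and integrating in $t$ between $t_1$ and $t_2$ then produces the claimed structural decomposition, with the potential/nonlinearity piece matching (\ref{II-formula}) essentially by inspection.

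The nontrivial work is deriving the three-term identity (\ref{I-formula}) for $I(i,j)$, which is purely calculus. I would integrate by parts once in $s$ to rewrite $\int_r^\infty u_{ss}(t,s)s^{2i-1}\,ds$, producing the boundary term $-u_r(t,r)r^{2i-1}$ plus an interior integral of $u_s\,s^{2i-2}$; combining the latter with the pre-existing $(2\ell+2)\int_r^\infty u_s\,s^{2i-2}\,ds$ yields a coefficient $2\ell+2-(2i-1) = 2\ell-2i+3 = d-2i$. A second integration by parts in $s$ then converts $u_s\,s^{2i-2}$ into $u\,s^{2i-3}$, producing the boundary term $-u(t,r)r^{2i-2}$ together with the tail integral $(2i-2)\int_r^\infty u\,s^{2i-3}\,ds$. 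Multiplying through by $r^{d-2i-2j}$ and averaging in $r$ over $[R,2R]$, one last integration by parts in $r$ on the $u_r(t,r)\,r^{d-2j-1}$ contribution gives the boundary term $-\tfrac{1}{R}(u(t,r)r^{d-2j-1})|_R^{2R}$ together with an interior $u\,r^{d-2j-2}$ term of coefficient $d-2j-1$; combining with the earlier $-(d-2i)$ coefficient collapses the two into $2i-2j-1$, exactly matching (\ref{I-formula}).

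The main obstacle, and the only genuinely analytic step, is justifying that every boundary term at $s=\I$ actually vanishes, i.e., that $u_s(t,s)s^{2i-1}\to 0$ and $u(t,s)s^{2i-2}\to 0$ as $s\to\I$ for each fixed $t$. For this I would invoke the pre-compactness of $\K$ together with (\ref{small-initial-all}) and the pointwise representation (\ref{u-ld}): the uniform bound $|\lambda_j(t,r)|\,r^{2j-(d+2)/2}\le\delta_1$ from (\ref{small-delta-control}) forces $|u(t,s)|\lesssim s^{2\tdk-d}$, which combined with the index bounds $\tdk\le(d+2)/4$ and $i\le k\le d/4$ gives strictly negative-power decay of $u(t,s)s^{2i-2}$; a parallel argument using (\ref{pipp-control}) handles $u_s(t,s)s^{2i-1}$. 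Once these decay properties are granted, the argument reduces to the integration-by-parts bookkeeping above, and assembling the contributions from each $i$ yields the identity. A minor further point worth checking is the exchange of the order of the $s$, $r$, and $t$ integrals, which follows routinely once the above decay justifies absolute integrability on $[R,2R]\times[r,\infty)\times[t_1,t_2]$.
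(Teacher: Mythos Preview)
Your approach is correct and essentially identical to the paper's: start from (\ref{mu-exp}), replace $u_{tt}$ via the equation (\ref{u eq}), split into $I$ and $II$, and derive (\ref{I-formula}) by exactly the three integrations by parts you describe (the paper does the same, merely writing the time difference $u_t(t_1,s)-u_t(t_2,s)=\int_{t_2}^{t_1}u_{tt}\,dt$ at the outset rather than differentiating and then integrating). One small slip worth correcting: the bound $|u(t,s)|\lesssim s^{2\tdk-d}$ does not follow from (\ref{small-delta-control}) and (\ref{u-ld}) as you state---those only give $|u(t,s)|\lesssim s^{(2-d)/2}$---but this weaker decay still forces $u(t,s)s^{2i-2}\to 0$ since $2i-2\le 2k-2<(d-2)/2$, so your boundary-term argument goes through unchanged.
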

 \begin{proof} Using the  explicit formula for $\mu_j$ in (\ref{mu-exp}), we have  \begin{align*}
&\frac{1}{R}\int_R^{2R}\mu_j(t_1,r)-\mu_j(t_2, r) dr\\
 =& \sum_{i=1}^{k}\frac{c_ic_{j}}{d-2i-2j}\frac{1}{R}\int_R^{2R} r^{d-2i-2j}\int_r^\infty (u_t(t_1,s)-u_t(t_2, s))s^{2i-1}ds dr  \\
 =& \sum_{i=1}^{k}\frac{c_ic_{j}}{d-2i-2p_0}\frac{1}{R}\int_R^{2R}r^{d-2i-2j}\int_r^\infty\!\!\!\! \int_{t_2}^{t_1}u_{tt}(t,s)s^{2i-1}\,dt\,ds\,dr  \\
 =&  \sum_{i=1}^{k}\frac{c_ic_{j}}{d-2i-2j}\int_{t_2}^{t_1}\frac{1}{R}\int_R^{2R} r^{d-2i-2j}\int_r^\infty  [u_{ss}(t,s)+\frac{2\ell+2}{s}u_s(t,s)]s^{2i-1}\,ds\,dr\,dt \\
  +&  \sum_{i=1}^{k}\frac{c_ic_{j}}{d-2i-2j}\int_{t_2}^{t_1}\frac{1}{R}\int_R^{2R} r^{d-2i-2j}\int_r^\infty  [ \N(s,u(t,s)) -V(s)u(t,s)]s^{2i-1}\,dsdrdt  \\
 = &  \sum_{i=1}^{k}\frac{c_ic_{j}}{d-2i-2j}\int_{t_2}^{t_1} I(i,j)  + II(i,j)dt
\end{align*}
here we used the equation  (\ref{u eq}) for $u$.

Now we use integration by parts and notice $d=2\ell+3$, we get 
\begin{align*}
I(i,j)  = & \frac{1}{R}\int_R^{2R} r^{d-2i-2j}\int_r^\infty  [u_{ss}(t,s)+\frac{2\ell+2}{s}u_s(t,s)]s^{2i-1}\,ds\,dr\\
 = & -\frac{1}{R} (u(t,r)r^{d-2j-1})\Big|_{r=R}^{r=2R} + (2i-2j-1)\frac{1}{R}\int_{R}^{2R} u(t,r)r^{d-2j-2} dr\\
& - \frac{(2\ell-2i+3)(2i-2)}{R}\int_R^{2R}r^{d-2i-2j}\int_r^\infty u(t,s)s^{2i-3}ds\,dr
\end{align*}
as desired. 
  \end{proof}

Now we begin the process of proving Propostion~\ref{prop:as}  using Lemma~\ref{lem:all-difference} and Corollary~\ref{cor:all-difference} as the main tools. The goal is to understand the asymptotic behavior of the projection coefficients~$\la_j(t, r)$ and $\mu_j(t, r)$.  We proceed in iterative cycles. In each cycle, we first show a pair $\lambda_{i}(t,r),\mu_{i-1}(t,r)$ converges as $r\rightarrow \infty$. Then we then show that the limits must be identically $0$.  By feeding  this information back into (\ref{difference-all-ld}) and (\ref{difference-all-mu}) we enter the next cycle where the goal is to show that   $\lambda_{i-1}(t, r)$ and $\mu_{i-2}(t, r)$ converge to $0$ as $r \to \infty$.  Eventually we show all of the coefficients have limits as $r\rightarrow \infty$, and that  all of these  limits must be $0$ with the possible exception of $\lambda_1$.

We will distinguish between the cases when $\ell$ is even or $\ell$ is odd, as there is a slight difference in the computation.   To clarify the exposition we illustrate the method by working out the details of the simplest case not covered in~\cite{KLS}, namely  $\ell=2$. 
 
\subsubsection{\textbf{Proof of Proposition~\ref{prop:as} when  $\ell=2$:}}  \quad \\
When $\ell =2$ we have  $d= 7, k= 1, \tdk = 2,$ and we have projection coefficients $\la_1, \la_2$ and $\mu_1$ with 
\begin{equation*}
\pipp \vec{u}(t, r)= (u(t, r) -\lambda_1(t,R)  {r^{-5}} -\lambda_2(t,R)  {r^{-3}}, u_t-\mu(t,R) r^{-5})\end{equation*}

Recall that our goal is to prove the following result, which is just Proposition~\ref{prop:as} specialized to the case $\ell = 2$. 

\begin{prop}\label{prop:ell2} Let $\vec u(t)$ be as in Theorem~\ref{Rigidity} with $\vec u(0) = (u_0, u_1)$.  When $\ell=2$ we have 
\begin{equation*}
r^5u_0(r) =\vartheta  +O(r^{-6}) \text{ as } r\rightarrow \infty\end{equation*}
\begin{equation*}
\int_r^\infty u_1(s)s ds =  O(r^{-10}) \text{ as } r\rightarrow\infty\end{equation*}
\end{prop}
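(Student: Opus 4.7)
The plan is to adapt the iterative cycle argument of~\cite[Section~5]{KLS} to this higher-equivariant setting, where the new difficulty is that the projection space $P(R)$ now has dimension three instead of two. For $\ell=2$ we work with the coefficients $\lambda_1(t,r), \lambda_2(t,r), \mu_1(t,r)$, and the identities in Lemma~\ref{u-ld-mu} yield $u(t,r) = \lambda_1(t,r)r^{-5} + \lambda_2(t,r)r^{-3}$ and $\int_r^\infty u_t(t,s)s\,ds = \tfrac{1}{3}\mu_1(t,r)r^{-3}$. Translating the two target asymptotics, we must prove $\mu_1(0,r)=O(r^{-7})$, $\lambda_2(0,r)=O(r^{-8})$, and $\lambda_1(0,r)-\vartheta=O(r^{-6})$. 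Among the three generators of $P(R)$, only $(r^{-5},0)$ matches the harmonic map tail $r^{-\ell}(Q-n\pi)\sim-\alpha r^{-5}$; the other two directions do not correspond to any stationary tail, which is why $\lambda_2$ and $\mu_1$ should vanish at infinity while $\lambda_1$ can carry a nonzero limit.

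The first cycle will establish that $\lambda_2(t,r)$ and $\mu_1(t,r)$ converge as $r\to\infty$ to limits $\lambda_2^\infty(t)$ and $\mu_1^\infty(t)$. I would plug the Strauss-type bounds~\eqref{small-delta-control} into the difference estimates of Lemma~\ref{lem:all-difference} and sum dyadically. A technical point already visible from~\eqref{difference-all-mu} is that the cubic $|\mu_1|^3$ contribution is borderline with the a priori bound, so a preliminary bootstrap (feeding the improved control of $|\mu_1|$ back into itself) is needed before dyadic summability can be secured. Once pointwise limits exist, time-independence is obtained by computing the difference $\mu_1(t_1,r)-\mu_1(t_2,r)$ via Lemma~\ref{mu-computation} and $\lambda_2(t_1,r)-\lambda_2(t_2,r)$ via the identity~\eqref{ldjldj}, then averaging over a dyadic window $[R,2R]$ and passing $R\to\infty$; the boundary terms and the integrated $V u+\N$ contributions decay thanks to the already-established Strauss decay of $u$ and the bounds~\eqref{VFGR-bound} on the potential and nonlinearity.

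The principal obstacle is showing that the constant limits $\lambda_2^\infty$ and $\mu_1^\infty$ actually vanish, since neither of these directions is ruled out by finiteness of energy alone (they are precisely the obstructing directions of Theorem~\ref{Exterior-Estimate}). The plan is to combine pre-compactness of $\mathcal{K}$ with the exterior vanishing Corollary~\ref{Exterior-Decay}: a nonzero $\mu_1^\infty$ would force $u_t(t,r)\sim\mu_1^\infty r^{-5}$ persistently in $t$ on the translated cones $\{r\ge R+|t|\}$, which, when coupled with the rate of convergence established in the first cycle, contradicts~\eqref{extvan}. An analogous argument based on $u(t,r)\sim\lambda_2^\infty r^{-3}$ and the vanishing of $\|\vec u(t)\|_{\mathcal{H}(r\ge R+|t|)}$ excludes $\lambda_2^\infty\neq 0$. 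I expect this to be the longest step and the main place where the $\ell=2$ analysis genuinely diverges from the $\ell=1$ case of~\cite{KLS}, because the two null-space directions for $\dot H^1$ can in principle mix.

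Once $\lambda_2(t,r),\mu_1(t,r)\to 0$ is known, a second cycle re-enters Lemma~\ref{lem:all-difference} with these improved rates, producing the quantitative bounds $\lambda_2(t,r)=O(r^{-8})$ and $\mu_1(t,r)=O(r^{-7})$ from the dominant surviving terms on the right-hand side. Feeding these into the expression for $\lambda_1$ given by Lemma~\ref{lem:ld-explicit} and repeating the time-independence argument with~\eqref{ldjldj} yields convergence $\lambda_1(t,r)\to\vartheta$ to a time-constant limit with $|\lambda_1(0,r)-\vartheta|=O(r^{-6})$. Plugging all three rates into $u_0(r)=\lambda_1(0,r)r^{-5}+\lambda_2(0,r)r^{-3}$ and $\int_r^\infty u_1(s)s\,ds=\tfrac{1}{3}\mu_1(0,r)r^{-3}$ then gives the two asymptotic formulas in Proposition~\ref{prop:ell2}.
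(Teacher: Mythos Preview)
Your overall architecture matches the paper's: establish an $\epsilon$-growth bootstrap, show $\lambda_2$ and $\mu_1$ converge, show the limits vanish, then cycle back to extract the limit of $\lambda_1$ and the sharp rates. The identification of the ``principal obstacle'' is also correct.

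However, the mechanism you propose for showing $\lambda_2^\infty=\mu_1^\infty=0$ has a genuine gap. You write that a nonzero $\mu_1^\infty$ would force $u_t(t,r)\sim\mu_1^\infty r^{-5}$ on the exterior cones and that this contradicts~\eqref{extvan}. It does not: in $d=7$ one has $\|r^{-5}\|_{L^2(r\ge R+|t|)}^2\simeq (R+|t|)^{-3}\to 0$ and $\|r^{-3}\|_{\dot H^1(r\ge R+|t|)}^2\simeq (R+|t|)^{-1}\to 0$, so these tails are perfectly consistent with Corollary~\ref{Exterior-Decay} and with the uniform tightness coming from pre-compactness of $\mathcal K$. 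This is precisely the point of the subspace $P(R)$ in Theorem~\ref{Exterior-Estimate}: those directions carry no outgoing energy, so exterior decay gives you nothing against them.

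The paper's argument is different and uses the \emph{equation}, together with time integration, in a way you only partly anticipate. The computation in Lemma~\ref{mu-computation} for $\tfrac{1}{R}\int_R^{2R}(\mu_1(t_1,r)-\mu_1(t_2,r))\,dr$ does not merely yield time-independence of $\varrho:=\mu_1^\infty$: after inserting the preliminary asymptotic $r^3u(t,r)=\vartheta_2+O(r^{-2+3\epsilon})$ into $I(1,1)$, the main term is $-2\vartheta_2$, so one obtains
\[
|\varrho(t_1)-\varrho(t_2)|=6\,|t_1-t_2|\,|\vartheta_2|+|t_1-t_2|\,O(R^{-2+3\epsilon})+O(R^{-1}).
\]
Sending $R\to\infty$ and then $|t_1-t_2|\to\infty$, the uniform boundedness of $\varrho(t)$ forces $\vartheta_2=0$; only afterwards does one read off that $\varrho$ is constant. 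Finally $\varrho=0$ is obtained not from exterior decay but by integrating in time: if $\varrho\neq 0$ then $\bigl|\int_0^T 3R^3\int_R^\infty u_t(t,s)s\,ds\,dt\bigr|\ge\tfrac{T}{2}|\varrho|$, while exchanging the integrals and using $\vartheta_2=0$ bounds the same quantity by $O(R^{3\epsilon})$ uniformly in $T$, a contradiction. You should replace your exterior-decay step by this pair of arguments; the rest of your plan then goes through.
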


We record the conclusions of  Lemma~\ref{lem:all-difference} and Corollary~\ref{cor:all-difference} for $\ell = 2$.  For any $r, r'$ such that $R_1\leq r\leq r'\leq 2r$,   the following estimates hold uniformly in time. 
\begin{equation}\begin{aligned}\label{true-difference1}
 |\ld(t, r')-\ld(t, r)| \lesssim&  r^{-6}|\ld(t,r)| + r^{-6}|\ld(t,r)|^2 + r^{-6}|\ld(t,r)|^3
 \\
  &+ r^{-4}|\ldd(t,r)| + r^{-2}|\ldd(t,r)|^2 +|\ldd(t,r)|^3 \\ &+
  r^{-5}|\mu(t,r)| + r^{-4}|\mu(t,r)|^2 + r^{-3}|\mu(t,r)|^3\\
\end{aligned}\end{equation}
\begin{equation}\label{true-difference2}\begin{aligned}
 |\ldd(t,r')-\ldd(t,r)|
 \lesssim  &
r^{-8}|\ld(t,r)|+ r^{-8}|\ld(t,r)|^2 + r^{-8}|\ld(t,r)|^3
 \\
  &+ r^{-6}|\ldd(t,r)| + r^{-4}|\ldd(t,r)|^2 +r^{-2}|\ldd(t,r)|^3\\
 &+  r^{-7}|\mu(t,r)| + r^{-6}|\mu(t,r)|^2 + r^{-5}|\mu(t,r)|^3\end{aligned}\end{equation}
 \begin{equation}\label{true-difference3}\begin{aligned}
 |\mu(t, r')-\mu(t,r)|
 \lesssim &   r^{-7}|\ld(t,r)| + r^{-7}|\ld(t,r)|^2 + r^{-7}|\ld(t,r)|^3
  \\
  &+ r^{-5}|\ldd(t,r)| + r^{-3}|\ldd(t,r)|^2 +r^{-1}|\ldd(t,r)|^3\\
 &+r^{-6}|\mu(t,r)| + r^{-5}|\mu(t,r)|^2 + r^{-4}|\mu(t,r)|^3\\
\end{aligned}\end{equation}
\begin{equation}\label{difference-relation}\left\{\begin{aligned}
 |\ld(t,r')-\ld(t,r)|  \lesssim & 
 \delta_1 \big(  | \ld(t,r)| +r^2 |\ldd(t,r)| +r  |\mu(t,r)| \big)
 \\
 |\ldd(t,r')-\ldd(t,r)|\lesssim &   r^{-2}\delta_1 \big(  | \ld(t,r)| +r^2 |\ldd(t,r)| +r  |\mu(t,r)| \big)
  \\
 |\mu(t,r')-\mu(t,r)|\lesssim &  r^{-1}\delta_1 \big(  | \ld(t,r)| +r^2 |\ldd(t,r)| +r  |\mu(t,r)| \big)
\end{aligned}\right.\end{equation}
\begin{lemma}[$\epsilon$-growth]\label{one-step-control} Given any small fixed number  $\epsilon>0$, the following estimates hold uniformly in $t \in \R$ with constants $C= C( \e)$. 
\begin{equation}\label{epsilon-control}\ld(t,r)\lesssim r^{3\epsilon}, \hspace{0.5cm}\ldd(t,r)\lesssim r^{\epsilon},\hspace{0.5cm} \mu(t,r)\lesssim r^{\epsilon}\end{equation}
\end{lemma}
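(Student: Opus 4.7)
The plan is an iterative bootstrap based on the nonlinear difference estimates \eqref{true-difference1}--\eqref{true-difference3}, starting from the a priori smallness bounds \eqref{small-delta-control}, which in the case $d=7$ read $|\lambda_1(t,r)|\lesssim\delta_1 r^{5/2}$, $|\lambda_2(t,r)|\lesssim\delta_1 r^{1/2}$, $|\mu(t,r)|\lesssim\delta_1 r^{3/2}$. I would work at dyadic scales $r_n:=2^n R_1$: each difference estimate bounds $|\lambda_j(t,r_{n+1})-\lambda_j(t,r_n)|$ (and similarly for $\mu$) by a function of the values at $r_n$ alone, and pre-compactness of the trajectory $\mathcal K$ supplies a uniform-in-$t$ bound on $\vec u(t,R_1)$, and hence on the initial projection coefficients at radius $R_1$.

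I would first handle $\lambda_2$ by plugging the a priori bounds into \eqref{true-difference2}. Each of the three cubic contributions yields $\delta_1^3 r^{-1/2}$ (e.g.\ $r^{-8}|\lambda_1|^3 \lesssim \delta_1^3 r^{-1/2}$, $r^{-2}|\lambda_2|^3\lesssim \delta_1^3 r^{-1/2}$, $r^{-5}|\mu|^3\lesssim \delta_1^3 r^{-1/2}$), while the quadratic and linear terms are $O(r^{-3})$ and $O(r^{-11/2})$ respectively. The dyadic sum $\sum_n r_n^{-1/2}$ converges, so $|\lambda_2(t,r)|\lesssim 1$ uniformly in $t$ and in $r\geq R_1$.

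Next I would alternate improvements of $\mu$ and $\lambda_1$. Substituting the a priori bounds in \eqref{true-difference3} gives $|\mu(t,r_{n+1})-\mu(t,r_n)|\lesssim \delta_1^3 r_n^{1/2}$ (all three cubic terms saturating this rate), hence $|\mu(t,r)|\lesssim r^{1/2}$; in the same way \eqref{true-difference1} yields $|\lambda_1(t,r)|\lesssim r^{3/2}$. Feeding these improved bounds together with $|\lambda_2|\lesssim 1$ back into \eqref{true-difference3}, every term on the RHS becomes $O(r^{-\alpha})$ for some $\alpha>0$ (the slowest is $r^{-1}$ from $r^{-1}|\lambda_2|^3$), so $|\mu(t,r)|\lesssim 1$ uniformly. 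Finally, inserting $|\lambda_1|\lesssim r^{3/2}$ and $|\lambda_2|,|\mu|\lesssim 1$ into \eqref{true-difference1}, the cubic contributions $r^{-6}|\lambda_1|^3$ and $r^{-3}|\mu|^3$ are summable, but the cubic term $|\lambda_2|^3\lesssim 1$ carries no $r$-weight, so $|\lambda_1(t,r_{n+1})-\lambda_1(t,r_n)|\lesssim 1$; summing over the $\lesssim\log_2(r/R_1)$ dyadic scales yields $|\lambda_1(t,r)|\lesssim \log r$.

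Since $\log r$ and any fixed constant are $\lesssim r^{\epsilon}$ for $r\geq R_1$, with implicit constant depending on $\epsilon$, the desired estimates \eqref{epsilon-control} follow. I expect the main obstacle to be purely organizational: one must carefully track which terms in each estimate are dominant versus decaying, and iterate in the correct order ($\lambda_2$ first, then $\mu$ and $\lambda_1$ in interleaved passes) so that the already-improved bounds are available when needed. The reason $\lambda_1$ only achieves logarithmic (rather than constant) growth is precisely the weight-free cubic term $|\lambda_2|^3$ in \eqref{true-difference1}, and this is exactly why the lemma accepts the weaker exponent $r^{3\epsilon}$ for $\lambda_1$ while $\lambda_2$ and $\mu$ come with the smaller exponent $r^{\epsilon}$.
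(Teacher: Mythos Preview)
Your argument is correct and in fact yields slightly stronger conclusions than stated: $|\lambda_2|,|\mu|\lesssim 1$ and $|\lambda_1|\lesssim\log r$, uniformly in $t$, with constants independent of~$\epsilon$. The powers $r^{\epsilon}$ and $r^{3\epsilon}$ then follow trivially.

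The paper takes a different route. Rather than iterating the full nonlinear estimates \eqref{true-difference1}--\eqref{true-difference3} from the crude bounds \eqref{small-delta-control}, it packages the three coefficients into a single scalar quantity
\[
H_n:=(2^nr_0)^{-2}|\lambda_1(t,2^nr_0)|+|\lambda_2(t,2^nr_0)|+(2^nr_0)^{-1}|\mu(t,2^nr_0)|,
\]
and uses the $\delta_1$-weighted difference estimates \eqref{difference-relation} to obtain the single recursion $H_{n+1}\le(1+3C\delta_1)H_n$. Choosing $\delta_1=\delta_1(\epsilon)$ so that $1+3C\delta_1<2^{\epsilon}$ gives $H_n\lesssim(2^nr_0)^{\epsilon}$, i.e.\ the coarse bounds $|\lambda_1|\lesssim r^{2+\epsilon}$, $|\lambda_2|\lesssim r^{\epsilon}$, $|\mu|\lesssim r^{1+\epsilon}$; these are then fed back into \eqref{true-difference3} and \eqref{true-difference1} to upgrade $\mu$ and $\lambda_1$. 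Your approach avoids choosing $\delta_1$ as a function of $\epsilon$ and is sharper in this specific case (you notice that the $\lambda_2$ difference estimate is already summable at the outset), while the paper's $H_n$ device is more systematic: it treats all coefficients symmetrically via one recursion, and is what the paper generalizes to arbitrary $\ell$ in Lemma~\ref{Cycle1} and Lemma~\ref{OCycle1}, where the number of coefficients grows and a hand-tuned bootstrap order would become unwieldy.
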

\begin{proof} Fix a small constant $\epsilon >0$.  From (\ref{difference-relation}) and the triangle inequality we obtain for any $r>R_1$, 
\begin{equation}\label{total-estimate}\begin{aligned}
 |\ld(t,2r)|  \leq  &
  (1+C\delta_1) | \ld(t,r)| +C\delta_1r^2 |\ldd(t,r)| +C\delta_1r  |\mu(t,r)|
 \\
 |\ldd(t,2r) |\leq  & Cr^{-2}\delta_1  | \ld(t,r)| + (1+C\delta_1) |\ldd(t,r)| + Cr^{-1}\delta_1   |\mu(t,r)|
  \\
 |\mu(t,2r) |\leq &  r^{-1}C \delta_1  | \ld(t,r)| +C \delta_1 r  |\ldd(t,r)| +(1+C\delta_1)  |\mu(t,r)|
\end{aligned}\end{equation}
Now fix  $r_0>R_1$, and define 
\[H_n := \frac{|\ld(t,2^nr_0)|}{(2^nr_0)^2} + |\ldd(t,2^nr_0)| + \frac{|\mu(t,2^nr_0)|}{2^nr_0}\]
From~\eqref{total-estimate} we can deduce that  
\[H_{n+1}\leq (1+3C\delta_1) H_n   \]
Now choose $\delta_1$ small enough so that $1+3C\delta_1< 2^\epsilon$.  Iterating the previous line gives
\begin{equation}\label{H-bound} H_n\leq (1+3C\delta_1)^n H_0\lesssim (2^nr_0)^\epsilon
\end{equation}
Notice that (\ref{small-delta-control}) ensures that once we fix $r_0$,   $H_0$ is  bounded for all $t\in \R$, and hence (\ref{H-bound}) holds uniformly in time.   This means that 
\begin{equation}\label{1step-control}\ld(t,2^nr_0)\lesssim (2^nr_0)^{2+\epsilon},\hspace{0.2cm}  \ldd(t,2^nr_0)\lesssim (2^nr_0)^\epsilon, \hspace{0.2cm}\mu(t,2^nr_0)\lesssim (2^nr_0)^{1+\epsilon}\end{equation}
Now we  feed (\ref{1step-control}) into the estimate for $\mu(t,r)$ in (\ref{true-difference3}). This yields 
\[|\mu(t,2^{n+1}r_0)|\lesssim  (1+C\delta_1)|\mu(t,2^nr_0)| + (2^nr_0)^{-1+\epsilon}\]
Iterating the above as before gives the improved bound 
\begin{equation*}
|\mu(t,2^nr_0)|\lesssim (2^nr_0)^{\epsilon}
\end{equation*}
Next,  we feed  (\ref{1step-control}) with the improvement above into the estimate for $\ld(t,r)$ in  (\ref{true-difference1}). We have  
\[|\ld(t,2^{n+1}r_0)|\lesssim (1+C\delta_1)|\ld(t,2^nr_0)| + (2^nr_0)^{3\epsilon}\]
which again yields the improvement 
\begin{equation*}
|\ld(t,2^nr_0)|\lesssim (2^nr_0)^{3\epsilon}
\end{equation*}
after iterating. 
In summary, we have proved  
\[\ld(t,2^nr_0)|\lesssim (2^nr_0)^{3\epsilon}, \hspace{0.3cm}\ldd(t,2^nr_0)\lesssim (2^nr_0)^{\epsilon},\hspace{0.3cm} |\mu(t,2^nr_0)|\lesssim (2^nr_0)^{\epsilon}\]
Finally,   (\ref{epsilon-control}) follows by combining the above with the  difference estimates (\ref{true-difference1}), (\ref{true-difference2}), and (\ref{true-difference3}).
\end{proof}
%
\begin{lemma}\label{lem:tr} There exist uniformly bounded functions $\vartheta(t)$ and $\varrho(t)$ so that 
\label{ldd-mu-limit} 
\begin{equation}\label{ldd-limit}|\ldd(t,r)-\vartheta_2(t)|=O(r^{-2}) \text{ as } r\rightarrow \infty\end{equation}
\begin{equation}\label{mu-limit}|\mu(t,r)-\varrho(t)| = O(r^{-1}) \text{ as } r\rightarrow \infty\end{equation}
where the implicit constants in  $O(\cdot)$ are also uniform in time. 
 \end{lemma}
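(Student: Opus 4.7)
The plan is to run a two-step dyadic bootstrap on the difference estimates \eqref{true-difference2} and \eqref{true-difference3}. In the first step I will feed in the rough $\e$-growth bounds from Lemma~\ref{one-step-control} to obtain preliminary convergence rates that suffice to define the limits $\vartheta_2(t)$ and $\varrho(t)$ and to show the uniform-in-$(t,r)$ boundedness of $\ldd$ and $\mu$. In the second step I will reinsert this boundedness into the same difference estimates; this kills the $r^{3\e}$-type amplifications and produces the sharp rates $O(r^{-2})$ and $O(r^{-1})$ in the statement. Note that it is crucial here that the lemma asserts nothing about $\ld$: the coefficient $\ld$ is precisely the one that will carry a nontrivial limit (matching the $r^{-5}$ asymptotic of the harmonic map), and the argument below never needs a uniform bound on it.

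Fix $\e>0$ small enough that $3\e<1$. Substituting the bounds $|\ld|\lesssim r^{3\e}$, $|\ldd|\lesssim r^\e$, $|\mu|\lesssim r^\e$ into \eqref{true-difference2}, a direct term-by-term check shows that the dominant contribution is $r^{-2}|\ldd|^3\lesssim r^{-2+3\e}$; every other term decays strictly faster, and in particular the terms carrying $|\ld|$ are absorbed because the prefactor $r^{-8}$ beats any amplification $r^{9\e}$. Hence for $R_1\le r\le r'\le 2r$,
\[
|\ldd(t,r')-\ldd(t,r)|\lesssim r^{-2+3\e},
\]
uniformly in $t$. Dyadic telescoping along $[r,2^N r]$ yields $|\ldd(t,r')-\ldd(t,r)|\lesssim\sum_{n\ge0}(2^n r)^{-2+3\e}\lesssim r^{-2+3\e}$ for all $r'>r$, so $\ldd(t,r)$ is Cauchy as $r\to\infty$. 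Set $\vartheta_2(t):=\lim_{r\to\infty}\ldd(t,r)$. Combining with the pointwise bound $|\ldd(t,R_1)|\lesssim \de_1 R_1^{1/2}$ coming from~\eqref{small-delta-control}, one gets a uniform-in-$(t,r)$ bound on $\ldd$, and consequently a uniform bound on $\vartheta_2(t)$. Applying the same reasoning to \eqref{true-difference3}, the dominant term is $r^{-1}|\ldd|^3\lesssim r^{-1+3\e}$, producing a limit $\varrho(t):=\lim_{r\to\infty}\mu(t,r)$, uniform in $t$, together with the preliminary rate $|\mu(t,r)-\varrho(t)|\lesssim r^{-1+3\e}$ and a uniform bound on $\mu$.

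Now I bootstrap. With $|\ldd|,|\mu|\lesssim 1$ uniformly in $(t,r\ge R_1)$, the dominant term in \eqref{true-difference2} becomes $r^{-2}|\ldd|^3\lesssim r^{-2}$; the $|\ldd|^2,|\ldd|$ and $\mu$-terms all decay faster, and the $\ld$-terms remain harmless at $r^{-8+9\e}$. Telescoping again yields $|\ldd(t,r)-\vartheta_2(t)|=O(r^{-2})$. Similarly, the dominant term in \eqref{true-difference3} becomes $r^{-1}|\ldd|^3\lesssim r^{-1}$, giving $|\mu(t,r)-\varrho(t)|=O(r^{-1})$. Uniformity in $t$ is preserved at every step because the $\e$-growth bounds, the estimate~\eqref{small-initial-all}, and the difference estimates themselves are all uniform in time. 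The only delicate point -- and the one I would call the main obstacle, though it turns out to be mild -- is to verify that the $\ld$-terms, for which no uniform bound is available, are sufficiently suppressed by the inverse powers of $r$ built into \eqref{true-difference2} and \eqref{true-difference3} to survive the $r^{9\e}$ amplification. The prefactors $r^{-8}$ and $r^{-7}$ respectively provide more than enough margin, so this verification is routine.
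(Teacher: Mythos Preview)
Your proposal is correct and follows essentially the same two-pass dyadic strategy as the paper: feed the $\e$-growth bounds from Lemma~\ref{one-step-control} into the difference estimates \eqref{true-difference2}--\eqref{true-difference3} to extract the limits and uniform boundedness, then bootstrap with $|\ldd|,|\mu|\lesssim 1$ to sharpen the rates to $r^{-2}$ and $r^{-1}$. The only cosmetic difference is ordering---the paper finishes the $\ldd$ argument (including the sharp $r^{-2}$ rate) before turning to $\mu$, so it can insert $|\ldd|\lesssim 1$ directly into \eqref{true-difference3} and obtain the $r^{-1}$ rate for $\mu$ in a single pass---but your more symmetric two-step version is equally valid.
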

\begin{proof}
We let   $\epsilon$ and $r_0$ be as in the proof of Lemma~\ref{one-step-control}, 
 and plug (\ref{epsilon-control}) into  (\ref{true-difference2}). This gives  
\[|\ldd(t,2^{n+1}r_0)-\ldd(t,2^nr_0)|\lesssim (2^nr_0)^{-8+9\epsilon}+(2^nr_0)^{-2+3\epsilon}+(2^nr_0)^{-5+3\epsilon}\]
This implies that 
\[\sum_n|\ldd(t,2^{n+1}r_0)-\ldd(t,2^nr_0)|<\infty\]
Therefore,  $\ldd(t,2^nr_0)$ has limit as $n \to \infty$, which we denote by  $\vartheta_2(t)$. Next, we have 
\begin{align*}|\vartheta(t)-\ldd(t, r_0)|&=\lim_{n\rightarrow \infty}|\ldd(t,2^{n+1}r_0)-\ldd(t,r_0)|\\
&\lesssim\lim_{m\rightarrow\infty} \sum_{l=1}^n|\ldd(t,2^{l+1}r_0)-\ldd(t,2^lr_0)| 
\lesssim r_0^{-2+3\epsilon}\sum_{l=1}^\infty (2^l)^{-2+3\epsilon} \end{align*}
Since (\ref{small-delta-control}) implies that  $\ldd(t, r_0)$ uniformly bounded,  the above means that  $\vartheta_2(t)$ and hence $\ldd(t,2^nr_0)$ are uniformly bounded. 
Using the fact that $\ldd(t, 2^nr_0)$ is bounded, we can upgrade  (\ref{true-difference2}) to 
\[|\ldd(t,2^{n+1}r_0)-\ldd(t,2^nr_0)|\lesssim (2^nr_0)^{-8+9\epsilon}+(2^nr_0)^{-2} +(2^nr_0)^{-5+3\epsilon}\]
and therefore  \[|\ldd(t,2^nr_0)-\vartheta_2(t)|\lesssim \sum_{l\geq n}|\ldd(t,2^{l+1}r_0)-\ldd(t,2^lr_0)|\lesssim (2^nr_0)^{-2}\]
The fact that $\abs{\la_2(t, r)  -  \vartheta(t)} = O(r^{-2})$ as $r \to \infty$  now follows from difference estimates (\ref{true-difference2}).

Similarly, we plug (\ref{epsilon-control}) and the fact that we now know that $\ldd(t,r)$ is  bounded  into (\ref{true-difference3}). This   yields  
\[|\mu(t,2^{n+1}r_0)-\mu(t,2^nr_0)|\lesssim (2^nr_0)^{-7+9\epsilon}+(2^nr_0)^{-1}+(2^nr_0)^{-4+3\epsilon}\]
Arguing as above we deduce that $\mu(t,2^nr_0)$ has limit $\varrho(t)$, which is bounded in $t \in \R$,  and 
\[|\mu(t,2^nr_0)-\varrho(t)|\lesssim (2^nr_0)^{-1}\]
Using the difference estimate (\ref{true-difference3}) as above we obtain  (\ref{mu-limit}).
\end{proof}
From Lemma~\ref{one-step-control} and Lemma~\ref{ldd-mu-limit} we deduce the following asymptotic behavior for $\vec{u}(t, r)$ as $r \to \infty$. 
\begin{lemma}\label{lem:1step-decay} The following holds uniformly in time. 
\begin{equation}\label{1step-decay}
 r^3 u(t,r)=  {\vartheta_2(t)} +O(r^{-2+3\epsilon}) \text{ as } r\rightarrow\infty\end{equation}
\end{lemma}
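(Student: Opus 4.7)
The strategy is very direct: combine the decomposition formula~\eqref{u-ld} from Lemma~\ref{u-ld-mu} with the asymptotic information about the projection coefficients already established in Lemma~\ref{one-step-control} and Lemma~\ref{ldd-mu-limit}. No new analytic input is needed; Lemma~\ref{lem:1step-decay} is essentially a bookkeeping step.

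Concretely, in the case $\ell=2$ we have $d=7$ and $\tdk = 2$, so that~\eqref{u-ld} specializes to
\begin{equation*}
u(t,r) = \lambda_1(t,r)\, r^{-5} + \lambda_2(t,r)\, r^{-3}, \qquad r \geq R_1.
\end{equation*}
Multiplying by $r^3$ gives
\begin{equation*}
r^3\, u(t,r) = \lambda_2(t,r) + \lambda_1(t,r)\, r^{-2}.
\end{equation*}
I would then invoke Lemma~\ref{ldd-mu-limit}, which provides the limit $\vartheta_2(t)$ and the rate $|\lambda_2(t,r) - \vartheta_2(t)| = O(r^{-2})$ uniformly in $t$. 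The remaining term $\lambda_1(t,r)\, r^{-2}$ is controlled by the $\epsilon$-growth bound from Lemma~\ref{one-step-control}, which gives $|\lambda_1(t,r)| \lesssim r^{3\epsilon}$ uniformly in $t$; therefore $|\lambda_1(t,r)\, r^{-2}| \lesssim r^{-2+3\epsilon}$.

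Assembling these two estimates yields
\begin{equation*}
\bigl| r^3 u(t,r) - \vartheta_2(t) \bigr| \leq |\lambda_2(t,r) - \vartheta_2(t)| + |\lambda_1(t,r)|\, r^{-2} = O(r^{-2}) + O(r^{-2+3\epsilon}) = O(r^{-2+3\epsilon}),
\end{equation*}
uniformly in $t \in \R$, which is exactly~\eqref{1step-decay}. There is no genuine obstacle here; the step of real substance was Lemma~\ref{lem:tr}, and the only thing to note is that the error from $\lambda_1$, though only controlled at the crude rate $r^{3\epsilon}$, is softened by the $r^{-5}$ prefactor to give a decay exponent better than the target $-2+3\epsilon$ once $\epsilon$ is chosen small (so the bound from $\lambda_1$ is strictly dominant over the $O(r^{-2})$ contribution from $\lambda_2$, and the combined rate is $O(r^{-2+3\epsilon})$).
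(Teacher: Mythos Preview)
Your proposal is correct and follows exactly the same approach as the paper: you invoke the identity~\eqref{u-ld} from Lemma~\ref{u-ld-mu}, combine the $r^{3\epsilon}$-bound on $\lambda_1$ from Lemma~\ref{one-step-control} with the convergence rate $|\lambda_2(t,r)-\vartheta_2(t)|=O(r^{-2})$ from Lemma~\ref{ldd-mu-limit}, and read off the result. The paper's proof is the same one-line argument, just stated more tersely.
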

\begin{proof}  Using the formula  (\ref{u-ld}) 
from  Lemma~\ref{u-ld-mu},~\eqref{1step-decay} follows immediately from the $r^{3\epsilon}$-control 
of $\ld(t,R)$ from (\ref{epsilon-control}) together with the conclusion of 
Lemma~\ref{ldd-mu-limit}.
 \end{proof}
  We will improve the asymptotics of $\vec{u}$ by showing $\vartheta_2(t)=\varrho(t)=0$.
\begin{lemma}The limit $\vartheta_2(t)$ is independent of time and from now on we will write $\vartheta_2 = \vartheta_2(t)$. 
\end{lemma}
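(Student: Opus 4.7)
The plan is to apply the identity \eqref{ldjldj} from Lemma~\ref{u-ld-mu} at two arbitrary times $t_1,t_2$ and then send $r\to\infty$. Specializing to $\ell=2$ (so $d=7$, $k=1$, $\tdk=2$), and choosing $j=2$, $j'=1$, the identity becomes
\begin{equation*}
|\ldd(t_1,r)-\ldd(t_2,r)| \lesssim r^{-2}|\ld(t_1,r)-\ld(t_2,r)| + r^{-2}\left|\int_{t_2}^{t_1}\mu(t,r)\,dt\right|.
\end{equation*}
Both terms on the right will be shown to vanish as $r\to\infty$, uniformly in any fixed pair $t_1,t_2$, which forces $\vartheta_2(t_1)=\vartheta_2(t_2)$.

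For the first term, the $\epsilon$-growth estimate in Lemma~\ref{one-step-control} gives $|\ld(t,r)|\lesssim r^{3\epsilon}$ uniformly in $t$, so the triangle inequality yields
\begin{equation*}
r^{-2}|\ld(t_1,r)-\ld(t_2,r)|\lesssim r^{-2+3\epsilon},
\end{equation*}
which tends to $0$ as $r\to\infty$ provided we fix any $\epsilon<2/3$. For the second term, Lemma~\ref{ldd-mu-limit} shows that $\mu(t,r)\to\varrho(t)$ with $\varrho$ uniformly bounded in $t$ and the convergence rate $O(r^{-1})$ uniform in $t$. In particular there is a constant $M$ such that $|\mu(t,r)|\le M$ for all $t\in\R$ and all $r\ge R_1$, whence
\begin{equation*}
r^{-2}\left|\int_{t_2}^{t_1}\mu(t,r)\,dt\right|\le Mr^{-2}|t_1-t_2|\longrightarrow 0\mas r\to\infty.
\end{equation*}
Combining these two bounds and using that $\ldd(t_i,r)\to\vartheta_2(t_i)$ by Lemma~\ref{ldd-mu-limit} yields $|\vartheta_2(t_1)-\vartheta_2(t_2)|=0$, so $\vartheta_2$ is constant in $t$.

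There is really no obstacle of substance here: the argument is a clean application of the algebraic identity \eqref{ldjldj} together with the bounds already established in Lemmas~\ref{one-step-control} and~\ref{ldd-mu-limit}, and it depends crucially on the favorable power $r^{-2}=r^{2j'-2j}$ gained by comparing the next-higher index $j'=1$. The genuinely hard step will come next, when one must promote this invariant $\vartheta_2$ (and the analogous $\varrho$) to be identically zero; that step cannot rely on difference estimates alone and will require exploiting the equation itself through the averaged identity in Lemma~\ref{mu-computation}.
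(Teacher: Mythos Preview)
Your proof is correct and follows essentially the same route as the paper: both apply the identity \eqref{ldjldj} with $j=2$, $j'=1$, bound the $\lambda_1$ difference via the $r^{3\epsilon}$-control from Lemma~\ref{one-step-control}, and let $r\to\infty$. The only cosmetic difference is that you bound the $\mu$-integral using the uniform boundedness of $\mu$ from Lemma~\ref{ldd-mu-limit}, whereas the paper uses the slightly weaker $r^{\epsilon}$-control from \eqref{epsilon-control}; both are valid and lead to the same conclusion.
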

\begin{proof} 
Fix times  $t_1\not=t_2$.  Using (\ref{ldd-limit})    and (\ref{ldjldj}) with $j=2, j'=1$, we get 
\begin{align*}
&|\vartheta_2(t_1)-\vartheta_2(t_2)| \\  = &|\ldd(t_1,R)-\ldd(t_2,R)| +O(R^{-2}) \\
\lesssim & R^{-2}|\lambda_1(t_1, R)-\lambda_1(t_2,R)| + R^{-2}\abs{\int_{t_2}^{t_1} \mu(t,R)dt} +O(R^{-2})\\
 \lesssim & |t_1-t_2| O(R^{-2+\epsilon}) +O(R^{-2+3\epsilon})
\end{align*}
where in the last step we used the $r^{3\epsilon}$-control on $\lambda_1(t,r)$ and $r^{\epsilon}$-control of $\mu(t,r)$ as in (\ref{epsilon-control}),  which hold uniformly in time. 
It follows that  $\vartheta_2(t_1)=\vartheta_2(t_2)$ by letting $R\rightarrow \infty$. 
\end{proof}
\begin{lemma}\label{2lCycle4} $\vartheta_2=0$. Moreover,  $\varrho(t)$ is independent of time, and from now on we will write  $\varrho = \varrho(t)$.
\end{lemma}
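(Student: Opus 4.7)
The plan is to use the averaged-difference identity of Lemma~\ref{mu-computation} to extract from the evolution equation a linear-in-$t$ constraint on $\varrho$ whose slope is proportional to $\vartheta_2$. Once such a constraint is established, the uniform boundedness of $\varrho$ from Lemma~\ref{ldd-mu-limit} will force $\vartheta_2 = 0$, and the resulting identity will simultaneously give $\varrho(t_1) = \varrho(t_2)$.

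In detail, I will fix $t_1 \neq t_2$ and apply Lemma~\ref{mu-computation} with $j=1$, which is the only admissible index since $k=1$ when $\ell = 2$. The single contribution from $i=1$ produces the master identity
\begin{equation*}
\frac{1}{R}\int_R^{2R}[\mu(t_1,r) - \mu(t_2,r)]\,dr \;=\; \frac{c_1^2}{d-4}\int_{t_2}^{t_1}\bigl[I(1,1) + II(1,1)\bigr]\,dt .
\end{equation*}
By the uniform-in-$t$ convergence of $\mu(t,r)$ to $\varrho(t)$ at rate $O(r^{-1})$ established in Lemma~\ref{ldd-mu-limit}, the left-hand side tends to $\varrho(t_1) - \varrho(t_2)$ as $R \to \infty$.

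The core calculation is to prove that $I(1,1) \to -2\vartheta_2$ uniformly in $t$. The third term in~\eqref{I-formula} vanishes identically because its coefficient $(2i-2)$ equals zero at $i=1$, leaving only the boundary contribution $-R^{-1}(u(t,r)r^{4})\big|_R^{2R}$ and the bulk average $-R^{-1}\!\int_R^{2R} u(t,r) r^{3}\,dr$. Substituting the asymptotic $r^{3}u(t,r) = \vartheta_2 + O(r^{-2+3\epsilon})$ from Lemma~\ref{lem:1step-decay} shows each of these is $-\vartheta_2 + O(R^{-2+3\epsilon})$ uniformly in $t$, yielding the claim. For $II(1,1)$ I will use the pointwise bound $|u(t,r)| \lesssim r^{-3}$ (again from Lemma~\ref{lem:1step-decay}) together with~\eqref{Vbound} and~\eqref{FGbound}, which in $d=7$ give $|V(s)u| \lesssim s^{-11}$, $|F(s,u)| \lesssim s^{-9}$, and $|G(s,u)| \lesssim s^{-7}$; the dominant $G$-contribution yields $|II(1,1)| \lesssim R^{-1}$ uniformly in $t$.

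Passing to the limit $R\to\infty$ in the master identity thus produces
\begin{equation*}
\varrho(t_1) - \varrho(t_2) \;=\; -\frac{2c_1^2 \vartheta_2}{d-4}\,(t_1-t_2) .
\end{equation*}
Since $\varrho$ is uniformly bounded by Lemma~\ref{ldd-mu-limit}, any nonzero slope on the right-hand side is incompatible with letting $|t_1-t_2| \to \infty$; hence $\vartheta_2 = 0$. Plugging $\vartheta_2 = 0$ back into the identity gives $\varrho(t_1) = \varrho(t_2)$ for all $t_1, t_2$, so $\varrho$ is constant in $t$.

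The main technical obstacle is verifying that every error estimate appearing in $I(1,1)$ and $II(1,1)$ is \emph{uniform in $t$}; without this uniformity the passage to the limit would not produce a genuine linear constraint on $\varrho$. Fortunately the convergence rates in Lemmas~\ref{lem:1step-decay} and~\ref{ldd-mu-limit} are already stated uniformly in time, and the nonlinear bounds from~\eqref{FGbound} depend only on the pre-compactness bound~\eqref{small-initial-all}, so the required uniformity is built into the hypotheses.
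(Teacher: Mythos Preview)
Your proof is correct and follows essentially the same route as the paper: apply Lemma~\ref{mu-computation} with $j=1$, use Lemma~\ref{lem:1step-decay} to evaluate $I(1,1)\to -2\vartheta_2$ and the pointwise bounds \eqref{Vbound}, \eqref{FGbound} to show $II(1,1)\to 0$, then exploit the boundedness of $\varrho$ to kill $\vartheta_2$. The only cosmetic differences are that you pass directly to the limit $R\to\infty$ to obtain an exact linear identity (the paper keeps explicit error terms and takes $R\to\infty$, then $|t_1-t_2|\to\infty$ sequentially), and your stated bound $|II(1,1)|\lesssim R^{-1}$ is one power weaker than what your own integrand estimates actually yield (the $s^{-7}$ decay gives $O(R^{-2})$); neither affects the argument.
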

\begin{proof} 
From (\ref{mu-limit})  we have 
\[\varrho(t_1)-\varrho(t_2)  =\frac{1}{R}\int_R^{2R} \varrho(t_1)-\varrho(t_2) dr = \frac{1}{R}\int_R^{2R} \mu(t_1)-\mu(t_2) dr + O(R^{-1})\]
Now using  Lemma~\ref{mu-computation} with $j=1$ (note that  when $d=7$, we have $c_1=3$), we obtain 
\[\frac{1}{R}\int_R^{2R} \mu(t_1)-\mu(t_2) dr  = 3 \int_{t_2}^{t_1}I(1,1) + II(1,1)dt\]
Recall that  
\[I(1,1) = -\frac{1}{R} (u(t,r)r^{4})\Big|_{r=R}^{r=2R} -\frac{1}{R}\int_{R}^{2R} u(t,r)r^{3} dr\]
Plugging  in  (\ref{1step-decay}) we see that 
\[I(1,1) = -2\vartheta_2+ O(R^{-2+3\epsilon})\]
To estimate $II(1, 1)$, we note that 
from the point-wise estimates for  $V$ and  $\N$ in  (\ref{Vbound}) and (\ref{FGbound}) together with (\ref{1step-decay}), we get 
\[|-V(r)u +\N(r,u)|\lesssim r^{-8}|u| +r^{-3}|u|^2 +r^2 |u|^3\lesssim r^{-7} \]
Hence, 
\begin{equation*}\begin{aligned} 
| II(1,1) |=&\abs{\frac{1}{R}\int_R^{2R} r^3\int_r^\infty  [-V(s)u(t,s)+ \N(s,u(t,s))]s \,ds\,dr }
\\ = &  O(R^{-2}) 
\end{aligned}
\end{equation*}
Therefore we must have  
\begin{equation}\label{defeat-theta-rho}|\varrho(t_1)-\varrho(t_2)|= 6   | (t_1-t_2) \vartheta_2| + |t_1-t_2|O(R^{-2+3\epsilon}) + O(R^{-1})\end{equation}
Recalling that  $\varrho(t)$ is bounded uniformly in time, we rewrite the above as an expression for $\vartheta_2$. Leting $R\rightarrow \infty$ and then letting  $|t_1-t_2| \to \infty$, we obtain \[|\vartheta_2|= \frac{|\varrho(t_1)-\varrho(t_2)|}{6|t_1-t_2|} + O(R^{-2+3\epsilon}) + \frac{1}{|t_1-t_2|}O(R^{-1}) \displaystyle\underset{R, |t_1-t_2|\rightarrow \infty}{\longrightarrow} 0\]
which means that $\vartheta_2=0$. 
 
With the knowledge that  $\vartheta_2=0$, we see from  (\ref{defeat-theta-rho}) that 
\[|\varrho(t_1)-\varrho(t_2)|=   |t_1-t_2|O(R^{-2+3\epsilon}) + O(R^{-1})\underset{R \rightarrow \infty}{\longrightarrow}  0\]
for any fixed $t_1 \neq t_2$. Therefore  $\varrho(t) = \varrho$ is independent of time.  
\end{proof}

\begin{lemma} We must have $\varrho=0$.
\end{lemma}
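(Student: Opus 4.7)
The plan is to pair the time-independence of $\varrho$ (Lemma~\ref{2lCycle4}) against the pointwise decay $|u(t,r)| \lesssim r^{-5+3\varepsilon}$ (which comes from Lemma~\ref{lem:1step-decay} together with $\vartheta_2=0$) via a two-stage limit in $r$ and $|t_2-t_1|$. The starting point is identity~\eqref{u-mu} specialized to $i=j=1$, namely $\int_r^\infty u_t(t,s)\, s\, ds = \mu(t,r)/(3r^3)$. Since $|\mu(t,r) - \varrho|=O(r^{-1})$ uniformly in $t$ by Lemma~\ref{ldd-mu-limit} and Lemma~\ref{2lCycle4}, this can be rewritten as
\begin{equation*}
\int_r^\infty u_t(t,s)\, s\, ds \;=\; \frac{\varrho}{3r^3} \;+\; O(r^{-4}) \quad \text{uniformly in } t.
\end{equation*}

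Integrating this identity in $t$ over $[t_1,t_2]$ and applying Fubini on the left yields
\begin{equation*}
\int_r^\infty \bigl[u(t_2,s) - u(t_1,s)\bigr]\, s\, ds \;=\; \frac{(t_2-t_1)\,\varrho}{3r^3} \;+\; O\bigl(|t_2-t_1|\,r^{-4}\bigr).
\end{equation*}
On the other hand, the uniform bound $|u(t,s)|\lesssim s^{-5+3\varepsilon}$ forces $\bigl|\int_r^\infty[u(t_2,s)-u(t_1,s)]\,s\,ds\bigr| \lesssim r^{-3+3\varepsilon}$, with the implicit constant independent of $t_1,t_2$. Multiplying through by $3r^3$ and applying the triangle inequality produces
\begin{equation*}
|t_2-t_1|\,|\varrho| \;\lesssim\; r^{3\varepsilon} \;+\; |t_2-t_1|\,r^{-1},
\end{equation*}
with constants uniform in $r$, $t_1$, $t_2$.

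To conclude $\varrho=0$ I divide by $|t_2-t_1|$ and take the two limits in the correct order: first sending $|t_2-t_1|\to\infty$ while keeping $r$ fixed gives $|\varrho|\lesssim r^{-1}$, and then sending $r\to\infty$ forces $\varrho=0$. The only real subtlety is precisely that the order of limits matters: since $r^{3\varepsilon}\to\infty$ as $r\to\infty$ for every $\varepsilon>0$, one cannot first take $r\to\infty$ for a fixed time gap; the growing factor must be killed by the unbounded denominator $|t_2-t_1|$ before $r$ is allowed to escape. This mirrors exactly the $\vartheta_2=0$ argument, where the uniform boundedness of the (then still $t$-dependent) function $\varrho(t)$ played the role that the $O(r^{-3+3\varepsilon})$ estimate is playing here.
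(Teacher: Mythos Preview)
Your proof is correct and follows essentially the same route as the paper: both use the identity relating $\mu(t,r)$ to $\int_r^\infty u_t(t,s)\,s\,ds$, integrate in time, swap the order of integration, and exploit the uniform decay $|u(t,r)|\lesssim r^{-5+3\varepsilon}$ coming from $\vartheta_2=0$ to obtain $|t_2-t_1|\,|\varrho|\lesssim r^{3\varepsilon}+|t_2-t_1|\,r^{-1}$. The only cosmetic difference is that the paper frames this as a contradiction argument (fix $R$ large, let $T\to\infty$), whereas you phrase it as a direct two-limit estimate; the underlying mechanism is identical.
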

\begin{proof} Suppose $\varrho\not=0$. Recall from~\eqref{mu-exp} and Lemmas~\ref{lem:tr} and \ref{2lCycle4}  that $\varrho$ satisfies  
\[3R^3\int_R^\infty u_t(t,s)sds =\varrho+ O(R^{-1})\] uniformly in time. 
It follows that we can fix $R$ large enough so that  $3R^3\int_R^\infty u_t(t,s)sds$ has the same sign as $\varrho$ and 
\[\abs{3R^3\int_R^\infty u_t(t,s)sds } \geq \frac12 |\varrho|\]
Integrating in time from $t = 0$ to $t=T$ gives 
\[\abs{\int_0^T 3R^3\int_R^\infty u_t(t,s)sds \,dt } \geq \frac{T}{2} |\varrho|\]
Carrying out the $t$-integration on the left hand side   and using  (\ref{1step-decay}) with the knowledge that $\vartheta_2(T)  = \vartheta_2(0) = \vartheta=0$, we see that 
\[|\int_0^T 3R^3\int_R^\infty u_t(t,s)sds \,dt | = |3R^3\int_R^\infty [u(T,s)-u(0,s)]sds\,dt |\lesssim O(R^{3\epsilon})\]
Therefore 
\[\frac{T}{2} |\varrho|\lesssim R^{3\epsilon}\]
which gives a contradiction if $\varrho \neq 0$ since $R$ is fixed and we are free to choose $T$ as large as we like. 
\end{proof}

  Now we are ready to prove that the leading coefficient $\ld(t,r)$ has a limit as $r \to \infty$.  At this point we it will suffice to consider the case $t =0$, and in the following we will simplify notation by writing $\la_1(r):= \la_1(0, r)$, $\la_2(r):= \la_2(0, r)$ and $\mu(r):= \mu(0, r)$. 
  \begin{lemma}[Existence of limit for the  leading coefficient $\la_1$]\label{lemma:ld-limit} There exist $\vartheta_1 \in \R$ so  that 
  \begin{equation}\label{ld-limit}
|  \ld(r) -\vartheta_1|=O(r^{-6}) \text{ as  }r\rightarrow \infty. 
  \end{equation}
\end{lemma}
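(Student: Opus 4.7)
The plan is to bootstrap the difference estimate \eqref{true-difference1} by substituting the decay information we have just gathered. At this stage we know that $|\lambda_2(t,r)| = O(r^{-2})$ (since $\vartheta_2 = 0$ by Lemma~\ref{2lCycle4}) and $|\mu(t,r)| = O(r^{-1})$ (since $\varrho = 0$), both uniformly in $t$, together with the slow growth bound $|\lambda_1(t,r)| \lesssim r^{3\epsilon}$ from Lemma~\ref{one-step-control}. Specializing to $t = 0$, each of the six terms on the right-hand side of \eqref{true-difference1} that involves $\lambda_2$ or $\mu$ becomes $O(r^{-6})$, while the three terms $r^{-6}|\lambda_1|^j$ contribute at most $O(r^{-6+9\epsilon})$. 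Choosing $\epsilon$ small enough (any $\epsilon < 2/3$ will do), we obtain
\begin{equation*}
|\lambda_1(r') - \lambda_1(r)| \lesssim r^{-6 + 9\epsilon} \quad \text{for } R_1 \leq r \leq r' \leq 2r.
\end{equation*}

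This bound is summable across dyadic scales, so for any fixed $r_0 > R_1$ the sequence $\{\lambda_1(2^n r_0)\}_{n \geq 0}$ is Cauchy and converges to a limit, which we call $\vartheta_1$. A telescoping argument yields $|\lambda_1(2^n r_0) - \vartheta_1| \lesssim (2^n r_0)^{-6 + 9\epsilon}$, and in particular $\lambda_1(r)$ is uniformly bounded on $[R_1, \infty)$. We then re-feed this boundedness into \eqref{true-difference1}: the three $\lambda_1$-terms are now each controlled by a constant multiple of $r^{-6}$ rather than $r^{-6+9\epsilon}$, so we upgrade the difference bound to
\begin{equation*}
|\lambda_1(r') - \lambda_1(r)| \lesssim r^{-6} \quad \text{for } R_1 \leq r \leq r' \leq 2r.
\end{equation*}
A second telescoping/dyadic summation then gives $|\lambda_1(2^n r_0) - \vartheta_1| \lesssim (2^n r_0)^{-6}$, and one final application of the sharp difference bound fills in the continuous values of $r$ between dyadic scales, yielding \eqref{ld-limit}.

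The main obstacle is the cubic self-interaction term $r^{-6}|\lambda_1|^3$ appearing in \eqref{true-difference1}. Unlike the contributions of $\lambda_2$ and $\mu$, which directly give $O(r^{-6})$ once their sharpened decay is inserted, this term a priori produces only $O(r^{-6+9\epsilon})$. This obstruction is overcome precisely because the slightly weaker rate is still summable; it is enough to deduce that $\lambda_1$ is bounded, at which point the cubic term becomes harmless and the sharp $O(r^{-6})$ rate follows from a standard two-step bootstrap.
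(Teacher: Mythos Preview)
Your proof is correct and follows essentially the same approach as the paper's: feed the decay $|\lambda_2|\lesssim r^{-2}$, $|\mu|\lesssim r^{-1}$ and the $r^{3\epsilon}$-bound on $\lambda_1$ into \eqref{true-difference1}, sum dyadically to extract a limit $\vartheta_1$ and deduce boundedness of $\lambda_1$, then re-insert this boundedness to upgrade the rate to the sharp $O(r^{-6})$. The paper compresses this bootstrap by writing ``arguing as in the proof of Lemma~\ref{ldd-mu-limit}'' and records the intermediate exponent as $-6+3\epsilon$ rather than your more careful $-6+9\epsilon$, but the logic is identical.
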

\begin{proof} Using (\ref{ldd-limit}), (\ref{mu-limit}) and the fact that $\vartheta_2=\varrho=0$, we have improved decay rates for $\ldd(r)$ and $\mu(r)$, namely  
\EQ{ \label{la2mu}
|\ldd(r) |\lesssim r^{-2},  \quad |\mu(r)|\lesssim r^{-1}
}
Fixing a large $r_0$ and feeding~\eqref{la2mu} and the $r^{3\epsilon}$-control of $\la_1(r)$  back  into the difference estimate (\ref{true-difference1}), 
we obtain
\[|\ld(2^{n+1}r_0)-\ld(2^nr_0)|\lesssim (2^nr_0)^{-6+3\epsilon}\]
Arguing as as in the proof of Lemma~\ref{ldd-mu-limit}, we deduce that there exists $\vartheta_1 \in \R$ so that  $\ld(2^nr_0) \to \vartheta_1$ as $ n \to \infty$. Moreover, 
\[|\ld(2^nr_0)-\vartheta_1|\lesssim (2^nr_0)^{-6} \text{ as } r\rightarrow \infty\]
Finally,~\eqref{ld-limit} follows from another application of  the difference estimate (\ref{true-difference1}).
\end{proof}

We are now ready to complete the proof of Proposition~\ref{prop:ell2}. 

\begin{proof}[Proof of Proposition~\ref{prop:ell2}] Inserting the conclusions of Lemma~\ref{ldd-mu-limit} and  Lemma~\ref{lemma:ld-limit}, along with the facts that $\vartheta_2 = \varrho = 0$, into the difference estimates (\ref{true-difference2}) and (\ref{true-difference3}), we obtain the improved decay rates 
\begin{equation}\label{final-decay} |\ldd(r) |=O(r^{-8}), \quad \, 
 |\mu(r) | = O(r^{-7}) \text{ as } r\rightarrow \infty\end{equation}
  It then follows from   Lemma~\ref{lemma:ld-limit}, (\ref{final-decay}), and the identities  (\ref{u-ld}) and (\ref{u-mu}) with $t = 0$ that 
\begin{align*} & r^5u_0(r) =\vartheta_1  +O(r^{-6}) \text{ as } r\rightarrow \infty 
\\
&\int_r^\infty u_1( s)s ds =  O(r^{-10}) \text{ as } r\rightarrow\infty\end{align*}
 This completes the proof. 
 \end{proof}

\subsubsection{\textbf{Proof of Proposition~\ref{prop:as} when $\ell\geq 2$ is even:}}  \quad \\ 
Here we  have  \,$d=2\ell+3,\, k=[\frac{d}{4}]=\frac{\ell}{2},\, \tdk=[\frac{d+2}{4}]=\frac{\ell}{2}+1$.  We also note that  $\tdk=k+1$, $d=4\tdk-1=4k+3$.  

Recall that in the case  $\ell=2$, we had projection coefficients $\lambda_1, \lambda_2, \mu$.  We  first  showed that  $\lambda_2(t,r) \to 0$ and $ \mu(t,r) \to 0$ as $r \to \infty$. This then allowed us to extract a limit for $\la_1(r) = \la_1(0, r)$ as $r \to \infty$,  which in turn implied the desired asymptotics (\ref{initial-a}).  \hfill $\Box$
 
 \vspace{\baselineskip}

Now for an arbitrary even equivariance class $\ell\geq 2$, we have coefficients $\lambda_1, \ldots \lambda_{\tdk}$ and $\mu_1, \ldots \mu_k$.  We first will show inductively that each pair $\lambda_{j}(t,r)$, $\mu_{j-1}(t,r)$, with $2\leq j\leq \tdk$  satisfies 
 \ant{
\lambda_{j}(t,r) \to 0 \mas r \to \infty, \quad \mu_{j-1}(t,r) \to 0 \mas r \to \infty
}
This will then allow us to extract a limit for $\la_1(0, r)$ as $r \to \infty$. The argument is nearly identical to the case $\ell=2$, and simply requires more bookkeeping. 
  
  First we prove an initial growth estimate that is analogous to  Lemma~\ref{one-step-control}.  


 \begin{lemma}[$\epsilon$-control]\label{Cycle1} Given any small fixed number $0<\epsilon\ll 1$, we can find $\delta_1$ in \textnormal{(\ref{small-initial-all})} so that the projection coefficients defined in \textnormal{(\ref{u-projection})} satisfy the following  estimates uniformly in time. 
 \begin{equation}\label{1step-growth-even}  \left\{ \begin{aligned}
|\lambda_{\tdk}(t,r)\lesssim & \,r^{\epsilon},  \\
 |\mu_k(t,r)|\lesssim & \,r^{\epsilon}\\
 | \lambda_i(t, r)| \lesssim&  \,r^{2\tdk-2-2i+3\epsilon}\hspace{1cm}\forall 1\leq i < \tdk \\ 
 | \mu_i(t, r)| \lesssim& 
 \,r^{2\tdk-3-2i+3\epsilon} \hspace{1cm}\forall 1\leq i < k
   \end{aligned}\right.\end{equation}   
 \end{lemma}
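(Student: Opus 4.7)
The plan is to follow the three-coefficient argument of Lemma~\ref{one-step-control} for $\ell=2$, with the enlarged projection space $P(R)$ forcing more careful weighted-sum bookkeeping across all $\tdk+k$ coefficients. I would proceed in two steps: first, obtain a crude growth estimate $|\lambda_i|\lesssim r^{2\tdk-2i+\epsilon}$ and $|\mu_i|\lesssim r^{2\tdk-1-2i+\epsilon}$ via a properly rescaled linear combination; then, bootstrap it into the full difference estimates~\eqref{difference-all-ld} and~\eqref{difference-all-mu} to gain the extra two powers of $r$ announced for the non-leading coefficients.

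For the first step, I would fix $r_0>R_1$ and, for dyadic points $r_n:=2^n r_0$, set
\[
a_i^n:=\frac{|\lambda_i(t,r_n)|}{r_n^{2\tdk-2i}},\qquad b_i^n:=\frac{|\mu_i(t,r_n)|}{r_n^{2\tdk-1-2i}},\qquad H_n:=\sum_{i=1}^{\tdk}a_i^n+\sum_{i=1}^{k}b_i^n.
\]
The normalization exponents are tuned so that the cross-coupling factors $r^{2i-2j}|\lambda_i|$ and $r^{2i-2j+1}|\mu_i|$ on the right-hand side of Corollary~\ref{cor:all-difference} become scale-free once both sides are divided by $r^{2\tdk-2j}$ (respectively $r^{2\tdk-1-2j}$). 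A short computation then gives, for a dimensional constant $C$,
\[
a_j^{n+1}\leq 2^{-(2\tdk-2j)}\bigl(a_j^n+C\delta_1 H_n\bigr),\qquad b_j^{n+1}\leq 2^{-(2\tdk-1-2j)}\bigl(b_j^n+C\delta_1 H_n\bigr).
\]
Among these $\tdk+k$ quantities only $a_{\tdk}^n$ fails to acquire a geometric contraction factor (all others pick up a factor $\leq 1/2$), so summing yields $H_{n+1}\leq (1+C'\delta_1)H_n$. Choosing $\delta_1$ small enough that $1+C'\delta_1<2^\epsilon$, iterating, and using that $H_0$ is uniformly bounded in $t$ by~\eqref{small-delta-control}, gives $H_n\lesssim r_n^\epsilon$ uniformly in time. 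One further application of Corollary~\ref{cor:all-difference} on each dyadic annulus $[r_n,r_{n+1}]$ passes to continuous $r$ and produces the crude estimates $|\lambda_i(t,r)|\lesssim r^{2\tdk-2i+\epsilon}$ and $|\mu_i(t,r)|\lesssim r^{2\tdk-1-2i+\epsilon}$.

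For the bootstrap, I would plug these crude bounds into the full difference estimates~\eqref{difference-all-ld} and~\eqref{difference-all-mu}. Using the identity $d=4\tdk-1$ (specific to even $\ell$), a direct computation shows that the cubic terms $|\lambda_i|^3$ and $|\mu_i|^3$ dominate uniformly in $i$, producing for $R_1\leq r\leq r'\leq 2r$,
\[
|\lambda_j(t,r)-\lambda_j(t,r')|\lesssim r^{2\tdk-2-2j+3\epsilon},\qquad |\mu_j(t,r)-\mu_j(t,r')|\lesssim r^{2\tdk-3-2j+3\epsilon}.
\]
Summing over the dyadic chain then splits into two cases. For $j=\tdk$ and $j=k$ the exponents equal $-2+3\epsilon$ and $-1+3\epsilon$, hence are negative; the telescoping sums converge and $\lambda_{\tdk}$, $\mu_k$ are in fact uniformly bounded, so they satisfy the claimed $r^{\epsilon}$ bound. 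For subleading indices $j<\tdk$ or $j<k$ the exponents are non-negative, and telescoping against the initial bound from~\eqref{small-delta-control} yields precisely the claimed polynomial growth $|\lambda_j|\lesssim r^{2\tdk-2-2j+3\epsilon}$ and $|\mu_j|\lesssim r^{2\tdk-3-2j+3\epsilon}$. A final difference estimate on a single dyadic annulus promotes these dyadic bounds to continuous $r$.

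The hard part will be the design of the weights in $H_n$: one has to verify that with the exponents $2\tdk-2i$ and $2\tdk-1-2i$ the cross-coupling terms in~\eqref{all-difference-delta}--\eqref{all-dd-mu} genuinely become scale-free and that only $a_{\tdk}$ is free of a geometric contraction factor—so that the recursion closes into $H_{n+1}\leq(1+C'\delta_1)H_n$. This pivots on the specific relation $d=4\tdk-1$ available for even $\ell$; once the contraction structure is in place, the remaining iteration is a direct generalization of the three-variable scheme of the $\ell=2$ case, and the bootstrap reduces to a careful but routine exponent calculation whose uniformity across the indices $i$ again rests on $d=4\tdk-1$.
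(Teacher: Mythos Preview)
Your proposal is correct and follows essentially the same approach as the paper. Your weighted sum $H_n$ coincides with the paper's $H^e_n$ (same normalization exponents $2j-2\tdk$ and $2j-2\tdk+1$), and the two-step structure---crude bound via $H_{n+1}\le(1+C'\delta_1)H_n$, then bootstrap through the full difference estimates using $d=4\tdk-1$---matches the paper exactly; the only cosmetic difference is that in the bootstrap you treat the leading coefficients $\lambda_{\tdk},\mu_k$ by observing the telescoping sums converge (negative exponent), whereas the paper isolates a $C\delta_1|\lambda_j|$ self-term and iterates directly to $r^\epsilon$.
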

 \begin{proof} From (\ref{all-difference-delta}),  for $1\leq j\leq \tdk$, we have for $r>R_1$
 \begin{align}\label{ld-j-d}
   |\lambda_j(t,2r)| 
\leq&  |\lambda_j(t,r)|+ C \delta_1 \left(\sum_{i=1 }^{\tdk} r^{2i-2j}|\lambda_i(t,r)| 
+ \sum_{i=1}^k  r^{2i-2j+1} |\mu_i(t,r)|\right)
 \end{align}
 And from (\ref{all-dd-mu}) for $1\leq j\leq k$, we have
 \begin{align}\label{mu-j-d}|\mu_j(t,2r) | 
 \leq&  |\mu_j(t,r) | + C\frac{\delta_1}{r} \left(\sum_{i=1}^{\tdk} r^{2i-2j}|\lambda_i(t,r)| + \sum_{i=1 }^k  r^{2i-2j+1} |\mu_i(t,r)|\right)
 \end{align}
 Now fix $r_0>R_1$  and define 
\begin{equation}\label{H:even}H^e_n = \sum_{j=1}^{\tdk} (2^nr_0)^{2j-2\tdk}|\lambda_j(t,2^nr_0)| 
+ \sum_{j=1}^k  (2^nr_0)^{2j-2\tdk+1} |\mu_j(t,2^nr_0)|\end{equation}
One can check using~\eqref{ld-j-d} and~\eqref{mu-j-d} that 
\EQ{ \label{Hen} 
H^e_{n+1}\leq (1 +C(k+\tdk)\delta_1)H^e_n
}
Now, given any fixed, small  $\epsilon>0$, 
we can find $\delta_1$ in (\ref{small-initial-all}) small enough such that $1 +C(k+\tdk)\delta_1<2^{\epsilon}$. 
Iterating~\eqref{Hen} we have 
\begin{equation*}
H^e_n\leq   (2^n)^{\epsilon} H^e_0\end{equation*}
Using (\ref{H:even}), it follows that    
\begin{equation}\label{base-growth-even}
 |\lambda_i(t,2^nr_0)| \leq  (2^nr_0)^{2\tdk-2i+\epsilon }, \hspace{0.5cm}  
|\mu_i(t,2^nr_0)|\leq (2^nr_0)^{2\tdk-2i-1+\epsilon}\end{equation}
We remark that if we compare (\ref{base-growth-even})  with (\ref{small-delta-control}),   we have achieved a nontrivial  improvement in the growth rate.

Now we insert  (\ref{base-growth-even})   back into (\ref{difference-all-ld}) and (\ref{difference-all-mu}).  Using the fact that $d=4\tdk-1$, we have 
 \begin{equation*} 
 \left\{\begin{aligned}
 |\lambda_j(t,2^{n+1}r_0) -\lambda_j(t, 2^nr_0)| \leq& C\delta_1|
 \lambda_j(t, 2^nr_0)|+ C(2^nr_0)^{2\tdk-2-2j+3\epsilon}\\
 |\mu_i(t,2^{n+1}r_0) -\mu_i(t, 2^nr_0)| \leq& C\delta_1|\mu_i(t, 2^nr_0)|+ C
(2^nr_0)^{2\tdk-3-2i+3\epsilon}\\
 \end{aligned} \right.\end{equation*}
From this we see that 
 \[ |\lambda_j(t,2^{n+1}r_0)|\leq    (1+C\delta_1)|
 \lambda_j(t, 2^nr_0)|+ C(2^nr_0)^{2\tdk-2-2j+3\epsilon}\]
 from which, using again that  $(1+C\delta_1)<2^\epsilon$, we obtain 
  \[ |\lambda_j(t,2^{n}r_0)|\leq    (2^\epsilon)^n|
 \lambda_j(t, r_0)|+ C\sum_{m=1}^n(2^mr_0)^{2\tdk-2-2j+3\epsilon}(2^{\epsilon})^{n-m}\]
We remark that when $j<\tdk$, the second term involving the summation is dominant, but if  $j=\tdk$, the first term is dominant. One can perform a similar  calculation for $\mu_j$. In summary, we have 
 \begin{equation}\label{1step-even}  \left\{ \begin{aligned}
|\lambda_{\tdk}(t,2^{n}r_0)\lesssim & (2^nr_0)^{\epsilon}\\
 \mu_k(t,2^{n}r_0)|\lesssim & (2^nr_0)^{\epsilon},\\
 | \lambda_i(t, 2^nr_0)| \lesssim&  (2^nr_0)^{2\tdk-2-2i+3\epsilon}\hspace{1cm}\forall 1\leq i <\tdk\\
 | \mu_i(t, 2^nr_0)| \lesssim& 
 (2^nr_0)^{2\tdk-3-2i+3\epsilon} \hspace{1cm}\forall 1\leq i <k
   \end{aligned}\right.\end{equation}   
 which is  an improvement over (\ref{base-growth-even}). We note that  since $r_0$ is fixed, all of the bounds are uniform in time. 
 
We use the difference estimates   (\ref{difference-all-ld}),(\ref{difference-all-mu}) to pass from  (\ref{1step-even}) to   (\ref{1step-growth-even}) 
 for any $2^nr_0 < r< 2^{n+1}r_0$ .
 
 We note that  that if  we  feed   (\ref{1step-even}) back into the system  (\ref{difference-all-ld}), (\ref{difference-all-mu}) 
 again, we will not achieve any direct  improvement  because the last term involving $|\lambda_{\tdk}|^3$ dominates the growth in the difference estimate (\ref{difference-all-ld}). 
\end{proof}

Next, we use Lemma~\ref{Cycle1} as the base case for an induction argument.  The goal is to establish the  following proposition, which  indicates that  the projection coefficients go to $0$ as $r\rightarrow\infty$ uniformly in time, with the possible exception of $\lambda_1(t,r)$. 
 \begin{prop}\label{weak-induction}
 Suppose that the equivariance class  $\ell\geq 2$ is even.  Let
 $\epsilon>0$ be the small fixed constant   from Lemma~\ref{Cycle1}.  Let $\lambda_j(t,r) $ and  $\mu_{j}(t,r) $ be the projection coefficients defined as in \textnormal{(\ref{u-projection})} for a solution   $\vec u(t)$ to \textnormal{(\ref{u eq})} as in Theorem~\ref{Rigidity}. 
  Then  the following the estimates hold uniformly in time
  \begin{equation}\label{final-est}\left\{\begin{aligned}
   |\lambda_{j}(t,r)|&\lesssim   r^{-2j +3\epsilon}\hspace{1cm} \forall \,2\leq j  \leq  \tdk\\
    |\lambda_1(t,r)|&\lesssim  r^{\epsilon}, \\
   |\mu_{j}(t,r)|&\lesssim    r^{-2j -1 +3\epsilon}  \hspace{1cm} \forall \,1\leq j  \leq  k
  \end{aligned}\right.\end{equation}
  \end{prop}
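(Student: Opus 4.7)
The plan is to mirror the four-step cycle carried out for $\ell=2$ in Lemmas~\ref{ldd-mu-limit}--\ref{2lCycle4}, now iterated: for $j$ decreasing from $\tdk$ down to $2$, I will process the pair $(\lambda_j,\mu_{j-1})$, showing that both coefficients tend to zero as $r\to\infty$ uniformly in $t$ at the quantitative rates encoded in~(\ref{final-est}), and using the resulting improved decay to upgrade the a priori bounds on all remaining lower-index coefficients before entering the next cycle. The base case $j=\tdk$ starts from the $\epsilon$-growth bounds of Lemma~\ref{Cycle1}; the induction terminates after $\tdk-1$ cycles, leaving $\lambda_1$ with only the $r^{\epsilon}$ bound already furnished by Lemma~\ref{Cycle1}.

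Each cycle has four sub-steps. First, feeding the current a priori bounds into the difference estimates (\ref{difference-all-ld})--(\ref{difference-all-mu}) and telescoping along $r=2^n r_0$ shows that the sums $\sum_n|\lambda_j(t,2^{n+1}r_0)-\lambda_j(t,2^n r_0)|$ and $\sum_n|\mu_{j-1}(t,2^{n+1}r_0)-\mu_{j-1}(t,2^n r_0)|$ are convergent, producing time-dependent limits $\vartheta_j(t):=\lim_{r\to\infty}\lambda_j(t,r)$ and $\varrho_{j-1}(t):=\lim_{r\to\infty}\mu_{j-1}(t,r)$, uniformly bounded in $t$, with quantitative rates of convergence. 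Second, the time-independence of $\vartheta_j$ is established via the identity~(\ref{ldjldj}) with $j'=1$: all three contributions on the right-hand side are controlled by the current uniform bounds on $\lambda_1$ and the $\mu_m$'s, and letting $r\to\infty$ collapses $\vartheta_j(t_1)=\vartheta_j(t_2)$. Third, Lemma~\ref{mu-computation} applied at outer index $j-1$ gives an averaged time-difference formula for $\mu_{j-1}$; substituting the asymptotic expansion~(\ref{u-ld}) for $u(t,r)$ and using the already-established decay of $\lambda_{j+1},\dots,\lambda_{\tdk}$ extracts a nonzero constant multiple of $\vartheta_j\cdot(t_1-t_2)$ as the leading contribution of $I(\cdot,j-1)$, while $II(\cdot,j-1)$ is $O(R^{-2})$ via the pointwise bounds~(\ref{Vbound})--(\ref{FGbound}) combined with the current decay of $u$. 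Since $|\varrho_{j-1}(t_1)-\varrho_{j-1}(t_2)|=O(1)$, sending $R,|t_1-t_2|\to\infty$ forces $\vartheta_j=0$; the analogous direct computation (integrating $u_t$ in time against $s^{2i-1}$ and using $\vartheta_j=0$ together with the asymptotics of $u$) then yields $\varrho_{j-1}=0$. Fourth, re-inserting the vanishing limits into~(\ref{difference-all-ld})--(\ref{difference-all-mu}) produces the target rates $|\lambda_j(t,r)|\lesssim r^{-2j+3\epsilon}$ and $|\mu_{j-1}(t,r)|\lesssim r^{-2(j-1)-1+3\epsilon}$, and simultaneously improves the bounds on all $\lambda_i$ ($i<j$) and $\mu_i$ ($i<j-1$) by two powers, which is precisely what is needed to launch the next cycle.

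The principal obstacle is the vanishing step at each level: one must verify that the constant multiple of $\vartheta_j$ appearing in the leading part of $I(\cdot,j-1)$ (after substituting~(\ref{u-ld}) and discarding strictly faster-decaying terms coming from higher-index coefficients already handled) is actually nonzero. For $\ell=2$ this reduces to the single identity with coefficient $-2\vartheta_2$ appearing in the proof of Lemma~\ref{2lCycle4}. For general even $\ell$ the corresponding leading constant is a sum over $1\leq i\leq k$ weighted by $c_i c_{j-1}$ and by products of factors $(d-2i-2m)$ coming from (\ref{I-formula}); checking that the total does not accidentally cancel will be done using the contour-integral identities in Lemma~\ref{contour-integral}. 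Once non-degeneracy is confirmed at each level, the finite induction closes and yields all the bounds asserted in~(\ref{final-est}).
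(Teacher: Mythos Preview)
Your proposal is correct and follows essentially the same route as the paper's proof: the paper packages the iteration as an induction on a parameter $P=0,\dots,k$ (Proposition~\ref{weak-stat}), with the four sub-steps you describe appearing as Lemmas~\ref{Cycle2}--\ref{Cycle7}, and handles the non-degeneracy of the leading constant exactly as you anticipate, via a contour-integral residue computation (Remark~\ref{number1}) built on Lemma~\ref{contour-integral}. The only cosmetic difference is that in the time-independence step the paper takes $j'=j-1$ in~(\ref{ldjldj}) rather than your $j'=1$; both choices give the same $r^{-2+3\epsilon}$ decay after inserting the current inductive bounds.
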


  As we mentioned above, we will prove Proposition~\ref{weak-induction} inductively. Indeed, Proposition~\ref{weak-induction} is a consequence of   the following Proposition by setting  $P = k$ below. 

   \begin{prop} \label{weak-stat} Under the same hypothesis as Proposition~\ref{weak-induction} the following estimates hold true for $P=0,1,\ldots k$,  uniformly in time. 
  \EQ{
  \label{induction-ld}\begin{cases} 
   |\lambda_{j}(t,r)|\lesssim   r^{2(\tdk-P-j)-2 + \app\epsilon}  \quad & \forall 1\leq j \leq \tdk, \text{ and } j\not=\tdk-P\\
    |\lambda_j(t,r)|\lesssim  r^{\epsilon}, &\mif j=\tdk-P
   \end{cases} 
}
  \EQ{
  \label{induction-mu}\begin{cases} 
   |\mu_{j}(t,r)|\lesssim    r^{2(k-P-j) -1 + \app\epsilon}\quad & \forall 1\leq j \leq k, \text{ and } j\not=k-P\\
  |\mu_j(t,r)|\lesssim  r^{\epsilon}, &\mif   j=k-P\\
 \end{cases} 
 }
  \end{prop}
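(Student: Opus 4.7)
The plan is to prove Proposition~\ref{weak-stat} by induction on $P$. The base case $P=0$ is precisely Lemma~\ref{Cycle1}: since $\tdk = k+1$ for $\ell$ even, the bound $|\mu_i|\lesssim r^{2\tdk-3-2i+3\epsilon}$ from that lemma matches $r^{2(k-i)-1+3\epsilon}$ in~(\ref{induction-mu}) at $P=0$, and the $\lambda$-bounds match by direct inspection. For the inductive step I would assume~(\ref{induction-ld}) and~(\ref{induction-mu}) at level $P \le k-1$ and derive them at level $P+1$. The structure mirrors the $\ell = 2$ argument in Lemmas~\ref{ldd-mu-limit}--\ref{lemma:ld-limit}, with $\lambda_{\tdk-P}$ and $\mu_{k-P}$ playing the roles of $\ldd$ and $\mu$, respectively.

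The first sub-step is to produce uniformly-in-$t$ bounded limits $\vartheta_{\tdk - P}(t) := \lim_{r\to\infty} \lambda_{\tdk - P}(t, r)$ and $\varrho_{k - P}(t) := \lim_{r\to\infty} \mu_{k - P}(t, r)$. Substituting the level-$P$ bounds into~(\ref{difference-all-ld}) with $j = \tdk - P$ and~(\ref{difference-all-mu}) with $j = k - P$, every term on the right-hand side carries a strictly negative power of $r$ throughout the inductive range; summing over dyadic shells $[2^n r_0, 2^{n+1} r_0]$ produces convergent geometric series, which yields the limits together with quantitative convergence rates, exactly as in the proof of Lemma~\ref{lem:tr}.

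Next, time-independence of $\vartheta_{\tdk - P}$ follows by applying the identity~(\ref{ldjldj}) with $j = \tdk - P$ and any $j' < \tdk - P$: the level-$P$ polynomial bounds on $\lambda_{j'}$ and on $\mu_m$ for $1 \le m \le k$ force the right-hand side to decay polynomially in $r$, so $\vartheta_{\tdk-P}(t_1) = \vartheta_{\tdk-P}(t_2)$. To show that this constant is zero, I would apply Lemma~\ref{mu-computation} with $j = k - P$ to rewrite $R^{-1} \int_R^{2R} [\mu_{k-P}(t_1, r) - \mu_{k-P}(t_2, r)]\, dr$ in terms of $I(i, k-P)$ and $II(i, k-P)$. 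Plugging the expansion~(\ref{u-ld}) of $u$ together with the level-$P$ decay rates into the boundary piece of $I(i,k-P)$, all subleading contributions collapse by the weighted bounds, leaving for exactly one index $i$ a nonzero multiple of $\vartheta_{\tdk - P}$; the precise algebraic coefficient is extracted via the Lagrange-polynomial identities of Lemma~\ref{contour-integral}. The bulk term $II(i,k-P)$ is controlled by~(\ref{Vbound}) and~(\ref{FGbound}) and gives $O(R^{-1})$ contributions. The result is an identity of the shape $\varrho_{k-P}(t_1) - \varrho_{k-P}(t_2) = C_P (t_1 - t_2) \vartheta_{\tdk - P} + o_R(1)$ with $C_P \neq 0$; letting $R \to \infty$ and then $|t_1 - t_2| \to \infty$ with $\varrho_{k-P}$ uniformly bounded forces $\vartheta_{\tdk - P} = 0$, as in Lemma~\ref{2lCycle4}. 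This also makes $\varrho_{k-P}(t)$ independent of $t$, and a final time-integration argument (integrating the identity $\mu_{k-P}(t, R) = \varrho_{k-P} + O(R^{-1})$ from~(\ref{mu-exp}) over $[0, T]$ and noting that the resulting $t$-integral of $u_t$ is controlled by the improved asymptotics of $u$) forces $\varrho_{k-P} = 0$.

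Once both $\vartheta_{\tdk-P}$ and $\varrho_{k-P}$ vanish, the difference estimates~(\ref{difference-all-ld}) and~(\ref{difference-all-mu}) upgrade to $|\lambda_{\tdk - P}(t, r)| \lesssim r^{-4 + 3\epsilon}$ and $|\mu_{k - P}(t, r)| \lesssim r^{-3 + 3\epsilon}$, which are precisely the level-$(P+1)$ bounds at the indices $j = \tdk - P$ and $j = k - P$. The remaining level-$(P+1)$ bounds, including the $r^{\epsilon}$ bound on the new ``leading'' pair $\lambda_{\tdk - P - 1}$, $\mu_{k - P - 1}$, then follow by feeding the improved information back into~(\ref{difference-all-ld}) and~(\ref{difference-all-mu}) and running the dyadic contraction argument of Lemma~\ref{Cycle1} once more. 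The main technical obstacle is the vanishing step: correctly identifying the algebraic coefficient of $\vartheta_{\tdk-P}$ arising from the boundary term in $I(i, k-P)$ and verifying it is nonzero. This generalizes the elementary constant $-2$ that appeared alongside $\vartheta_2$ in the $\ell = 2$ computation and requires careful bookkeeping of the Lagrange-polynomial sums and the identities of Lemma~\ref{contour-integral}; the parity of $\ell$ will enter here, explaining why the odd-$\ell$ case is treated separately in the sequel.
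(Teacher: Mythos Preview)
Your proposal is correct and follows essentially the same route as the paper, which organizes the inductive step as Lemmas~\ref{Cycle2}--\ref{Cycle7}. Two small corrections: the nonzero coefficient of $\vartheta_{\tdk-P}$ in the $I(i,k-P)$ computation does not come from a single index $i$ but from the full sum $\sum_{i=1}^k \frac{c_i c_j}{(d-2i-2j)^2}$, and its nonvanishing is established by a separate residue computation (Remark~\ref{number1}) rather than directly by Lemma~\ref{contour-integral}; also, the even/odd split is driven by the numerology $\tdk=k+1$ versus $\tdk=k$ (the odd case has an extra unpaired $\mu_k$ to eliminate first), not by the coefficient computation itself.
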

  
   \begin{proof}[Proof of Proposition~\ref{weak-stat}]
 First, observe that  Lemma~\ref{Cycle1} covers the case  $P=0$.  Now we argue by induction.
 Suppose that~\eqref{induction-ld} and~\eqref{induction-mu}  are true for $P$ with $0\leq P\leq k-1$. We show that they must also  then hold for $P+1$. 
  We divide  the remainder of the proof into several  lemmas, namely    Lemma~\ref{Cycle2}  --  Lemma~\ref{Cycle7}. 
  
  \begin{lemma}\label{Cycle2} There exist bounded functions $\vartheta_{\tdk-P}(t), \varrho_{k-P}(t)$ such that 
  \begin{align}
  |\lambda_{\tdk-P}(t,r)-\vartheta_{\tdk-P}(t)| &\lesssim  O(r^{-4P-2}) \label{even-tdk-l} \\
|\mu_{k-P}(t,r)-\varrho_{k-P}(t)| &\lesssim O(r^{-4P-1}) \label{even-k-l}
  \end{align}
 where $O(\cdot)$ is uniform in time $t\in \R.$
  \end{lemma}
  \begin{proof} 
    First we insert our induction hypothesis (\ref{induction-ld}) into (\ref{difference-all-ld}).      
A quick computation shows that the cubic terms always dominate the growth rate in each summation term when $P\leq k-1$. Therefore, for $R_1$ as in (\ref{small-initial-all}) and for  $R_1< r<r'<2r$, we have  
      \begin{align}& |\lambda_{j}(t,r') -\lambda_{j}(t, r)|
\lesssim    r^{6\tdk-6P-9 -2j-d+9\epsilon} + r^{6\tdk-6P-2j-d-3+3\epsilon}\label{ld-P-difference}
 \end{align} 
  Here the second term on the right-hand-side above comes from the term involving $|\lambda_{\tdk-P}|^3$ in~\eqref{difference-all-ld},  which grows faster than the term involving  $|\mu_{k-P}|^3$. The first term on the right-hand-side arises by considering the remaining terms in~\eqref{difference-all-ld}.  
  
 Fix $r_0>R_1$, and   
  set $j=\tdk-P$ in~(\ref{ld-P-difference}).  Using the fact $d=4\tdk-1$, we get 
    \[|\ldp(t,2^{n+1}r_0)-\ldp(t,2^nr_0)|\lesssim (2^nr_0)^{-4P-2+3\epsilon}\] 
    This implies that the series below converges,  \[\sum_n|\ldp(t,2^{n+1}r_0)-\ldp(t,2^nr_0)|<\infty\]
    which in turn  implies that there exist a function $\thpt$ so that 
    \ant{
    \lim_{n \to \infty} \ldp(t,2^nr_0)  = \thpt 
    }
   Then, we have  
\begin{align*}|\thpt-\ldp(t, r_0)|&=\lim_{n\rightarrow \infty}|\ldp(t,2^{n+1}r_0)-\ldp(t,r_0)|\\
&\lesssim\lim_{n\rightarrow\infty} \sum_{m=1}^n|\ldd(t,2^{m+1}r_0)-\ldd(t,2^mr_0)| \\
&\lesssim   r_0^{-4P-2+3\epsilon}\sum_{m=1}^\infty (2^m)^{-4P-2+3\epsilon} \end{align*}
 We can conclude from the above that $\thpt$ is uniformly bounded since  (\ref{small-delta-control}) implies that $\ldp(t, r_0)$ is  uniformly bounded. 
As usual, we use  the difference estimate (\ref{ld-P-difference}), to conclude that, in fact, we have 
 $$\lim_{r\rightarrow\infty}\ldp(t,r)=\thpt,$$ which implies that 
 \ant{
 \abs{\ldp(t,r)} \lesssim 1 
 } 
 uniformly in time. 
 
  Similarly 
 we plug   (\ref{induction-mu}) into   (\ref{difference-all-mu}),   and show that  for $R_1< r<r'<2r$
   \begin{align*}& |\mup(t,r') -\mup(t, r)|  
\lesssim    r^{-4P-1+3\epsilon}
 \end{align*} 
Arguing as above, this implies that  there exists bounded function
 $\rpt$, so  that $$\lim_{r\rightarrow\infty}\mup(t,r)=\rpt$$ uniformly in time. 
 
 Using the  boundedness of $\ldp(t,r), \mup(t,r)$ in  (\ref{difference-all-ld}) (\ref{difference-all-mu}), we deduce that  for any $r>R_1$
  \[|\ldp(t,2^{n+1}r)-\ldp(t,2^nr)|\lesssim (2^nr)^{-4P-2}\] 
\[ |\mup(t,2^{n+1}r) -\mup(t, 2^nr)|  
\lesssim    (2^nr)^{-4P-1 } 
\]
and it follows that 
  \[|\ldp(t,r)-\thpt|\lesssim \sum_{n=0}^\infty (2^nr)^{-4P-2}\lesssim r^{-4P-2}\] 
\[ |\mup(t, r) -\rpt|  
\lesssim   \sum_{n=0}^\infty (2^nr)^{-4P-1 } \lesssim r^{-4P-1 }
\]
as desired.   
  \end{proof}
  Now we use the new information on $\lambda_j, \mu_j$ to write down a preliminary estimate regarding the asymptotic behavior  of $ u(t, r)$ as $r \to \infty$ assuming our induction hypothesis. 
  \begin{lemma}\label{Cycle3} We have the following preliminary estimates for $u(t, r)$: 
 \begin{align}
  r^{-2(\tdk-P)+d} u(t,r)=& \thpt  +O(r^{-2+ \app\epsilon})\label{EV-u-a}
 \end{align}
 As usual, the above holds uniformly in time. 
   \end{lemma}
   \begin{proof} The proof follows by plugging the estimates in (\ref{induction-ld}) along with the conclusion of  Lemma~\ref{Cycle2} into the formula (\ref{u-ld}) in Lemma~\ref{u-ld-mu}.
\end{proof} 
 \begin{remark}  The induction hypothesis (\ref{induction-ld}) and the corresponding asymptotics (\ref{EV-u-a}) mean that if we count backwards from $\tdk$ to $1$ to find the first $\lambda_j(t, r)$ that doesn't decay to $0$, then $u(t,r)$ decays like $r^{2j-d}$.  This coincides with the fact that $\lambda_j$ is the projection coefficient for $r^{2j-d}$.
   \end{remark}
 \begin{lemma}\label{Cycle4}
 $\thpt$  is independent of time and from now one we will write  $ \thpt = \thp$.
 \end{lemma}
 \begin{proof} 
Fix $t_1 \neq t_2$. Using (\ref{even-tdk-l}) and (\ref{ldjldj}) with $j=\tdk-P, j'=\tdk-P-1=k-P \geq 1$, together with (\ref{induction-ld}) and (\ref{induction-mu}) we have
 \begin{align*}
& |\thp(t_1)-\thp(t_2)|\\
=& \abs{\ldp(t_1,r)-\ldp(t_2,r)} + O(r^{-4P-2})\\
\lesssim & r^{-2} |\lambda_{k-P}(t_1,r)-\lambda_{k-P}(t_2,r)| +\abs{\sum_{m=1}^k \int_{t_2}^{t_1}r^{2m-2(\tdk-P)}\mu_m(t,r)dt} + O(r^{-4P-2})\\
\lesssim &r^{-2+3\epsilon}  +O\big(r^{-2+3\epsilon} (1+|t_1-t_2|)\big)  
 \end{align*}
   Letting $r\rightarrow \infty$ we see that 
 \[\thp(t_1)=\thp(t_2)\]
 as desired. 
  \end{proof}

 \begin{lemma}\label{Cycle5} $\thp=0$ and $\rpt$ is independent of time -- from now on we will write $\rpt = \rp$.  
 \end{lemma}
 \begin{proof}  We will use Lemma~\ref{mu-computation}
to prove both statements. Using
  (\ref{even-k-l})  we have 
\begin{align*}\rp(t_1)-\rp(t_2)  = & \frac{1}{R}\int_R^{2R} (\rp(t_1)-\rp(t_2) )\,  dr \\ = &\frac{1}{R}\int_R^{2R}( \mup(t_1)-\mup(t_2))  \, dr + O(R^{-4P-1})\end{align*} 
Now,  by setting $j=k-P$ in Lemma~\ref{mu-computation}, we obtain 
\[\frac{1}{R}\int_R^{2R}( \mup(t_1)-\mup(t_2) )dr  =   \sum_{i=1}^{k}\frac{c_ic_{j}}{d-2i-2j}\int_{t_1}^{t_2} (I(i,k-P)  + II(i,k-P)) \, dt\]
with the formulas for  $I(i,k-P), \,  II(i,k-P)$ given by  (\ref{I-formula})  and (\ref{II-formula}).
 
  From the point-wise estimates  for $V$ and  $\N$  in (\ref{Vbound}) and (\ref{FGbound}),  along with the  asymptotics  for $\vec u(t, r)$ in~\eqref{EV-u-a}, we see that 
\[|-V(r)u +\N(r,u)|\lesssim r^{-2\ell-4}|u| +r^{-3}|u|^2 +r^{2\ell-2} |u|^3\lesssim r^{-2\tdk-3-6P}  \]
Hence with $j=k-P$ we have 
\begin{equation*}\begin{aligned} 
II(i,k-P)=& \abs{  \frac{1}{R}\int_R^{2R} r^{d-2i-2j}\int_r^\infty  \big(-V(s)u(t,s)+ \N(s,u(t,s))\big)\, s^{2i-1} \,ds\,dr }
\\ \lesssim &     O(R^{-2-4P}) 
\end{aligned}
\end{equation*}
Next we examine  the main term 
\begin{equation*}\begin{aligned}
I(i,j)   = & -\frac{1}{R} (u(t,r)r^{d-2j-1})\Big|_{r=R}^{r=2R} + (2i-2j-1)\frac{1}{R}\int_{R}^{2R} u(t,r)r^{d-2j-2} dr\\
& - \frac{(2\ell-2i+3)(2i-2)}{R}\int_R^{2R}r^{d-2i-2j}\int_r^\infty u(t,s)s^{2i-3}ds\,dr
%
\end{aligned}
\end{equation*}
We plug in   (\ref{EV-u-a}) and observe  that for $j=k-P$
\[r^{d-2j-2}u(t,r)= \thp  + O(r^{-2+3\epsilon})\]
\[r^{d-2i-2j}\int_r^\infty u(t,s)s^{2i-3}ds =   \frac{\thp}{d-2i-2j}+ O(r^{-2+3\epsilon})\]
From these estimates we obtain
\[I(i,j)=\thp \frac{-2j(d-2j-2)}{d-2i-2j}+ O(R^{-2+3\epsilon})\]
Hence we get that the entire contribution from the term involving $I(i,j), 1\leq i\leq k$ is given by 
\[\sum_{i=1}^k\frac{c_ic_{j}}{d-2i-2j}\frac{-2j(d-2j-2)}{d-2i-2j} \thp (t_1-t_2) +O(R^{-2+3\epsilon}) \]
We will prove in Remark~\ref{number1} that  the coefficient in front of $(t_2- t_1)\thp$ is nonzero. We assume that this is so for the moment and  denote its absolute value  by $C>0$. Hence
\begin{align}\label{rho-theta-p}|\rp(t_1)-\rp(t_2)|= C  \abs{t_2- t_1}|\thp|  +  O(R^{-1}(1+|t_1-t_2|))
\end{align}
First we  let  $R\rightarrow \infty$ to obtain 
\[|\rp(t_1)-\rp(t_2)|= C  \abs{t_2- t_1}|\thp| \]
Then by taking 
  $|t_1-t_2|$ arbitrarily large, and  by the boundedness of 
 $\rp(t)$, we have 
  \[|\thp|= \frac{1}{C} \frac{|\rp(t_1)-\rp(t_2)|}{|t_1-t_2|}  \displaystyle\underset{ |t_1-t_2|\rightarrow \infty}{\longrightarrow} 0\]
which means $\thp=0$. 
 
But since $\thp=0$, we can go back to  (\ref{rho-theta-p}) to deduce that for fixed times $t_1 \neq t_2$ we have 
\[|\rp(t_1)-\rp(t_2)|=  O(R^{-1}(1+|t_1-t_2|))\underset{R \rightarrow \infty}{\longrightarrow}  0\]
which means that  $\rpt$ is independent of time.  
 \end{proof}
 \begin{remark}\label{number1} Here we prove that for  $1\leq j\leq k$ we have 
 \EQ{ \label{-C}
 \abs{\sum_{i=1}^k\frac{c_ic_{j}}{d-2i-2j}\frac{-2j(d-2j-2)}{d-2i-2j}} =  C \not=0
 }
 which was needed in the proof of Lemma~\ref{Cycle5} above. 
 
  We will  show that 
 $\sum_{i=1}^k\frac{c_i}{(d-2i-2j)^2}=\frac{1}{c_j}$.
 Arguing as in the proof of Lemma~\ref{contour-integral}, we set 
 $\al(z)=\Pi_{m=1}^{k}(z-x_m), \beta(z)=\Pi_{m=1}^{k}(z-y_m)$ with $x_m=2m, y_m=d-2m$ and 
 consider the contour integral
 \[\frac{1}{2\pi i}\oint_{\ga}\frac{\beta(z)}{\al(z)}\frac{1}{(z-y_j)^2}dz =\sum_{i=1}^{k}\Res(\frac{\beta(z)}{\al(z)}\frac{1}{(z-y_j)^2}, x_i) + \Res(\frac{\beta(z)}{\al(z)}\frac{1}{(z-y_j)^2}, y_j)\]
 where $\gamma$ is a large circle centered at the origin. The left-hand-side goes to $0$ as we let the radius of the circle $\gamma$ tend to $\infty$, and we can compute 
 \[\Res(\frac{\beta(z)}{\al(z)}\frac{1}{(z-y_j)^2}, x_i) =\frac{-c_i}{(d-2j-2i)^2} \]
 \[\Res(\frac{\beta(z)}{\al(z)}\frac{1}{(z-y_j)^2}, y_j)=\frac{\prod_{1\leq m \leq k, m \not=j}(y_j-y_m)}{\prod_{m=1}^k(y_j-x_m)}=\frac{1}{c_j}\]
 Therefore  $  \sum_{i=1}^k\frac{c_i}{(d-2i-2j)^2}=\frac{1}{c_j}$. In fact,  the number $C$ in~\eqref{-C} is given by $2j(d-2j-2)$, and we can check that this matches the corresponding number in Lemma~\ref{2lCycle4} in the case $\ell = 2$.
 \end{remark}

  \begin{lemma}\label{Cycle6} $\rp=0$.
 \end{lemma}
 \begin{proof} Suppose $\rp\not=0$. Using (\ref{even-k-l}), we see that when $R$ is large enough $\mup(t,R)$ will have the same sign as $\rp$ and 
 \begin{equation*}
 |\mup{(t,R)}| \geq \frac12 |\rp|.\end{equation*}
 Integrating $\mup(t,R)$ in time from $t=0$ to $t = T$ gives  
 \begin{align}\label{con-1}
 \abs{\int_0^T \mup(t,R)dt}\geq \frac12 T|\rp|
 \end{align}
On the other hand, using the explicit formula (\ref{mu-exp}) for $\mup$ we can deduce that  
 \begin{align}
 \abs{\int_0^T \mup(t,R)dt }\lesssim &\sum_{i=1}^k \abs{R^{d-2i-2(k-P)}\int_R^\infty\int_0^T u_t(t,s)s^{2i-1}dt ds }\nonumber\\
\lesssim &\sum_{i=1}^k \abs{R^{d-2i-2(k-P)} \int_R^\infty (u(T,s)-u(0,s))s^{2i-1}dt ds } \nonumber\\
\lesssim & O(R^{\app\epsilon})\label{con-2}
 \end{align}
 where we have used (\ref{EV-u-a}) and the fact that  $\thp=0$ above. Therefore, combining~\eqref{con-1} and~\eqref{con-2} we can find a large fixed $R$ so that for all $T$ we have 
 \ant{
 \frac12 T|\rp| \lesssim R^{\app\epsilon}
 }
 Letting  $T\rightarrow\infty$ above  gives a contradiction if $\rp \neq 0$.  Hence $\rp=0$. 
 \end{proof}
 
 Now that we have proved that $\ldp(t,r), \mup(t,r) \to 0 $ as $ r \to \infty$  we can feed this information   back into the difference estimates  (\ref{difference-all-ld}), (\ref{difference-all-mu}) to obtain improved decay. Here we make some simple observations (to prepare for the next lemma)
regarding an argument that we have used often.  
 
 \begin{remark}\label{rem:no-better} 
 We make two observations regarding  arguments that were used for example, in the proofs of Lemma~\ref{Cycle1} and Lemma~\ref{Cycle2} and will be used again below.  
 \begin{enumerate}
\item  Starting with the difference estimates (\ref{difference-all-ld}) and~\eqref{all-difference-delta},   suppose we can prove  
\begin{equation}\label{rem-est1}|\lambda_j(t,2^{n+1}r_0)-\lambda_j(t,2^nr_0)|\lesssim (2^nr_0)^a\end{equation}
 and/or
\begin{equation}\label{rem-est2}|\lambda_j(t,2^{n+1}r_0)|\leq (1+C\delta_1)\lambda_j(t,2^nr_0) +(2^nr_0)^a\end{equation}
for some number $a \in \R$ and where  $r_0>R_1$ is fixed. For a fixed $\epsilon>0$ we then  choose  $\delta_1, R_1$ in  (\ref{small-initial-all}) so that  $1+C_1\delta<2^\epsilon$.
\begin{itemize} 
\item  If  $a\geq \epsilon,$ then we can show by iterating (\ref{rem-est2})  that $|\lambda_j(t,r)|\lesssim r^a$ uniformly in time. See for example the proof of Lemma~\ref{Cycle1} for an argument of this nature. 

\item If $a < 0$, then  (\ref{rem-est1}) is enough to conclude that $\lambda_j(t,r)$ has limit $\vartheta_j(t)$ as $r \to \infty$, which is bounded in time, and moreover,  $$|\lambda_j(t,r)-\vartheta_j(t) |\lesssim r^a$$ uniformly in time. This further shows that $|\lambda_j(t,r)| \lesssim 1$. See for example the proof Lemma~\ref{Cycle2} for such an argument. 
\end{itemize}
\item By the observations in Remark~\ref{domination}, we note  that when we feed
  (\ref{induction-ld}) and  (\ref{induction-mu}) with $0\leq P\leq k-1$ into  (\ref{difference-all-ld}) and (\ref{difference-all-mu}), the cubic terms always dominate. But, when we insert  (\ref{induction-ld}) and (\ref{induction-mu}) with $P=k$ into  (\ref{difference-all-ld}) and (\ref{difference-all-mu}), the linear terms will dominate.
  \end{enumerate}
 \end{remark}
 
  \begin{lemma}\label{Cycle7}   If 
  ~\eqref{induction-ld} and~\eqref{induction-mu} are true for $P$ with $1\leq P\leq k-1$, then they are  also true for $P+1$.
 \end{lemma}
 \begin{proof}  We insert (\ref{even-tdk-l}), (\ref{even-k-l})  (recalling that we have proved that $\thp=\rp=0$) as well as the  induction hypothesis regarding the other  coefficients, i.e.,  (\ref{induction-ld}) and (\ref{induction-mu}),  into the system (\ref{difference-all-ld}) and (\ref{difference-all-mu}). This gives,   for $ R_1< r <r'<2r$,
 \begin{align}
 & |\lambda_j(t, r')-\lambda_j(t, r)|\nonumber\\
 \lesssim & r^{-2j-d+ 6(\tdk-P)-9 +9\epsilon} + r^{-2j-d +2(\tdk-P)-4P-1} + r^{-2j-d +6(\tdk-P)-12P-9}\label{true-est-ld}
 \end{align}
 where the first term on the right hand side arises from  the summations involving $|\lambda_i|$  with indices  $i\not=\tdk-P$ and the summations involving $|\mu_i|$ with  indices $i\not=k-P$. The second term comes from the linear terms involving $|\ldp|, |\mup|$, and the third term comes from  cubic terms involving $|\ldp|, |\mup|$.
 
 By  comparing the growth rates of each term on the right-hand-side of~\eqref{true-est-ld} with $2(\tdk-P-1 -j)-2+\app\epsilon$, we can simplify the above to  
  \begin{align*}
 |\lambda_j(t, r')-\lambda_j(t, r)|
 \lesssim  r^{2(\tdk-P-1 -j)-2+\app\epsilon}
 \end{align*}
 Noting that  (\ref{induction-ld}) implies $\lambda_j(t,r)\rightarrow 0$ for $\tdk-P <j\leq \tdk$, 
 we have, using the second bullet point in Remark~\ref{rem:no-better}, 
  \begin{align*} |\lambda_j(t, r)| &\lesssim  r^{2(\tdk-P-1 -j)-2+\app\epsilon},  \text{ when }\tdk-P\leq j\leq \tdk  \\
  |\lambda_j(t, r)| &\lesssim 1\lesssim r^{\epsilon},  \hspace{1.8cm}\text{ when }j=\tdk-(P+1)\end{align*}
However, if  $j<\tdk- (P+1)$, we have  $2(\tdk-P-1 -j)-2+\app\epsilon>\epsilon$, and hence, using the first bullet point in Remark~\ref{rem:no-better}, that   
    \[ |\lambda_j(t, r)| \lesssim  r^{2(\tdk-P-1 -j)-2+\app\epsilon},  \text{ when }1\leq j <  \tdk-(P+1) \]
The same argument shows that  
  \begin{align*}
 |\mu_j(t, r')-\mu_j(t, r)|
 \lesssim  r^{2(\tdk-P-1 -j)-1+\app\epsilon} =r^{2(k-P-1-j)+1+\app\epsilon}
 \end{align*}
and hence  
\begin{align*} |\mu_j(t, r)|
 &\lesssim   r^{2(k-P-1-j)+1+\app\epsilon}\text{ when } 1\leq j\leq k,  j \not=k-(P+1) \\
 |\mu_j(t, r)| &\lesssim 1 \lesssim r^{\epsilon} \hspace{1.8cm}\text{ when } j =k-(P+1)\end{align*}
 This completes the proof of Lemma~\ref{Cycle7}. 
 \end{proof}



Finally, we note that the work in Lemma~\ref{Cycle2} though Lemma~\ref{Cycle7} verifies the inductive step. This completes the proof of Proposition~\ref{weak-stat} (and hence of Proposition~\ref{weak-induction}).  
\end{proof}


The last step before completing the proof of Proposition~\ref{prop:as} is to show that $\la_1(r):= \la_1(t, 0)$ has a limit as $r \to \infty$. In what follows we will restrict to time $t =0$ and write $\la_j(r)  := \la_j(0, r)$ and $\mu_j(r) := \mu_j(t, r)$. 

\begin{lemma}\label{final-a} There exists a number  $\vartheta_1 \in \R$ so  that 
\begin{align}\label{final-ld-1}
|\lambda_1(r)-\vartheta_1|   = O( r^{1-d}) \mas r \to \infty
\end{align}
Also we have the improved decay estimates 
\begin{align}
 |\lambda_j(r)|& \lesssim r^{-2j-d+3}, \hspace{1cm} 2\leq j\leq \tdk \label{final-ld-2}\\
        |\mu_j(r)|& \lesssim r^{-2j-d+2},\hspace{1cm} 1\leq j\leq k  \label{final-mu}
\end{align}
\end{lemma}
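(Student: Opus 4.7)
The strategy is to exploit the quantitative decay information from Proposition~\ref{weak-induction} by feeding it back into the difference estimates \eqref{difference-all-ld} and \eqref{difference-all-mu} specialized to $t=0$ (using the shorthand $\lambda_j(r):=\lambda_j(0,r)$, $\mu_j(r):=\mu_j(0,r)$). Since Proposition~\ref{weak-induction} already establishes that every coefficient except $\lambda_1$ goes to zero polynomially with an explicit rate, the only remaining task is to identify the limit of $\lambda_1$ and then bootstrap for the sharp rates.

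To extract the limit $\vartheta_1$, I would set $j=1$ in \eqref{difference-all-ld} and insert the bounds from \eqref{final-est}. A quick power-counting shows that the dominant contribution on the right-hand side is the cubic-in-$\lambda_1$ term with $i=1$, namely $r^{-2-d}\cdot r^{3}\cdot|\lambda_1|^{3}\lesssim r^{1-d+3\epsilon}$; every other summand is smaller by at least $r^{-2}$. This produces
\[
|\lambda_1(r')-\lambda_1(r)|\lesssim r^{1-d+3\epsilon}\qquad (R_1\leq r\leq r'\leq 2r).
\]
Since $1-d+3\epsilon<0$ for $\epsilon$ small (as $d\geq 7$), the telescoping argument already used in Lemma~\ref{Cycle2} shows that $\lambda_1(2^{n}r_0)$ is Cauchy as $n\to\infty$, hence has a limit $\vartheta_1\in\R$, and a further application of the difference estimate yields the preliminary bound $|\lambda_1(r)-\vartheta_1|\lesssim r^{1-d+3\epsilon}$.

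Now that $\lambda_1$ is known to be bounded, I would bootstrap: re-examining \eqref{difference-all-ld} with $j=1$ and the improved bound $|\lambda_1|\lesssim 1$, all three $i=1$ contributions (linear, quadratic, and cubic) in $\lambda_1$ produce the rate $r^{1-d}$, while all other summands remain strictly smaller. Summing yields \eqref{final-ld-1}. For the coefficients $\lambda_j$ with $2\leq j\leq \tdk$, running the same power count in \eqref{difference-all-ld} at general $j$ shows that the dominant terms are again the $i=1$ linear/cubic terms in $\lambda_1$, both of which contribute $r^{-2j-d+3}$; since Proposition~\ref{weak-induction} guarantees $\lambda_j(r)\to 0$ for these $j$, the standard telescoping argument gives \eqref{final-ld-2}. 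An entirely analogous analysis of \eqref{difference-all-mu}, using the prefactor $r^{-2j-d-1}$, yields $|\mu_j(r'){-}\mu_j(r)|\lesssim r^{-2j-d+2}$ and hence \eqref{final-mu}.

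The argument is essentially routine bookkeeping once Proposition~\ref{weak-induction} is in hand; the only small subtlety is the two-step bootstrap for $\lambda_1$ (first extracting a limit and an $O(1)$ bound from the weak $r^{\epsilon}$ estimate, then re-inserting this improvement to remove the $r^{3\epsilon}$ loss). There is no genuinely hard step beyond carefully checking, for each coefficient, which of the nine summands in the difference estimates dominates after the bounds of Proposition~\ref{weak-induction} are substituted.
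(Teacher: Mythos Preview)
Your proposal is correct and follows essentially the same approach as the paper's own proof: feed Proposition~\ref{weak-induction} into the difference estimate \eqref{difference-all-ld} for $j=1$ to get $|\lambda_1(r')-\lambda_1(r)|\lesssim r^{1-d+3\epsilon}$, extract the limit $\vartheta_1$ and the boundedness of $\lambda_1$, then bootstrap once more with $|\lambda_1|\lesssim 1$ to remove the $r^{3\epsilon}$ loss and obtain the sharp rates \eqref{final-ld-1}--\eqref{final-mu}. Your identification of the dominant $i=1$ cubic-in-$\lambda_1$ term and the two-step bootstrap matches the paper exactly.
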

  \begin{proof}
  First we use the estimates from    (\ref{final-est}) in (\ref{difference-all-ld}) for $\lambda_1$, which for a fixed  $r_0>R_1$, yields 
  \begin{align*}|\lambda_1(2^{n+1}r_0)-\lambda_1(2^nr_0)|\lesssim& (2^nr_0)^{1-d }|\lambda_1(2^nr_0)|^3 +(2^nr_0)^{-1-d+3\epsilon}\\
  \lesssim& (2^nr_0)^{1-d+3\epsilon}\end{align*}
As noted in Remark~\ref{rem:no-better}, this 
is enough to conclude that  there exists $\vartheta \in \R$  so that 
  $$\lambda_1(r) \to \vartheta  \mas r \to \infty$$ 
  and moreover   that $\abs{\lambda_1(r)} \lesssim 1$ is  bounded. 
  Using the new information that $\la_1(r)$ is bounded in (\ref{difference-all-ld}), we obtain
    \begin{align*}|\lambda_1(2^{n+1}r)-\lambda_1(2^nr)|\lesssim& (2^nr)^{1-d } \end{align*}
    Therefore,  \[|\lambda_1(r)-\vartheta_1| \lesssim \sum_{n  \ge 0}  \abs{\lambda_1(2^{n+1}r)-\lambda_1(2^nr)} \lesssim r^{1-d}\]
    
   Finally, combining the fact that  $\abs{\lambda_1(r)} \lesssim 1$ is bounded with (\ref{final-est})  we can deduce from  (\ref{difference-all-ld}) (\ref{difference-all-mu}), the following improved  decay for all of the other coefficients
    \EQ{ \label{last}
  &  |\lambda_j(2^{n+1}r)-\lambda_j(2^nr)|\lesssim (2^nr)^{-2j-d+3} \\
       & |\mu_j(2^{n+1}r)-\mu_j(2^nr)|\lesssim (2^nr)^{-2j-d+2}
       } 
Moreover, we know from (\ref{final-est})  that  then $\lambda_j(r) \to 0$ for $2\le j \le \ti k$ and  $\mu_j(r) \to 0$ for  each $ 1\leq j\leq k $ and  hence  (\ref{final-ld-2}) and  (\ref{final-mu}) follow from~\eqref{last}.
        \end{proof}
       
       We can now finish the proof of Proposition~\ref{prop:as} when $\ell \ge 2$ is even. 
       
       \begin{proof}[Proof of Proposition~\ref{prop:as} when $\ell \ge 2$ is even]
       
  We  insert  (\ref{final-ld-1}), (\ref{final-ld-2}), and (\ref{final-mu}) into the identities (\ref{u-ld}), (\ref{u-mu}) for $t = 0$. This gives 
        \begin{equation*}
        \begin{aligned}
r^{d-2}u_0(r) &= \vartheta_1  +O(r^{-d+1})  \mas r \to \infty\\
\int_r^\infty u_1(s)s^{2i-1}ds & =O(r^{2i + 2-2d}) \mas r \to \infty
\end{aligned}\end{equation*}
as desired. This completes the proof.      
  \end{proof}
  \subsubsection{\textbf{Proof of Proposition~\ref{prop:as} when $\ell\geq 2$ is odd}}  \quad \\
  In this case we  have $d=2\ell+3, k=\tdk=\frac{\ell+1}{2}$, and thus  $d=4k+1$. The proof is very similar to the argument given when $\ell \ge2$ is even, with a few subtle differences due to the different numerology involving $d, k,$ and $ \ti k$. In particular, in the current situation there is an extra term $\mu_k(t, r)$ which we must show tends to zero as $r \to \infty$ before beginning  the induction argument, which deals with the pairs $\la_{j}(t, r)$, $\mu_{j-1}(t, r)$ for $2 \le j \le k$. 
  
  We will give an outline of the proof, highlighting the slight differences that arise in the argument. However,  we will omit details and simply refer to the corresponding argument in the previous subsection.

 We begin  with the $r^{\epsilon}$-control, which is the analog of Lemma~\ref{Cycle1}. 
  \begin{lemma}[$\epsilon$-control]\label{OCycle1} Given any small fixed number $0<\epsilon\ll 1$,  we can find $\delta_1$ as in  \textnormal{(\ref{small-initial-all})} such that the projection coefficients defined in \textnormal{(\ref{u-projection})} satisfy the following  estimates   uniformly in time. 
 \begin{equation}\label{1step-growth-odd}  \left\{ \begin{aligned}
|\lambda_{k}(t,r)\lesssim & \,r^{\epsilon}\\
 |\mu_k(t,r)|\lesssim & \,r^{\epsilon}\\
 | \lambda_i(t, r)| \lesssim&  \,r^{2k-2i-1+3\epsilon}\hspace{1cm}\forall 1\leq i < k\\
 | \mu_i(t, r)| \lesssim& 
 \,r^{2k-2i-2+3\epsilon} \hspace{1cm}\forall 1\leq i < k
   \end{aligned}\right.\end{equation}   
 \end{lemma}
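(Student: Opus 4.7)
The proof will closely follow the scheme of Lemma~\ref{Cycle1}, with the weights in the iterative quantity modified to reflect the odd-case numerology $k = \tdk$, $d = 4k+1$. Three steps are needed: a base growth estimate extracted from the Corollary~\ref{cor:all-difference}, a feedback loop into the full difference estimates~\eqref{difference-all-ld}--\eqref{difference-all-mu}, and transfer from dyadic scales to all $r > R_1$.

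\textbf{Step 1 (Base $\epsilon$-growth).} Fix $r_0 > R_1$ and define
\ant{
H_n := \sum_{j=1}^{k} r^{2j - 2k}|\la_j(t, r)| + \sum_{j=1}^{k} r^{2j - 2k + 1}|\mu_j(t, r)|, \quad r = 2^n r_0.
}
Multiplying the estimate~\eqref{all-difference-delta} for $|\la_j(t,2r)-\la_j(t,r)|$ by $r^{2j-2k}$, one checks that each summand on the right becomes either $r^{2i-2k}|\la_i|$ or $r^{2i-2k+1}|\mu_i|$ --- that is, a summand of $H_n$. Likewise, multiplying the estimate~\eqref{all-dd-mu} for $|\mu_j(t,2r)-\mu_j(t,r)|$ by $r^{2j-2k+1}$ closes on $H_n$ as well. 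Summing over $1 \le j \le k$ gives $H_{n+1} \le (1 + 2kC\delta_1)H_n$. Since~\eqref{small-delta-control} makes $H_0$ finite (for each fixed $r_0$, uniformly in $t$), choosing $\delta_1$ so small that $1 + 2kC\delta_1 < 2^\epsilon$ and iterating yields $H_n \lesssim (2^n r_0)^\epsilon$, uniformly in time. Reading off the weights produces the base estimates
\ant{
|\la_j(t, 2^n r_0)| \lesssim (2^n r_0)^{2k - 2j + \epsilon}, \quad |\mu_j(t, 2^n r_0)| \lesssim (2^n r_0)^{2k - 2j - 1 + \epsilon}, \quad 1\le j\le k.
}

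\textbf{Step 2 (Feedback into the full difference estimates).} Substituting the base estimates into~\eqref{difference-all-ld} and~\eqref{difference-all-mu}, and using $d = 4k+1$, the cubic terms involving $|\la_k|^3$ and $|\mu_k|^3$ dominate the growth of the right-hand side (cf.\ Remark~\ref{domination}). After collecting terms one obtains bounds of the form
\ant{
|\la_j(t, 2^{n+1} r_0) - \la_j(t, 2^n r_0)| &\le C\delta_1|\la_j(t,2^n r_0)| + C(2^n r_0)^{2k - 2j - 1 + 3\epsilon},\\
|\mu_j(t, 2^{n+1} r_0) - \mu_j(t, 2^n r_0)| &\le C\delta_1|\mu_j(t,2^n r_0)| + C(2^n r_0)^{2k - 2j - 2 + 3\epsilon}.
}
Iterating exactly as in the even case (see the two bullet points of Remark~\ref{rem:no-better}): for $j=k$ the first (geometric) term is dominant and yields $|\la_k(t,2^n r_0)|, |\mu_k(t,2^n r_0)| \lesssim (2^n r_0)^\epsilon$; for $j<k$ the inhomogeneous power dominates and produces $|\la_j|,|\mu_j|$ bounded by the stated powers of $2^n r_0$.

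\textbf{Step 3 (Passage to general $r$).} Finally, the difference estimates~\eqref{difference-all-ld}--\eqref{difference-all-mu} allow us to transfer the dyadic bounds to arbitrary $r \in [2^n r_0, 2^{n+1} r_0]$ without loss, completing~\eqref{1step-growth-odd}.

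\textbf{Main obstacle.} The only genuine difference from Lemma~\ref{Cycle1} is the algebraic bookkeeping behind Step 1. In the even case the relations $\tdk = k+1$ and $d = 4\tdk-1$ select the weights $r^{2j-2\tdk}|\la_j|,\, r^{2j-2\tdk+1}|\mu_j|$, but in the odd regime $\tdk = k$ collapses one index and $d = 4k+1$ shifts the numerology by one. One must therefore verify that the modified weights $r^{2j-2k}$ (for $\la_j$) and $r^{2j-2k+1}$ (for $\mu_j$) still cause the off-diagonal terms in \eqref{all-difference-delta}--\eqref{all-dd-mu} to reorganize into summands of the same $H_n$; once this closure is established, the rest of the argument --- the geometric iteration, the cubic-dominance analysis, and the dyadic-to-continuous passage --- is formally identical to the even case with $\tdk$ everywhere replaced by $k$ and $d=4\tdk-1$ replaced by $d=4k+1$.
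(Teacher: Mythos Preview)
There is a genuine gap in Step~1. Your iterative quantity $H_n=\sum_j r^{2j-2k}|\la_j|+\sum_j r^{2j-2k+1}|\mu_j|$ (with $r=2^n r_0$) does \emph{not} satisfy $H_{n+1}\le(1+2kC\delta_1)H_n$. The closure of the right-hand side of~\eqref{all-difference-delta}--\eqref{all-dd-mu} under multiplication by your weights is fine --- that is indeed the check you singled out in your ``Main obstacle'' paragraph --- but you overlooked the \emph{left}-hand side: passing from $r$ to $2r$ in the weights introduces the factor $2^{2j-2k+1}$ on the $\mu$-block, and at the top index $j=k$ this factor equals~$2$. Hence the term $r|\mu_k|$ doubles at every step, and the best one can extract from your $H_n$ is $H_{n+1}\le 2(1+O(\delta_1))H_n$, which is useless for an $\e$-growth conclusion.

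The paper avoids this by shifting both families of weights down by one power of~$r$: it sets
\[
H^o_n=\sum_{j=1}^{k} r^{2j-2k-1}|\la_j(t,r)|+\sum_{j=1}^{k} r^{2j-2k}|\mu_j(t,r)|,\qquad r=2^n r_0.
\]
Now every exponent is $\le 0$, so the scaling factors $2^{2j-2k-1}\le 2^{-1}$ and $2^{2j-2k}\le 1$ are harmless, and the iteration genuinely closes as $H^o_{n+1}\le(1+2kC\delta_1)H^o_n$. The resulting base bounds are one power of $r$ weaker than what you wrote, namely $|\la_j|\lesssim r^{2k+1-2j+\e}$ and $|\mu_j|\lesssim r^{2k-2j+\e}$; feeding these into~\eqref{difference-all-ld}--\eqref{difference-all-mu} the cubic-dominance analysis produces exactly the inhomogeneous power $2k-2j-1+3\e$ you stated in Step~2 (with your own weights the honest power would have been $2k-2j-4+3\e$, so the number you quoted already presupposed the correct base). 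With this single correction to the weights, your Steps~2 and~3 go through verbatim.
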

 \begin{proof}  From (\ref{all-difference-delta}) and (\ref{all-dd-mu}), we have for all $r \ge R_1$ 
 \begin{align*}
   |\lambda_j(t,2r)| 
\leq&  |\lambda_j(t,r)|+ C \delta_1 \left(\sum_{i=1 }^{k} r^{2i-2j}|\lambda_i(t,r)| 
+ \sum_{i=1}^k  r^{2i-2j+1} |\mu_i(t,r)|\right)
\\
|\mu_j(t,2r) | 
 \leq&  |\mu_j(t,r) | + C\frac{\delta_1}{r} \left(\sum_{i=1}^{k} r^{2i-2j}|\lambda_i(t,r)| + \sum_{i=1 }^k  r^{2i-2j+1} |\mu_i(t,r)|\right) 
 \end{align*}
 Now fix   $r_0>R_1$  and define
\begin{equation}\label{H:odd}
H^o_n = \sum_{j=1}^{k} (2^nr_0)^{2j-2k-1}|\lambda_j(t,2^nr_0)| 
+ \sum_{j=1}^k  (2^nr_0)^{2j-2k} |\mu_j(t,2^nr_0)|\end{equation} 
Iterating, we obtain 
\begin{equation*}
H^o_{n+1}\leq (1+2kC\delta_1)H^o_n\end{equation*}
Now given $\epsilon>0$,  we can find $\delta_1$ small enough so that  $(1+2kC\delta_1)<2^\epsilon$, and hence\[H^o_n\leq 2^{n\epsilon}H^o_0\]
This then  implies that for $1\leq j\leq k$, 
\begin{equation}\label{base-growth-odd} |\lambda_j(t,2^nr_0)| \lesssim  (2^nr_0)^{2k+1-2j +\epsilon}, \hspace{0.5cm}  |\mu_j(t,2^nr_0)|\lesssim (2^nr_0)^{2k-2j+ \epsilon}\end{equation}
which is an improvement over (\ref{small-delta-control}). 

Arguing as in the proof of Lemma~\ref{Cycle1}, using (\ref{difference-all-ld})  from Lemma~\ref{lem:all-difference} we obtain 
\begin{align*} &|\lambda_j(t,2^{n+1}r_0) -\lambda_j(t, 2^nr_0)| 
\lesssim C\delta_1|\lambda_j(t, 2^nr_0)| + C'(2^nr_0)^{2k-2j-1+3\epsilon}
\\
 &|\mu_i(t,2^{n+1}r_0) -\mu_i(t, 2^nr_0)| \leq C\delta_1|\mu_i(t, 2^nr_0)|+ C' (2^nr_0)^{2k-2i-2+3\epsilon}
 \end{align*}
which yields (using also \ref{base-growth-odd}) in the case of  $\mu_k$), 
\begin{equation*} 
\left\{\begin{aligned}
 |\mu_{k}(t,2^{n}r_0)|\lesssim & (2^nr_0)^{\epsilon} \\
 |\lambda_k(t,2^{n+1}r_0)|\leq &(1+C\delta_1) |\lambda_k(t, 2^nr_0)| +C' (2^nr_0)^{-1+3\epsilon} \\
 |\lambda_j(t,2^{n+1}r_0)|\leq& (1+C\delta_1) |\lambda_j(t, 2^nr_0)|  +C'(2^nr_0)^{2k-2j-1+3\epsilon}, \, \quad  \forall 1\leq j <k\\
 |\mu_i(t,2^{n+1}r_0)|\leq  &(1+C\delta_1) |\mu_i(t, 2^nr_0)|  +C'(2^nr_0)^{2k-2i-2+3\epsilon},  
 \quad \, \forall 1\leq i <k
\end{aligned} \right.\end{equation*}
The usual argument then gives  
 \begin{equation*} 
\left\{\begin{aligned}
   |\lambda_k(t,2^nr_0)|\lesssim & (2^nr_0)^{\epsilon},\hspace{1cm} |\mu_{k}(t,2^{n}r_0)| \lesssim  (2^nr_0)^{\epsilon}\\
 |\lambda_j(t,2^nr_0)| \lesssim & (2^nr_0)^{2k-2j-1+3\epsilon} \hspace{0.2cm} \forall 1\leq j <k\\ |\mu_i(t,2^nr_0)|\lesssim& (2^nr_0)^{2k-2i-2+3\epsilon}\hspace{0.2cm}\forall 1\leq i <k\end{aligned} \right. \end{equation*}
which is an  improvement over (\ref{base-growth-odd}). Lemma~\ref{OCycle1} then follows from another application of the difference estimates. 
 \end{proof}
 \begin{remark} We remark on two subtle differences between the $\ell$ even and $\ell$ odd cases. One is the choice of slightly different functions $H^e_n$ (\ref{H:even}) and $H^o_n$ (\ref{H:odd}) above.  Another occurs  when we plug the $\epsilon-$control Lemma~\ref{Cycle1} or~\ref{OCycle1} into the difference  estimates (\ref{difference-all-ld}) (\ref{difference-all-mu}).  
When $\ell$ is even, the dominant growth rate  comes from the  $\lambda_j$ terms, while when $\ell$ is odd, the dominant growth rate   comes from the $\mu_j$ terms. 

 \end{remark}
 
 As we mentioned  above,  we first show  that $\mu_k(t,r)\rightarrow 0$ as $r\rightarrow \infty$ before performing the  inductive argument on pairs $\lambda_j, \mu_{j-1}$ as in the even case. 
 
 \begin{lemma}\label{OCycle2} There exists a bounded function $\varrho_k(t)$ for each $t\in \R$ so  that 
 \begin{equation}\label{muk} |\mu_k(t,r)-\varrho_k(t)|=O(r^{-2}) \text{ as } r\rightarrow \infty
 \end{equation}
 where the $O(\cdot)$ above  is uniform in time. 
 \end{lemma}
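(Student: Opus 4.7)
The plan is to apply the difference estimate~\eqref{difference-all-mu} from Lemma~\ref{lem:all-difference} at $j=k$ after inserting the $\epsilon$-control bounds from Lemma~\ref{OCycle1}, and then to run a boundedness bootstrap to remove the resulting $\epsilon$-loss. Since $d=4k+1$ in the odd case, the prefactor in~\eqref{difference-all-mu} is $r^{-2j-d-1}=r^{-6k-2}$. A direct exponent count shows that among all contributions produced on the right-hand side, the dominant term is the cubic contribution $r^{6k}|\mu_k(t,r)|^3$ at $i=k$, which yields $r^{-6k-2}\cdot r^{6k}\cdot r^{3\epsilon}=r^{-2+3\epsilon}$; every other contribution -- the linear and quadratic $|\mu_k|$ terms, the $|\lambda_k|$-terms (at worst $r^{-5+3\epsilon}$ from the cubic), and all $i<k$ contributions (of order $r^{-4k-2+3\epsilon}$ or better) -- decays strictly faster.

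Consequently, fixing $r_0>R_1$ and summing the estimate $|\mu_k(t,2^{n+1}r_0)-\mu_k(t,2^n r_0)|\lesssim (2^n r_0)^{-2+3\epsilon}$ over $n$, the series is convergent and $\mu_k(t,2^n r_0)$ is Cauchy; denote its limit by $\varrho_k(t)$. The uniform bound~\eqref{small-delta-control} guarantees that $\varrho_k(t)$, and hence the sequence $\mu_k(t,2^n r_0)$, are uniformly bounded in $t$; this is precisely the situation discussed in the second bullet of Remark~\ref{rem:no-better}. We now perform the key bootstrap: feeding the boundedness $|\mu_k(t,r)|\lesssim 1$ back into~\eqref{difference-all-mu}, the cubic $\mu_k$ term produces the sharper bound $r^{-2}$ with no $\epsilon$-loss, while all competing contributions remain at least $r^{-5+3\epsilon}$. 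Telescoping then yields $|\mu_k(t,2^n r_0)-\varrho_k(t)|\lesssim (2^n r_0)^{-2}$ uniformly in $t$, and one final application of~\eqref{difference-all-mu} on a dyadic interval $[r,2r]$ transfers the estimate from the sequence $\{2^n r_0\}$ to all $r\ge R_1$, giving~\eqref{muk}.

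The main obstacle is purely the bookkeeping of exponents: one has to verify carefully that, after inserting the sharp bounds of Lemma~\ref{OCycle1} into each of the six types of summands in~\eqref{difference-all-mu}, \emph{only} the cubic $i=k$ $\mu$-term realizes the critical rate and every other summand decays strictly faster, so that the bootstrap of boundedness truly converts $r^{-2+3\epsilon}$ into $r^{-2}$ without any competing term saturating that rate. Apart from this accounting -- which is ultimately a numerology check involving the identity $d=4k+1$ -- the argument follows the same two-step pattern (existence of a bounded limit, then rate upgrade) used in the even case in Lemma~\ref{Cycle2}, and will serve as the base step for the subsequent inductive treatment of the pairs $(\lambda_j,\mu_{j-1})$ for $2\leq j\leq k$.
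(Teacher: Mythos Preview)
Your proposal is correct and follows essentially the same approach as the paper: plug the $\epsilon$-control bounds from Lemma~\ref{OCycle1} into~\eqref{difference-all-mu} at $j=k$ to obtain the $(2^nr_0)^{-2+3\epsilon}$ difference estimate, extract the bounded limit $\varrho_k(t)$, and then bootstrap the uniform boundedness of $\mu_k$ back into~\eqref{difference-all-mu} to remove the $\epsilon$-loss and obtain the $r^{-2}$ rate. Your exponent accounting (identifying the cubic $\mu_k$ term as dominant at $r^{-2+3\epsilon}$, with the next competitor being the cubic $\lambda_k$ term at $r^{-5+3\epsilon}$) is correct and in fact more explicit than what the paper records.
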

 \begin{proof} By the now usual argument, we fix  $r_0>R_1$ and plug (\ref{1step-growth-odd}) into the difference estimates (\ref{difference-all-mu}), 
 \[|\mu_k(t, 2^{n+1}r_0) -\mu_k(t, 2^nr_0)| \lesssim (2^nr_0)^{-2+3\epsilon}\]
 As in Remark~\ref{rem:no-better} we can deduce from the above that $\mu_k(t, 2^nr_0)$ converges to a  bounded function $\varrho_k(t)$,  which in turn implies that $\mu_k(t,2^nr_0)$ is uniformly bounded.   From the difference relation (\ref{difference-all-mu}) we conclude that in fact $\lim_{r \to \infty} \mu_k(t, r)  = \varrho_k(t)$. 
 
Using this new information together with (\ref{1step-growth-odd}) we can deduce from  (\ref{difference-all-mu}) that  
 \[|\mu_k(t, 2^{n+1}r) -\mu_k(t, 2^nr)| \lesssim (2^nr)^{-2}\]
whence
 \[|\mu_k(t, r) -\varrho_k(t)| \lesssim r^{-2}\]
 uniformly in time. 
 \end{proof}
 \begin{lemma} $\varrho_k(t)$ is independent of time and from now on we write  $\varrho_k(t) = \varrho_k $.
 \end{lemma}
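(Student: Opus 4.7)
My plan is to adapt the argument in the proof of Lemma~\ref{Cycle5} to the present setting, where the task is actually cleaner because I only need to establish constancy of $\varrho_k$ (and not the simultaneous vanishing of another limit). Fix times $t_1 \neq t_2$. Using~\eqref{muk} to average over $r \in [R,2R]$, I begin with
\[
\varrho_k(t_1) - \varrho_k(t_2) = \frac{1}{R}\int_R^{2R}\bigl(\mu_k(t_1,r) - \mu_k(t_2,r)\bigr)\,dr + O(R^{-2}),
\]
with constants independent of $t_1,t_2$, and then apply Lemma~\ref{mu-computation} with $j = k$ to rewrite the integral on the right-hand side as
\[
\sum_{i=1}^{k}\frac{c_i c_k}{d-2i-2k}\int_{t_2}^{t_1}\bigl(I(i,k) + II(i,k)\bigr)\,dt,
\]
with $I(i,k)$ and $II(i,k)$ as in~\eqref{I-formula} and~\eqref{II-formula}.

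The heart of the argument is to show that $I(i,k), II(i,k) = O(R^{-2+\epsilon})$ uniformly in $t$. For this I combine the $\epsilon$-control bounds from Lemma~\ref{OCycle1} with the identity~\eqref{u-ld} to obtain the pointwise bound $|u(t,r)| \lesssim r^{-2k-1+\epsilon}$ uniformly in time; the dominant contribution comes from the $\lambda_k(t,r)\,r^{2k-d}$ term, since $d = 4k+1$ in the odd case. Inserting this bound into the three pieces of $I(i,k)$ in~\eqref{I-formula}, and using $d-2k-1 = 2k$ and $d-2k-2 = 2k-1$, each piece is readily seen to be $O(R^{-2+\epsilon})$: the boundary term is $O(R^{-1}\cdot R^{-1+\epsilon})$, the mean of $u\cdot r^{2k-1}$ is a mean of a function of size $r^{-2+\epsilon}$, and the double integral similarly yields a function of size $r^{-2+\epsilon}$ before averaging over $[R,2R]$. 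For $II(i,k)$, I use~\eqref{Vbound} and~\eqref{FGbound} together with the same bound on $u$: the cubic piece $|G(r,u)| \lesssim r^{2\ell-2}|u|^3$ dominates, giving $|{-V(r)u + \NN(r,u)}| \lesssim r^{-2k-7+3\epsilon}$ (using $\ell = 2k-1$) and hence $II(i,k) = O(R^{-6+3\epsilon})$, which is negligible.

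Combining these estimates yields
\[
|\varrho_k(t_1) - \varrho_k(t_2)| \lesssim R^{-2} + |t_1-t_2|\,R^{-2+\epsilon}.
\]
For fixed $t_1,t_2$, letting $R\to\infty$ forces $\varrho_k(t_1) = \varrho_k(t_2)$, which completes the proof. I do not anticipate any real obstacle here: the key simplification over the corresponding step in the even $\ell$ argument (Lemma~\ref{Cycle5}) is that no ``main term'' proportional to a limit of $\lambda_k$ arises at this stage, because the $\epsilon$-growth of $\lambda_k$ already forces $u(t,r)$ to decay fast enough for every piece of $I(i,k)$ to be small. The harder work of running the induction on pairs $(\lambda_j,\mu_{j-1})$, where vanishing of $\vartheta$-type constants must be established simultaneously, will come in the subsequent lemmas following the template of Lemma~\ref{Cycle2}--Lemma~\ref{Cycle7}.
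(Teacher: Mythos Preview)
Your proposal is correct and follows essentially the same approach as the paper's own proof: both use the averaging identity from Lemma~\ref{mu-computation} with $j=k$, the pointwise bound $|u(t,r)|\lesssim r^{2k-d+\epsilon}=r^{-2k-1+\epsilon}$ coming from~\eqref{u-ld} and Lemma~\ref{OCycle1}, and then estimate $I(i,k)=O(R^{-2+\epsilon})$ and $II(i,k)=O(R^{-6+3\epsilon})$ to conclude $|\varrho_k(t_1)-\varrho_k(t_2)|=O(R^{-2+\epsilon}|t_1-t_2|)+O(R^{-2})\to 0$ as $R\to\infty$. Your write-up is in fact slightly more explicit than the paper's about why each of the three pieces of $I(i,k)$ is $O(R^{-2+\epsilon})$, but the argument is the same.
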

 \begin{proof} First we note that (\ref{u-ld}) together with (\ref{1step-growth-odd}) imply  that 
 \begin{equation*}
 |u(t,r)|=\abs{\sum_{j=1}^k \lambda_j(t,r)r^{2j-d}} \lesssim r^{2k-d+\epsilon}\end{equation*}
Thus, we have 
(\ref{EV-u-a})
\begin{equation}\label{II-bound}|-V(r)u +\N(r,u)|\lesssim r^{-2\ell-4}|u| +r^{-3}|u|^2 +r^{2\ell-2} |u|^3\lesssim r^{-2k-7+3\epsilon}\end{equation}
Using Lemma~\ref{OCycle2} and Lemma~\ref{mu-computation} we obtain  
\begin{align*}\varrho_k(t_1)-\varrho_k(t_2) & =\frac{1}{R}\int_{2R}^{R} \mu_k(t_1)-\mu_k(t_2)dr + O(R^{-2})\\
& = \sum_{i=1}^{k}\frac{c_ic_{k}}{d-2i-2k}\int_{t_1}^{t_2} I(i,k)  + II(i,k)dt
 + O(R^{-2})\end{align*}
with $I, II$ as in (\ref{I-formula}), (\ref{II-formula}).

 From (\ref{II-bound}) we can  estimate $II$ as follows  
\begin{align*}
|II(i,k)|= & | \frac{1}{R}\int_R^{2R} r^{d-2i-2k}\int_r^\infty  [-V(s)u(t,s)+ \N(s,u(t,s))]s^{2i-1}ds  dr|\\
\lesssim& O(R^{-6+3\epsilon} )
\end{align*}
For the main term $I(i, k)$,  we have  
\begin{align*}
I(i,k)  
 = & -\frac{1}{R} (u(t,r)r^{d-2k-1})\Big|_{r=R}^{r=2R} + (2i-2k-1)\frac{1}{R}\int_{R}^{2R} u(t,r)r^{d-2k-2} dr\\
& - \frac{(2\ell-2i+3)(2i-2)}{R}\int_R^{2R}r^{d-2i-2k}\int_r^\infty u(t,s)s^{2i-3}ds\,dr\\
=& O(r^{-2+\epsilon})
\end{align*}
In summary, we have proved that  
\[|\varrho_k(t_1)-\varrho_k(t_2)|=O(R^{-2+\epsilon}|t_1-t_2|) +O(R^{-2}) \]
By letting  $R\rightarrow \infty$, we see that  $\varrho_k(t_1)=\varrho_k(t_2)$ for any $t_1\not=t_2.$   \end{proof}
\begin{lemma} $\varrho_k=0$ \label{lem:rhok}
\end{lemma}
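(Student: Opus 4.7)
The plan is to argue by contradiction, mirroring the structure of Lemma~\ref{Cycle6} in the even case. Suppose, for the sake of contradiction, that $\varrho_k \neq 0$. Since $\varrho_k$ is independent of time and $|\mu_k(t,r)-\varrho_k| = O(r^{-2})$ uniformly in $t$ by Lemma~\ref{OCycle2}, we can fix $R$ large enough so that $\mu_k(t,R)$ has the same sign as $\varrho_k$ and
\[
|\mu_k(t,R)| \ge \tfrac{1}{2}|\varrho_k| \quad \forall\, t \in \R.
\]
Integrating this uniform lower bound from $t=0$ to $t=T$ yields
\[
\left|\int_0^T \mu_k(t,R)\,dt\right| \ge \tfrac{T}{2}|\varrho_k|.
\]

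Next, I would use the explicit representation \eqref{mu-exp} for $\mu_k$ and Fubini to convert the time integral on the left into a spatial integral of $u(T,\cdot)-u(0,\cdot)$:
\[
\int_0^T \mu_k(t,R)\,dt \;=\; \sum_{i=1}^{k}\frac{c_i c_k R^{d-2i-2k}}{d-2i-2k}\int_R^{\infty}\bigl(u(T,s)-u(0,s)\bigr)s^{2i-1}\,ds.
\]
Because we are in the odd case with $\tdk=k$, the identity \eqref{u-ld} combined with the $\epsilon$-control from Lemma~\ref{OCycle1} gives the uniform pointwise bound
\[
|u(t,r)| \;\lesssim\; r^{\,2k - d + \epsilon} \;=\; r^{-2k-1+\epsilon},
\]
the dominant contribution coming from the $j=k$ term (since $|\lambda_k(t,r)| \lesssim r^{\epsilon}$ while the lower-index coefficients decay faster). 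Plugging this into the right-hand side, using $d=4k+1$, each term in the sum is bounded in modulus by $R^{d-2i-2k}\cdot R^{2i-2k-1+\epsilon} = R^{\epsilon}$, hence
\[
\left|\int_0^T \mu_k(t,R)\,dt\right| \;\lesssim\; R^{\epsilon},
\]
a bound that does \emph{not} depend on $T$.

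Comparing the two inequalities yields $\tfrac{T}{2}|\varrho_k| \lesssim R^{\epsilon}$; for the fixed $R$ above, letting $T \to \infty$ forces $\varrho_k = 0$, the desired contradiction. The only delicate point (and the ``main obstacle'' insofar as there is one) is verifying that, for every $1 \le i \le k$, the spatial integrals $\int_R^\infty s^{2i-2k-2+\epsilon}\,ds$ converge and produce precisely the right power of $R$ so that the $R$-exponents cancel to give $R^{\epsilon}$; this is exactly where the numerology $d = 4k+1$ (specific to the odd case) enters, playing the role that $\thp=0$ played in the even case to achieve the analogous cancellation.
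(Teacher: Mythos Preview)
Your proof is correct and follows essentially the same approach as the paper, which simply refers back to the argument of Lemma~\ref{Cycle6}. Your observation that in the odd case the raw $\epsilon$-control $|u(t,r)|\lesssim r^{2k-d+\epsilon}$ already suffices (so no analogue of $\thp=0$ is needed) because $d=4k+1$ makes the exponents line up is exactly the point.
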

\begin{proof} The proof is identical to the argument in the proof of Lemma~\ref{Cycle6}. We omit the details.
\end{proof}

 Now we can give slight upgrades to  the estimates for each of the $\lambda_j, \mu_j$.
\begin{lemma}\label{OCycle3}The following estimates hold true uniformly in time. 
\begin{align*}|\lambda_j(t,r)| & \lesssim \max(r^{\epsilon}, r^{2k-2j-4+3\epsilon})\hspace{1cm} 1\leq j\leq k\\
|\mu_j(t,r)| &\lesssim \max(r^{\epsilon}, r^{2k-2j-5+3\epsilon})\hspace{1cm} 1\leq j < k\\
|\mu_k(t,r)|&\lesssim r^{-5+3\epsilon}\end{align*}
\end{lemma}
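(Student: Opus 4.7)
The plan is to feed the upgraded pointwise bound $|\mu_k(t,r)|\lesssim r^{-2}$ -- which follows by combining Lemma~\ref{OCycle2} with Lemma~\ref{lem:rhok} ($\varrho_k=0$) -- together with the $\epsilon$-growth bounds of Lemma~\ref{OCycle1}, back into the difference estimates (\ref{difference-all-ld}) and (\ref{difference-all-mu}), and then to iterate dyadically exactly as codified in Remark~\ref{rem:no-better}. This is the same template already used repeatedly in the even case (Lemmas~\ref{Cycle2}--\ref{Cycle7}) and in Lemmas~\ref{OCycle1}--\ref{OCycle2}; only the numerology changes.

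The key step is to identify the dominant contribution on the right-hand side of the two difference estimates after inserting the current pointwise information. Using $d=4k+1$ and checking the twelve summation terms term by term: with the new bound $|\mu_k|\lesssim r^{-2}$, the cubic $\mu_k$ contribution shrinks from $r^{6k+3\epsilon}$ down to $r^{6k-6}$, the cubic $\lambda_i,\mu_i$ contributions for $i<k$ are at most $r^{6k-6+9\epsilon}$, and the unique dominant term is now $r^{6k-3}|\lambda_k|^3\lesssim r^{6k-3+3\epsilon}$ coming from $|\lambda_k|\lesssim r^{\epsilon}$. Multiplying by the prefactors $r^{-2j-d}$ and $r^{-2j-d-1}$ respectively then yields, for every $R_1\leq r\leq r'\leq 2r$,
\begin{align*}
|\lambda_j(t,r)-\lambda_j(t,r')| &\lesssim r^{2k-2j-4+3\epsilon}, \\
|\mu_j(t,r)-\mu_j(t,r')| &\lesssim r^{2k-2j-5+3\epsilon},
\end{align*}
uniformly in $t$.

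From these dyadic difference bounds I would apply the dichotomy of Remark~\ref{rem:no-better} index by index. For indices $j$ whose exponent is $\geq\epsilon$, iterating along $r=2^n r_0$ and using the uniform-in-time boundedness of $|\lambda_j(t,r_0)|$, $|\mu_j(t,r_0)|$ supplied by (\ref{small-delta-control}) gives $|\lambda_j(t,r)|\lesssim r^{2k-2j-4+3\epsilon}$ and $|\mu_j(t,r)|\lesssim r^{2k-2j-5+3\epsilon}$ respectively. For indices where the exponent is strictly negative, the differences are summable, so $\lambda_j(t,r)$ and $\mu_j(t,r)$ each converge to a bounded-in-$t$ limit and are therefore $\lesssim 1\lesssim r^{\epsilon}$. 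Combining the two regimes yields precisely the $\max(r^{\epsilon},\,\cdot\,)$ bounds claimed for $1\leq j\leq k$ in $\lambda$ and $1\leq j<k$ in $\mu$. For the distinguished index $j=k$ in the $\mu$-family, the exponent $-5+3\epsilon$ is negative and the limit is exactly $\varrho_k$, which vanishes by Lemma~\ref{lem:rhok}, so the argument upgrades to $|\mu_k(t,r)|\lesssim r^{-5+3\epsilon}$.

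The only delicate point I anticipate is the bookkeeping in the first step: verifying that after the improvement $|\mu_k|\lesssim r^{-2}$, the cubic $|\lambda_k|^3$ term is unambiguously the unique dominant contribution in both difference estimates (and in particular that no cross term of the form $|\lambda_i|^2,|\mu_i|^2$ or $|\mu_i|^3$ for $i<k$ can compete for small enough $\epsilon$). Once this dominance is confirmed, the remainder is a mechanical invocation of the iteration machinery already deployed in the even case.
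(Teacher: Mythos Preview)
Your proposal is correct and follows essentially the same approach as the paper. The only organizational difference is that the paper first isolates the $j=k$ index in the $\mu$-family, upgrades $|\mu_k|$ from $r^{-2}$ to $r^{-5+3\epsilon}$, and \emph{then} plugs this improved bound into the difference estimates for all remaining indices; you instead run the difference estimates for all indices simultaneously using only $|\mu_k|\lesssim r^{-2}$, and extract the sharper $\mu_k$ bound at the end as the special case where the limit $\varrho_k$ vanishes. Since $r^{6k-6}<r^{6k-3+3\epsilon}$, the cubic $|\lambda_k|^3$ term already dominates the cubic $|\mu_k|^3$ term at the $r^{-2}$ stage, so your one-pass version is valid and yields the identical conclusion.
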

\begin{proof} We plug  the estimates (\ref{1step-growth-odd}) together with~(\ref{muk}) and the conclusion of Lemma~\ref{lem:rhok},  into the difference estimates (\ref{difference-all-mu}). For all  $R_1 < r \leq r' <2r$ we have 
\[|\mu_k(t, r)-\mu_k(t, r')|\lesssim r^{-5+3\epsilon}\]
Since we have  already proved that  $\mu_k(t,r) \to 0 $ as $r \to \infty$ it follows that 
\[|\mu_k(t,r)|\lesssim r^{-5+3\epsilon}\] 
Now we plug the above  together with  (\ref{1step-growth-odd}) into the difference estimates (\ref{difference-all-ld}), (\ref{difference-all-mu}).  Noticing that  the dominant growth rate come from the terms involving $|\lambda_k|^3$ we obtain  
\[|\lambda_j(t, r)-\lambda_j (t, r')|\lesssim r^{2k-2j-4+3\epsilon}\]
\[|\mu_j(t, r)-\mu_j (t,r')|\lesssim r^{2k-2j-5+3\epsilon}\]
which yields  (see Remark~\ref{rem:no-better}), 
\[|\lambda_j(t,r)|\lesssim \max(r^{\epsilon}, r^{2k-2j-4+3\epsilon})\hspace{1cm} 1\leq j\leq k\]
\[|\mu_j(t,r)|\lesssim \max(r^{\epsilon}, r^{2k-2j-5+3\epsilon})\hspace{1cm} 1\leq j < k\]
This completes the proof. 
\end{proof}

We can now formulate  the analog of Proposition~\ref{weak-induction} for the case when $\ell$ is odd.  
 \begin{prop}\label{O-weak-induction}
 Suppose that the equivariance class  $\ell \ge 2$ is odd.  Let  
 $\epsilon>0$ be a small fixed  constant from  Lemma~\ref{OCycle1}.  Let $\lambda_j(t,r) $ and  $\mu_{j}(t,r) $ be the projection coefficients defined as in \textnormal{(\ref{u-projection})} for a solution   $u$ to \textnormal{(\ref{u eq})}. 
 
 If $\vec{u}(t)$ has pre-compact  trajectory in $\Hd$, then   the following the estimates hold uniformly in time
  \begin{equation*}
  \left\{\begin{aligned}
   |\lambda_{j}(t,r)|&\lesssim   r^{-2j -2+\epsilon}\hspace{1cm} \forall \,2\leq j  \leq  k\\
    |\lambda_1(t,r)|&\lesssim  r^{\epsilon}, \\
   |\mu_{j}(t,r)|&\lesssim    r^{-2j -3 +\epsilon}  \hspace{1cm} \forall \,1\leq j  \leq  k
  \end{aligned}\right.\end{equation*}
  \end{prop}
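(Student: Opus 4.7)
The plan is to adapt the inductive scheme of Proposition~\ref{weak-stat} used in the even case to the odd numerology $\tdk = k$, $d = 4k+1$. The extra coefficient $\mu_k$ has already been killed in Lemmas~\ref{OCycle2}--\ref{lem:rhok}, while Lemma~\ref{OCycle3} provides initial polynomial control on all remaining coefficients; this serves as the base case. I would then formulate, and prove by induction on $P = 0, 1, \dots, k-2$, a statement asserting that the pair $\lambda_{k-P}(t, r), \mu_{k-P-1}(t, r)$ is controlled only by $r^\epsilon$ while all other $\lambda_j, \mu_j$ with smaller indices enjoy explicit polynomial decay (in the spirit of~\eqref{induction-ld},~\eqref{induction-mu}, but with $d = 4k+1$). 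Passing from level $P$ to level $P+1$ kills one more pair, and at $P = k-2$ only $\lambda_1$ is left as a possibly non-decaying coefficient, which is exactly Proposition~\ref{O-weak-induction}.

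For the inductive step I would transcribe the four moves of Lemmas~\ref{Cycle2}--\ref{Cycle7}. Inserting the level-$P$ hypothesis into the difference inequalities of Lemma~\ref{lem:all-difference} produces summable difference sequences for $\lambda_{k-P}(t,r)$ and $\mu_{k-P-1}(t,r)$, yielding uniformly bounded limits $\vartheta_{k-P}(t)$ and $\varrho_{k-P-1}(t)$ with explicit $r^{-a_P}$ convergence rates. Substituting into~\eqref{u-ld} sharpens the asymptotic for $u(t, r)$ to $\vartheta_{k-P}(t) r^{2(k-P) - d} + O(r^{2(k-P)-d-2+3\epsilon})$. The algebraic identity~\eqref{ldjldj} with $j = k-P$, $j' = k-P-1$ then forces $\vartheta_{k-P}$ to be independent of $t$. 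To pin $\vartheta_{k-P}$ and $\varrho_{k-P-1}$ down to $0$, I would apply Lemma~\ref{mu-computation} at index $j = k-P-1$: the pointwise bounds~\eqref{Vbound},~\eqref{FGbound} give a negligible bound on $II(i, k-P-1)$, and plugging the sharpened asymptotic of $u$ into $I(i, k-P-1)$ produces an identity of the schematic form $\varrho_{k-P-1}(t_1) - \varrho_{k-P-1}(t_2) = C_P \vartheta_{k-P}(t_1 - t_2) + \text{vanishing terms}$. Sending $R \to \infty$ and then $|t_1 - t_2| \to \infty$ forces $\vartheta_{k-P} = 0$ and hence $\varrho_{k-P-1}$ to be time-independent; a flux-type contradiction argument as in Lemma~\ref{Cycle6} then forces $\varrho_{k-P-1} = 0$. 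Reinserting this improved decay into~\eqref{difference-all-ld},~\eqref{difference-all-mu} delivers the level-$(P+1)$ hypothesis.

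The two points of genuine departure from the even case are the following. First, in the difference inequalities~\eqref{difference-all-ld},~\eqref{difference-all-mu} the identification of the dominant cubic term changes with the parity of $\ell$: since $\tdk = k$, the $|\mu_{k-P-1}|^3$ contribution typically dominates over the $|\lambda|^3$ one, as already seen in the remark after Lemma~\ref{OCycle1}, so all exponents appearing in the analog of Lemma~\ref{Cycle7} must be recomputed by hand with $d = 4k+1$ and the correct index shift. Second, the scalar $C_P$ appearing in the $\varrho$-identity must be shown to be nonzero; this reduces, exactly as in Remark~\ref{number1}, to evaluating a sum of the form $\sum_{i=1}^k c_i c_{k-P-1}/(d-2i-2(k-P-1))^2$ as a residue of $\beta(z)/\al(z)$ at $z = d-2(k-P-1)$, which is explicitly nonzero. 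Once these arithmetic bookkeeping issues are resolved, the remainder of the argument is a direct transcription of Lemmas~\ref{Cycle2}--\ref{Cycle7}, and the induction concludes with the estimates asserted in Proposition~\ref{O-weak-induction}.
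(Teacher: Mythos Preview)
Your proposal is essentially the paper's own argument: the paper also formulates an auxiliary inductive statement (Proposition~\ref{O-weak-stat}, indexed by $P=0,\dots,k-1$), takes Lemma~\ref{OCycle3} as the base case, and runs the analogs of Lemmas~\ref{Cycle2}--\ref{Cycle7} (packaged there as Steps~1--5) to pass from level $P$ to $P+1$, applying Lemma~\ref{mu-computation} at $j=k-P-1$ and the residue identity of Remark~\ref{number1} for the nonvanishing of the constant.

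One inaccuracy worth flagging: your first ``point of genuine departure'' is misdiagnosed. Once $\mu_k$ has been killed and the special pair becomes $(\lambda_{k-P},\mu_{k-P-1})$, the dominant cubic in \eqref{difference-all-ld}--\eqref{difference-all-mu} is again $|\lambda_{k-P}|^3$, not $|\mu_{k-P-1}|^3$: with both bounded by $r^\epsilon$, the relevant powers are $r^{6(k-P)-3}$ versus $r^{6(k-P-1)}=r^{6(k-P)-6}$. The remark after Lemma~\ref{OCycle1} you cite concerns only the preliminary stage before $\mu_k$ is eliminated, where the special $\mu$ index is $k$ rather than $k-1$. This does not break your argument---you correctly note that the exponents must be recomputed by hand---but the bookkeeping in the inductive step is in fact parallel to the even case, not opposite to it.
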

  As in the case of even $\ell$, Proposition~\ref{weak-induction} is proved inductively. Indeed, Proposition~\ref{weak-induction} is a consequence of   the following Proposition by setting  $P = k-1$ below.
\begin{prop} \label{O-weak-stat} The following estimates hold true uniformly in time for $P=0, 1,\ldots k-1$.
     \begin{equation}\label{induction-ld-o}\left\{\begin{aligned}
   |\lambda_{j}(t,r)|&\lesssim   r^{2(k-P-j)-2 + \epsilon}\hspace{1cm} \forall 1\leq j \leq k, \text{ and } j\not=k-P\\
    |\lambda_j(t,r)|&\lesssim  r^{\epsilon}, \hspace{5cm} j=k-P
   \end{aligned}\right.\end{equation}
    \begin{equation}\label{induction-mu-o}\left\{\begin{aligned}
   |\mu_{j}(t,r)|&\lesssim    r^{2(k-P-j) -3 + \epsilon}\hspace{1cm} \forall 1\leq j \leq k, \text{ and } j\not=k-P-1\\
  |\mu_j(t,r)|&\lesssim  r^{\epsilon}, \hspace{4cm}   j=k-P-1\\
  \end{aligned}\right.\end{equation}
  \end{prop}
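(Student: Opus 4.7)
The plan is to prove Proposition~\ref{O-weak-stat} by induction on $P$, closely mirroring the argument in Lemmas~\ref{Cycle2}--\ref{Cycle7} from the even case. The base case $P=0$ follows directly from Lemma~\ref{OCycle3} combined with the fact $\varrho_k = 0$ established in Lemma~\ref{lem:rhok}: the anomalous bounds $|\lambda_k(t,r)|, |\mu_{k-1}(t,r)| \lesssim r^\epsilon$ at $P=0$ are exactly those provided by Lemma~\ref{OCycle3}, and the remaining bounds in \eqref{induction-ld-o}--\eqref{induction-mu-o} at $P=0$ are immediate weakenings of Lemma~\ref{OCycle3}.

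For the inductive step from $P$ to $P+1$ with $0 \le P \le k-2$, I would first substitute the hypothesis \eqref{induction-ld-o}--\eqref{induction-mu-o} at level $P$ into the difference estimates \eqref{difference-all-ld} and \eqref{difference-all-mu}. Since $P < k$, the analysis in Remark~\ref{domination} guarantees that the cubic terms involving the anomalous coefficients $\lambda_{k-P}$ and $\mu_{k-P-1}$ dominate the right-hand sides. Summing geometric series over dyadic scales produces bounded functions $\vartheta_{k-P}(t), \varrho_{k-P-1}(t)$ with
\[
\lambda_{k-P}(t,r) \to \vartheta_{k-P}(t), \qquad \mu_{k-P-1}(t,r) \to \varrho_{k-P-1}(t) \quad \text{as } r \to \infty,
\]
uniformly in $t$, at summable algebraic rates. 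Inserting into \eqref{u-ld} yields the preliminary asymptotic $r^{d-2(k-P)} u(t,r) = \vartheta_{k-P}(t) + O(r^{-2+3\epsilon})$. Time-independence of $\vartheta_{k-P}$ then follows from identity \eqref{ldjldj} applied with $j=k-P$ and $j'=k-P-1 \ge 1$ (valid since $P \le k-2$), controlling the right-hand side with the induction hypothesis and sending $r \to \infty$.

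The main obstacle is showing $\vartheta_{k-P} = 0$. Here I would apply Lemma~\ref{mu-computation} with $j = k-P-1$ and compute $R^{-1}\int_R^{2R}(\mu_{k-P-1}(t_1,r) - \mu_{k-P-1}(t_2,r))\,dr$ in two ways. The nonlinear contribution $II(i, k-P-1)$ is small by \eqref{Vbound}, \eqref{FGbound} and the new pointwise bound on $u$, while the main term $I(i, k-P-1)$, after integration by parts and substitution of the asymptotic, contributes $-C(t_1-t_2)\vartheta_{k-P}$ up to negligible corrections. Non-vanishing of the algebraic coefficient $C$ reduces, as in Remark~\ref{number1}, to the residue identity $\sum_{i=1}^k c_i/(d-2i-2j)^2 = 1/c_j$ with $j=k-P-1$, yielding $C = 2j(d-2j-2) \ne 0$ since $j \ge 1$ and $d-2=4k-1$ is odd while $2j$ is even. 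Sending $R \to \infty$ and then $|t_1 - t_2| \to \infty$, exploiting the boundedness of $\varrho_{k-P-1}(t)$, forces $\vartheta_{k-P} = 0$; the same computation with $\vartheta_{k-P} = 0$ then shows $\varrho_{k-P-1}$ is constant in $t$, and an integration-in-time contradiction modeled on Lemma~\ref{Cycle6} forces $\varrho_{k-P-1} = 0$.

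Finally, with $\lambda_{k-P}(t,r), \mu_{k-P-1}(t,r) \to 0$ in hand, I would feed these improvements back into \eqref{difference-all-ld}--\eqref{difference-all-mu}, identify the dominant contributions, and apply Remark~\ref{rem:no-better} to deduce the level-$(P+1)$ bounds, exactly as in Lemma~\ref{Cycle7}. The two features distinguishing the odd case from its even counterpart are the identity $k = \tilde k$ in place of $\tilde k = k+1$ (so that $|\lambda_{k-P}|^3$ rather than $|\mu_{k-P}|^3$ dominates in the relevant cubic comparisons) and the shift of the anomalous $\mu$-index from $k-P$ to $k-P-1$. Neither represents a conceptual obstacle, but both require careful bookkeeping when applying Lemma~\ref{mu-computation} and verifying the residue identity.
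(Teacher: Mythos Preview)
Your proposal is correct and follows essentially the same inductive scheme as the paper, which explicitly organizes the proof into Steps 1--5 mirroring Lemmas~\ref{Cycle2}--\ref{Cycle7} and cites Lemma~\ref{OCycle3} for the base case. The only minor remark is that your parenthetical about which cubic term dominates is slightly imprecise (the paper's Remark after Lemma~\ref{OCycle1} notes that in the odd case the $\mu_j$ terms initially dominate), but this does not affect the argument.
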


\begin{proof}[Proof of Proposition~\ref{O-weak-stat}] We proceed by  induction. The base case $P=0$  follows from the slightly stronger conclusions of Lemma~\ref{OCycle3}. Now assume that~\eqref{induction-ld-o} and~\eqref{induction-mu-o} hold for $P$ with $0  \le P \le k-2$. We show that they must then hold for $P+1$. 
  
 We note that when we plug (\ref{induction-ld}) and (\ref{induction-ld}) into (\ref{difference-all-ld}), (\ref{difference-all-mu}), the cubic terms  dominate the growth rate, as long as $P\leq k-2$;  see Remark~\ref{domination}. 
 
 As in the proof of Proposition~\ref{weak-stat}, we proceed in a sequence of steps. As the proof is nearly identical to the proof of Proposition~\ref{weak-stat} we will omit nearly all the details below. 
  
  \noindent\textbf{Step $1$:} There exist bounded functions $\vartheta_{k-P}(t), \varrho_{k-P-1}(t)$ such that 
  \begin{align*}
  |\lambda_{k-P}(t,r)-\vartheta_{k-P}(t)| &\lesssim  O(r^{-4P-4}) 
  \\
|\mu_{k-P-1}(t,r)-\varrho_{k-P-1}(t)| &\lesssim O(r^{-4P-3}) 
  \end{align*}
 where $O(\cdot)$ is uniform in time $t\in \R.$
  \begin{proof} The proof is identical to the proof of Lemma~\ref{Cycle2}.
  \end{proof}
     

 
  Now we use information on $\lambda_j, \mu_j$ to  find the asymptotic behavior for $u$. 
 
 \noindent \textbf{Step $2$:} We have the following asymptotics for $ u(t, r)$ which hold uniformly in time. 
 \begin{align*}
  u(t,r)=& \thpto r^{2(k-P)-d} +O(r^{2(k-P)-d-2+ \epsilon})
  \end{align*}
 \begin{proof} The proof follows by plugging  the estimates in~(\ref{induction-ld-o})  along with the conclusions of Step $1$  into (\ref{u-ld}).
\end{proof} 

 \noindent \textbf{Step $3$:} $\thpto$  is independent of time and from now on we will write  $\thpto = \thpo$.
 \begin{proof} The proof follows by arguing exactly as in the proof of Lemma~\ref{Cycle4}. 
 \end{proof}

 \noindent \textbf{Step $4$:} $\thpo=0$,  and $\rpto$ is independent of time. 
 \begin{proof}  The proof is identical to the argument given for Lemma~\ref{Cycle5}. 
 \end{proof}
 \noindent\textbf{Step $5$:} $\rpo=0$.
 \begin{proof}  This follows from the same argument as the one used to prove Lemma~\ref{Cycle6}. 
 \end{proof}
 

 Finally, we use the same argument as the proof of Lemma~\ref{Cycle7} to complete the proof of the induction step, namely that if~\eqref{induction-ld-o} and~\eqref{induction-mu-o} hold for $P$, then they also hold for $P+1$. 
This completes the proof of Proposition~\ref{O-weak-stat} and hence of Proposition~\ref{O-weak-induction}.
 \end{proof}
Finally, we can argue as in the proof of Lemma~\ref{final-a} to establish the following lemma. Again we write $\la_j(t):= \la_j(0, r)$ and $\mu_j(r):= \mu_j(0, r)$. 
\begin{lemma}\label{final-a-o} There exists  $\vartheta_1 \in \R$ so that 
\begin{align}\label{final-ld-1-o}
|\lambda_1(r)-\vartheta_1|  = O( r^{1-d} ) \mas r \to \infty
\end{align}
Moreover,  we have the improved decay estimates 
\begin{align}
 |\lambda_j(r)|& \lesssim r^{-2j-d+3}, \hspace{1cm} 2\leq j\leq k \label{final-ld-2-o}\\
        |\mu_j(r)|& \lesssim r^{-2j-d+2},\hspace{1cm} 1\leq j\leq k  \label{final-mu-o}
\end{align}
\end{lemma}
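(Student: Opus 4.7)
The plan is to mirror the proof of Lemma~\ref{final-a} line by line, with Proposition~\ref{O-weak-induction} replacing Proposition~\ref{weak-induction} as the input. The argument is a two-step bootstrap: first extract the limit $\vartheta_1$ of $\lambda_1(r)$ as $r\to\infty$, then use the resulting boundedness of $\lambda_1$ to improve the rate of convergence and to read off the decay of the remaining coefficients.

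For the first step I will fix $r_0>R_1$ and substitute the bounds of Proposition~\ref{O-weak-induction} into \eqref{difference-all-ld} with $j=1$. The only coefficient not yet known to decay is $\lambda_1$ itself; its cubic self-interaction contributes the term $r^{1-d}|\lambda_1(r)|^3\lesssim r^{1-d+3\epsilon}$, while all terms involving $\lambda_i$ with $i\geq 2$ or any $\mu_i$ are of order at most $r^{-3-d+\epsilon}$, strictly smaller because of the strong decay supplied by Proposition~\ref{O-weak-induction}. This yields
$$|\lambda_1(2^{n+1}r_0)-\lambda_1(2^n r_0)|\lesssim (2^n r_0)^{1-d+3\epsilon},$$
and since $1-d+3\epsilon<0$ for $d=2\ell+3\geq 7$ and $\epsilon$ small, summing the resulting geometric series along the lines of Remark~\ref{rem:no-better} gives both a limit $\vartheta_1\in\R$ and the uniform bound $|\lambda_1(r)|\lesssim 1$.

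For the second step, re-injecting $|\lambda_1(r)|\lesssim 1$ into \eqref{difference-all-ld} with $j=1$ removes the $\epsilon$-loss in the cubic term, yielding $|\lambda_1(2^{n+1}r)-\lambda_1(2^n r)|\lesssim (2^n r)^{1-d}$; summing the tail produces \eqref{final-ld-1-o}. For $j\geq 2$, using $|\lambda_1|\lesssim 1$ together with the decay rates of Proposition~\ref{O-weak-induction} in \eqref{difference-all-ld} and \eqref{difference-all-mu} shows that the $\lambda_1$ contribution (the $i=1$ cubic $r^{-2j-d}\cdot r^{-3}\cdot r^6=r^{-2j-d+3}$ in \eqref{difference-all-ld}, and its $\mu_j$-analogue in \eqref{difference-all-mu}) is now the dominant one. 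Hence $|\lambda_j(2^{n+1}r)-\lambda_j(2^n r)|\lesssim (2^n r)^{-2j-d+3}$ and $|\mu_j(2^{n+1}r)-\mu_j(2^n r)|\lesssim (2^n r)^{-2j-d+2}$. Since Proposition~\ref{O-weak-induction} already ensures $\lambda_j(r)\to 0$ for $2\leq j\leq k$ and $\mu_j(r)\to 0$ for $1\leq j\leq k$, summing these tails yields \eqref{final-ld-2-o} and \eqref{final-mu-o}.

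I do not anticipate any real obstacle. Structurally the proof is identical to the even case, and the only point that requires careful checking is that the slightly different numerology $d=4k+1$ (odd $\ell$) in place of $d=4k+3$ (even $\ell$) does not alter which term dominates the right-hand side of the difference estimates. A brief accounting confirms that the cubic self-interaction of $\lambda_1$ controls \eqref{difference-all-ld} for $j=1$, while the linear-in-$\lambda_1$ contribution controls \eqref{difference-all-ld} and \eqref{difference-all-mu} for $j\geq 2$, exactly paralleling the even case.
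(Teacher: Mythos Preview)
Your proposal is correct and follows exactly the approach the paper intends: the paper's own ``proof'' of Lemma~\ref{final-a-o} is simply the instruction to argue as in Lemma~\ref{final-a}, and you have carried this out faithfully, with Proposition~\ref{O-weak-induction} supplying the input bounds in place of Proposition~\ref{weak-induction}. The only minor wrinkle is cosmetic: once $|\lambda_1|\lesssim 1$, the linear, quadratic, and cubic $i=1$ terms in \eqref{difference-all-ld} all contribute at the same order $r^{-2j-d+3}$, so your slight wavering between calling the dominant contribution ``cubic'' versus ``linear'' is immaterial.
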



         
         We are now able to complete the proof of Proposition~\ref{prop:as}. 
          
          \begin{proof}[Proof of Proposition~\ref{prop:as} when $\ell \ge 2$ is odd]
     
  To conclude, we  plug  (\ref{final-ld-1-o}), (\ref{final-ld-2-o}), and (\ref{final-mu-o}) into the identities (\ref{u-ld}), (\ref{u-mu}) to  obtain
       \ant{
&r^{d-2}u_0(r) = \vartheta_1  +O(r^{-d+1})  \mas r \to \infty\\
&\int_r^\infty u_1(s)s^{2i-1}ds  =O(r^{2i + 2-2d}) \mas r \to \infty
}
as desired. 
  \end{proof}
  
 
\subsection{Rigidity argument. Step III:  nonexistence of the critical element}\label{kill}
 Finally, we complete the proof of Proposition~\ref{Rigidity} by showing that $\vec u(t) = (0, 0)$. We divide this argument into two cases depending on the value of the  number $\vartheta_1$ from Proposition~\ref{prop:as}.  
\\

\noindent \textbf{Case 1: $\vartheta_1=0 \Longrightarrow \vec u(0) \equiv (0, 0)$:} Here we show that if $\vartheta_1 = 0$ then $ \vec u(t) $ must be identically zero. As a preliminary step we first prove that $\vec u(0)$ must be compactly supported. 


\begin{lemma}\label{compact-support} Let $\vec u(t)$ be as in Proposition~\ref{Rigidity} and let $\vartheta_1$ be as in Proposition~\ref{prop:as}. 
 If $\vartheta=0$, then $\vec{u}(0)=(u_0, u_1)$ must be compactly supported. 
\end{lemma}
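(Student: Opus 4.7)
My plan is to use $\vartheta_1=0$ to bootstrap the decay obtained in Proposition~\ref{prop:as} into arbitrarily fast polynomial decay of $(u_0,u_1)$, and then to promote this rapid decay to genuine compact support by exploiting the finite-dimensional structure of $P(R)$ together with Proposition~\ref{Decay-Thm}. Concretely, under $\vartheta_1=0$, Lemma~\ref{final-a} (or its odd analog Lemma~\ref{final-a-o}) already upgrades $|\lambda_1(r)-\vartheta_1|=O(r^{1-d})$ to $|\lambda_1(r)|=O(r^{1-d})$, so every coefficient $\lambda_j(0,r),\mu_j(0,r)$ satisfies the strong decay in~(\ref{final-ld-2})--(\ref{final-mu}). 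I would reinsert these improved estimates into the difference inequalities of Lemma~\ref{lem:all-difference}, restart the inductive scheme of Propositions~\ref{weak-stat}/\ref{O-weak-stat} with this stronger base case, and rerun the cycles of Lemmas~\ref{Cycle2}--\ref{Cycle7}. Since the cubic terms in~(\ref{difference-all-ld})--(\ref{difference-all-mu}) are now strictly subdominant, each completed cycle produces additional powers of $r^{-1}$ in the decay of every $\lambda_j,\mu_j$, hence of $u_0$ via~(\ref{u-ld}) and of the weighted moments of $u_1$ via~(\ref{u-mu}). Iterating the cycle should yield, for every integer $N\ge 1$, estimates of the form
\[
|u_0(r)|\le C_N r^{-N},\qquad \Big|\int_r^\infty u_1(s)s^{2i-1}\,ds\Big|\le C_N r^{-N},\quad 1\le i\le k,
\]
and consequently, by Lemma~\ref{ldmu-identity} and Proposition~\ref{Decay-Thm}, also $\|\vec u(0)\|_{\HH(r\ge R)}\le C_N R^{-N}$ for every $N$.

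To promote this infinite-order polynomial decay to strict compact support I would argue by the truncation scheme used in the proof of Proposition~\ref{Decay-Thm}. For $R$ large, truncate $\vec u(0)$ to obtain $\vec u_R(0)$ with $\|\vec u_R(0)\|_\HH\le C_N R^{-N}$ for every $N$; by Lemma~\ref{linear-decay-bound} its evolution under the modified equation~\eqref{Modified} is global, stays in a rapidly decaying ball, and by finite speed of propagation coincides with $\vec u(t)$ on the exterior cone $r\ge R+|t|$. Applying Theorem~\ref{Exterior-Estimate} to the free evolution of $\vec u_R(0)$, invoking the vanishing~\eqref{extvan} of Corollary~\ref{Exterior-Decay}, and feeding the result back into Proposition~\ref{Decay-Thm}, one obtains a closed chain of estimates that forces $\pi_R\vec u_R(0)$ and $\pi_R^\perp\vec u_R(0)$ to both vanish once $R$ exceeds some finite threshold $R_*$. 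By the invertibility of the Cauchy matrices underlying Lemma~\ref{u-ld-mu}, this in turn forces $\vec u(0)\equiv 0$ on $[R_*,\infty)$, which is the desired compact support.

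The main obstacle is the passage from rapid decay to compact support in the second step: the bootstrap of the first step is a lengthy but formulaic adaptation of the arguments of Section~\ref{Section:FA}, whereas infinite-order polynomial decay alone does not imply compact support in general. Closing that gap requires carefully exploiting the finite-dimensional structure of $P(R)$, the equality case of Theorem~\ref{Exterior-Estimate}, and the precise interplay between the truncated dynamics of~\eqref{Modified} and the pre-compactness of $\K$, in the combination used in the analogous $\ell=1$ argument of~\cite{KLS}; adapting that combination uniformly in the increasing dimension of $P(R)$ is where I expect the real work to lie.
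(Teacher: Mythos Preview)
Your Step~1 (bootstrapping to arbitrarily fast polynomial decay) is plausible but unnecessary, and your Step~2 has a genuine gap. Rapid decay does not imply compact support, and the ``closed chain'' you sketch is circular: applying Theorem~\ref{Exterior-Estimate} to the truncated free evolution and using Corollary~\ref{Exterior-Decay} is precisely how Proposition~\ref{Decay-Thm} was derived in the first place. Feeding the output back in recovers~\eqref{decay-estimate} and nothing more; it bounds $\pi_R^\perp\vec u(0)$ by powers of $\pi_R\vec u(0)$, but gives no mechanism to force $\pi_R\vec u(0)$ itself to vanish at any finite $R$. The equality case of Theorem~\ref{Exterior-Estimate} does not help either, since it characterizes when the exterior energy vanishes, not when the data do.

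The missing idea is that the difference estimates of Corollary~\ref{cor:all-difference} can be read as \emph{lower} bounds, not just upper bounds. From $|\lambda_j(2r)-\lambda_j(r)|\le C\delta_1(\cdots)$ one gets, via the reverse triangle inequality,
\[
|\lambda_j(2r)|\ge (1-C\delta_1)|\lambda_j(r)|-C\delta_1\sum_{i\ne j}r^{2i-2j}|\lambda_i(r)|-C\delta_1\sum_i r^{2i-2j+1}|\mu_i(r)|,
\]
and similarly for $\mu_j$. Summing with suitable weights and choosing $\delta_1$ small and $r_0$ large, the total weighted sum $S_n$ of all coefficients at scale $2^n r_0$ satisfies $S_{n+1}\ge \tfrac34 S_n$, hence $S_n\ge(\tfrac34)^n S_0$. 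But from $\vartheta_1=0$ and Lemma~\ref{final-a}/\ref{final-a-o} you already know $S_n\lesssim(2^n r_0)^{1-d}$, which decays strictly faster than $(3/4)^n$ since $d\ge7$. The only way both can hold is $S_0=0$, i.e.\ all $\lambda_j(r_0)=\mu_j(r_0)=0$; then~\eqref{all-pip-identity} and Proposition~\ref{Decay-Thm} give $\|\vec u(0)\|_{\HH(r\ge r_0)}=0$. This one-shot contradiction replaces both your bootstrap and your incomplete Step~2.
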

\begin{proof} The proof is similar to~\cite[Proof of Lemma~$5.13$]{KLS}, which was inspired by~\cite{DKM5}. If $\vartheta_1=0, $ we see from 
Lemma~\ref{final-a} and  Lemma~\ref{final-a-o}, that 
\begin{equation*}\begin{aligned}
|\lambda_i(r)| &= O(r^{-2i-d+3}),1\leq i\leq \tdk \\ |\mu_i(r)| &=O(r^{-2i-d+2}), 1\leq i \leq k 
\end{aligned}\end{equation*}
Therefore,  
\begin{equation}\label{sum}
\sum_{j=1}^{\tdk}|\lambda_j(2^n {r}_0)|+\sum_{j=1}^k |\mu_j( 2^n{r}_0)|\lesssim (2^nr_0)^{1-d}
\end{equation}
Using  the difference relations (\ref{difference-all-ld})  and (\ref{difference-all-mu}) we can deduce that  
 for  any $r_0>R_1$ 
\begin{align*}
|\lambda_j(2^{n+1}r_0)|&\geq (1-C\delta_1)|\lambda_j(2^nr_0)| \\
& -  C(2^nr_0)^{2\tdk-d}\left(\sum_{i=1,i\not= j}^{\tdk}  |\lambda_i(2^nr_0)| +
\sum_{i=1,i\not= j}^{\tdk} |\mu_i(2^nr_0)|\right)\\
|\mu_j(2^{n+1}r_0)|&\geq (1-C\delta_1)|\mu_j(2^nr_0)| \\
& -  C(2^nr_0)^{2\tdk-d}\left(\sum_{i=1,i\not= j}^{\tdk}  |\lambda_i(2^nr_0)| +
\sum_{i=1,i\not= j}^{\tdk} |\mu_i(2^nr_0)|\right)
\end{align*}
Now, choose  ${r}_0$ large enough and $\delta_1>0$ small enough so that $C(\delta_1 + 2\tdk(r_0)^{2\tdk-d})<\frac14$. Iterating the above then yields 
\EQ{ \label{34}
\sum_{j=1}^{\tdk}|\lambda_j(2^n {r}_0)|+\sum_{j=1}^k|\mu_j( 2^n{r}_0)|\geq \left(\frac34\right)^n \left(\sum_{j=1}^{\tdk}|\lambda_j( {r}_0)|+\sum_{j=1}^k\mu_j({r}_0)|\right) 
}
Putting (\ref{sum}) together with~\eqref{34} gives 
 \[ \sum_{j=1}^{\tdk}|\lambda_j( {r}_0)|+\sum_{j=1}^k|\mu_j( {r}_0)|\lesssim  \frac{1}{3^n}\frac{1}{2^{n(d-3)}}{r_0}^{1-d}\]
This implies that 
 \[|\lambda_i( {r}_0)|=|\mu_j( {r}_0)| =0, \hspace{1cm}\forall 1\leq i\leq \tdk, 1\leq j \leq k\]
It then follows from  (\ref{all-pip-identity}) and Proposition~\ref{Decay-Thm} that 
$$\|\vec{u}(0)\|_{\mathcal{H}(r\geq  {r}_0)}=0$$ 
Hence $(\partial_r u_0, u_1)$ are compactly supported. Finally,  because we know that $$\lim_{{r\rightarrow \infty}}u_0(r)=0$$   we can conclude that $u_0$ is compactly supported as well. 
 \end{proof}

\begin{lemma}\label{0solution} Let $u(t)$ be as in Theorem~\ref{Rigidity}, and $\vartheta_1 $ as in Proposition~\ref{prop:as}. Suppose $\vartheta_1=0$. Then $\vec u\equiv (0, 0)$.
\end{lemma}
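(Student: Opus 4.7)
The plan is to argue by contradiction using the channels-of-energy mechanism. By Lemma~\ref{compact-support} combined with finite speed of propagation for~\eqref{u eq}, $\supp\vec u(t)\subset[1,\rho_0+|t|]$ for every $t$, where I set $\rho_0:=\sup\{r\ge 1:\vec u(0,r)\ne 0\}$. It suffices to show $\rho_0=1$, which forces $\vec u(0)\equiv 0$ and hence, by uniqueness for~\eqref{u eq}, $\vec u\equiv 0$.

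Suppose toward contradiction $\rho_0>1$, and fix $R\in(1,\rho_0)$. The restriction $\vec u(0)|_{\{r\ge R\}}$ is nonzero and compactly supported in $[R,\rho_0]$, whereas every nonzero element of the finite-dimensional subspace $P(R)$ is a linear combination of the non-integrable power functions $r^{2k-d}$ and so cannot have compact support. Therefore $\pi_R^\perp\vec u(0)\ne 0$ in $(\dot H^1\times L^2)(r\ge R;\,r^{d-1}\,dr)$. Extending $\vec u(0)$ by zero to $\{r<1\}$ (consistent with the Dirichlet condition $u(0,1)=0$), Theorem~\ref{Exterior-Estimate} applied to the free wave $\vec u_L(t)$ on $\R^{1+d}$ with this data gives, for an appropriate choice of sign,
\[
\lim_{t\to\pm\infty}\|\vec u_L(t)\|_{\HH(r\ge R+|t|)}^2 \;\ge\; \tfrac12\|\pi_R^\perp\vec u(0)\|_{\HH(r\ge R)}^2 =: c_R^2>0.
\]
Because $R>1$, reflections from the Dirichlet boundary at $r=1$ do not reach the cone $\{r\ge R+|t|\}$, so on this set the full $\R^d$ free wave coincides with the exterior free wave solving the Dirichlet problem.

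I then compare $\vec u(t)$ with $\vec u_L(t)$ on the exterior cone, following the proof of Proposition~\ref{Decay-Thm}. Introduce the truncated data $\vec u_R(0)$ (agreeing with $\vec u(0)$ on $\{r\ge R\}$, extended linearly inside), and let $\vec u_R(t)$ and $\vec u_{R,L}(t)$ be its evolutions under~\eqref{Modified} and under the free wave equation respectively. By finite speed of propagation, $\vec u_R(t)=\vec u(t)$ and $\vec u_{R,L}(t)=\vec u_L(t)$ on $\{r\ge R+|t|\}$. Choosing $R$ so that $\|\vec u_R(0)\|_\HH\le\delta_*$, Lemma~\ref{linear-decay-bound} yields
\[
\sup_t\|\vec u_R(t)-\vec u_{R,L}(t)\|_\HH \;\lesssim\; R^{1-d}\|\vec u_R(0)\|_\HH + R^{-d/2}\|\vec u_R(0)\|_\HH^2 + R^{-1}\|\vec u_R(0)\|_\HH^3.
\]
Combined with Corollary~\ref{Exterior-Decay}, $\|\vec u(t)\|_{\HH(r\ge R+|t|)}\to 0$ as $t\to\pm\infty$, the triangle inequality on the cone gives
\[
c_R \;\le\; C\bigl(R^{1-d}\|\vec u_R(0)\|_\HH + R^{-d/2}\|\vec u_R(0)\|_\HH^2 + R^{-1}\|\vec u_R(0)\|_\HH^3\bigr).
\]
This is the same cubic inequality proved in Proposition~\ref{Decay-Thm}, but now with the strict lower bound $c_R$ on the left, which must be forced to vanish by iterating in $R$ as in the proof of Lemma~\ref{compact-support}; the conclusion is $\|\vec u(0)\|_{\HH(r\ge R)}=0$ for every $R>1$, i.e.\ $\rho_0=1$, the desired contradiction.

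The main technical obstacle is matching the constants in the choice of $R$: one needs simultaneously $R<\rho_0$ (for $\pi_R^\perp\vec u(0)\ne 0$), $R>R_*$ (to invoke Lemma~\ref{linear-decay-bound}), and $\|\vec u_R(0)\|_\HH\le\delta_*$ so that the nonlinear error is controlled by $c_R$. When $\rho_0>R_*$ this is routine since $\|\vec u_R(0)\|_\HH\to 0$ as $R\to\rho_0^-$ while $c_R$ can be kept comparable. In the delicate case $\rho_0\le R_*$, the uniform compact support together with pre-compactness keeps $\|\vec u(t)\|_\HH$ bounded on a fixed bounded annulus, and one must argue separately --- either by refining the constants in Lemma~\ref{linear-decay-bound} (its conclusion really depends only on the smallness of $\|\vec u_R(0)\|_\HH$), or by observing that the trajectory falls below the small-data scattering threshold $\delta_0$ of Theorem~\ref{thm:gwp}, contradicting the non-scattering property of the critical element, or by iterating the decay bounds of Section~\ref{Section:FA} at later times $t_0$ (where $\vartheta_1(t_0)=0$ automatically since $\vec u(t_0)$ is already compactly supported) to obtain a uniform support bound $\supp\vec u(t)\subset[1,r_0]$ that collapses the argument to the interior case.
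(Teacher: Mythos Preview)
Your argument has a genuine gap at the closing step. The inequality you derive,
\[
c_R \;\lesssim\; R^{1-d}\|\vec u_R(0)\|_{\HH} + R^{-d/2}\|\vec u_R(0)\|_{\HH}^2 + R^{-1}\|\vec u_R(0)\|_{\HH}^3,
\]
is nothing more than the conclusion of Proposition~\ref{Decay-Thm} at a single radius $R$: it bounds $\|\pi_R^\perp \vec u(0)\|$ by terms involving the \emph{full} norm $\|\vec u(0)\|_{\HH(r\ge R)}$. It does not by itself force $c_R=0$, because the right-hand side is nonzero, and the gap between the two sides is precisely $\|\pi_R \vec u(0)\|$, which you never control. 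Your appeal to ``iterating in $R$ as in the proof of Lemma~\ref{compact-support}'' does not apply: that lemma's iteration sends $r_0 \to 2^n r_0 \to \infty$ and plays the lower bound $(3/4)^n\big(\sum|\lambda_j(r_0)|+\sum|\mu_j(r_0)|\big)$ against the a~priori decay $(2^n r_0)^{1-d}$ coming from $\vartheta_1=0$. Here you are confined to the bounded interval $(1,\rho_0)$, there is no room to iterate outward, and no analogous a~priori decay is available. Also, your claim that ``$c_R$ can be kept comparable'' as $R\to\rho_0^-$ is unjustified: both $c_R$ and $\|\vec u_R(0)\|_{\HH}$ tend to zero, and your inequality says nothing about their ratio.

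The paper closes the argument by killing $\pi_R\vec u(0)$ separately. One picks $\rho_1<\rho_0$ with $\|\vec u(0)\|_{\HH(r\ge\rho_1)}<\varepsilon$, observes that the projection coefficients satisfy $\lambda_j(\rho_0)=\mu_j(\rho_0)=0$ (since the data vanish on $\{r\ge\rho_0\}$), and then uses Lemma~\ref{all-decay} together with the fundamental theorem of calculus---exactly as in the derivation of Corollary~\ref{cor:all-difference}, with $\varepsilon$ in the role of $\delta_1$---to obtain $|\lambda_j(\rho_1)|=|\lambda_j(\rho_1)-\lambda_j(\rho_0)|\lesssim\varepsilon\,H$ and similarly for $\mu_j$, where $H$ is a weighted sum of all $|\lambda_i(\rho_1)|,|\mu_i(\rho_1)|$. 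Summing gives $H\lesssim\varepsilon H$, hence $H=0$ for $\varepsilon$ small, and then~\eqref{all-pip-identity} together with Lemma~\ref{all-decay} yield $\|\vec u(0)\|_{\HH(r\ge\rho_1)}=0$, contradicting the definition of $\rho_0$. Regarding your last paragraph: the tension between $R_*$ and $\rho_0$ is real, and the paper addresses it in the Remark immediately following the proof---the only role of $R>R_*$ in Lemma~\ref{linear-decay-bound} is to make the truncated data small, and for compactly supported data this is achieved simply by taking $R$ near the edge of the support, regardless of the size of $\rho_0$. Your alternative suggestion that the trajectory ``falls below the small-data scattering threshold $\delta_0$'' is incorrect: compact support of $\vec u(0)$ does not make $\|\vec u(0)\|_{\HH}$ small.
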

\begin{proof} We again argue as in~\cite[Proof of Lemma~$5.13$]{KLS}, which was inspired by~\cite{DKM5}.  Suppose that $\vartheta_1 = 0$. By Lemma~\ref{compact-support} we know that then $ (u_0, u_1)$ are compactly supported. Now we assume $(u_0,u_1)\not=(0,0)$, and argue by contradiction. 

Find  $\rho_0=\rho(u_0,u_1)>1$  so that 
\begin{equation*}
\rho_0:=\inf\{\rho: \|\vec{u}(0)\|_{\Hd(r\geq \rho)}=0\}\end{equation*}
Let $\varepsilon>0$ be a small number to be determined  below, and  find $\rho_1=\rho_1(\varepsilon)$, with $1<\rho_1<\rho_0$ such that 
\[0< \|\vec{u}(0)\|_{\Hd(r\geq \rho_1)}^2 <\varepsilon^2 <\delta_1^2\]
where $\delta_1$ is  as in (\ref{small-initial-all}).

By  Lemma~\ref{ldmu-identity} we have 
 \EQ{\label{all-energy-formula}
 &\|\vec{u}(0)\|_{\hrr}^2
 \simeq 
  \left(\sum_{i=1}^{\tdk}(\lambda_i(R)
R^{2i-\frac{d+2}{2}})^2 +\sum_{i=1}^k (\mu_i(R) R^{2i-\frac{d}{2}})^2\right)\\ + &\int_R^\infty \left(\sum_{i=1}^{\tdk}  (\partial_r\lambda_i(r)r^{2i-\frac{d+1}{2}} )^2  + 
\sum_{i=1}^k(  {\partial_r\mu_i(r)
r^{2i-\frac{d-1}{2}}} )^2\right)dr
}
By setting  $R=\rho_0$ above it follows that $\lambda_j(\rho_0)=\mu_j(\rho_0)=0.$

By Proposition~\ref{Decay-Thm}, and its reformulation in  Lemma~\ref{all-decay}, we see that 
\EQ{\label{all-energy-control} 
&\int_{\rho_1}^\infty  \left( \sum_{i=1}^{\tdk}  (\partial_r\lambda_i(r)r^{2i-\frac{d+1}{2}} )^2  +
\sum_{i=1}^k(  {\partial_r\mu_i(r)
r^{2i-\frac{d-1}{2}}} )^2\right)\,dr 
\\
\lesssim & \sum_{i=1}^{\tdk} \left({\rho_1}^{4i-3d}\lambda_i^2({\rho_1}) + {\rho_1}^{8i-3d-4}\lambda_i^4({\rho_1}) +\rho_1^{12i-3d-8}\lambda_i^6(\rho_1)\right)\\
&+ \sum_{i=1}^k \left({\rho_1}^{4i+2-3d}\mu_i^2({\rho_1}) + {\rho_1}^{8i-3d}\mu_i^4({\rho_1}) +{\rho_1}^{12i-3d-2}\mu_i^6({\rho_1})\right)
}
 Now we use the fundamental theorem of calculus to express the differences $ |\lambda_j(\rho_1)-\lambda_j(\rho_0)|$ and $|\mu_j(\rho_1)-\mu_j(\rho_0)| $ in terms of~\eqref{all-energy-control}  and argue exactly as in the proofs of 
 Lemma~\ref{lem:all-difference} and  Corollary~\ref{cor:all-difference} to obtain 
   \begin{align*} 
  |\lambda_j(\rho_1)-\lambda_j(\rho_0)|
\lesssim &  \, \varepsilon \left(\sum_{i=1}^{\tdk} \rho_1^{2i-2j}|\lambda_i(\rho_1)| + \sum_{i=1}^k  \rho_1^{2i-2j+1} |\mu_i(\rho_1)|\right)\\
|\mu_j(\rho_1)-\mu_j(\rho_0)| 
 \lesssim & \,  \frac{\varepsilon}{\rho_1}\left(\sum_{i=1}^{\tdk} \rho_1^{2i-2j}|\lambda_i(\rho_1)| + \sum_{i=1}^k  \rho_1^{2i-2j+1} |\mu_i(\rho_1)|\right)
  \end{align*}
Recalling that $\lambda_j(\rho_0)=\mu_j(\rho_0)=0$, and setting 
\[H := 
\sum_{j=1}^{\tdk}  \rho_1^{2j}|\lambda_j(0,\rho_1)| 
+ \sum_{j=1}^k \rho_1^{2j+1} |\mu_j(0,\rho_1)|\]
 we see that $H\lesssim \varepsilon H$. By choosing  $\varepsilon\ll1$ small enough we can ensure that  
$ H \equiv 0$. Therefore 
 \[\lambda_i(\rho_1)=\mu_j(\rho_1) =0.\hspace{1cm}\forall 1\leq i\leq \tdk, 1\leq j\leq k\]
By (\ref{all-energy-control}) and (\ref{all-energy-formula}), we then have 
\[\|\vec{u}(0)\|_{\Hd(r\geq \rho_1)}=0\]
which is a contradiction with the definition of $\rho_0$ since  $\rho_1<\rho_0$. This completes the proof. 
\end{proof}

\begin{remark}
Above we used Proposition~\ref{Decay-Thm} at time $t = 0$ to obtain~(\ref{all-energy-control}). We note that although the statement of Proposition~\ref{Decay-Thm} makes it seem like $R>R_0$ must be a large number, in fact all that is required in the proof is that the energy of the truncated data be small enough to be able to apply Lemma~\ref{linear-decay-bound}. If the initial data $(u_0, u_1)$ is compactly supported, this smallness can be achieved by simply taking $R_0$ close to the edge of the support. 
The same can be said about Lemma~\ref{linear-decay-bound}. An examination of the proof of Lemma~\ref{linear-decay-bound} reveals that in the case of compactly supported data, one can gain an extra small factor involving $(\rho_0-R_0)$, which is enough to treat the potential term perturbatively. 
\end{remark}


We next consider the case $\vartheta_1 \neq 0$. 
\\

\noindent\textbf{Case 2: $\vartheta_1 \neq 0$ is impossible.} \quad \\

By Proposition~\ref{prop:as} we have 
\[u_0=\vartheta_1 r^{2-d} +O(r^{3-2d})\]
Recall that by definition,  $r^\ell u_0=\psi_0(r)-Q(r)$. Using  the    asympotitc behavior  of $Q(r)$ from  (\ref{Q-asymptotic}), we see that in fact,  
\begin{equation*}
\psi_0(r) =n\pi -\frac{\alpha_0-\vartheta_1}{r^{\ell+1}} +O(r^{-3(\ell+1)})
\end{equation*}
By Lemma~\ref{lem:Q-asymptotic}, we can find  a unique solution $Q_{\alpha_0-\vartheta_1} \in \dot{H}^1(\R^3_*)$ to \textnormal{(\ref{HarmonicMap})} with the same asymptotics 
\begin{equation*}
Q_{\alpha_0-\vartheta_1}=n\pi -\frac{\alpha_0-\vartheta_1}{r^{\ell+1}} +O(r^{-3(\ell+1)})
\end{equation*}
Since $\vartheta_1 \neq 0$ we also note that Lemma~\ref{lem:Q-asymptotic} shows that 
 \ant{ 
 Q_{\alpha_0-\vartheta_1}(1)\not=0
 } 

To simplify notation we write $\vartheta:= \vartheta_1$ and  $ \ti{Q}:=Q_{\alpha_0-\vartheta}$. Define 
\begin{equation}\begin{aligned}\label{new-solution}
u_{\vartheta,0}(r):&=\frac{1}{r^\ell}(\psi_0(r)- \ti{Q}(r))\\
u_{\vartheta,1}(r):&=\frac{1}{r^\ell}\psi_1(r)\\
u_{\vartheta}(t,r):&=\frac{1}{r^\ell}(\psi(t,r)- \ti{Q}(r))
\end{aligned}\end{equation}
where of course $ \psi(t, r) := r u(t, r)$. 
Then, 
$u_{\vartheta}(t,r)$ solves 
\EQ{
\label{eq:vartheta}
&\partial_{tt}u_{\vartheta} - \partial_{rr}u_{\vartheta}- \frac{2\ell+2}{r}\partial_r u_{\vartheta} + V_{\vartheta}(r) u_{\vartheta}  = \N_{\vartheta}(r,u_{\vartheta}), \hspace{1cm} r\geq 1\\
&u_{\vartheta}(t,1)=-  \ti{Q}(1)\not=0, \hspace{0.5cm}\forall t\in \R\\
&\vec{u}_{\vartheta}(0) =(u_{\vartheta,0}, u_{\vartheta,1})
}
where
\begin{equation*}
\begin{aligned}
V_{\vartheta}(r) &: = \frac{\ell(\ell+1)(\cos2 \ti{Q} -1)}{r^2}
\\
\N_{\vartheta}(r,u_{\vartheta})& := F_{\vartheta}(r, u_{\vartheta}) + G_{\vartheta}(r, u_{\vartheta})\\
F_{\vartheta}(r, u_{\vartheta})&:= \frac{\ell(\ell+1)}{r^{\ell+2}}  \sin^2(r^{\ell} u_{\vartheta})\sin 2  \ti{Q}\\
G_{\vartheta}(r, u_{\vartheta})&:= \frac{\ell(\ell+1)}{2r^{\ell+2}}(2r^{\ell} u_{\vartheta}-\sin (2r^{\ell} u_{\vartheta}))\cos 2
 \ti{Q}
\end{aligned}
\end{equation*}
Note that  the  difference between (\ref{u eq})  and (\ref{eq:vartheta}) is that  the Dirichlet boundary condition at $r=1$ is not satisfied by $\vec u_{\vartheta}(t)$ and that we have replaced $Q$ by $ \tilde{Q}$.

We list several properties of $\vec u_{\vartheta}(t)$.
\begin{enumerate}
\item $ \tilde{K}:=\{\vec{u}_{\vartheta}(t) :t\in \R\}$ is pre-compact in $\dot{H}^1 \times L^2(r \ge 1)$. This follows since 
\EQ{ \label{tu}
\vec{u}_{\vartheta}(t)=\big({u}(t) +\frac{1}{r^\ell}(Q- \ti{Q}),  \, u_t(t) \big), 
}  the trajectory of  $\vec{u}(t)$ is pre-compact in $\HH$, and  
$Q-  \tilde{Q}$ is independent of time. In particular, this means that 
\EQ{ \label{ut ext}
\| \vec u_{\vartheta}(t) \|_{\HH (r \ge R + \abs{t})} \to 0 \mas \abs{t} \to \infty
}
\item $\vec {u}_{\vartheta}(0)$ has the following asymptotic behavior 
\EQ{
&\label{new-initial-a}
{u}_{\vartheta,0}(r) =  O(r^{3-2d}) \text{ as } r\rightarrow \infty
\\
&\int_r^\infty  {u}_{\vartheta,1}(s)s^{2i-1} ds=O(r^{2i+2-2d})  \text{ as } r\rightarrow \infty
}
The above follows from~\eqref{tu},  Proposition~\ref{prop:as},   Lemma~\ref{lem:Q-asymptotic}. 
\item ${u}_{\vartheta}(t,1)=-\tilde{Q}(1)\not=0$.
\end{enumerate}

To complete the proof of Proposition~\ref{Rigidity} we will prove below that   ${u}_{\vartheta}\equiv 0$.  Indeed, by  (\ref{new-solution}) this would imply that  $\psi_0( r) \equiv\ti{Q}(r)$,  which is a  contradiction because $\psi(t,1)=0$,  while $\ti{Q}(1)\not=0$.

\begin{lemma}\label{nonzero-theta} Let $\vec {u}$ be as in the Proposition~\ref{Rigidity}, and suppose $\vartheta := \vartheta_1\not=0$. Then if  $\vec u_\vartheta(t) $ is defined as in \textnormal{(\ref{new-solution})} we must have  $\vec {u}_{\vartheta}\equiv (0, 0)$.
\end{lemma}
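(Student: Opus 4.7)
The plan is to mimic the two-step scheme of Case 1 (Lemma~\ref{compact-support} and Lemma~\ref{0solution}) applied to $\vec u_\vartheta$, using the enhanced asymptotics~\eqref{new-initial-a} as the analog of having $\vartheta_1=0$, and then to extract the contradiction from the nonzero boundary value $u_\vartheta(t,1)=-\tilde Q(1)\neq 0$ recorded in property $(3)$.

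First, I would adapt Proposition~\ref{Decay-Thm} to $\vec u_\vartheta$. Since $\vec u_\vartheta(t)=(u(t)+r^{-\ell}(Q-\tilde Q),u_t(t))$, its trajectory is pre-compact in the space $\dot H^1\times L^2(r\ge 1;r^{2\ell+2}dr)$ (without the zero boundary condition), and property~(1), namely~\eqref{ut ext}, yields exterior-cone vanishing. The potential $V_\vartheta$ and nonlinearities $F_\vartheta, G_\vartheta$ satisfy the same asymptotics as $V,F,G$ since both $Q$ and $\tilde Q$ approach $n\pi$ with the same leading rate. Thus, truncating $\vec u_\vartheta(0)$ on $r\le R$ by the same linear interpolation used in the proof of Proposition~\ref{Decay-Thm}, I obtain a function that satisfies the Dirichlet condition at $r=1$, lies in $\HH$, and whose nonlinear modified evolution agrees with $\vec u_\vartheta(t)$ on the cone $\{r\ge R+|t|\}$ by finite speed of propagation. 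Applying Lemma~\ref{linear-decay-bound} to this truncated data and comparing with the free wave via Theorem~\ref{Exterior-Estimate} yields the exterior projection estimate
\[\|\pi_R^\perp \vec u_\vartheta(t)\|_{\HH(r\ge R)}\lesssim R^{1-d}\|\pi_R\vec u_\vartheta(t)\|_{\HH(r\ge R)}+\|\pi_R\vec u_\vartheta(t)\|^2+\|\pi_R\vec u_\vartheta(t)\|^3,\]
uniformly in $t$, for every $R$ past some threshold determined by the pre-compactness of $\{\vec u_\vartheta(t)\}$.

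Next, I would use~\eqref{new-initial-a} together with the identities~\eqref{u-ldj} and~\eqref{mu-exp} to compute directly that the projection coefficients $\lambda_j(0,r)$ and $\mu_j(0,r)$ of $\vec u_\vartheta(0)$ satisfy the decay
\[|\lambda_j(r)|\lesssim r^{-2j-d+3},\quad 1\le j\le \tilde k;\qquad |\mu_j(r)|\lesssim r^{-2j-d+2},\quad 1\le j\le k,\]
which is exactly the decay obtained in Lemma~\ref{final-a}/\ref{final-a-o} when $\vartheta_1=0$. Feeding this into the difference inequalities~\eqref{difference-all-ld},~\eqref{difference-all-mu} (now formulated for $\vec u_\vartheta$) and iterating as in the proof of Lemma~\ref{compact-support} yields that $\vec u_\vartheta(0)$ is compactly supported.

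Finally, define $\rho_0:=\inf\{\rho>1:\|\vec u_\vartheta(0)\|_{\HH(r\ge\rho)}=0\}$. I claim $\rho_0=1$. Suppose for contradiction that $\rho_0>1$; pick $\rho_1\in(1,\rho_0)$ close enough to $\rho_0$ that $\|\vec u_\vartheta(0)\|_{\HH(r\ge\rho_1)}<\varepsilon$ for $\varepsilon$ as small as desired (the remark after Lemma~\ref{0solution} shows that smallness, not largeness of $\rho_1$, is what powers the machinery when data are compactly supported near the truncation radius). Applying the $\vec u_\vartheta$-version of Proposition~\ref{Decay-Thm} on $r\ge\rho_1$, and using that all coefficients $\lambda_j(\rho_0)=\mu_j(\rho_0)=0$, the argument of Lemma~\ref{0solution} forces $\lambda_j(\rho_1)=\mu_j(\rho_1)=0$ for every $j$, hence $\|\vec u_\vartheta(0)\|_{\HH(r\ge\rho_1)}=0$, contradicting the minimality of $\rho_0$. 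Therefore $\rho_0=1$, so $u_{\vartheta,0}(r)=0$ and $u_{\vartheta,1}(r)=0$ for all $r>1$. Recalling $u_{\vartheta,0}(r)=r^{-\ell}(\psi_0(r)-\tilde Q(r))$, we have $\psi_0\equiv\tilde Q$ on $r>1$, and by continuity $\psi_0(1)=\tilde Q(1)\neq 0$, contradicting the Dirichlet condition $\psi_0(1)=0$. The main obstacle is the adaptation of Proposition~\ref{Decay-Thm} to the setting where the natural boundary value $u_\vartheta(t,1)$ is nonzero; this is resolved cleanly by the linear-interpolation truncation combined with finite speed of propagation, exactly as in the $\vec u$ case.
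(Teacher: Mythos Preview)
Your proposal is correct and follows essentially the same approach as the paper: adapt Proposition~\ref{Decay-Thm} to $\vec u_\vartheta$ via the truncation/finite-speed-of-propagation argument (noting that $V_\vartheta, F_\vartheta, G_\vartheta$ obey the same bounds as $V, F, G$ since $\tilde Q$ has the same asymptotics as $Q$), read off the decay of the projection coefficients from~\eqref{new-initial-a}, then rerun Lemma~\ref{compact-support} and Lemma~\ref{0solution} verbatim. The only minor organizational difference is that you fold the final contradiction with the boundary value $\tilde Q(1)\neq 0$ into the proof itself, whereas the paper states it separately after the lemma.
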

\begin{proof} 
The argument that we will use to prove Lemma~\ref{nonzero-theta} is basically  identical  to the one presented in the previous step regarding the case $\vartheta = 0$ and we give only a very brief sketch of the proof here. We refer the reader to~\cite[Proof of Proposition~$5.15$]{KLS} for a more detailed version of this argument. 

First we can show that $\vec u_{\vartheta}(0)$ must be compactly supported. 
Indeed, we can  adapt the same argument used to prove Proposition~\ref{Decay-Thm}, using~\eqref{ut ext} in place of Corollary~\ref{Exterior-Decay}, 
to prove the analogous result for $\vec u_{\vartheta}(t)$. In particular, (\ref{decay-estimate}) holds with $\vec u_{\vartheta}(t)$ in place of $\vec u(t)$, i.e.,  there exists $R_0>0$ so that for all $R>R_0$ we have 
\EQ{ \label{pi uth}
\|\pi_R^\perp \vec{u}_{\vartheta}(t)\|_{\mathcal{H}(r\geq
R)}\lesssim&  R^{1-d}\|\pip \vec{u}_{\vartheta}(t)\|_{\hrr}\\ &
+R^{-\frac{d}{2}}\|\pip \vec{u}_{\vartheta}(t)\|_{\hrr}^2 + R^{-1}\|\pip
\vec{u}_{\vartheta}(t)\|_{\hrr}^3
} 

Next, we define the projection coefficients $\lambda_{\vartheta,j},  \mu_{\vartheta,j}$ 
for $\vec u_{\vartheta}$, 
\ant{
\pipp \vec{u}_\vartheta(t, r)= \left(u_{\vartheta}(t, r)-\sum_{j=1}^{\tdk}\lambda_{\vartheta,j}(t,R)  {r^{2j-d}} , \,  \partial_t u_{\vartheta}(t, r)-\sum_{j=1}^k\mu_{\vartheta,j}(t,R) r^{2j-d}\right)
}
using the formulas  (\ref{lambda-explicit}) and  (\ref{mu-explicit}). 
Using the asymptotics of $u_\vartheta$ in (\ref{new-initial-a}), we immediately get  the asymptotics for $\lambda_{\vartheta,j},  \mu_{\vartheta,j}$.
\begin{align*} 
|\lambda_{\vartheta,j}|=O(r^{-2j-d+3}), \hspace{0.5cm}
  |\mu_{\vartheta,j}|=O(r^{-2j-d+2})\end{align*}
Using the analog of Lemma~\ref{ldmu-identity} for $\vec u_{\vartheta}$, we can rewrite~\eqref{pi uth} in terms of $\la_{\vartheta, j}$ and $\mu_{\vartheta_j}$. It follows that Lemma~\ref{all-decay}, Lemma~\ref{lem:all-difference}, and Corollary~\ref{cor:all-difference} hold for $\la_{\vartheta, j}$ and $\mu_{\vartheta, j}$ in place of $\la_j$ and $\mu_j$. 

With this information, we can apply the exact same argument used to prove Lemma~\ref{compact-support} to deduce that $\vec u_{\vartheta}(0)$ must be compactly supported. To conclude, we can then argue exactly as in the proof of Lemma~\ref{0solution} to prove that $\vec u_{\vartheta} \equiv (0, 0)$. 
\end{proof}

\subsection{Proof of Theorem~\ref{Rigidity} and Theorem~\ref{MainThm}}
We give a brief summary of the proof of Proposition~\ref{Rigidity}. 
\begin{proof}[Proof of Propopsition~\ref{Rigidity}]
Suppose that $\vec u(t)$ is a solution to~\eqref{u eq} so that the trajectory, 
\ant{
\K:= \{ \vec u(t) \mid t \in \R \}
}
is pre-compact in $\HH$. We also note that 
\ant{
r^{\ell} u (t, r):=  \psi(t, r) - Q(r)
}
where $\psi(t, r)$ is a degree $n$, $\ell$-equivariant wave map with $\ell \ge 2$, i.e., a solution to~\eqref{EE1}.  
Then by Proposition~\ref{prop:as} there exists $\vartheta \in \R$ so that 
\ant{
&\abs{r^{d-2} u_0(r) -   \vartheta}  = O(r^{-d+1}) \mas r \to \infty\\
&\abs{\int_r^\infty u_1(s)s^{2i-1} ds} =O(r^{2i+2-2d}) \mas r \to \infty\quad  \forall 1\leq i\leq k
}
where $k  = \frac{\ell}{2}$ if $\ell \ge 2$ is even,  and $k = \frac{\ell +1}{2}$ if $\ell \ge 2$ is odd. If $\vartheta \neq 0$, then by Lemma~\ref{nonzero-theta} we must have $\psi(0, r) =  Q_{\al_0 - \vartheta}(r)$, which yields a contradiction since $Q_{\al_0 - \vartheta}(1) \neq 0$. Therefore $\vartheta = 0$, and hence $\vec u(t) = (0, 0)$ by Lemma~\ref{0solution}. This completes the proof of Theorem~\ref{Rigidity}. 
\end{proof}

Finally, we summarize the proof of Theorem~\ref{MainThm}.

\begin{proof}[Proof of Theorem~\ref{MainThm}]
We prove the equivalent reformulation,  Theorem~\ref{thm:main}. Suppose Theorem~\ref{MainThm} and hence Theorem~\ref{thm:main}, are false. Then, by Proposition~\ref{critical-element}, there exists a critical element, $\vec u_*(t)$, i.e., a nonzero solution to~\eqref{u eq} such that the trajectory $\K:= \{ \vec u_*(t) \mid t \in \R\}$ is pre-compact in $\HH$. By Theorem~\ref{Rigidity} we must then have $\vec u_*(t) \equiv (0, 0)$. Since the critical element is nonzero, we have reached a contradiction, which means that  Theorem~\ref{thm:main} and Theorem~\ref{MainThm} are true. 
 \end{proof}


\bibliography{researchbib}

\begin{thebibliography}{10}

\bibitem{BSSS}
Balakrishna B., Sanyuk V., Schechter J., and Subbaraman A.
\newblock Cutoff quantization and the skyrmion.
\newblock {\em Physical Review D}, 45(1):344--351, 1992.

\bibitem{BG}
P.~Bahouri and G\'erard.
\newblock High frequency approximation of solutions to critical nonlinear wave
  equations.
\newblock {\em Amer. J. Math.}, 121:131--175, 1999.

\bibitem{BCM12}
P.~Bizo{\'n}, T.~Chmaj, and M.~Maliborski.
\newblock Equivariant wave maps exterior to a ball.
\newblock {\em Nonlinearity}, 25(5):1299--1309, 2012.

\bibitem{Bu}
A.~Bulut.
\newblock Maximizers for the {S}trichartz inequalities for the wave equation.
\newblock {\em Differential Integral Equations}, 23(11-12):1035--1072, 2010.

\bibitem{BCLPZ}
A.~Bulut, M.~Czubak, D.~Li, N.~Pavlovi{{\'c}}, and X.~Zhang.
\newblock Stability and unconditional uniqueness of solutions for energy
  critical wave equations in high dimensions.
\newblock {\em Comm. Partial Differential Equations}, 38(4):575--607, 2013.

\bibitem{DKM1}
T.~Duyckaerts, C.~Kenig, and F.~Merle.
\newblock Universality of the blow-up profile for small radial type
  $\textrm{II}$ blow-up solutions of the energy critical wave equation.
\newblock {\em J. Eur math. Soc. (JEMS)}, 13(3):533--599, 2011.

\bibitem{DKM4}
T.~Duyckaerts, C.~Kenig, and F.~Merle.
\newblock Classification of radial solutions of the focusing, energy critical
  wave equation.
\newblock {\em Cambridge Journal of Mathematics}, 1(1):75--144, 2013.

\bibitem{DKM5}
T.~Duyckaerts, C.~Kenig, and F.~Merle.
\newblock Scattering for radial, bounded solutions of focusing supercritical
  wave equations.
\newblock {\em Int. Math. Res. Not. IMRN}, (1):224--258, 2014.

\bibitem{Sogge1}
K.~Hidano, J.~Metcalfe, H.~Smith, C.~Sogge, and Y.~Zhou.
\newblock On abstract {S}trichartz estimates and the {S}trauss conjecture for
  nontrapping obstacles.
\newblock {\em Trans. Amer. Math. Soc.}, 362(5):2789--2809, 2010.

\bibitem{KLLS}
C.~Kenig, A.~Lawrie, B.~Liu, and W.~Schlag.
\newblock Channels of energy for the linear radial wave equation.
\newblock {\em Preprint}, 2014.

\bibitem{KLS}
C.~Kenig, A.~Lawrie, and W.~Schlag.
\newblock Relaxation of wave maps exterior to a ball to harmonic maps for all
  data.
\newblock {\em Geom. Funct. Anal.}, 24(2):610--647, 2014.

\bibitem{KM06}
C.~Kenig and F.~Merle.
\newblock Global well-posedness, scattering and blow-up for the
  energy-critical, focusing, non-linear {S}chr{\"o}dinger equation in the
  radial case.
\newblock {\em Invent. Math.}, 166(3):645--675, 2006.

\bibitem{KM08}
C.~Kenig and F.~Merle.
\newblock Global well-posedness, scattering and blow-up for the energy-critical
  focusing non-linear wave equation.
\newblock {\em Acta Math.}, 201(2):147--212, 2008.

\bibitem{KVimrn}
R.~Killip, M.~Visan, and X.~Zhang.
\newblock Riesz transforms outside a convex obstacle.
\newblock {\em arXiv:1205.5784 e-print}, 05 2012.

\bibitem{LS13}
A.~Lawrie and W.~Schlag.
\newblock Scattering for wave maps exterior to a ball.
\newblock {\em Adv. Math.}, 232:57--97, 2013.

\bibitem{R14}
C.~Rodriguez.
\newblock Profiles for the radial focusing energy-critical wave equation in odd
  dimensions.
\newblock {\em arXiv:1412.1388 e-print}, 12 2014.

\bibitem{CauchyMatrix}
S.~Schechter.
\newblock On the inversion of certain matrices.
\newblock {\em Math. Tables Aids Comput.}, 13:73--77, 1959.

\bibitem{Shatah}
J.~Shatah.
\newblock Weak solutions and development of singularities of the {${\rm
  SU}(2)$} {$\sigma$}-model.
\newblock {\em Comm. Pure Appl. Math.}, 41(4):459--469, 1988.

\bibitem{ShatahStr}
J.~Shatah and M.~Struwe.
\newblock {\em Geometric wave equations}, volume~2 of {\em Courant Lecture
  Notes in Mathematics}.
\newblock New York University, Courant Institute of Mathematical Sciences, New
  York; American Mathematical Society, Providence, RI, 1998.

\end{thebibliography}
\bibliographystyle{plain}

\medskip

\centerline{\scshape Carlos Kenig,  Baoping Liu, Wilhelm Schlag}
\medskip
{\footnotesize
 \centerline{Department of Mathematics, The University of Chicago}
\centerline{5734 South University Avenue, Chicago, IL 60615, U.S.A.}
\centerline{\email{cek@math.uchicago.edu, baoping@math.uchicago.edu, schlag@math.uchicago.edu}}
} 
\vspace{\baselineskip}

\centerline{\scshape  Andrew Lawrie}
\medskip
{\footnotesize
 \centerline{Department of Mathematics, University of California Berkeley}
\centerline{
859 Evans Hall
Berkeley, CA 94720}
\centerline{\email{alawrie@math.berkeley.edu}}
}

\end{document}